\documentclass[oneside,reqno,11pt]{amsart}

\usepackage[top=1in, bottom=1.25in, left=1.25in, right=1.25in]{geometry}
\usepackage{amsmath}
\usepackage{amsfonts}
\usepackage{amssymb}
\usepackage{amsthm}
\usepackage{hyperref}
\usepackage{mathrsfs}
\usepackage{tikz-cd}
\usepackage{multicol}
\usepackage{xypic}
\usepackage[font=small,labelfont=bf]{caption}
\usepackage{mathtools}
\usepackage{bbm}

\newtheorem{thm}{Theorem}[section]
\newtheorem{lemma}[thm]{Lemma}

\newtheorem{cor}[thm]{Corollary}
\newtheorem{prop}[thm]{Proposition}
\newtheorem{defn}[thm]{Definition}
\newtheorem{example}[thm]{Example}
\newtheorem{remark}[thm]{Remark}
\newtheorem{conjecture}[thm]{Conjecture}
\newtheorem{assumption}[thm]{Assumption}

\newtheorem*{claim}{Claim}

\newcommand{\Z}{\mathbb{Z}} 
\newcommand{\C}{\mathbb{C}} 

\newcommand{\bL}{\mathbb{L}} 
\newcommand{\unit}{1_\bL} 
\newcommand{\cF}{\mathcal{F}}  
\newcommand{\cA}{\mathcal{A}}  
\newcommand{\CF}{\mathrm{CF}} 
  
\newcommand{\Hom}{\mathrm{Hom}}  
\newcommand{\R}{\mathbb{R}}  
\newcommand{\bP}{\mathbb{P}}  
\newcommand{\bS}{\mathbb{S}}  
\newcommand{\bb}{\mathbbm{b}} 
\newcommand{\bv}{\mathbf{v}}
  
\newcommand{\cO}{\mathcal{O}}  
\newcommand{\dg}{\mathrm{dg}}
\newcommand{\cG}{\mathcal{G}}
\newcommand{\cS}{\mathcal{S}}
\newcommand{\cY}{\mathcal{Y}}

\newcommand{\fr}{\mathrm{fr}}
\newcommand{\Fuk}{\mathrm{Fuk}}

\newcommand{\sslash}{/\!/}

\numberwithin{equation}{section}

\allowdisplaybreaks[1]

\begin{document}
	
\title{Mirror functor for deformed preprojective algebras}

\author{Hansol Hong}
\address{Department of Mathematics\\ Yonsei University\\ South Korea}
\email{hansolhong@yonsei.ac.kr}

\author{Siu-Cheong Lau}
\address{Department of Mathematics and Statistics\\ Boston University\\ USA}
\email{lau@math.bu.edu}
\author{Ju Tan}
\address{The IBS Center for Geometry and Physics\\ South Korea}
\email{jutan@ibs.re.kr}

\begin{abstract}
	We study localized homological mirror symmetry associated to an immersed Lagrangian brane $\bL$, possibly equipped with a higher rank flat bundle, of a symplectic manifold $X$.  
	Under a certain finiteness assumption on the Floer theory of $\bL$, we deduce a quasi-equivalence $\mathcal{D}\Fuk_\bL(X) \cong \mathcal{D}_{\mathrm{fd}}(\tilde{\mathcal A}_\bL)$ using Koszul duality,
	where $\tilde{\cA}_{\bL}$ is the dual differential graded quiver algebra called the extended localized mirror.
	We apply this to obtain some HMS results for plumbings of cotangent bundles of spheres, and to reproduce known results of split-generation of compact objects.
	
	In the second part of the paper, we consider bulk deformation cycles of $X$ that have non-trivial intersections with $\bL$. This gives rise to noncommutative deformations of the mirror.  
	When applied to (framed) plumbings, we obtain mirror functors to deformed preprojective algebras (or Nakajima quiver varieties at a general complex moment-map level).  
	For the ADHM and affine $ADE$-type immersions, our construction produces mirror functors to the noncommutative spaces studied by Kapustin-Kuznetsov-Orlov, Baranovsky–Ginzburg–Kuznetsov and Kawamata.

\end{abstract}

\maketitle

\tableofcontents

\section{Introduction} \label{sec:intro}

Deformed preprojective algebras form one of the basic bridges between quiver representation theory, Kleinian singularities, and noncommutative geometry.  They were introduced by Crawley-Boevey and Holland~\cite{CBH98} as noncommutative deformations of the usual preprojective algebra, which are especially important for quotients of $\C^2$ by finite subgroups of $\mathrm{SL}_2(\C)$.

\begin{defn}[\cite{CBH98}]
	Let $K$ be a field and let $Q$ be a finite quiver with vertex set $I$.
	The deformed preprojective algebra of $Q$ with parameter $\lambda \in K^I$ is
	\[
		\Pi(Q)^\lambda
		:= K\bar{Q}\Big/\left\langle \sum_{a \in Q_1}[a,a^*]-\lambda \right\rangle,
	\]
	where $\bar{Q}$ is the double quiver of $Q$ and $Q_1$ is the set of arrows of
	$Q$.  When $\lambda=0$, this recovers the ordinary preprojective algebra.
\end{defn}

The interesting relation between preprojective algebras and symplectic topology has already appeared in several forms. Etg\"u-Lekili~\cite{EL19} related multiplicative preprojective algebras to plumbings of two-spheres, where the vertices of the quiver are represented by the Lagrangian sphere components and the arrows by their intersections.  In the framed setting, \cite{HLT24, LT26} constructed localized mirror functors from framed plumbings to Nakajima quiver varieties~\cite{Nak94,Nak98,Nak01}. Under these functors, the monadic complexes that play an important role in Nakajima's theory arise as images of framed Lagrangian immersions.  

The previous literature mainly focuses on the case when $\lambda=0$.
The deformation parameter $\lambda$ controls the complex structure on the corresponding hyperK\"ahler quotient: in Nakajima's construction it is the level of the complex moment map.  
A guiding question in this paper is to identify the mirror-symmetric origin of the deformation parameter $\lambda$ on the symplectic side.

A novelty of this paper is that nonzero complex moment-map level $\lambda$ arises from bulk deformations of the symplectic plumbing. We begin with the example of $\mathbb{C}^2$ that nicely illustrates our construction.

\begin{example} \label{ex:ncC2}
Consider the algebra
$$ \C\langle x,y \rangle / \langle xy-yx+\lambda\rangle .$$
This is the deformed preprojective algebra $\Pi(Q)^\lambda$ for the quiver with one vertex and one loop.  At $\lambda=0$, it is the coordinate ring of the commutative affine plane $\C^2$. In~\cite{HKL23}, the case $\lambda=0$ was realized as the Maurer-Cartan	deformation space of a nodal Lagrangian sphere $\mathscr{S}$ with one node. If $b=\tilde{x}X+yY$ is the formal deformation at the node, with $\tilde{x}$ and $y$ treated as noncommutative variables, then the Maurer-Cartan equation $m_0^b=0$ gives the relation $xy-yx=0$, after a certain change of coordinate from $\tilde{x}$ to $x$.  See Figure~\ref{fig:ncADHM}.
	
We now explain how the constant term $\lambda$ appears on the symplectic side.  We work in a symplectic neighborhood of $\mathscr{S}$, and take a cotangent fiber $C$ through a generic  point $p \in \mathscr{S}$ away from the node. We then consider the bulk deformation of Floer theory of $\mathscr{S}$ by $\bb=\lambda C$.  Although an exact Lagrangian cannot bound a nonconstant disc without corners, the bulk insertion stabilizes the constant disc through $p$.  In the Morse model, this constant disc is followed by a gradient trajectory to the degree-two critical	point $P_2$, and hence contributes the term $\lambda P_2$ to $m_0^\bb$.
	
Since $C$ has codimension two, the divisor axiom implies that constant discs with more than one interior insertion do not contribute (the disc class is zero and has zero intersection with $C$).  Thus $m_0^\bb=\lambda P_2$, and the bulk-deformed Maurer-Cartan equation becomes
$$ m_0^{\bb,b}=(xy-yx+\lambda)P_2. $$
\end{example}

\begin{figure}[h]
	\begin{center}
		\includegraphics[scale=0.7]{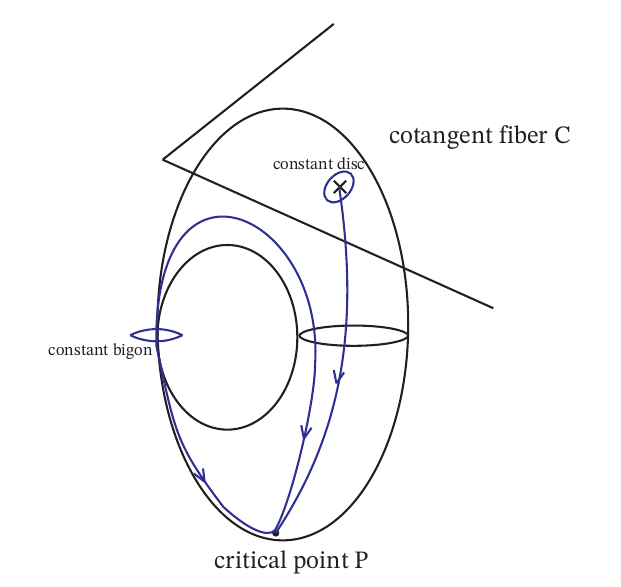}
		\caption{}\label{fig:ncADHM}
	\end{center}
\end{figure}

For plumbings of $2$-spheres, the same construction yields the deformed preprojective algebra associated with the underlying plumbing quiver. More precisely, we prove the following result using a Morse model for the bulk-deformed Floer theory.

\begin{thm}[see Theorem \ref{thm: dpre} for the precise statement] \label{thm: intro_dpre}
For a plumbing of two-spheres, there are bulk deformations for which the bulk-deformed Maurer-Cartan algebra of the zero-section (the union of two-spheres) $\bL$ is isomorphic to the deformed preprojective algebra of the quiver canonically obtained from the plumbing graph.
	
Similarly, let $(\bL^{\fr},\mathcal{E})$ be the framed Lagrangian brane obtained by adjoining to $\bL$ a cotangent fiber in each sphere component, and let $\mathcal{E}$ be the trivial bundle with rank vector $(\vec{n},\vec{d})$.  For each vector of deformation parameters	 $\vec{\delta}$, there are bulk deformations for which the Maurer-Cartan space of $\bL^{\fr}$ is isomorphic to the corresponding Nakajima quiver variety at	 complex moment-map level $\mu_\C=\vec{\delta}$.
\end{thm}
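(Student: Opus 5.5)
The plan is to compute the bulk-deformed Maurer--Cartan equation $m_0^{\bb,b}$ of $\bL$ directly in a Morse model, and to show that it equals the undeformed relation plus a constant term at each vertex. This is the mechanism of Example~\ref{ex:ncC2}, now carried out sphere by sphere.

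\textbf{Setup.} Let $\bL=\bigcup_{v\in I} S_v$ be the union of the Lagrangian spheres. Equip each $S_v$ with a perfect Morse function having a minimum $\unit_v$ and a maximum $P_v$ of degree two. At each plumbing point $S_v\cap S_w$ there are two immersed generators $X_a$ and $X_{a^*}$ of degree one, one for each orientation of the edge. The formal deformation is $b=\sum_a (x_a X_a + x_{a^*}X_{a^*})$, with the $x$'s noncommutative variables valued in the path algebra $K\bar Q$. For the bulk class take
\[
\bb=\sum_{v\in I}\lambda_v C_v,
\]
where $C_v$ is a cotangent fiber through a generic point $p_v\in S_v$, chosen away from the plumbing points and away from the gradient trajectories used in the undeformed computation.

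\textbf{Step 1: the undeformed part.} Since the plumbing is exact, only constant polygons and Morse flow trees contribute to $m_0^b$. I will use the result already established (from \cite{EL19, HKL23, HLT24}) that $m_0^b=\sum_v \bigl(\sum_{h(a)=v}\pm[x_a,x_{a^*}]\bigr)P_v$, possibly after a coordinate change $\tilde x\mapsto x$ as in Example~\ref{ex:ncC2}. This gives the preprojective relation at each vertex, with signs fixed by a choice of orientations of the arrows.

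\textbf{Step 2: bulk contributions.} The bulk class $\bb$ changes the count only through discs meeting some $C_v$. Every disc bounded by $\bL$ has symplectic area zero and is contained in a neighborhood of the Weinstein core, so each contributing polygon is constant. Each $C_v$ has codimension two and zero intersection with constant disc classes. The divisor axiom therefore kills every configuration with two or more interior insertions, and every configuration with a single insertion together with corners at immersed points, since such a configuration would have to be a constant polygon sitting at $p_v$. The only surviving term is the constant disc at $p_v$ followed by the gradient flow to $P_v$. This contributes $\pm\lambda_v P_v$, so
\[
m_0^{\bb,b}=\sum_v\Bigl(\sum_{h(a)=v}\pm[x_a,x_{a^*}]+\lambda_v\Bigr)P_v .
\]
The $\unit_v$-components vanish by degree reasons, so the Maurer--Cartan algebra is $K\bar Q/\langle\sum[a,a^*]-\lambda\rangle$ up to the sign $\lambda\mapsto-\lambda$. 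That sign is absorbed by replacing $C_v$ with $-C_v$.

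\textbf{Step 3: the framed case.} Adjoin to each $S_v$ a cotangent fiber $F_v$ carrying the rank-$d_v$ trivial bundle, and give $S_v$ rank $n_v$. Intersections of $F_v$ with $S_v$ produce the framing variables $i_v,j_v$. The same argument gives the relation $\sum[x_a,x_{a^*}]+ij+\delta=0$ at $P_v$, now with matrix-valued variables. The bulk fibers $C_v$ must be chosen disjoint from the $F_v$. Taking the gauge quotient by $\prod_v GL(n_v)$, with a stability condition where needed, identifies the Maurer--Cartan space with Nakajima's $\mu_\C^{-1}(\vec\delta)\sslash G$.

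\textbf{Main obstacle.} The main difficulty is making Step 2 rigorous. This means setting up bulk-deformed Morse-model moduli spaces for an immersed Lagrangian in which the divisor axiom holds and transversality holds for constant discs with interior markings. It also means checking that the bulk insertion does not alter the higher-order flow-tree terms, which would otherwise spoil the coordinate change from Step 1. I would try to handle this by perturbing the Morse data so that the trajectory from $p_v$ to $P_v$ avoids all immersed points, and then using an energy and degree filtration to show that no mixed terms arise.
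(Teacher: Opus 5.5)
Your proposal follows the paper's proof essentially step for step: a Morse-model computation in which every polygon is constant; the undeformed part reduces to the preprojective relation after the coordinate change $x_a\mapsto x_a\bigl(1+\sum_j a_j(x_{\bar a}x_a)^j\bigr)$, which the paper justifies via the local anti-symmetric involution at each plumbing point; the only bulk contribution is the constant disc at $p_v$ followed by the flow line to the degree-two generator; and the framed case is handled in the same way. The one point the paper addresses that you leave implicit is convergence: that coordinate change lives in the completed path algebra, where $\lambda_v e_v$ plus arrow-ideal terms is invertible in $e_v\widehat{K\bar Q}e_v$ when $\lambda_v\in\C^\times$, so the completed quotient would kill $e_v$; this is why the paper takes $\delta_i\in\Lambda_+$ in the filtered theory and restricts to the polynomial model only after the change of coordinates.
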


This result gives a direct mirror-symmetric interpretation of the deformation parameter in $\Pi(Q)^\lambda$. It records the weights of codimension-two bulk cycles intersecting the components of the Lagrangian core.  

We are particularly interested in the case of affine $ADE$ quivers. (In each such case, there is a unique primitive dimension vector for which the corresponding Lagrangian brane is of torus type.)
In the affine $A_n$ case, the construction recovers the noncommutative deformations of
$A_n$-resolutions and their local charts recently constructed by Kawamata \cite{Kaw24A} (see Corollary~\ref{cor: An} and Proposition \ref{prop:A_n}). 
More generally, for affine $ADE$ quivers, the Maurer--Cartan algebra is related to finite-group symmetries through the McKay correspondence. Namely, if $\Gamma \subset \mathrm{SL}_2(\C)$ is the finite subgroup associated to the affine Dynkin diagram, then the bulk-deformed localized mirror functor sends framed Lagrangian branes to monadic complexes representing framed torsion-free sheaves on the noncommutative projective surface $\bP^2_{\tau,\Gamma}$ (see Theorem~\ref{thm: monad}). This recovers, from the Fukaya category, the noncommutative spaces studied by Kapustin-Kuznetsov-Orlov~\cite{KKO01} and Baranovsky-Ginzburg-Kuznetsov~\cite{BGK02}.   

The categorical foundation for these applications occupies the first part of the paper (Section \ref{sec:extmirkos}, \ref{sec:locfunc}, and \ref{sec:higher}).  Let $\bL$ be an immersed Lagrangian brane in a symplectic manifold $X$, possibly equipped with a higher-rank flat bundle.  Associated to $\bL$ is a differential graded quiver algebra $\widetilde{\cA}_{\bL}$, the extended localized mirror.  Algebraically, $\widetilde{\cA}_{\bL}$ is the completed reduced cobar construction of the $A_\infty$ algebra $\CF(\bL,\bL)$ over the Novikov field $\Lambda$, or equivalently its Koszul dual.

The higher-rank setting introduces a natural gauge group $\cG$, so the mirror should be regarded as the stack $(\widetilde{\cA}_{\bL},\cG)$.  Multiplicities occur in basic examples, including the Lagrangian immersions associated with affine Dynkin diagrams of type $D_n$ and $E_n$ for $n=6,7,8$. As in \cite{HLT24}, framed branes (denoted by $\bL^{\fr}$) are also essential for constructing ADHM spaces and general Nakajima quiver varieties as shown in Theorem \ref{thm: intro_dpre}.

In the rank-one unframed case, \cite{CHL21} constructed an extended mirror functor $\cF^{\bL}$ from the Fukaya category of $X$ to the dg category of dg modules over $\tilde{\cA}_{\bL}$, in which $\tilde{\cA}_{\bL}$ was called the extended localized mirror.
We extend this functor to the framed and higher-rank setting. Along the way, the Koszul-dual description yields, under suitable finiteness hypotheses, an explicit free bimodule resolution of $\tilde{\cA}_{\bL}$. This resolution identifies the Hochschild cohomology of the localized mirror with an appropriate version of the Floer cohomology of $\bL$ and equips $\tilde{\cA}_{\bL}$ with a canonical Calabi-Yau structure.

Moreover, using Koszul duality, we prove the following quasi-equivalence:
\begin{thm}[Theorem \ref{thm:extfuctorequiv} and Lemma \ref{lem:fd=nil}]
	Let $\bL$ be a compact $\mathbb{Z}$-graded unobstructed immersed Lagrangian without nonpositive degree immersed generators.
	Then the extended mirror functor induces a quasi-equivalence
	$$\mathcal{D} \Fuk_{\mathbb{L}} (X) \longrightarrow \mathcal{D}_{\mathrm{fd}}(\tilde{\cA}_\bL),$$ where $\Fuk_{\mathbb{L}} (X) \subset \Fuk(X)$ is the full subcategory generated by $\mathbb{L}$ and $\mathcal{D}_{\mathrm{fd}}(\tilde{\cA}_\bL)$ denotes the derived category of $\tilde{\cA}_\bL$-modules with finite-dimensional total cohomology.
\end{thm}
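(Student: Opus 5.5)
The plan is to reduce the statement to a Koszul duality comparison between the $A_\infty$ algebra $A:=\CF(\bL,\bL)$ and its completed reduced cobar construction $\tilde{\cA}_\bL$. First I will identify the extended mirror functor on the generator. By construction of $\cF^{\bL}$ as in \cite{CHL21}, extended to the framed and higher-rank setting, $\cF^{\bL}(\bL)$ is the twisted complex $(\tilde{\cA}_\bL\otimes \CF(\bL,\bL), m_1^{b})$, with $b$ the universal formal deformation. I will show it is quasi-isomorphic to the augmentation module $\bS=\bigoplus_v \Lambda e_v$, the sum of simple modules at the vertices; this is the standard acyclicity of the cobar/bar twisted tensor product. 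The hypotheses that $\bL$ is compact, $\Z$-graded and unobstructed, with no immersed generators of nonpositive degree, make $\bar A$ (the reduced part, after removing idempotents) concentrated in degree $\ge 1$ and finite dimensional. Consequently $\tilde{\cA}_\bL$ is generated by variables in degree $\le 0$, and the completion is compatible with the relevant filtrations, so the usual convergence issues disappear.

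Second, I will prove full faithfulness on $\bL$, namely that
\[
A \longrightarrow \mathrm{RHom}_{\tilde{\cA}_\bL}(\bS,\bS)
\]
is a quasi-isomorphism of $A_\infty$ algebras, and that the map induced by $\cF^{\bL}$ agrees with this one. I plan to compute the right-hand side using the explicit free bimodule resolution of $\tilde{\cA}_\bL$ (the Koszul resolution $\tilde{\cA}_\bL\otimes \bar A^{\vee}[\ast]\otimes\tilde{\cA}_\bL \oplus \tilde{\cA}_\bL\otimes \Lambda^I\otimes\tilde{\cA}_\bL$). Tensoring it with $\bS$ gives a small complex whose dual is $A$, by finite dimensionality and the bimodule structure, with the higher products matching. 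This is the double Koszul dual statement $A^{!!}\simeq A$. Here the finiteness and positivity assumptions are precisely what guarantee that double duality holds without completion pathologies. Both full faithfulness and compatibility with $A_\infty$ structures then follow, since $\cF^{\bL}$ is an $A_\infty$ functor. Passing to triangulated/idempotent closures then gives a fully faithful functor $\mathcal{D}\Fuk_\bL(X)\to \mathcal{D}(\tilde{\cA}_\bL)$, whose essential image is the thick subcategory generated by $\bS$.

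Third, which I take to be the content of Lemma~\ref{lem:fd=nil}, I will identify $\mathrm{thick}(\bS)$ with $\mathcal{D}_{\mathrm{fd}}(\tilde{\cA}_\bL)$. The inclusion of $\mathrm{thick}(\bS)$ in $\mathcal{D}_{\mathrm{fd}}(\tilde{\cA}_\bL)$ is clear. For the converse, take a module $M$ with finite-dimensional total cohomology. Because $\tilde{\cA}_\bL$ is a completed path algebra whose arrows lie in nonpositive degrees, one can arrange $\tilde{\cA}_\bL$ to be (pseudo-)compact and connective. Then $H^\ast M$ is a finite-dimensional $H^0$-module on which the arrow ideal acts nilpotently, because of the completion and finite dimensionality. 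I will then filter $M$ by truncations and by powers of the augmentation ideal, so that $M$ becomes an iterated extension of shifts of simples, i.e. $M\in \mathrm{thick}(\bS)$.

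The main obstacle is the nilpotence step. A finite-dimensional module over a complete algebra need not be topologically nilpotent unless the module is continuous, and negative-degree generators complicate the truncation argument. I would first try to use the co-t-structure/standard t-structure on connective dg algebras: reduce via cohomology modules $H^i M$ over $H^0\tilde{\cA}_\bL$, and check directly that finite-dimensional $H^0$-modules are nilpotent, using the Maurer--Cartan (completed) topology built into the definition.
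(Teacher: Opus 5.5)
Your architecture is the paper's: $\cF^{(\bL,\tilde b)}(\bL)\simeq\Bbbk$ (Lemma~\ref{lem:hitbbk}), full faithfulness from the double Koszul duality map $V\to (V^!)^!$ for $V=\CF(\bL,\bL)$ (Lemma~\ref{lem:vdoubledual}), so the essential image is $\operatorname{thick}(\Bbbk)=\mathcal{D}_{\mathrm{nil}}(\tilde{\cA}_\bL)$, and then $\mathcal{D}_{\mathrm{nil}}=\mathcal{D}_{\mathrm{fd}}$. However, one tool in your second step is not available under the stated hypotheses. $V^!\otimes_\Bbbk V^\sharp\otimes_\Bbbk V^!$ is a well-defined free bimodule resolution only when $V$ has a finite-type minimal model (Corollary~\ref{cor:freeresolfin}). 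In general its differential contains the infinite sum $m_1^{\tilde b,\tilde b}$ and only makes sense in a completed tensor product, which is not free. The theorem does not assume finite type. You do not need the bimodule resolution. The one-sided resolution $V^!\otimes_\Bbbk V^\sharp\simeq\Bbbk$ of left $V^!$-modules (Lemma~\ref{lem:leftrightresol}) needs only $\dim V<\infty$, because then $(V\otimes_\Bbbk \bar BV)^\sharp=V^!\otimes_\Bbbk V^\sharp$ with no completion. It gives $\mathrm{RHom}_{V^!}(\Bbbk,\Bbbk)\cong\Hom_\Bbbk(V^\sharp,\Bbbk)\cong V$ directly, and this is exactly how the paper argues.

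The step you call the main obstacle is Lemma~\ref{lem:fd=nil}, and you leave it unresolved. The missing idea is that completeness alone forces nilpotence, with no continuity assumption on the module. For $x$ in the completed arrow ideal $\widehat{\mathfrak m}$ and any $a$, the series $\sum_{k\ge 0}(ax)^k$ converges and inverts $1-ax$, so $\widehat{\mathfrak m}\subset\operatorname{Jac}$. This persists in the quotient $H^0(\tilde{\cA}_\bL)=\tilde{\cA}_\bL^0/d(\tilde{\cA}_\bL^{-1})$ of the completed path algebra on the degree-zero arrows, since a surjection maps Jacobson radical into Jacobson radical. Now let $N$ be a finite-dimensional $H^0$-module. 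The chain $\widehat{\mathfrak m}^kN$ stabilizes at some $N'$ with $\widehat{\mathfrak m}N'=N'$, and Nakayama gives $N'=0$. Applied to each $H^i(M)$, this makes every $H^i(M)$ an iterated extension of vertex simples. Your truncation argument then places $M$ in $\operatorname{thick}(\Bbbk)$: all arrows have degree $\le 0$ because the generators of $V_{>0}$ have degree $\ge 1$, so $\tilde{\cA}_\bL$ is connective. Hence the smart truncations of $M$ are submodules, and their subquotients are $H^i(M)[-i]$ acted on through $\tilde{\cA}_\bL\to H^0(\tilde{\cA}_\bL)$. That last reduction is more explicit than the paper, which simply asserts the cohomological description of $\mathcal{D}_{\mathrm{nil}}$ and treats the dg case by a remark.
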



As a first application of the above quasi-equivalence, we recover known split-generation results for the core Lagrangians (the union of zero-sections) of several plumbing spaces of $T^*S^n$ in Section \ref{sec:fukplumb} (see Lemma \ref{lem:gen}).

Suppose that $\tilde{\cA}_{\bL}$ is formal and that its cohomology is concentrated in degree zero. Then it is quasi-isomorphic to the ordinary Maurer--Cartan algebra, $ \cA_{\bL}=H^0(\tilde{\cA}_{\bL})$.
In this case, a spectral-sequence argument shows that the extended mirror functor reduces to the localized mirror functor with target the category of $\cA_{\bL}$-modules. Consequently, it induces an equivalence between the derived Fukaya category generated by the core spheres and the derived category of finite-dimensional $\cA_{\bL}$-modules.

Applying this result to plumbings of $T^\ast S^2$ associated with affine $ADE$ diagrams, we obtain a fully faithful embedding of the derived Fukaya category generated by the core spheres into the derived category of coherent sheaves on the crepant resolution of $\C^2/\Gamma$. In particular, the non-affine core Lagrangian spheres are sent to coherent sheaves supported on the exceptional locus. See Corollary \ref{cor:localHMS} for the precise correspondence.

In Section \ref{bulk_deform}, we construct noncommutative deformations of the quiver algebra $\cA_{\bL}$ as mirrors of bulk deformations of the symplectic manifold $X$.
Applying this to plumbings, we obtain deformed preprojective algebras and noncommutative ADHM spaces.
For an $A_n$ singularity, Kawamata \cite{Kaw24A} studied semi-universal deformations of its noncommutative crepant resolution. He established a derived equivalence with noncommutative deformations of the usual commutative resolutions via chart-by-chart gluing.
We recover his results via mirror symmetry by taking bulk deformations on the total space of the smoothing of the $A_n$ singularity (Proposition \ref{prop:A_n}). 

We also investigate an analogous construction in dimension three. Namely, suitable bulk cycles in the smoothing of the conifold deform the localized mirror into a noncommutative crepant resolution of the conifold and give rise to local noncommutative charts organized into a quiver algebroid stack (Proposition \ref{prop:coni} and Lemma \ref{lem:conichart}).

\subsubsection*{Notations}
Throughout, we will use the following to denote a general element of a tensor algebra:
$$\vec{X} = X_1 \otimes \cdots \otimes X_l$$
for some $l \geq 0$. For $v=(v_1,\cdots, v_k)$ and $w=(w_1,\cdots,w_k)$,
$$x_{\vec{v}}=x_{v_1} \otimes \cdots \otimes x_{v_k}, \qquad x_{\vec{v}^{op}}=x_{v_k} \otimes \cdots \otimes x_{v_1},$$ a notation frequently used in the localized mirror formalism is
$$x_{\vec{v}} X_{\vec{w}^{op}} = x_{v_l} X_{w_l} \otimes x_{v_{l-1}} X_{w_{l-1}} \otimes \cdots \otimes x_{v_1} X_{w_1}.$$

For a graded module $M = \oplus_{d \in \mathbb{Z}} M_d$ over a coefficient ring $\Bbbk$, $\hom_\Bbbk (M, \Bbbk)$ means the full linear dual
$$ \hom_\Bbbk (M, \Bbbk) = \prod_d \hom_\Bbbk(M_d, \Bbbk)$$
and $\Hom_\Bbbk (M, \Bbbk)$ means the graded dual,
$$\Hom_\Bbbk (M,\Bbbk) := \oplus_d \hom_\Bbbk(M_d,\Bbbk).$$

\begin{center}
{\bf Acknowledgement}
\end{center}
The authors express their gratitude to Sangjin Lee for useful discussions on related works of Floer theory for plumbings. The third-named author thanks Yujiro Kawamata for helpful conversations and for pointing out related work of Toda.

The work of the first-named author is supported by the National Research Foundation of Korea (NRF) grant funded by the Korea government (MSIT) (RS-2025-00517727, 2020R1A5A1016126). The work of the third-named author is supported by the Institute for Basic Science (IBS-R003-D1).

\section{Review on the extended localized mirror and the mirror functor}\label{sec:prelim}

In \cite{CHL21}, deformation theory of the Lagrangian Floer complex of a compact Lagrangian was used to construct a noncommutative mirror, referred to as the \emph{localized mirror} in the sense that it captures the mirror geometry locally around the object corresponding to the chosen compact Lagrangian. In this section, we review the mirror construction for higher-rank trivial vector bundles. In particular, this is the case when each component of the domain of the immersion is simply-connected. The Floer theory involving higher-rank flat vector bundles with nontrivial holonomy will be explained in Section \ref{sec:higher}. See also \cite[Section 3.1]{Kon17} and \cite[Section 3]{LT26}.

\subsection{Noncommutative local mirror arising from a Lagrangian immersion}\label{subsec:reviewlocmir}

Let $M$ be a symplectic manifold and let $\bL \subset M$ be a spin oriented Lagrangian immersion. Let $(\bL, \mathcal{E})$ be a Lagrangian brane, where $ \mathcal{E}$ is a trivial vector bundle over the domain of the immersion $\bL$.
We define its formal deformation space as follows, which is a natural generalization of \cite{CHL21} to include higher-rank bundles. Including higher-rank bundles is important for the construction of creation and annihilation operators in representation theory as explained in \cite{LT26}.
\begin{enumerate}
	\item 	
	Associate a quiver $Q$ to $\CF^1(\bL,\bL)$. Each connected component $L_v$ of (the domain of the immersion) $\bL=\oplus_v L_v$ is associated with a vertex $v$. For each generator $X \in \CF^1(\bL,\bL)$, let $t(X)$ and $h(X)$ denote its tail and head, respectively, i.e., $X \in \CF^1 (\bL_{t(X)},\bL_{h(X)})$; we associate to $X$ an arrow $x$ from $t(X)$ to $h(X)$.
	\item Let $\C Q$ be the free path algebra, equipped with the path-length filtration. We take the completion of $\C Q$ with respect to this filtration, and by abuse of notation denote the completion again by $\C Q$.\footnote{One may instead work with convergent power series over the Novikov field $\Lambda$ as in \cite{CHL21}, rather than formal power series. However, for our purposes, it suffices to work over $\C$.} 
	\item 
	Extend the Fukaya algebra $\CF((\mathbb{L}, \mathcal{E}),(\mathbb{L}, \mathcal{E}))$ over the path algebra $\C Q$ to obtain a noncommutative $A_\infty$-algebra $$\tilde{\cA}^\mathbb{L} = \C Q\otimes_{\C^{\oplus}} \CF((\mathbb{L}, \mathcal{E}),(\mathbb{L}, \mathcal{E})).$$ 
	Here, the unit of $\CF((\mathbb{L}, \mathcal{E}),(\mathbb{L}, \mathcal{E}))$ is $1_{\mathbb{L}} = \sum 1_{L_v}$, and $\C^{\oplus} \subset \C Q$ denotes $\bigoplus_v \C \cdot e_v$, where $e_v$ are the trivial paths (idempotents) at corresponding vertices of $Q$.  The fibered tensor product forces an element $a \otimes X$ to be non-zero only when the tail of $a$ corresponds to the source of $X$. Using flat trivializations of $\mathcal{E}|_{L_i}$, the $A_\infty$-operations are defined by
	\begin{equation} \label{eq:mk}
		m_k (f_1 X_1,\ldots,f_k X_k) := f_k \ldots f_1 \, m_k (X_1,\ldots,X_k) \footnote{For a nontrivial flat vector bundle, this formula must be modified by inserting the ordered parallel-transport maps along the boundary segments of the corresponding holomorphic discs; see Section \ref{sec:higher}. }
	\end{equation} 
 	where $X_l \in \CF(\bL)$  and $f_l \in \C Q \otimes_{\C^{\oplus}} \Hom( \mathcal{E}|_{\bL_{t(X_l)}},  \mathcal{E}|_{\bL_{h(X_l)}})$. Here and throughout, arrows are composed from right to left, i.e. $t(f_{l+1})=h(f_l)$, so the product $f_k \ldots f_1$ matches the left-to-right ordering of the inputs $X_1,\ldots, X_k.$ Moreover, we use restriction notation to denote the fibers of the flat vector bundles $\mathcal{E}$.

	\item Extend the formalism of bounding cochains of \cite{FOOO09} over $\C Q$, that is, we take 
	\begin{equation} \label{eq:b}
		\mathbf{b} = \sum_l b_l B_l
	\end{equation}
	where $B_l$ are the generators of $\CF^1(\bL)$, and $b_l$ are matrices whose entries are the corresponding arrows in $Q$.  Then define the deformed $A_\infty$-structure $m_k^{\mathbf{b}}$ as in \cite{FOOO09}, 
	\begin{equation}\label{eqn:mkbbb}
	m_k^{\mathbf{b},\cdots,\mathbf{b}} (X_1, \cdots, X_k) := \sum_l m_l (\mathbf{b}, \cdots, \mathbf{b}, X_1 ,\mathbf{b} \cdots,\mathbf{b}, X_k, \mathbf{b}, \cdots, \mathbf{b})
	\end{equation}
	which can be further expanded via Equation \eqref{eq:mk}.
	\item Take the quotient of the quiver algebra by the two-sided ideal $R$ generated by coefficients of the obstruction term $m_0^{b}$, so that $m_0^{\mathbf{b}} = W\cdot 1_\bL$ holds over 
	$$\cA_{(\bL, \mathcal{E})} := Rep(Q, \mathcal{E})/R,$$ where $Rep(Q, \mathcal{E})$ is the tensor algebra $T_{Rep^0(Q, \mathcal{E})} Rep^1(Q, \mathcal{E})$. Here $Rep^0(Q, \mathcal{E}):= \CF^0((\bL,\mathcal{E}),(\bL,\mathcal{E})) = \oplus_i \C  e_i \otimes_{\C} \Hom(\mathcal{E}|_{\bL_i},\mathcal{E}|_{\bL_i})$ and 
	$$Rep^1(Q, \mathcal{E}):= \bigoplus_{X_l} \left( \C x_l \otimes_{\C^{\oplus}} \Hom( \mathcal{E}|_{\bL_{t(X_l)}},  \mathcal{E}|_{\bL_{h(X_l)}}) \right)$$ where $x_l$ is the arrow associated to $X_l$. In other words, $Rep(Q, \mathcal{E})$ is a higher-rank quiver algebra generated by matrices whose entries are paths. The product of two such matrices is zero unless the paths are concatenable. $\cA_{(\bL, \mathcal{E})}$ is the space of noncommutative weakly unobstructed deformations of the Lagrangian brane $(\bL, \mathcal{E})$.
	We call $(\cA_{(\bL, \mathcal{E})},W)$ the noncommutative local mirror of $X$ probed by $(\bL, \mathcal{E})$.
	\item For simplicity, we denote $(\bL, \mathcal{E},\mathbf{b})$ by $(\bL,\mathbf{b}).$ The morphism spaces between $(\bL,\mathbf{b})$ and $L$ are enlarged to be $\CF((\bL,\mathbf{b}),L) := \cA_{(\bL, \mathcal{E})}\otimes_{\C^{\oplus}} \CF((\mathbb{L}, \mathcal{E}),L)$ (and similarly for $\CF(L,(\bL,\mathbf{b}))$), where $L$ is an object in $\mathrm{Fuk}(M)$. Moreover, the differential is defined in a similar way as \eqref{eq:mk} in Step 3. More precisely, the differential on $\CF((\bL,\mathbf{b}),L)$ can be written as
	\begin{equation*}
		\begin{array}{rcl}
			m_1^{\mathbf{b},0} (X) &=& \sum_{l \geq 0} m_{l+1} (\underbrace{\mathbf{b}, \cdots, \mathbf{b}}_{k}, X) \\
			&=& \sum_{l \geq 0} \sum_{i_1,\cdots, i_l} m_{l+1} (b_{i_1} B_{i_1}, \cdots, b_{i_l} B_{i_l}, X) \\
			&=& \sum_{l \geq 0} \sum_{i_1,\cdots, i_l} b_{i_l} \cdots b_{i_1} \otimes m_{l+1} ( B_{i_1}, \cdots,  B_{i_l}, X)
		\end{array}
	\end{equation*}
	for a Floer generator $X \in L \cap \mathbb{L}$. Note that the $m_k$-operations on the right-hand side are defined consistently with \eqref{eq:mk}. Moreover, there is a natural bimodule complex associated with $(\bL,\mathbf{b})$, which we denote by
    $$\mathcal B_{(\bL,\mathbf b)}:=  \cA_{(\bL, \mathcal{E})}\otimes_{\C^{\oplus}} \CF((\mathbb{L}, \mathcal{E}),(\bL, \mathcal{E})) \otimes_{\C^{\oplus}} \cA_{(\bL, \mathcal{E})}, $$ whose differential is given by \begin{equation*}
    	\begin{array}{rcl}
    		m_1^{\mathbf{b},\mathbf{b}} (X) &=& \sum_{l,k \geq 0} m_{k+l+1} (\underbrace{\mathbf{b}, \cdots, \mathbf{b}}_{l}, X,\underbrace{\mathbf{b}, \cdots, \mathbf{b}}_{k}) \\
    		&=& \sum_{l,k \geq 0} \sum_{i_1,\cdots, i_l,j_1,\ldots,j_k} m_{k+l+1} (b_{i_1} B_{i_1}, \cdots, b_{i_l} B_{i_l}, X,b_{j_1} B_{j_1}, \cdots, b_{j_k} B_{j_k}) \\
    		&=& \sum_{l,k \geq 0} \sum_{i_1,\cdots, i_l,j_1,\ldots,j_k} b_{i_l} \cdots b_{i_1} \otimes m_{k+l+1} ( B_{i_1}, \cdots,  B_{i_l}, X,B_{j_1}, \ldots, B_{j_k}) \otimes b_{j_k} \cdots b_{j_1}.
    	\end{array}
    \end{equation*}  More details can be found in Section 3.2 of \cite{LNT23}. 
\end{enumerate}

When $E$ is a trivial line bundle, we simply write $\cA_\bL$ for $\cA_{(\bL,E)}$. We have the following localized mirror functor.

\begin{thm}\cite[Thm 4.7]{CHL21}
	Consider the boundary deformation $\mathbf{b}$ of $(\bL, \mathcal{E})$. There exists a well-defined $A_\infty$-functor
	$$\cF^{(\bL,\mathbf{b})}: \Fuk(X) \to \mathrm{MF}(\cA_{(\bL, \mathcal{E})},W) \qquad L \mapsto \left(\CF((\bL,\mathbf{b}), L),m_1^{\mathbf{b},0} \right).$$
\end{thm}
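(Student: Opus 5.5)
The plan follows the strategy of \cite[Thm 4.7]{CHL21}, extended to matrix-valued coefficients. The functor is built from the $A_\infty$-operations of $\Fuk(X)$ with $\mathbf{b}$ inserted in the first slot. On objects, $L$ is sent to $\CF((\bL,\mathbf{b}),L)=\cA_{(\bL,\mathcal{E})}\otimes_{\C^{\oplus}}\CF((\bL,\mathcal{E}),L)$ with differential $m_1^{\mathbf{b},0}$. On morphisms, for $k\geq 1$ and inputs $Y_1,\dots,Y_k$ with $Y_i\in\CF(L_{i-1},L_i)$, we set
\[
\cF^{(\bL,\mathbf{b})}_k(Y_1,\dots,Y_k)(X):=\sum_{l\geq 0} m_{l+1+k}(\mathbf{b},\dots,\mathbf{b},X,Y_1,\dots,Y_k),
\]
where the coefficients are extended $\cA_{(\bL,\mathcal{E})}$-linearly using \eqref{eq:mk}. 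The coefficient products $b_{i_l}\cdots b_{i_1}$ stand to the left of the $\CF$-factor, so these maps are right $\cA_{(\bL,\mathcal{E})}$-module maps, or left, depending on convention. Convergence holds in the path-length-completed $\C Q$ because each extra insertion of $\mathbf{b}$ raises path length by one.

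First, check that $(m_1^{\mathbf{b},0})^2=W\cdot \mathrm{id}$ over $\cA_{(\bL,\mathcal{E})}$. Expand the $A_\infty$ relation for the sequence $(\mathbf{b},\dots,\mathbf{b},X)$ and sum over all numbers of $\mathbf{b}$-insertions. The terms split into two groups. Terms whose inner operation eats only $\mathbf{b}$'s assemble into $m_2(m_0^{\mathbf{b}},X)$, with $\mathbf{b}$'s before it. Terms whose inner operation includes $X$ assemble into $m_1^{\mathbf{b},0}\circ m_1^{\mathbf{b},0}$.

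The crucial point is the noncommutative bookkeeping. By \eqref{eq:mk}, the coefficient of an inner operation applied to a consecutive block of inputs is the ordered product of that block's coefficients. That product is then inserted in the correct position in the outer product. So the coefficient ordering is compatible with the nesting, and the relation holds identically in $\C Q\otimes \Hom(\mathcal{E},\mathcal{E})$-matrices. It also descends to $\cA_{(\bL,\mathcal{E})}$, because $R$ is a two-sided ideal.

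In $\cA_{(\bL,\mathcal{E})}$ we then substitute $m_0^{\mathbf{b}}=W\cdot 1_\bL$. Weak unitality of $1_\bL$ (for which we take a strictly unital or homotopy-unital model) makes all higher operations containing $1_\bL$ vanish, and gives $m_2(1_\bL,X)=\pm X$. This yields curvature $W$. The same argument is needed to see that $W$ is central in $\cA_{(\bL,\mathcal{E})}$, so that $\mathrm{MF}(\cA_{(\bL,\mathcal{E})},W)$ makes sense. The standard trick is to apply the $A_\infty$ relation to $(\mathbf{b},\dots,\mathbf{b})$ alone, which shows $m_1^{\mathbf{b},\mathbf{b}}(m_0^{\mathbf{b}})=0$. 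Unitality then converts this into $W\cdot 1_\bL$ having equal left and right actions, i.e.\ $[W,b_l]=0$ in the quotient.

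Second, verify the $A_\infty$-functor equations for $\cF_k$. For fixed $Y_1,\dots,Y_k$, sum the $A_\infty$ relations of $\Fuk(X)$ over sequences $(\mathbf{b}^{\otimes l},X,Y_1,\dots,Y_k)$. The terms sort into four types:
\begin{enumerate}
\item inner operations on $\mathbf{b}$'s only, which give the curvature term and cancel in pairs as in the first step;
\item inner operations on $\mathbf{b}$'s together with $X$ and some initial $Y_1,\dots,Y_j$, which give the compositions $\cF_{k-j}(Y_{j+1},\dots)\circ\cF_j(Y_1,\dots,Y_j)$, together with the module differentials at the ends;
\item inner operations on $Y$'s only, which give $\cF(\dots,m_j(Y_\ast),\dots)$;
\item inner operations containing $\mathbf{b}$'s but not the entire prefix; these are absorbed into the preceding types by summing over $l$.
\end{enumerate}
The curvature terms $W$ appear symmetrically in source and target, so they cancel in the matrix-factorization morphism complex.

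The main obstacle is the sign and ordering bookkeeping in the higher-rank setting. Here the $b_l$ are matrices of arrows, and the relevant fibers $\Hom(\mathcal{E}|_{\bL_i},\mathcal{E}|_{\bL_j})$ must be composed consistently with \eqref{eq:mk} and the right-to-left composition convention. I would handle this by observing that \eqref{eq:mk} makes $\tilde{\cA}^{\bL}$ the tensor product of the $A_\infty$ algebra $\CF$ with the associative algebra $\C Q\otimes\mathrm{End}(\mathcal{E})^{op}$, where the opposite ordering accounts for the reversal $f_k\cdots f_1$. Tensoring an $A_\infty$ category with an associative algebra preserves the $A_\infty$ relations, with Koszul signs dictated by the degrees of the arrows, which are in degree $0$ after the shift. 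This reduces both verifications to the rank-one computation of \cite{CHL21} applied formally in the tensor-extended category.
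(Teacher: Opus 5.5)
Your proposal is correct and follows essentially the same route as the paper. The paper gives no independent proof: it relies on the argument of \cite[Thm 4.7]{CHL21}, namely inserting $\mathbf{b}$ into the $A_\infty$ relations and getting centrality of $W$ from $m_1^{\mathbf{b},\mathbf{b}}(m_0^{\mathbf{b}})=0$, and it treats the higher-rank case formally through the matrix coefficients in \eqref{eq:mk}, just as you do.

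There is one small point to tighten. Killing all operations that contain $1_\bL$ requires a strictly unital model; with only a homotopy unit you must adjust as in \cite{FOOO09}. Also, for $k\geq 1$ the terms with an inner $m_0^{\mathbf{b}}=W\cdot 1_\bL$ vanish outright by this unitality rather than cancelling in pairs.
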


\begin{remark}
$m_0^{\mathbf{b}}=W\cdot 1_\bL$ has degree $2$.  Thus if furthermore $\bL$ is graded and we work in the $\Z$-graded case, $W$ automatically vanishes and we instead obtain a functor, 
$$ \cF^{(\bL,\mathbf{b})}: \Fuk(X) \to \mathrm{Mod} (\cA_{(\bL, \mathcal{E})}) $$
where the right-hand side is the (dg) category of dg modules over $\cA_{(\bL, \mathcal{E})}$.
\end{remark}

Intuitively, the noncommutative local mirror $\cA_\bL$ can be understood via Strominger-Yau-Zaslow Conjecture \cite{SYZ96}, which predicts that the mirror space is constructed as the moduli space of (special) Lagrangians (see also \cite{Aur07}). Roughly, a Lagrangian $\bL$ corresponds to a point of the mirror, while its deformation space $\cA_\bL$ gives rise to a neighborhood of that point. Thus, $\cA_\bL$ is also referred to as the localized mirror or noncommutative deformation space. Moreover, $\cA_{(\bL, \mathcal{E})}$ provides a natural framework for describing the moduli space of sheaves on the localized mirrors.

\subsection{Commutative Maurer-Cartan solutions and their moduli spaces}
Instead of solving the Maurer–Cartan equations formally, one may restrict to commutative solutions. Equivalently, this amounts to evaluating the arrows and solving the resulting matrix equations. We define the Maurer-Cartan space as follows:

\begin{defn}
	Let $\bL$ be a relatively-spin oriented graded Lagrangian immersion and $ \mathcal{E}$ be a trivial flat vector bundle of rank $\mathbf{v}$ over $\bL$.
	The set of Maurer-Cartan solutions $\mathrm{MC}(\mathbf{v})$ is defined to be
	$$\mathrm{MC}(\mathbf{v}) := \left\{b \in \CF^1((\bL, \mathcal{E}),(\bL, \mathcal{E})): \, m_0^b = 0\right\}$$
	where $\bL_i$ is the $i$-th component of the domain of $\bL$, and $v_i$ is the $i$-th component of $\mathbf{v}$, namely the rank of $ \mathcal{E}|_{\bL_i}$.
	
	We have the gauge group symmetry
	$\cG$ on $\mathrm{MC}(\bv)$, where $\cG = \prod_i GL(v_i)$ is a reductive group.
	The Maurer-Cartan space is defined to be the quotient stack 
	$$[\mathrm{MC}(\bv) / \cG]$$ 
	which parametrizes a family of Lagrangian branes $(E,b)$ supported on $\bL$.
	
	To be more geometric, we can fix a GIT stability condition $\zeta$ and define the GIT quotient 
	$$\mathcal{MC}_\zeta(\bv) := \mathrm{MC}(\bv) \sslash_\zeta \cG.$$ 
\end{defn}

If the trivial-flat assumption is dropped, one needs to take into account the holonomy of flat connections on $ \mathcal{E}$; we refer to \cite[Section 3.1]{LT26} for more details. 

By construction, the Maurer-Cartan equations are precisely the defining relations of the quiver algebra $\cA_{\bL}$ evaluated on matrices. Consequently, the Maurer-Cartan space admits the following representation-theoretic interpretation.

\begin{lemma}\cite[Prop. 3.10]{LT26}
	The Maurer-Cartan space $\mathcal{MC}_\zeta(\mathbf{v})$ is isomorphic to the moduli space of $\zeta$-semistable representations of the quiver algebra $\cA_\bL$ of dimension $\bv$.
\end{lemma}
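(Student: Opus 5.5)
The plan is to identify both sides with the same affine $\cG$-variety equipped with the same $\cG$-action and the same stability notion. After that, the GIT quotients agree by functoriality, and King's theorem identifies the GIT side with the moduli of $\zeta$-semistable representations.

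\textbf{Step 1.} Fix the rank vector $\bv$ and a flat trivialization of $\mathcal{E}$, so that $\mathcal{E}|_{\bL_i}\cong \C^{v_i}$. Then
\[
\CF^1((\bL,\mathcal{E}),(\bL,\mathcal{E}))=\bigoplus_{X_l}\Hom(\C^{v_{t(X_l)}},\C^{v_{h(X_l)}}),
\]
which is exactly the representation space $\mathrm{Rep}(Q,\bv)$ of the quiver $Q$ attached to $\CF^1$. Write $b=\sum_l b_l B_l$ with $b_l$ a matrix. By \eqref{eq:mk},
\[
m_0^b=\sum_k \sum_{l_1,\dots,l_k} b_{l_k}\cdots b_{l_1}\, m_k(B_{l_1},\dots,B_{l_k}).
\]
The coefficient of each degree-two generator is therefore the evaluation at $(b_l)$ of the corresponding relation in $R$. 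Hence $\mathrm{MC}(\bv)$ is the closed subscheme of $\mathrm{Rep}(Q,\bv)$ cut out by the relations, namely $\mathrm{Rep}(\cA_\bL,\bv)$.

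\textbf{Step 2.} The gauge group $\cG=\prod GL(v_i)$ acts on the Floer side by changing the trivialization. This action is conjugation $b_l\mapsto g_{h}b_lg_t^{-1}$, which is the standard base-change action on representations. The defining equations are $\cG$-equivariant, since the formula for $m_0^b$ is built from compositions of the matrices. So the identification in Step 1 is $\cG$-equivariant.

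\textbf{Step 3.} On both sides $\zeta$ is the same character $\chi_\zeta(g)=\prod\det(g_i)^{\zeta_i}$. Then
\[
\mathcal{MC}_\zeta(\bv)=\mathrm{Proj}\bigoplus_n \C[\mathrm{Rep}(\cA_\bL,\bv)]^{\cG,\chi_\zeta^n}.
\]
King's theorem, applied to the closed $\cG$-invariant subvariety $\mathrm{Rep}(\cA_\bL,\bv)\subset\mathrm{Rep}(Q,\bv)$, identifies this with the coarse moduli space of $\zeta$-semistable $\cA_\bL$-modules of dimension $\bv$. Equivalently, these are $\zeta$-semistable representations of $Q$ satisfying the relations, since subrepresentations of an $\cA_\bL$-module are again $\cA_\bL$-modules.

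\textbf{Main obstacle: convergence.} $\cA_\bL$ is defined over the completed path algebra, so $m_0^b$ is a priori an infinite sum. Its evaluation on matrices need not converge or be polynomial. I would handle this in one of two ways, and I would try the first before falling back on the second.

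First, if $\bL$ is exact or has finitely many contributing polygons, as in the plumbing examples, $m_0^b$ is a polynomial. In that case everything above is algebraic.

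Otherwise, restrict to nilpotent or convergent representations. Working with Novikov-convergent coefficients as in the footnote, the evaluated series make sense. One then checks that King's theorem applies to the resulting closed subvariety.
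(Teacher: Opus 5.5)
Your proposal is correct and follows the paper's (cited) argument: by construction the Maurer--Cartan equations are the relations of $\cA_\bL$ evaluated on matrices, so $\mathrm{MC}(\bv)=\mathrm{Rep}(\cA_\bL,\bv)$ $\cG$-equivariantly, the GIT quotients coincide, and King's theorem gives the semistable-representation interpretation. One correction to your convergence remark: exactness alone does not make $m_0^b$ polynomial. In the $n=2$ plumbing computation (proof of Theorem~\ref{thm: dpre}), constant polygons contribute the series $(x_{\bar a}x_a)^j$, and the relations become polynomial only after the formal coordinate change \eqref{eq:coord}, so the clean algebraic statement really requires the relations to be polynomial in the chosen coordinates.
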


\subsection{Extended noncommutative local mirror}
To encode the full deformation theory, one needs to incorporate Floer generators in positive degrees. The resulting enhanced deformation space is called the extended localized mirror, introduced in \cite[Chapter 9]{CHL21}. Here, we briefly recall its construction.

\begin{defn}[Extended Quiver]
	Let $\bL$ be a spin oriented graded Lagrangian immersion $\bL \subset M$. Take a Morse function on each $L_i$ and let $\{T_q\}$ be the set of critical points other than the maximum points.  The extended quiver $\tilde{Q}$ is defined to be a directed graph whose vertices are in one-to-one correspondence with the connected components ${L_i}$, and whose arrows, denoted by $\{x_e\}$ and $\{t_q\}$, are in one-to-one correspondence with the immersed sectors $\{X_e\}$ and critical points $\{T_q\}$.
	
	The head and tail of $x_e$ are defined as before from the immersed sector $X_e$. 
	For a critical point $T_q$ on $L_i$, the associated arrow $t_q$ is a loop at the vertex $i$, namely
	$h(t_q)=t(t_q)=i.$
\end{defn}

To incorporate these additional generators into the deformation theory, we treat the immersed sectors and non-maximal Morse critical points uniformly. 
Let $\{Y_\alpha\}$ denote the collection of all such generators, and let $\{y_\alpha\}$ be the corresponding arrows of the extended quiver $\tilde Q$.

Take the extended formal deformation parameter
$$
\tilde b=\sum_\alpha y_\alpha Y_\alpha.
$$ Analogous to Section \ref{subsec:reviewlocmir}, one extends the Fukaya algebra $\CF(\bL,\bL)$ over the (completed) path algebra $\C\tilde Q$ as in Equation~\eqref{eq:mk} by
	\begin{equation*} 
m_k(f_1 X_1,\ldots,f_k X_k)
=
(-1)^{\sum_{i<j} |f_j|\,(|X_i|-1)}
\, f_k \cdots f_1 \, m_k(X_1,\ldots,X_k),
	\end{equation*}
using the Koszul sign convention. Here, $| \cdot |$ denotes the degree in $\mathbb{Z}$-grading, where for dual variables, we set $|y_\alpha|:= 1- |Y_\alpha|$ which determines $|f_j|$ (see Definition \ref{defn:extlocmir} (1) below).  
Then the Maurer--Cartan equation of $(\bL,\tilde b)$ takes the form
$$
m_0^{\tilde b}=\sum_\alpha P_\alpha Y_\alpha.
$$

\begin{defn}\label{defn:extlocmir}
	The extended localized mirror associated to an immersed Lagrangian $\bL$ is a dg algebra $\tilde{\cA}_{\bL}:=(\C \tilde{Q},d)$ where
	\begin{enumerate}
		\item For each arrow $y$ corresponding to an intersection point $Y$, we set
		\[
		\deg y = 1-\deg Y.
		\]
		
		\item The differential $d$ is the degree-one derivation on $\C\tilde{Q}$
		determined by
		\[
		d(a_\alpha y_\alpha)=a_\alpha P_\alpha, \qquad \forall a_\alpha \in \C
		\]
		and extended to all of $\C\tilde{Q}$ by the graded Leibniz rule.
	\end{enumerate}
\end{defn}

As before, we have the following extended localized mirror functor:
\begin{thm}\cite[Thm 9.16]{CHL21}
	Consider the extended deformation parameter $\tilde{b}$. There exists a well-defined $A_\infty$-functor $$\cF^{(\bL,\tilde{b})}: \Fuk(X) \to   \mathrm{Mod}_\dg(\tilde{\cA}_{\bL}) \qquad L \mapsto \left(\CF((\bL,\tilde{b}), L),m_1^{\tilde{b},0} \right),$$ 
	where $\mathrm{Mod}_\dg(\tilde{\cA}_{\bL})$ is the dg category of dg $\tilde{\cA}_{\bL}$-modules.
\end{thm}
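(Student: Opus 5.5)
The plan is to treat $\tilde b=\sum_\alpha y_\alpha Y_\alpha$ as a Maurer--Cartan element of a \emph{curved dg-twisted} $A_\infty$-algebra. Then the statement reduces to the standard fact that twisting by a Maurer--Cartan element produces an $A_\infty$-functor into modules. Throughout I work in $\C\tilde Q\otimes_{\C^{\oplus}}\CF(\bL,\bL)$, extended as in the displayed sign rule, and the analogous tensor products with $\CF(\bL,L)$.

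\textbf{Step 1: $\tilde{\cA}_\bL$ is a dg algebra.} The structure maps are:
\begin{itemize}
\item $d$ acting on coefficients (extended by Leibniz, so that $d\tilde b=\sum_\alpha P_\alpha Y_\alpha$);
\item $M_1 := d\otimes 1 + m_1$;
\item $M_k := m_k$ for $k\ge 2$.
\end{itemize}
Since $d$ is a derivation of the coefficient algebra and $m_k$ is coefficient-multilinear up to the Koszul sign, the $M_k$ satisfy the $A_\infty$ relations exactly when $d^2=0$. So I first prove $d^2=0$ directly, before building anything on top of it.

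By definition, $m_0^{\tilde b}=d\tilde b$ holds, possibly up to a multiple $W\cdot 1_\bL$. In the $\Z$-graded setting $W$ has the wrong degree and vanishes; otherwise I use that the unit (the maximum point) is excluded from $\{Y_\alpha\}$ and is strict.

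Now apply the $A_\infty$ relations of $\CF(\bL,\bL)$ to the inputs $(\tilde b,\dots,\tilde b)$. This gives $m_1^{\tilde b}(m_0^{\tilde b})=0$, in the expanded form
\[
\sum m\bigl(\tilde b,\dots,\tilde b,\,m(\tilde b,\dots,\tilde b),\,\tilde b,\dots,\tilde b\bigr)=0 .
\]
On the other hand, applying $d$ to the identity $d\tilde b=m_0^{\tilde b}$ and using the Leibniz rule across all the coefficients of $\tilde b$ gives
\[
d^2\tilde b=\sum \pm\, m\bigl(\tilde b,\dots,d\tilde b,\dots,\tilde b\bigr)=\sum \pm\, m\bigl(\tilde b,\dots,m_0^{\tilde b},\dots,\tilde b\bigr)=0 .
\]
Comparing coefficients of $Y_\alpha$ yields $d^2y_\alpha=0$. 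Since $d^2=\tfrac12[d,d]$ is itself a derivation, this gives $d^2=0$ on all of $\C\tilde Q$.

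\textbf{Step 2: the functor.} For each object $L$, set $\cF(L)=\C\tilde Q\otimes_{\C^{\oplus}}\CF(\bL,L)$ with differential $\delta=d\otimes1+m_1^{\tilde b,0}$. The $A_\infty$ relation for the inputs $(\tilde b^{\otimes *},x)$ gives
\[
(m_1^{\tilde b,0})^2(x)=\pm\,m_2^{\tilde b,0}(m_0^{\tilde b},x),
\]
which equals $\pm\,m^{\tilde b,0}(d\tilde b,x)$. This cancels exactly against the cross terms $d\circ m_1^{\tilde b,0}+m_1^{\tilde b,0}\circ(d\otimes1)$. Together with $d^2=0$ this gives $\delta^2=0$. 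The right module structure comes from left multiplication on coefficients together with the reversed-order convention.

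The higher components are
\[
\cF_k(a_1,\dots,a_k)(x)=m_{k+1}^{\tilde b,0,\dots,0}(x,a_1,\dots,a_k).
\]
The $A_\infty$-functor equations then follow from the $A_\infty$ relations of $\Fuk(X)$ with $\tilde b$ inserted in all slots before $x$. Terms with an inner $m_0^{\tilde b}$ are matched with the $d$-terms exactly as in Step 1.

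\textbf{Main obstacle.} The hard part is bookkeeping rather than ideas.
\begin{itemize}
\item \emph{Signs.} The Koszul signs from moving the $f_j$ past the $X_i$ must be shown compatible with the shifted degrees $|y_\alpha|=1-|Y_\alpha|$. I would check this once, via the bar-construction reformulation: $\tilde{\cA}_\bL$ is the completed cobar of $\CF(\bL,\bL)$ with the unit removed, and the functor is the twisting cochain $\tilde b$.
\item \emph{Unit.} I would need to use a model with a strict unit, or else show that homotopy-unit contributions are absorbed into the reduced cobar construction.
\item \emph{Convergence.} Infinite sums of the form $\sum m_k(\tilde b,\dots)$ converge in the path-length completion of $\C\tilde Q$, because $m_k(Y_{\alpha_1},\dots,Y_{\alpha_k})$ is multiplied by a path of length $k$. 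Finiteness for each $k$ comes from Gromov compactness or from the energy filtration over $\Lambda$.
\end{itemize}
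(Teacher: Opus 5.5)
Your proposal is correct. The paper quotes this theorem from \cite{CHL21} without proof, and your argument is the natural chain-level verification. First, $d^2=0$ follows from $m_1^{\tilde b}(m_0^{\tilde b})=0$ together with the Leibniz rule. Next, $\delta^2=0$ follows from the $A_\infty$ relation with one module input. Finally, the functor equations follow from the relations with $\tilde b$ inserted before $x$. Your $\delta=d\otimes 1+m_1^{\tilde b,0}$ is exactly the differential the paper writes in \eqref{eqn:funcdouble}, so the $m_1^{\tilde b,0}$ in the statement should be read as including the coefficient differential.

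What the paper adds is a different route, in Section~\ref{sec:locfunc} (Proposition~\ref{prop:kos}). It identifies $\cF^{(\bL,\tilde b)}$ with $\mathcal{K}\circ\mathcal{Y}_\bL$. Here $\mathcal{Y}_\bL(L)=\CF(L,\bL)[n]$ is the Yoneda module over $V=\CF(\bL,\bL)$. The functor $\mathcal{K}(M)=\Hom_{\mathrm{Mod}_{A_\infty}(V)}(M,\Bbbk)\cong\Hom_\Bbbk(M\otimes_\Bbbk\bar{B}V,\Bbbk)$ is Koszul duality into modules over $V^!=(\bar{B}V)^\sharp\cong\tilde\cA_\bL$.

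In that formulation nothing is checked by hand:
\begin{itemize}
\item $d^2=0$ is dual to the bar differential squaring to zero;
\item $\delta^2=0$ is the $A_\infty$-module relation for $M$;
\item the functor equations hold because $\mathcal{Y}_\bL$ is an $A_\infty$-functor and $\mathcal{K}$ is a dg functor;
\item all Koszul signs are absorbed into the bar construction, which is precisely the device you propose for your sign check.
\end{itemize}
The cost is that recovering the concrete complex $\CF((\bL,\tilde b),L)$ requires finite-dimensionality of $\CF(L,\bL)$, the Poincar\'e pairing $\CF(L,\bL)^\sharp\cong\CF(\bL,L)$, and the coordinate computation leading to \eqref{eqn:funcdouble}. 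Your direct route needs neither duality nor finiteness and gives explicit formulas for the higher components $\cF_k$. The paper's route gives, essentially for free, the structural facts used afterwards (Lemma~\ref{lem:hitbbk} and Theorem~\ref{thm:extfuctorequiv}).

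One correction to Step~1. Since $W$ has degree $2$, the ``wrong degree'' argument needs every arrow of $\tilde Q$ to have nonpositive degree. That means no immersed generator of degree $\le 0$, which is the paper's standing assumption; a bare $\Z$-grading does not suffice.

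Your fallback for $W\neq 0$ does not work. With a strict unit, the relation $m_1^{\tilde b}(m_0^{\tilde b})=0$ yields $dW=0$ but only $d^2y_\alpha=\pm[W,y_\alpha]$. Since $W$ need not be central in the free algebra $\C\tilde Q$, you get a curved dga, not a dga. Because the target is $\mathrm{Mod}_{\dg}$, simply work in the case $W=0$. There your argument uses no unit property at all, so the strict-unit concern in your last paragraph can be dropped.
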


\section{The extended localized mirror via Koszul duality}\label{sec:extmirkos}

Suppose $\bL$ is graded. The construction in Section \ref{sec:prelim} produces a dga  $\tilde{\mathcal{A}}_\bL$ as the Maurer-Cartan deformation space of $\bL$. Indeed, this local mirror $\tilde{\mathcal{A}}_\bL$ can be understood as a geometric manifestation of Koszul duality for abstract $A_\infty$-algebras. Adopting this perspective, we establish in this section several universal results concerning localized mirrors. A key reason Koszul duality behaves nicely in our geometric setting is that the Floer complex---after passing to its minimal model---satisfies strong algebraic conditions (see Assumption~\ref{assume:onv}). 

We shall see that this structure enables us to construct explicitly a free resolution of the localized mirror. As applications, we obtain closed-string invariants of the local mirror directly from the $A_\infty$-formalism of Floer theory of $\bL$ by suitably incorporating its boundary deformations.

\subsection{Koszul dual of an $A_\infty$-algebra}
Let us first introduce a general algebraic framework. Let $V$ be an  $A_\infty$-algebra over a semisimple ring $\Bbbk$. Throughout, we will work with the following assumption.

\begin{assumption}\label{assume:onv}
We assume that $V$ satisfies the following properties: $V$ is
\begin{itemize}
\item[-] a finite rank bimodule over a (semisimple) coefficient ring $\Bbbk$,
\item[-] $\mathbb{Z}$-graded and supported at nonnegative degrees,
\item[-] unital and $V_0 \cong \Bbbk \langle 1_V \rangle$.
\end{itemize}
\end{assumption}

In particular, we have a canonical augmentation $\varepsilon: V \to \Bbbk$, which makes $\Bbbk$ into a module over $V$.  We write $V_{>0}:=\ker \varepsilon$. The algebra $V$ serves as an algebraic model for $\CF(\bL,\bL)$ for a compact spin, graded, unobstructed Lagrangian $\bL$, possibly with multiple irreducible components, and has no nonpositive-degree immersed generators. More precisely, $V$ may be taken to be a minimal model for $\CF(\bL,\bL)$. 
 If $\bL$ consists of more than one component, we take $\Bbbk$ to be the semisimple ring $\Bbbk= \oplus_i \mathbb{C}$ with one copy of $\mathbb{C}$ for each component of $\bL$. 

Let $\bar{B}V := \oplus_{k\geq 0} \left(sV_{>0}\right)^{\otimes_\Bbbk k}$ denote the (reduced) bar construction of $V$ with respect to the augmentation $\varepsilon$, where $(sV_{>0})^\bullet := (V_{>0})^{\bullet+1}$. $\bar{B}V$ is the dg coalgebra that encodes the $A_\infty$-algebra structure of $V$ with the differential 
$$ \vec{Y}   \mapsto  \sum (-1)^{|\vec{Y}_1|'}\left( \vec{Y}_1 \otimes m(\vec{Y}_2) \otimes \vec{Y}_3 \right),$$
and the comultiplication 
$$\Delta : \bar{B} V \to \bar{B} V \otimes \bar{B} V \qquad \vec{Y} \to \sum  \vec{Y}_1 \otimes \vec{Y}_2.$$ 
Here, $|\vec{Y}|'$ denotes the (sum of) shifted degrees, and hence if $\vec{Y} = Y_1 \otimes \cdots \otimes Y_l$, then
$$|\vec{Y}|' =\sum_{i=1}^l (\deg Y_i -1).$$
Throughout Sections \ref{sec:extmirkos} and \ref{sec:locfunc}, tensor products of homogeneous maps use the Koszul convention
\[
(F\otimes G)(u\otimes v)=(-1)^{|G||u|}F(u)\otimes G(v),
\]
and the bar signs are computed using the shifted degree $|\cdot|'$. Thus, for example, the operator $1\otimes d$ on a tensor product already includes the sign obtained by moving the degree-one map $d$ past the preceding tensor factors.

The Koszul dual $V^!$ of  $V$ is defined to be the dga
\begin{equation}\label{eqn:defkosv}
V^!:=\mathrm{RHom}_V (\Bbbk,\Bbbk) \cong 
(\bar{B} V)^\sharp,
\end{equation}
where $\mathrm{RHom}_V$ is taken in the dg category  $\mathrm{Mod}_{A_\infty} (V)$ of $A_\infty$-modules over $V$ with pre-$A_\infty$ homomorphisms, and $(-)^\sharp$ is taking the linear graded dual. i.e.,
\begin{equation*}
 \left( \bar{B}V \right)^\sharp= \oplus_{d}  \hom_{\Bbbk} ((\bar{B}V)_d,\Bbbk).
\end{equation*} 
Here, $(\bar{B}V)_d$ is the degree $d$ homogeneous component of $ \bar{B}V$. The second equality in \eqref{eqn:defkosv} may be interpreted as replacing $\Bbbk$ in the first slot with its free resolution as a right $V$-module
\begin{equation}\label{eqn:barresol}
 \bar{B}V \otimes_\Bbbk V \to \Bbbk 
\end{equation}
which extends $\varepsilon : V \to \Bbbk$. The differential on $\bar{B} V \otimes_\Bbbk V$ is given by
$$ \vec{Y} \otimes X \mapsto  \sum  (-1)^{|\vec{Y}_1|'} \left( \vec{Y}_1 \otimes m(\vec{Y}_2) \otimes \vec{Y}_3 \right)  \otimes X + \sum  (-1)^{|\vec{Y}_1|'} \vec{Y}_1 \otimes  m(\vec{Y}_2,  X).$$
The fact that \eqref{eqn:barresol} is a quasi-isomorphism can be proved by constructing a contracting homotopy, namely by inserting the unit class into the (V)-factor. We prove this for the analogously defined bimodule resolution in Lemma \ref{lem:conthtpy}, using essentially the same homotopy.

Observe that $V^!$ has a canonical augmentation induced from the coaugmentation
$ \Bbbk \hookrightarrow \bar{B}V$.

\begin{remark}
When $V= \CF(\bL,\bL)$, then $(\bar{B} V)^\sharp$ agrees with the extended localized mirror $\tilde{\mathcal{A}}_\mathbb{L}$ from $\mathbb{L}$. (See \cite{Ho}.) $\mathrm{Mod}_{A_\infty} (V)$ is equivalent to the subcategory of $\Fuk(X)$ generated by $\mathbb{L}$ for $V= \CF(\bL,\bL)$.
\end{remark}

On the other hand, taking the graded dual to the homotopy equivalence $V \otimes_\Bbbk \bar{B}V \cong \Bbbk$ (left module version of \eqref{eqn:barresol}) gives
$$ \Hom_\Bbbk (V \otimes_\Bbbk \bar{B}V , \Bbbk) 
= (\bar{B}V)^\sharp \otimes V^\sharp = V^! \otimes V^\sharp$$
since $V$ is finite-dimensional (we would need a completed tensor product in general).  Here, $\Bbbk \to V \otimes_\Bbbk \bar{B}V$ is the inclusion of $\Bbbk$ into the unit component of $V$ (tensored with the length $0$ part of $\bar{B} V$), and it dualizes to the canonical augmentation $V^! (\cong V^! \otimes V_0^\sharp) \to \Bbbk$. 
This gives us a (finite-length) free resolution $V^! \otimes V^\sharp$ of $\Bbbk$, now as a module over the dga $V^!$ (via the above-mentioned canonical augmentation). We record this observation as a separate statement for later use.

\begin{lemma}\label{lem:leftrightresol}
For $V$ satisfying Assumption \ref{assume:onv}, $V^! \otimes V^\sharp$ is quasi-isomorphic to $\Bbbk$ as a left $V^!$ module.
\end{lemma}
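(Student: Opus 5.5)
The plan is to dualize the left-module bar resolution $V \otimes_\Bbbk \bar{B}V \to \Bbbk$ and check that the result is a left $V^!$-module map, not just a quasi-isomorphism of complexes. I will do this in three steps.

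\textbf{Step 1: the left bar resolution is a homotopy equivalence.} First I record the left version of \eqref{eqn:barresol}. The complex $V \otimes_\Bbbk \bar{B}V$ carries the differential
\[
X \otimes \vec{Y} \mapsto \sum \pm\, m(X, \vec{Y}_1) \otimes \vec{Y}_2 + \sum \pm\, X \otimes \vec{Y}_1 \otimes m(\vec{Y}_2) \otimes \vec{Y}_3 .
\]
The map to be inverted is the inclusion $\iota : \Bbbk \to V\otimes_\Bbbk \bar{B}V$ of the unit tensored with the length-zero part of $\bar{B}V$; its left inverse is the augmentation $\varepsilon$ on the length-zero part. I will build the contracting homotopy $h$ by moving the first tensor factor into the bar part and inserting the unit:
\[
h(X \otimes \vec{Y}) = 1_V \otimes sX_{>0} \otimes \vec{Y},
\]
where $X_{>0}$ is the projection of $X$ to $V_{>0}$. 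Strict unitality of $V$ (I may assume a strictly unital minimal model) gives $m_2(1_V, X) = \pm X$ and kills all higher $m_k$ with a unit input. Using this, a direct check gives $dh + hd = \mathrm{id} - \iota\varepsilon$. This is the same computation as the one the paper defers to Lemma \ref{lem:conthtpy}.

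\textbf{Step 2: dualize.} Next I apply $\Hom_\Bbbk(-,\Bbbk)$. By Assumption \ref{assume:onv}, $V$ has finite rank and is concentrated in nonnegative degrees. So in each degree the graded dual of $V \otimes_\Bbbk \bar{B}V$ is $V^\sharp \otimes (\bar{B}V)^\sharp$; each degree of the tensor product involves only finitely many summands of $V$, so no completion is needed. Writing this as $V^! \otimes V^\sharp$ with the factor order swapped, the dual of a chain homotopy equivalence is again a chain homotopy equivalence. The dual of $\iota$ is exactly the augmentation $V^! \cong V^!\otimes V_0^\sharp \to \Bbbk$.

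\textbf{Step 3: the left $V^!$-action.} This is the main obstacle. The left action of $V^!=(\bar{B}V)^\sharp$ on $V^!\otimes V^\sharp$ is defined dually to a coaction of $\bar{B}V$ on $V\otimes\bar{B}V$, namely deconcatenation from the right end of the bar word. Concretely, $\varphi\cdot(\psi\otimes\xi)$ is the convolution of $\varphi$ with $\psi$ along $\Delta$, tensored with $\xi$. I then need to verify three things:
\begin{enumerate}
\item The dualized differential is a derivation for this action. Its bar part is dual to the coderivation $d_{\bar B}$, so it satisfies Leibniz with respect to the dual product of $V^!$. The extra terms involving $m(X,\vec{Y}_1)$ act only by cutting a prefix of the bar word and feeding it into the $V$-factor. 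These commute with right-end deconcatenation because the bar word is cut at the opposite end. Sign bookkeeping with $|\cdot|'$ is the tedious part.
\item The augmentation $V^!\otimes V^\sharp\to\Bbbk$ is $V^!$-linear when $\Bbbk$ carries the module structure from the canonical augmentation of $V^!$. This holds because the map factors through length-zero bar words and $V_0^\sharp$.
\item The homotopy $h^\sharp$ is not needed to be linear. A $V^!$-linear map that is a quasi-isomorphism of underlying complexes is a quasi-isomorphism of modules.
\end{enumerate}
These three facts together show that $V^!\otimes V^\sharp \to \Bbbk$ is a quasi-isomorphism of left $V^!$-modules, which is the statement of the lemma.
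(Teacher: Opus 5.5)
Your proposal is correct and takes the paper's approach: dualize the unit-insertion homotopy equivalence $V\otimes_\Bbbk \bar{B}V \simeq \Bbbk$, use the finite rank of $V$ so that the graded dual splits as $V^!\otimes V^\sharp$ without completion, and observe that the inclusion $\Bbbk \to V\otimes_\Bbbk \bar{B}V$ dualizes to the augmentation of $V^!$. Your Step 3 additionally checks the $V^!$-linearity via the right $\bar{B}V$-coaction, which the paper leaves implicit.
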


One can prove the following involutivity of $(-)^!$ for $V$ from Lemma \ref{lem:leftrightresol}.

\begin{lemma}\label{lem:vdoubledual}
Let $V$ be a finite-dimensional unital $A_\infty$-algebra satisfying Assumption \ref{assume:onv}. Then $(V^!)^! \cong V$.
\end{lemma}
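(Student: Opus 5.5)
Since $V^!=\mathrm{RHom}_V(\Bbbk,\Bbbk)$ is a dga augmented over $\Bbbk$, the double dual is $(V^!)^! = \mathrm{RHom}_{V^!}(\Bbbk,\Bbbk)$, computed in the category of (left) dg $V^!$-modules. The plan is to compute this $\mathrm{RHom}$ with the explicit finite-length free resolution of Lemma~\ref{lem:leftrightresol}, rather than with a bar resolution of $V^!$. The latter would require completions and convergence arguments, because $V^!$ is infinite-dimensional. Concretely, $P:=V^!\otimes V^\sharp \xrightarrow{\sim} \Bbbk$ is a quasi-isomorphism of left $V^!$-modules. As a graded module, $P$ is free of finite rank over $V^!$, since $V$ is of finite rank over $\Bbbk$.

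\textbf{Step 1: $P$ is semifree.} I would check that $P$ computes the derived functor. Filter $V^\sharp$ by degree: because $V$ is supported in degrees $0,\dots,N$ and the differential on $P$ only increases tensor length in the $V^\sharp$-factor in a controlled way, this filtration gives a finite semifree filtration. Then
\[
\mathrm{RHom}_{V^!}(\Bbbk,\Bbbk)\simeq \Hom_{V^!}(P,\Bbbk)\cong \Hom_\Bbbk(V^\sharp,\Bbbk)\cong (V^\sharp)^\sharp \cong V,
\]
as graded $\Bbbk$-bimodules, using finite dimensionality for the last isomorphism.

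\textbf{Step 2: differentials.} The differential on $\Hom_{V^!}(P,\Bbbk)$ is induced by the part of $d_P$ that survives after applying the augmentation $V^!\to\Bbbk$. On $V^!\otimes V^\sharp$, $d_P$ is dual to the differential on $V\otimes \bar BV$; its component landing in $\Bbbk\otimes V^\sharp$ is dual to $m_1$ on $V$. Since $V$ is minimal (or in general), the induced differential is $m_1$, so the cohomology is $H(V)$.

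\textbf{Step 3: $A_\infty$-structure (main obstacle).} The multiplication, and the higher $A_\infty$-structure, on $\mathrm{RHom}_{V^!}(\Bbbk,\Bbbk)$ must be matched with the operations $m_k$ of $V$. Via the identification $\mathrm{End}_{V^!}(P)$, the composition on $\mathrm{End}(P)$ transports along the homotopy equivalence $\mathrm{End}_{V^!}(P)\simeq \Hom_{V^!}(P,\Bbbk)$ via homological perturbation, producing an $A_\infty$-structure on $V$. I would show it equals the original one. There is a natural map $V\to \mathrm{End}_{V^!}(P)$, given by right multiplication on the $V$-factor of $V\otimes\bar BV$ dualized, which is an $A_\infty$-morphism; its composite with the projection is the identity on $V$. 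Hence it is a quasi-isomorphism of $A_\infty$-algebras.

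The technical difficulty lies in making the right $V$-action on $V\otimes\bar BV$ strictly compatible with the left $A_\infty$ structure: it is only an $A_\infty$-bimodule action. This requires care with signs and with the dualization of higher operations; finiteness of $V$ ensures no completions are needed.
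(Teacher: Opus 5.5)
Your Steps 1--2 are exactly the paper's proof. The paper plugs the finite free resolution $V^!\otimes_\Bbbk V^\sharp\simeq\Bbbk$ of Lemma~\ref{lem:leftrightresol} into $\mathrm{RHom}_{V^!}(\Bbbk,\Bbbk)$, obtains $\Hom_{V^!}(V^!\otimes_\Bbbk V^\sharp,\Bbbk)\cong\Hom_\Bbbk(V^\sharp,\Bbbk)\cong V$, and stops there without treating semifreeness or the multiplicative structure; your degree filtration does give semifreeness, because the part of the differential that changes the $V^\sharp$-generator strictly lowers its $V$-degree.

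Your Step 3 goes beyond the paper and is sound in its second form: dualizing the left $A_\infty$-action $\mu_{k+1}(a_1,\dots,a_k,X\otimes\vec Y)=\sum m(a_1,\dots,a_k,X,\vec Y_1)\otimes\vec Y_2$ (not a plain right multiplication) gives an $A_\infty$-morphism $V\to\mathrm{End}_{V^!}(P)$, up to an $\mathrm{op}$, whose composite with the quasi-isomorphism induced by $P\to\Bbbk$ is the identity. The compatibility you worry about is in fact strict, since this action only involves the end of the bar word adjacent to $X$, while the coaction defining the $V^!$-module structure splits off the opposite end.
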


\begin{proof}
By definition, $(V^!)^! $ is given by
\begin{equation*}
\begin{array}{rcl}
(V^!)^!  &=& \mathrm{RHom}_{V^!} (\Bbbk, \Bbbk )  \\
 & =& \mathrm{RHom}_{V^!} (V^! \otimes_\Bbbk  V^\sharp, \Bbbk )  \\
 &\cong&  \Hom_{\Bbbk} ( V^\sharp, \Bbbk ) \cong V.
 \end{array}
 \end{equation*}
We used $V^! \otimes_\Bbbk V^\sharp \cong \Bbbk$ (Lemma \ref{lem:leftrightresol}) in the middle, which is applicable since $\mathrm{RHom}$ (the derived hom) only depends on the quasi-isomorphism type of objects. 
\end{proof}

\subsubsection{Some variants of $V^!$}\label{subsub:compl}
While the original definition $V^! = (\bar{B}V)^\sharp$ uses the graded dual, one can also consider alternatives such as the compactly supported dual which results in
$$V^!_c := \{ \sigma : \bar{B} V \to \Bbbk \in V^!_f : \mathrm{supp} (\sigma) \,\,\textnormal{has a finite rank over} \,\, \Bbbk\}.$$
Since $V$ is of finite rank over $\Bbbk$, $V^!_c$ can be also identified as
$$V^!_c:= \bigoplus_{k \geq 0}  \left((s V_{>0})^{\otimes k} \right)^\sharp.$$
Note however that $V^!_c$ is well-defined as a dga only when the $A_\infty$-operations on $V$ have a certain finiteness property as in Definition \ref{def:finmkv} (which is not functorial).
Clearly, we have inclusions
$$ V^!_c \subset V^! $$
which arises from a completion. Let $I \subset V^!_c$ denote the 2-sided ideal generated by degree $0$ elements in $V^!_c$. Then it is not difficult to see that
$$ V^! = \varprojlim_{n} V^!_c /I^n.$$

\subsubsection{Coordinate-description of the Koszul dual}\label{subsec:coordkos}
We give a more explicit description of \eqref{eqn:grdualres} in terms of (noncommutative) coordinates. 
Let us choose a basis $\{X_1,\cdots,X_N\}$ of $V_{>0}$ so that $V \cong \Bbbk \oplus \Bbbk \langle X_1,\cdots,X_N \rangle$. Then, by definition, $V^!$ is generated by noncommutative homogeneous power series in the dual basis $\{x_1,\cdots,x_N\}$.\footnote{Similarly, $V^!_c$ consists of polynomials in these variables whereas $V^!_f$ allows a series which is supported over infinitely many different degrees.} More specifically, its element satisfies $x_i (X_j) = \delta_{ij} \pi_{a_j}$ where $X_j \in V \cdot \pi_{a_j}$, which is natural since we are dealing with right module maps.
The $\Bbbk$-bimodule structure on $V^\sharp$ is given by
$$ (a\cdot f \cdot b) (X) =a \cdot f( b \cdot X)$$
for $f: V \to \Bbbk$. As an element of $V^!$, the degree of $x_i$ is $\deg_{V^!} (x_i)= 1 - \deg_V (X_i)$.

When $V$ is equipped with a Poincar\'e pairing, we further require that the basis $\{1_V, X_1,\cdots,X_N\}$ makes the pairing into the \emph{standard} form. In other words, one can rearrange elements in the basis to make it into the form
$$\{1_V, X_1,\cdots,X_N\}=\{1_V, X_1,\cdots,X_{N'}, \bar{X}_1,\cdots, \bar{X}_{N'}, [\mathrm{pt}_V]\}$$ 
such that $\langle X_i, \bar{X}_j \rangle = \delta_{ij}$, and $ [\mathrm{pt}_V]$ is a generator of the top degree component of $V$ satisfying $\langle 1_V, [\mathrm{pt}_V] \rangle =1$. In particular, one has an isomorphism
$$ PD: V \stackrel{\cong}{\longrightarrow} V^\sharp \qquad X_i \mapsto \langle X_i, - \rangle $$
and analogously defined for $1_V$ and $[\mathrm{pt}_V]$. In particular, we have $x_i = PD (\bar{X}_i)$. Note however that this is not a graded morphism, for e.g., 
\begin{equation}\label{eqn:graddual}
\deg_{V^\sharp} (x_i) := - \deg_V (X_i)
\end{equation}
whereas $\deg_V (\bar{X}_i) = n- \deg_V (X_i)$. We set $1_V^\sharp:=PD([\mathrm{pt}_V])$ so that $\{1_V^\sharp, x_1,\cdots, x_N\}$ gives the basis of $V^\sharp$ dual to $\{1_V, X_1,\cdots,X_N\}$.

\begin{remark}
If the $A_\infty$-structure on $V$ has a cyclic symmetry with respect to the pairing $\langle -,- \rangle$, we have
$$1 =  \langle m_2 (1_V,X_i), \bar{X}_i \rangle = \langle m_2 (X_i, \bar{X}_i ), 1_V \rangle,$$
and hence $m_2 (X_i,\bar{X}_i) = [\mathrm{pt}_V]$. Similarly, $m_2 (\bar{X}_i,X_i) = (-1)^\ast [\mathrm{pt}_V]$.
\end{remark}

On the chain-level, $V^!$ is nothing but the tensor algebra generated by $x_1,\cdots, x_N$. The differential $d$ can be explicitly written by
$$ \sum_k m_k (\tilde{b} ,\cdots, \tilde{b})= \sum_{k} \sum_{i_1,\cdots,i_k} m_k (x_{i_1} X_{i_1}, \cdots , x_{i_k} X_{i_k}) = \sum_{i=1}^N (d x_i ) X_i$$
where $\tilde{b} = \sum x_i X_i$ and the $m_k$-operations can be expanded as in Section \ref{sec:prelim}. Note that under our assumption $V^!$ is nonpositively graded, and its degree zero component is generated by dual elements to degree $1$ generators of $V$. It is not difficult to see that the zero cohomology $H^0 (V^!)$ agrees with the Maurer-Cartan algebra $A_{(\bL,E)}$ when $V = \CF(\bL,\bL)$ and $E$ is the trivial line bundle.

\subsection{The `Koszul' resolution of $V^!$ as a bimodule over itself}

We next look for a finite-length free resolution of $V^!$ as a bimodule over itself which will be particularly useful when studying the ``local version" of closed-string mirror symmetry.
Observe first that the (unital) bimodule structure on the diagonal bimodule $V$ gives rise to a dg bicomodule $\bar{B}V \otimes_\Bbbk V \otimes_\Bbbk \bar{B}V$ over the coalgebra $\bar{B} V$ equipped with the differential
$$ \delta:  \bar{B}V \otimes_\Bbbk V \otimes_\Bbbk \bar{B}V \to  \bar{B}V \otimes_\Bbbk V \otimes_\Bbbk \bar{B}V$$
defined as
\begin{equation}\label{eqn:formuladelta}
\begin{array}{rcl}
\delta(\vec{Y} \otimes X \otimes \vec{Z}) &=& \sum (-1)^{|\vec{Y}_1|'}\left( \vec{Y}_1 \otimes m(\vec{Y}_2) \otimes \vec{Y}_3 \right)  \otimes X \otimes \vec{Z} \\
&&+ \sum  (-1)^{|\vec{Y}_1|'} \vec{Y}_1 \otimes  m(\vec{Y}_2,  X, \vec{Z}_1) \otimes \vec{Z}_2 \\
&& + \sum  (-1)^{|\vec{Y}|' +|X|'+|\vec{Z}_1|'} \vec{Y} \otimes  X \otimes \left( \vec{Z}_1  \otimes m(\vec{Z}_2)  \otimes \vec{Z}_3 \right). 
\end{array}
\end{equation}
Due to shift of grading in $\bar{B}V$, one can easily see that $\delta$ is of degree $1$.
Notice that the $A_\infty$-relation (for bimodules) is equivalent to $\delta^2=0$. 
Since we allow length 0 tensors, $\bar{B}V \otimes_\Bbbk V \otimes_\Bbbk \bar{B}V $ contains $V$, $\bar{B} V \otimes_\Bbbk V$ and $V \otimes_\Bbbk \bar{B} V$.

Let $\left(\bar{B}V \otimes_\Bbbk V \otimes_\Bbbk \bar{B}V\right)^\sharp$ denote its graded dual,
\begin{equation}\label{eqn:grdualres}
 \left( \bar{B}V \otimes_\Bbbk V \otimes_\Bbbk \bar{B}V\right)^\sharp:= \oplus_{d}  \hom_{\Bbbk} (\mathcal{V}_d,\Bbbk)
\end{equation}
where $\mathcal{V}_d$ is the degree $d$ homogeneous component of $ \bar{B}V \otimes_\Bbbk V \otimes_\Bbbk \bar{B}V$ given as
$$ \mathcal{V}_d = \oplus_{d_1+d_2+d_3=d} (\bar{B}V)_{d_1} \otimes_\Bbbk V_{d_2} \otimes_\Bbbk (\bar{B}V)_{d_3}.$$ 
$\left(\bar{B}V \otimes_\Bbbk V \otimes_\Bbbk \bar{B}V\right)^\sharp$ can be viewed as the dg bimodule over the dg algebra $V^! =  (\bar{B} V)^\sharp$, Koszul dual to the diagonal bimodule over $V$. 


Now we describe the dual of the tensors in terms of the basis chosen above. 
By definition, the degree $-d$ component of \eqref{eqn:grdualres} takes the form of
\begin{equation}\label{eqn:dualprod}
\hom_\Bbbk( \mathcal{V}_d, \Bbbk) =  \prod_{d_1+d_2+d_3=d} \hom_\Bbbk \left((\bar{B}V)_{d_1} \otimes_\Bbbk V_{d_2} \otimes_\Bbbk (\bar{B}V)_{d_3},\Bbbk \right).
\end{equation}
Therefore we have
\begin{equation}\label{kdudiag}
\left( \bar{B}V \otimes_\Bbbk V \otimes_\Bbbk \bar{B}V\right)^\sharp \cong \widehat{V^! \otimes_\Bbbk V^\sharp \otimes_\Bbbk V^!},
\end{equation}
where the right-hand side denotes the completion of $V^! \otimes_\Bbbk V^\sharp \otimes_\Bbbk V^!$ by the ideal $I \otimes_\Bbbk V^\sharp \otimes_\Bbbk I$. Alternatively, it can be obtained as the completion of $V^!_f \otimes_\Bbbk V^\sharp \otimes_\Bbbk V^!_f$ by the ideal 
$$I \otimes_\Bbbk V^\sharp \otimes_\Bbbk  V^!_f + V^!_f \otimes_\Bbbk V^\sharp \otimes_\Bbbk  I.$$
The completion accounts for the fact that a functional on $\mathcal{V}_d$ can be supported over infinitely many different components in \eqref{eqn:dualprod}. It is generated by elements of the form $f (x) \otimes x_i \otimes g (x)$ or $f (x) \otimes 1_V^\sharp \otimes g (x)$, where $f$ and $g$ are power series, allowing for infinite sums in each homogeneous piece. For example, one can have
$ \sum_{i=0}^{\infty} x^i \otimes y \otimes x^i$ for $\deg(x)=0$.

With respect to these bases, the differential $d$ on \eqref{kdudiag} can be described as follows. This still has degree $1$, in our convention on the grading of the dual \eqref{eqn:graddual}.  Take elements $E:=\vec{Y} \otimes X \otimes \vec{Z} \in \bar{B}V \otimes_\Bbbk V \otimes_\Bbbk \bar{B}V $ and $x_{\vec{a}} \otimes x_b \otimes x_{\vec{c}} = x_{a_1} \cdots x_{a_k} \otimes x_b \otimes x_{c_1} \cdots x_{c_l} \in  \widehat{V^! \otimes_\Bbbk V^\sharp \otimes_\Bbbk V^!}$,
where $X \in \{X_1,\cdots,X_N\}$ and $\vec{Y}, \vec{Z}$ are tensors in $X_i$'s. From \eqref{eqn:formuladelta},
\begin{equation}\label{eqn:dbefpd}
\begin{array}{rl}
& d \left(x_{a_1} \cdots x_{a_k} \otimes x_b \otimes x_{c_1} \cdots x_{c_l} \right) (E)\\
= & \left(x_{a_1} \cdots x_{a_k} \otimes x_b \otimes x_{c_1} \cdots x_{c_l} \right) \left(\delta( \vec{Y} \otimes X \otimes \vec{Z})\right) \\
 =& \sum (-1)^{|\vec{Y}_1|'}(x_{c_1} \cdots x_{c_l} ) \left( \vec{Y}_1 \otimes m(\vec{Y}_2) \otimes \vec{Y}_3 \right)  \cdot x_b (X) \cdot ( x_{a_1} \cdots x_{a_k}) ( \vec{Z}) \\
 &+\sum (-1)^{|\vec{Y}|'+|X|'+|\vec{Z}_1|'}(x_{c_1} \cdots x_{c_l} ) (\vec{Y})  \cdot x_b (X) \cdot ( x_{a_1} \cdots x_{a_k}) \left( \vec{Z}_1 \otimes m(\vec{Z}_2) \otimes \vec{Z}_3 \right)\\
 &+\sum (-1)^{|\vec{Y}_1|'}(x_{c_1} \cdots x_{c_l} ) (\vec{Y}_1)  \cdot x_b \left(m(\vec{Y}_2,X,\vec{Z}_1)\right) \cdot ( x_{a_1} \cdots x_{a_k})(\vec{Z}_2).
 \end{array}
 \end{equation}

Using the identification $V^\sharp \cong V[n]$ via the Poincar\'e pairing on $V$, \eqref{kdudiag} becomes
\begin{equation}\label{eqn:cflblbkoszul}
\left( \bar{B}V \otimes_\Bbbk V \otimes_\Bbbk \bar{B}V\right)^\sharp \cong  \widehat{V^! \otimes_\Bbbk V \otimes_\Bbbk V^!} [n]
\end{equation}
where the right-hand side is defined analogously to \eqref{kdudiag}, except with $V^\sharp$ replaced by $V$.
Note that this is precisely $CF^\ast ((\mathbb{L}, \tilde{b}), (\mathbb{L}, \tilde{b}))$ in Section \ref{sec:prelim} up to degree shift, when $V= CF^\ast (\mathbb{L},\mathbb{L})$. Moreover,

\begin{lemma}
If the $A_\infty$-structure on $V$ is cyclic with respect to $\langle -,- \rangle$, then the differential on $V^! \hat{\otimes}_\Bbbk V^\sharp \hat{\otimes} V^! [n]$ induced from $d$ \eqref{eqn:dbefpd} can be written as
 $$ d= d_{V^!} \otimes 1 \otimes 1  + 1 \otimes 1 \otimes d_{V^!} +  1 \otimes m_1^{\tilde{b},\tilde{b}} \otimes 1 .$$
 where $m_1^{\tilde{b},\tilde{b}}$ is defined by the same formula as in \eqref{eqn:mkbbb} (applying to $\tilde{b}$).
\end{lemma}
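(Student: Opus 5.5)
The plan is to split the formula \eqref{eqn:dbefpd} for $d$ according to the three kinds of terms in $\delta$ from \eqref{eqn:formuladelta}, and to identify each group with one of the three summands in the claimed expression.

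\textbf{Outer terms.} The first line of \eqref{eqn:dbefpd} involves only $m(\vec{Y}_2)$ inside the left bar factor. Pairing $x_{\vec c}$ against $\vec{Y}_1\otimes m(\vec{Y}_2)\otimes\vec{Y}_3$ is, by definition, pairing the dual of the bar differential of $\bar{B}V$ with $\vec{Y}$. By the coordinate description of Section \ref{subsec:coordkos}, that dual is $d_{V^!}$, i.e. the derivation determined by $\sum_k m_k(\tilde b,\ldots,\tilde b)=\sum_i (dx_i)X_i$. So this line gives $d_{V^!}$ acting on one outer $V^!$ factor, and symmetrically the second line gives $d_{V^!}$ on the other. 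The sign $(-1)^{|\vec Y|'+|X|'+|\vec Z_1|'}$ should match the Koszul sign convention for $1\otimes 1\otimes d_{V^!}$. I would check this directly from the convention fixed at the start of Section \ref{sec:extmirkos}. The order reversal, with $x_{\vec c}$ evaluated on $\vec Y$ and $x_{\vec a}$ on $\vec Z$, reflects that the left $V^!$ factor is dual to the right bar factor. I would verify this against the $f_k\cdots f_1$ ordering in \eqref{eq:mk}.

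\textbf{Middle term.} The third line is $x_b\bigl(m(\vec{Y}_2,X,\vec{Z}_1)\bigr)$ together with outer factors. Under the identification $V^\sharp\cong V[n]$, $x_b=\langle \bar X_b,-\rangle$ up to sign. Cyclic symmetry then gives
\[
\langle \bar X_b, m_k(\vec Y_2,X,\vec Z_1)\rangle=\pm\langle m_k(\vec Z_1,\bar X_b,\vec Y_2),X\rangle .
\]
So the coefficient of $x_{\vec c'}\otimes x_{X}\otimes x_{\vec a'}$ in $d(x_{\vec a}\otimes x_b\otimes x_{\vec c})$ equals the coefficient of $\bar X\leftrightarrow X$ in $m_{k}(\vec Z_1,\bar X_b,\vec Y_2)$. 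The inputs $\vec Z_1,\vec Y_2$ run over all words, with their dual variables absorbed into the outer factors. Summing over these words is exactly expanding
\[
m_1^{\tilde b,\tilde b}(\bar X_b)=\sum m(\tilde b,\ldots,\tilde b,\bar X_b,\tilde b,\ldots,\tilde b)
\]
via \eqref{eq:mk}. The variables of the $\tilde b$'s on the left multiply on the left, and those on the right multiply on the right, matching $1\otimes m_1^{\tilde b,\tilde b}\otimes 1$ in $\widehat{V^!\otimes V\otimes V^!}[n]$. The completion causes no trouble, since each fixed output coefficient involves only finitely many terms.

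\textbf{Main obstacle.} The hard step is sign bookkeeping. Three sign sources must combine into the Koszul signs of $m_1^{\tilde b,\tilde b}$, as given by the sign rule for $m_k(f_1X_1,\ldots)$ in Section \ref{sec:prelim}:
\begin{itemize}
\item the cyclic sign;
\item the PD sign from $\deg_{V^\sharp}x_i=-\deg X_i$ versus $\deg \bar X_i=n-\deg X_i$;
\item the bar signs $(-1)^{|\vec Y_1|'}$.
\end{itemize}
I would first treat the case where all outer words are empty, which reduces to $m_1$ and cyclicity of $\langle m_1 -,-\rangle$. I would then induct on word length, moving one $\tilde b$ at a time. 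If a global sign discrepancy persists, I would absorb it into a sign twist of the PD identification $V^\sharp\cong V[n]$, which is enough to establish the lemma up to the stated isomorphism.
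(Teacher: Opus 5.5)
This is essentially the paper's argument: the two bar-differential lines of \eqref{eqn:dbefpd} are read off directly as $d_{V^!}$ on the two outer factors, and the middle line is matched with the expansion of $1\otimes m_1^{\tilde b,\tilde b}\otimes 1$ by rotating the pairing with $m(\vec Y_2,X,\vec Z_1)$ via cyclic symmetry, with the outer functionals forcing the $\tilde b$-inputs to be dual to $\vec Y_2$ and $\vec Z_1$. The paper also does not carry out the sign check in detail; it only asserts that, with the shifted-degree cyclic convention, $(-1)^{\epsilon+\chi}$ reduces to $(-1)^{|\vec Y_1|'}$. So your inductive sign verification is extra care rather than a different route, though your fallback of absorbing a leftover sign into a twist of $V^\sharp\cong V[n]$ would only work if the discrepancy had coboundary form.
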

 \begin{proof}
 It is straightforward to see that the first two terms of \eqref{eqn:dbefpd} precisely match  $d_{V^!} \otimes 1 \otimes 1+ 1 \otimes 1 \otimes d_{V^!} $. (This part is indeed irrelevant to the identification $V^\sharp \cong V[n]$.)
We need to compare the remaining term, that is, the map 
 $$\vec{Y} \otimes X \otimes \vec{Z}  \mapsto \sum (-1)^{|\vec{Y}_1|'}(x_{c_1} \cdots x_{c_l} ) (\vec{Y}_1)  \cdot x_b \left(m(\vec{Y}_2,X,\vec{Z}_1)\right) \cdot ( x_{a_1} \cdots x_{a_k})(\vec{Z}_2)$$
belonging to $ V^! \hat{\otimes}_\Bbbk V^\sharp \hat{\otimes} V^!$  with an element of  $V^! \hat{\otimes}_\Bbbk V \hat{\otimes} V^![n]$
 $$  x_{a_1} \cdots x_{a_k} \left( \sum m(\tilde{b},\cdots,\tilde{b}, X_b, \tilde{b},\cdots,\tilde{b}) \right)  x_{c_1} \cdots x_{c_l} $$
where $X_b$ is defined by $x_b = \langle X_b,- \rangle$. The latter can be more explicitly written as
\begin{equation}
\begin{array}{l}
\displaystyle\sum_{i,j} x_{a_1} \cdots x_{a_k} m \left(x_{i_1} X_{i_1}, \ldots, x_{i_q} X_{i_q}, X_b, x_{j_1} X_{j_1}, \ldots, x_{j_r} X_{j_r}  \right)  x_{c_1} \cdots x_{c_l}\\
=\displaystyle\sum_{i,j} (-1)^{\epsilon(\vec{i},X_b,\vec{j})}x_{a_1} \cdots x_{a_k} x_{j_r} \cdots x_{j_1} \otimes   m(X_{i_1} ,\cdots, X_{i_q}, X_b, X_{j_1}, \cdots, X_{j_r}) \otimes  x_{i_q} \cdots x_{i_1} x_{c_1} \cdots x_{c_l},
\end{array}
\end{equation}
where $\epsilon(\vec{i},X_b,\vec{j})$ is the Koszul sign from Equation~\eqref{eq:mk}, namely the sign obtained by moving each coefficient variable past the earlier shifted inputs.
Hence its evaluation at $\vec{Y} \otimes X \otimes \vec{Z}$ gives
\begin{equation}
\begin{array}{l}
\displaystyle\sum_{i,j, Y_1, Z_1} (-1)^{\epsilon(\vec{i},X_b,\vec{j})}x_{\vec{c}} (\vec{Y}_1) x_{\vec{i}} ( \vec{Y}_2) \otimes \langle m(X_{i_1} ,\cdots, X_{i_q}, X_b, X_{j_1}, \cdots, X_{j_r}), X \rangle \otimes x_{\vec{j}} (\vec{Z}_1) x_{\vec{a}} (\vec{Z}_2) \\
=\displaystyle\sum_{i,j, Y_1, Z_1} (-1)^{\epsilon(\vec{i},X_b,\vec{j})+\chi(\vec{i},X_b,\vec{j},X)}x_{\vec{c}} (\vec{Y}_1) x_{\vec{i}} ( \vec{Y}_2) \otimes \langle m(X_{j_1}, \cdots, X_{j_r}, X, X_{i_1} ,\cdots, X_{i_q}), X_b \rangle \otimes  x_{\vec{j}} (\vec{Z}_1) x_{\vec{a}} (\vec{Z}_2) \\
=\displaystyle\sum_{\substack{Y_1, Z_1 \\ \vec{i}  = \vec{y}_2, \vec{j} = \vec{z}_1}} (-1)^{\epsilon(\vec{i},X_b,\vec{j})+\chi(\vec{i},X_b,\vec{j},X)}x_{\vec{c}} (\vec{Y}_1) x_{\vec{i}} ( \vec{Y}_2) \otimes \langle m(X_{j_1}, \cdots, X_{j_r}, X, X_{i_1} ,\cdots, X_{i_q}), X_b \rangle \otimes x_{\vec{j}} (\vec{Z}_1) x_{\vec{a}} (\vec{Z}_2) \\
=\displaystyle\sum_{Y_1, Z_1}  (-1)^{|\vec{Y}_1|'}x_{\vec{c}} (\vec{Y}_1) \otimes \langle m(\vec{Y}_2, X, \vec{Z}_1), X_b \rangle \otimes   x_{\vec{a}}(\vec{Z}_2) =  \displaystyle\sum_{Y_1, Z_1}  (-1)^{|\vec{Y}_1|'}x_{\vec{c}} (\vec{Y}_1) \otimes x_b \left( m(\vec{Y}_2, X, \vec{Z}_1) \right) \otimes   x_{\vec{a}} (\vec{Z}_2) 
\end{array}
\end{equation}
Here $\chi(\vec{i},X_b,\vec{j},X)$ is the cyclic Koszul sign for rotating the cyclic word $(\vec{i},X_b,\vec{j},X)$ to $(\vec{j},X,\vec{i},X_b)$. With the shifted-degree cyclicity convention, the product $(-1)^{\epsilon+\chi}$ becomes the middle sign $(-1)^{|\vec{Y}_1|'}$ after imposing $\vec{i}=\vec{y}_2$ and $\vec{j}=\vec{z}_1$.
The first equality uses the cyclic symmetry of the Poincar\'e pairing with this sign. The second and third follow since $\vec{x}_j (\vec{Y}_2)$ and $\vec{x}_i (\vec{Z}_1)$ are nonzero if and only if $\vec{x}_j$ and $\vec{x}_i$ are precisely dual to $\vec{Y}_2$ and $\vec{Z}_1$ (in which case $ \vec{X}_j = \vec{Y}_2$ and $\vec{X}_i = \vec{Z}_1$). 
Thus, after the identification
 $$ \widehat{ V^!  \otimes_\Bbbk V^\sharp  \otimes_\Bbbk  V^!} \cong \widehat{ V^! \otimes_\Bbbk V[n] \otimes_\Bbbk V^!}$$
 using the Poincar\'e pairing, the differential $d$ reads
 $$ d= d_{V^!} \otimes 1 \otimes 1 +  1 \otimes 1 \otimes d_{V^!} + 1 \otimes m_1^{\tilde{b},\tilde{b}} \otimes 1.$$
\end{proof}

In particular, we see that the completed tensor product in the graded dual is crucial for well-definedness of $d$, as $m_1^{\tilde{b}, \tilde{b}}$ is in general an infinite sum.

%

We claim that  $V^! \otimes_\Bbbk V^\sharp \otimes_\Bbbk V^!$ gives a resolution of $V^!$ as a bimodule over itself as long as it is well-defined (i.e., when it becomes a subcomplex). Note that $V^! \otimes_\Bbbk V^\sharp \otimes_\Bbbk V^!$ is automatically a free $V^!$-bimodule. 
We first find a homotopy equivalence between $\bar{B}V \otimes_\Bbbk V \otimes_\Bbbk \bar{B}V$ and $\bar{B} V$ (and hence the same holds for their duals). A homotopy equivalence can be chosen as follows. We define
$$s:  \bar{B} V \to  \bar{B}V \otimes_\Bbbk V \otimes_\Bbbk \bar{B}V \qquad \vec{X} \to \sum \vec{X}_1 \otimes 1_V \otimes \vec{X}_2,$$
where we use Sweedler notation $ \Delta \vec{X} = \sum \vec{X}_1 \otimes \vec{X}_2$
for the comultiplication on the coalgebra $\bar{V}$. (This includes the case $\vec{X}_1 = 1_\Bbbk$ or $\vec{X}_2=1_\Bbbk$.) It is elementary to check that $s$ is a chain map.
For the opposite direction, we take the projection 
$$\pi : \bar{B}V \otimes_\Bbbk V \otimes_\Bbbk \bar{B}V \to 1_\Bbbk \otimes 1_V \otimes \bar{B} V \cong \bar{B} V$$
which is obviously a chain map.

\begin{lemma}\label{lem:conthtpy} $s$ and $\pi$ give homotopy equivalences between $\bar{B} V$ and $\bar{B}V \otimes_\Bbbk V \otimes_\Bbbk \bar{B}V$. 
\end{lemma}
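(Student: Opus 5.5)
First I will check the two composites. The composite $\pi\circ s$ on $\bar{B}V$: in $s(\vec{X}) = \sum \vec{X}_1 \otimes 1_V \otimes \vec{X}_2$, only the term with $\vec{X}_1 = 1_\Bbbk$ survives the projection, and it gives $\vec{X}$. So $\pi \circ s = \mathrm{id}$ on the nose. (If $\pi$ is read as projecting onto the summand $1_\Bbbk\otimes 1_V\otimes \bar BV$ and killing all other basis tensors, this uses that $V = \Bbbk 1_V \oplus V_{>0}$ by Assumption \ref{assume:onv}.) Before that, I will confirm that $s$ and $\pi$ are chain maps. For $s$ this uses strict unitality: in \eqref{eqn:formuladelta}, $m(\vec{Y}_2,1_V,\vec{Z}_1)$ vanishes except for $m_2(1_V,Z)=Z$ and $m_2(Y,1_V)=\pm Y$, and these terms reproduce the bar differential split via the comultiplication. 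For $\pi$ I will do the analogous check, using that $V_{>0}$ is closed under all $m_k$ (by degree reasons, since $V_0=\Bbbk 1_V$ and $m_1(1_V)=0$).

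The main work is a homotopy $h$ on $\mathcal{K}:=\bar{B}V\otimes_\Bbbk V\otimes_\Bbbk \bar{B}V$ with $\delta h + h\delta = \mathrm{id} - s\circ\pi$. Following the hint after \eqref{eqn:barresol}, I will build it in two stages, contracting one bar factor at a time. First I contract the left factor. Define
$$h_L(Y_1\otimes\cdots\otimes Y_k \otimes X \otimes \vec{Z}) = \pm\, Y_1\otimes\cdots\otimes Y_k\otimes X\otimes 1_V\otimes\vec{Z}$$
(reading $X$ as a letter of the bar word), or equivalently move the last letter $Y_k$ into the $V$-slot after inserting $1_V$. The precise choice is this: for $X\in V_{>0}$, send $\vec{Y}\otimes X\otimes \vec{Z}\mapsto \vec{Y}\otimes sX \otimes 1_V\otimes \vec{Z}$, and kill terms with $X=1_V$. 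Then $\delta h_L + h_L\delta$ compares with the identity through the unit terms $m_2(X,1_V)$, and all higher $m_k$ containing $1_V$ vanish. I expect the result to be $\mathrm{id}-s_L\pi_L$, where $\pi_L$ projects onto $1_\Bbbk\otimes 1_V\otimes\bar BV$ and $s_L(\vec{Z}) = \sum \vec{Z}_1\otimes 1_V\otimes \vec{Z}_2$. Since $\pi_L=\pi$ and $s_L=s$, this may already suffice. Otherwise I will compose two one-sided contractions (the left-module resolution $\bar BV\otimes V\simeq\Bbbk$, tensored with $\bar BV$) using the standard perturbation or filtration argument.

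A cleaner fallback, which I will use if the sign bookkeeping in $h_L$ becomes unwieldy, is a filtration argument. Filter $\mathcal{K}$ by the length of the left bar factor $\vec{Y}$. The associated graded differential only involves $m$'s that decrease this length by moving letters into the $V$ slot, and on the associated graded the one-sided bar resolution $\bar BV\otimes_\Bbbk V \to\Bbbk$ is contractible by the homotopy inserting $1_V$. The filtration is bounded below in each degree, because $V$ is nonnegatively graded with $V_{>0}$ in degrees $\ge1$, so letters of $sV_{>0}$ have degree $\ge 0$; it is exhaustive as well. Hence $\pi$ (equivalently $s$, since $\pi s=\mathrm{id}$) is a quasi-isomorphism. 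Over the semisimple ring $\Bbbk$, a quasi-isomorphism between complexes of projective modules with a strict one-sided inverse upgrades to a homotopy equivalence. The main obstacle is the direct construction of $h$ with correct Koszul signs and the verification that higher $m_k$ terms with a unit input cancel; in the fallback, it is the convergence of the spectral sequence, which I expect the degree bound to settle. Dualizing then gives the corresponding statement for \eqref{eqn:grdualres}.
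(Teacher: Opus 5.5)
Your main construction does not work: $h_L(\vec Y\otimes X\otimes\vec Z)=(\vec Y\otimes X)\otimes 1_V\otimes\vec Z$ does not satisfy $\delta h_L+h_L\delta=\pm(\mathrm{id}-s\circ\pi)$.

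\begin{itemize}
\item In $\delta h_L$, the middle term produces $\vec Y\otimes m_2(X,1_V)\otimes\vec Z$, but it also produces $(\vec Y\otimes X)\otimes m_2(1_V,Z_1)\otimes(Z_2\otimes\cdots\otimes Z_k)$.
\item In $h_L\delta$ you get $(\vec Y_1\otimes m(\vec Y_2,X,\vec Z_1))\otimes 1_V\otimes\vec Z_2$. The bar differential of $\vec Y\otimes X$ cancels only the terms with $\vec Z_1$ empty.
\item The two kinds of leftovers have middle entry in $V_{>0}$ and equal to $1_V$ respectively, so they cannot cancel each other. Already for $\vec Y$ empty and $\vec Z=Z$, the term $X\otimes Z\otimes 1_\Bbbk$ survives.
\item On $1_\Bbbk\otimes 1_V\otimes\vec Z$ you recover only the piece $Z_1\otimes 1_V\otimes(Z_2\otimes\cdots\otimes Z_k)$ of $s(\vec Z)$. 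So your ``$s_L$'' is not $s$, and this does not ``already suffice''.
\end{itemize}

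The missing idea is that $X$ must be pushed into the left bar word together with \emph{every} initial segment of $\vec Z$. The paper's homotopy is
\[
H(\vec Y\otimes X\otimes\vec Z)=\sum(\vec Y\otimes X\otimes\vec Z_1)\otimes 1_V\otimes\vec Z_2\quad (X\in V_{>0}),\qquad H=0\ \text{on}\ \bar BV\otimes_\Bbbk 1_V\otimes_\Bbbk\bar BV .
\]
With this $H$, the unit terms $m_2(Z_a,1_V)$ and $m_2(1_V,Z_a)$ coming from consecutive splittings telescope. The check is then a direct computation on the three types of elements.

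Your perturbation fallback would produce exactly this $H$:
\begin{itemize}
\item Let $\delta_1$ be the middle terms with $\vec Z_1\neq\emptyset$, and let $\delta_0=\delta-\delta_1$. Then $\delta_0$ is the one-sided differential on $\bar BV\otimes_\Bbbk V$ plus the bar differential on $\vec Z$, and $h_L$ is a genuine contraction of $\delta_0$ onto $1_\Bbbk\otimes 1_V\otimes\bar BV$.
\item The series $\sum_n h_L(\delta_1h_L)^n$ is a finite sum, since each $\delta_1$ shortens $\vec Z$, and it equals $H$.
\item The perturbed inclusion, projection and differential are $s$, $\pi$ and the bar differential.
\end{itemize}
As written, though, you only name this option and do not carry it out.

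Your filtration fallback is misstated. For the filtration by the length of $\vec Y$, the associated graded keeps only the length-preserving part of $\delta$. So the terms moving letters of $\vec Y$ into the $V$-slot are exactly the ones that drop out. The graded pieces are $((sV_{>0})^{\otimes p},m_1)\otimes_\Bbbk(V\otimes_\Bbbk\bar BV)$, which involve the left-module bar resolution. For this filtration it is $s$, not $\pi$, that is filtered and is a quasi-isomorphism on graded pieces.

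What you actually describe is the filtration by the length of $\vec Z$. Its graded pieces are $(\bar BV\otimes_\Bbbk V)\otimes_\Bbbk((sV_{>0})^{\otimes q},m_1)$, and $\mathrm{gr}\,\pi$ is the augmentation tensored with the identity.

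Convergence needs no degree argument, and word length is not bounded in a fixed degree anyway, since $V_1$ contributes letters of degree $0$. The filtration is exhaustive with $F_{-1}=0$, so induction on the filtration level plus a colimit suffices.

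With that correction, and the fact that acyclic complexes over a semisimple ring are contractible, you get a valid but non-constructive proof that $s$ and $\pi$ are inverse homotopy equivalences. What it loses is the explicit $H$. The paper uses $H$ immediately afterwards to describe $H^\sharp$. It also uses it, in the finite-type case, to see that the homotopy preserves the uncompleted $V^!\otimes_\Bbbk V^\sharp\otimes_\Bbbk V^!$ (Corollary \ref{cor:freeresolfin}).
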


\begin{proof}
$\pi \circ s$ is the identity map on $\bar{B}V$. We need to find a homotopy between $s \circ \pi$ and the identity map on $\bar{B}V \otimes_\Bbbk V \otimes_\Bbbk \bar{B}V$. By definition
$$(s \circ \pi) (1_\Bbbk \otimes 1_V \otimes \vec{Z}) = s(\vec{Z})=\sum \vec{Z}_1 \otimes 1_V \otimes \vec{Z}_2,$$
and it is trivial on the other types of elements in $\bar{B}V \otimes_\Bbbk V \otimes_\Bbbk \bar{B}V$.
Consider the degree $-1$ map $H: \bar{B}V \otimes_\Bbbk V \otimes_\Bbbk \bar{B}V \to \bar{B}V \otimes_\Bbbk V \otimes_\Bbbk \bar{B}V$ given by
$$H :\vec{Y} \otimes X \otimes \vec{Z} \mapsto \sum (\vec{Y} \otimes X \otimes \vec{Z}_1) \otimes 1 \otimes \vec{Z}_2 $$
for $X \in V^{>0}$. (The sum includes the case when $\vec{Z}_1 = 1_\Bbbk$ or $\vec{Z}_2 = 1_\Bbbk$.)
When $X$ is itself the unit, $H$ is defined to be zero, i.e., it vanishes on $\bar{B} V \otimes_\Bbbk 1_V \otimes_\Bbbk  \bar{B} V$. 

It is straightforward to check that $(d H + H d)(\vec{Y} \otimes X \otimes \vec{Z})$ equals $\vec{Y} \otimes X \otimes \vec{Z}$ when $X$ is not a multiple of $1_V$ or $\vec{Y} = Y_1 \otimes \cdots \otimes Y_l$ with $l \geq 1$:

\noindent(i) for $X \notin \Bbbk \langle 1_V \rangle$,
\begin{equation*}
\begin{array}{rl}
 &(dH + Hd) (\vec{Y} \otimes X \otimes \vec{Z}) \\
  =& \sum d \left( (\vec{Y} \otimes X \otimes \vec{Z}_1) \otimes 1_V \otimes \vec{Z}_2 \right) + \sum H \left(  ( \vec{Y}_1 \otimes m(\vec{Y}_2) \otimes \vec{Y}_3  )  \otimes X \otimes \vec{Z} \right) \\
  &+ \sum  H\left( \vec{Y}_1 \otimes  m(\vec{Y}_2,  X, \vec{Z}_1) \otimes \vec{Z}_2 ) \right) + \sum H \left(   \vec{Y} \otimes  X \otimes ( \vec{Z}_1  \otimes m(\vec{Z}_2)  \otimes \vec{Z}_3  ) \right)
 \end{array}
 \end{equation*}
and the last three terms in the above sum can be classified into the following four types, which all appear in the first term:
\begin{equation*}
\begin{array}{l}
  \left(\vec{Y}_1 \otimes m(\vec{Y}_2) \otimes \vec{Y}_3  \otimes X \otimes \vec{Z}_1\right) \otimes 1_V \otimes  \vec{Z}_2 \\
  \left( \vec{Y}_1 \otimes m(\vec{Y}_2, X , \vec{Z}_1) \otimes \vec{Z}_2 \right)\otimes 1_V \otimes  \vec{Z}_3 \\
   \left( \vec{Y}  \otimes X \otimes \vec{Z}_1 \right)\otimes 1_V \otimes \left( \vec{Z}_2 \otimes m(\vec{Z}_3)  \otimes \vec{Z}_4 \right) \\
  \left( \vec{Y}  \otimes X \otimes \vec{Z}_1 \otimes m(\vec{Z}_2) \otimes \vec{Z}_3 \right)\otimes 1_V  \otimes \vec{Z}_4.
  \end{array}
  \end{equation*}
After cancellations of these terms, we are left with
\begin{equation*}
\begin{array}{rl}
 &(dH + Hd) (\vec{Y} \otimes X \otimes \vec{Z}) \\
  =& 
\vec{Y} \otimes m_2 (X, 1_V) \otimes \vec{Z} + \\
&+\sum_{a=1}^k    ( \vec{Y} \otimes X \otimes Z_1 \cdots Z_{a-1} ) \otimes m_2(1_V, Z_a) \otimes  (Z_{a+1} \otimes \cdots \otimes Z_k)  \\
&  - \sum_{a=1}^k ( \vec{Y} \otimes X \otimes Z_1 \cdots Z_{a-1} ) \otimes m_2(Z_a ,1_V) \otimes  (Z_{a+1} \otimes \cdots \otimes Z_k) 
\\
=&\vec{Y} \otimes X \otimes \vec{Z}.
\end{array}
 \end{equation*}
where $\vec{Z} = Z_1 \otimes \cdots \otimes Z_k$

\noindent (ii) If $\vec{Y} = Y_1 \otimes \cdots \otimes Y_l$ with $l \geq 1$,
\begin{equation*}
\begin{array}{rcl}
 (dH + Hd) (\vec{Y} \otimes 1_V \otimes \vec{Z}) &=&  H ( (Y_1 \otimes \cdots \otimes Y_{l-1}) \otimes Y_l \otimes \vec{Z} + \vec{Y} \otimes Z_1 \otimes  (Z_2 \otimes \cdots \otimes Z_k)
)\\ 
 &=& 
 \sum_{a=0}^{k} (\vec{Y} \otimes Z_1 \otimes \cdots \otimes Z_a) \otimes 1_V \otimes (Z_{a+1} \otimes \cdots \otimes Z_k) \\
 && -   \sum_{a=1}^{k} (\vec{Y} \otimes Z_1 \otimes \cdots \otimes Z_a) \otimes 1_V \otimes (Z_{a+1} \otimes \cdots \otimes Z_k) \\
&=&\vec{Y} \otimes 1_V \otimes \vec{Z}.
  \end{array}
 \end{equation*}

Lastly, $dH + Hd$ on the component $1_\Bbbk \otimes 1_V \otimes \bar{B} V$ can be computed as
\begin{equation*}
\begin{array}{rcl}
(dH + Hd )(1_\Bbbk \otimes  1_V \otimes \vec{Z}) &=& H(1_\Bbbk \otimes Z_1 \otimes (Z_2 \otimes \cdots \otimes Z_k))  \\
&=&  Z_1 \otimes 1_V \otimes (Z_2 \otimes \cdots \otimes Z_k) \\
&&+ (Z_1 \otimes  Z_2) \otimes 1_V  \otimes (Z_3 \otimes \cdots \otimes Z_k) \\
&&+ (Z_1 \otimes  Z_2 \otimes Z_3) \otimes 1_V  \otimes (Z_4 \otimes \cdots \otimes Z_k) \\
&&+ \cdots + (Z_1 \otimes \cdots Z_k) \otimes 1_V \otimes 1_\Bbbk \\
&=& \sum \vec{Z}' \otimes 1_V \otimes \vec{Z}'' - 1_\Bbbk \otimes  1_V \otimes \vec{Z}\\
&=& (s \circ \pi) (1_\Bbbk \otimes 1_V \otimes \vec{Z}) -1_\Bbbk \otimes  1_V \otimes \vec{Z}
\end{array} 
\end{equation*}
Therefore $H$ is a contracting homotopy from the identity $s \circ \pi$ as desired.
\end{proof}

In particular, the homology of $\delta$ equals that of $\bar{B} V$. 
Dually,
$\left(\bar{B}V \otimes_\Bbbk V \otimes_\Bbbk \bar{B}V \right)^\sharp $ is homotopic to $V^! = (\bar{B} V)^\sharp$. 

\begin{cor}\label{cor:resolalg1} The map $ \left(\bar{B}V \otimes_\Bbbk V \otimes_\Bbbk \bar{B}V \right)^\sharp=\widehat{V^! \otimes_\Bbbk V^\sharp \otimes_\Bbbk V^!} \stackrel{s^\sharp}{\to} V^! $ dual to $s$ is a homotopy equivalence. 
\end{cor}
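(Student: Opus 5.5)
The plan is to dualize the homotopy equivalence of Lemma \ref{lem:conthtpy} degreewise, so that $s^\sharp$ and $\pi^\sharp$ become mutually inverse chain homotopy equivalences with homotopy $H^\sharp$. Concretely, $(-)^\sharp = \oplus_d \hom_\Bbbk((-)_d,\Bbbk)$ is a contravariant functor on graded $\Bbbk$-bimodules that preserves degreewise composition of maps. For a degree-$k$ map $f$, we set $f^\sharp(\phi)=(-1)^{k|\phi|}\phi\circ f$, so that $(f\circ g)^\sharp = \pm g^\sharp\circ f^\sharp$ with the usual Koszul sign. Applying this to the identities $\pi\circ s=\mathrm{id}$ and $dH+Hd=\mathrm{id}-s\circ\pi$, where the differentials are $\delta$ on $\bar BV\otimes_\Bbbk V\otimes_\Bbbk\bar BV$ and the bar differential on $\bar BV$, gives $s^\sharp\circ\pi^\sharp=\mathrm{id}$. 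It also gives $d^\sharp H^\sharp + H^\sharp d^\sharp = \mathrm{id} - \pi^\sharp\circ s^\sharp$, up to a global sign that we absorb into $H^\sharp$. By construction the differential on the dual is $d^\sharp$, which is exactly the differential \eqref{eqn:dbefpd} on $\left(\bar BV\otimes_\Bbbk V\otimes_\Bbbk\bar BV\right)^\sharp$ and the dga differential on $V^!=(\bar BV)^\sharp$. Since $s$ and $\pi$ are chain maps, so are $s^\sharp$ and $\pi^\sharp$.

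The one point requiring care is that everything is well defined on the graded dual, which is the source of the completion in \eqref{kdudiag}. The maps $s,\pi,H,\delta$ all have fixed degree, $0,0,-1,1$ respectively. So each sends the homogeneous piece $\mathcal V_d$, or $(\bar BV)_d$, into a single homogeneous piece. Precomposition with such a map therefore sends $\hom_\Bbbk(\mathcal V_{d'},\Bbbk)$ into $\hom_\Bbbk(\mathcal V_d,\Bbbk)$, and $(-)^\sharp$ is just the direct sum of these componentwise duals. We never need finiteness of the image of a single element: the dual of a degree-preserving map is computed degree by degree, and each $\hom_\Bbbk(\mathcal V_d,\Bbbk)$ is the full product \eqref{eqn:dualprod}. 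In this way the completed tensor product $\widehat{V^!\otimes_\Bbbk V^\sharp\otimes_\Bbbk V^!}$ absorbs the possibly infinite sums produced by $H^\sharp$ or $d^\sharp$; the example $\sum_i x^i\otimes y\otimes x^i$ shows that such infinite sums really do occur.

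Finally, we need $s^\sharp$ to be compatible with the $V^!$-bimodule structures, or at least with the $\Bbbk$-bimodule structures, for the statement as a homotopy equivalence of complexes. Here $s^\sharp(\phi)(\vec X)=\sum\phi(\vec X_1\otimes 1_V\otimes\vec X_2)$, and in coordinates it sends $f\otimes 1_V^\sharp\otimes g\mapsto fg$ and kills the terms with $x_b$ in the middle. This is visibly a $V^!$-bimodule map, because $s$ is a map of $\bar BV$-bicomodules, being built from $\Delta$. The $\Bbbk$-linearity of $s,\pi,H$ holds because they only insert or remove $1_V$, and $1_V$ commutes with the idempotents of $\Bbbk$.

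I expect the main obstacle to be the bookkeeping of Koszul signs in dualizing $dH+Hd$, together with confirming that the dual differential agrees with \eqref{eqn:dbefpd} including signs. I would fix the sign convention for $f^\sharp$ first and check it on the identity $\delta^2=0 \Rightarrow (d^\sharp)^2=0$. The homotopy relation then follows formally.
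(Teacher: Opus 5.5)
Your proposal is correct and is essentially the paper's argument. You obtain the corollary by dualizing the homotopy data $(s,\pi,H)$ of Lemma \ref{lem:conthtpy}, so that $s^\sharp\circ\pi^\sharp=\mathrm{id}$ and $\pi^\sharp\circ s^\sharp\simeq\mathrm{id}$ via $H^\sharp$ up to sign; your remarks on componentwise well-definedness on the completed graded dual and on Koszul signs make explicit what the paper leaves implicit.
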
 

For later use, we give a more explicit expression of homotopy equivalences in terms of coordinates chosen in \ref{subsec:coordkos}. For $f(x), g(x) \in V^!$, we have
\begin{equation}\label{eqn:homotopyequiv}
\begin{array}{rcl}
s^\sharp (f (x) \otimes 1_V^\sharp \otimes g (x)) &=& f(x)g(x), \\
s^\sharp (f (x) \otimes x_i \otimes g (x)) &=& 0, \\
 \pi^\sharp (f(x)) &=& f(x) \otimes 1_V^\sharp \otimes 1_\Bbbk .
 \end{array}
\end{equation}
Clearly, $ s^\sharp \circ \pi^\sharp = id$. 

On the other hand, $\pi^\sharp \circ s^\sharp$ can be homotoped to identity via $H^\sharp$ dual to $H$. $H^\sharp$ can be seen more easily through the identification $\widehat{V^! \otimes_\Bbbk V^\sharp \otimes_\Bbbk V^!} \cong \widehat{V^! \otimes_\Bbbk V[n] \otimes_\Bbbk V^!}$. 
With this identification,
$s^\sharp: \widehat{V^! \otimes_\Bbbk V[n]  \otimes_\Bbbk V^!} \to V^!$ can be written as  
$$s^\sharp (f (x) [\mathrm{pt}_V]  g (x)) = f(x)g(x), \qquad s^\sharp (f (x)  X_i   g (x)) =0$$ 
and $\pi^\sharp :  \widehat{V^! \to V^!  \otimes_\Bbbk V[n] \otimes_\Bbbk V^! }$,
$$ \pi^\sharp (f(x)) = f(x) [\mathrm{pt}_V].$$ 
Recall that $\deg [\mathrm{pt}_V]=0$ in $V[n]$, and hence both $s^\sharp$ and $\pi^\sharp$ are degree $0$. The following is by direct computation.

\begin{lemma}
If $V$ is a cyclic $A_\infty$-algebra with respect to $\langle -,- \rangle$, $H^\sharp$ can be written as
$$ H^\sharp ( x_1 \cdots x_k \otimes [\mathrm{pt}_V] \otimes y_1 \cdots y_l ) = \sum_i x_1 \cdots x_{i-1} \otimes \bar{X}_i \otimes x_{i+1} \cdots x_l y_1 \cdots y_l.$$
under the identification $\widehat{V^! \otimes_\Bbbk V^\sharp \otimes_\Bbbk V^!} \cong \widehat{V^! \otimes_\Bbbk V[n] \otimes_\Bbbk V^!}$. 
\end{lemma}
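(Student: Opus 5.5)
The plan is to compute $H^\sharp$ directly as the transpose of $H$, i.e. $H^\sharp(\phi) = \phi \circ H$ with the Koszul sign for precomposing a degree $-1$ map, and then to translate the answer through the Poincar\'e duality identification $V^\sharp \cong V[n]$. Since $H$ vanishes on $\bar B V\otimes_\Bbbk 1_V \otimes_\Bbbk \bar B V$ and otherwise only converts a $V$-slot entry $X$ into a bar letter, the transpose should reverse this: $H^\sharp$ takes a functional whose middle factor is $1_V^\sharp=PD([\mathrm{pt}_V])$ and moves one bar letter into the middle slot. Note that $1_V^\sharp$ is exactly the element that corresponds to $[\mathrm{pt}_V]$ under the identification.

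The first step is to evaluate $H^\sharp(x_{\vec a}\otimes 1_V^\sharp\otimes x_{\vec c})$ on a basis element $E=\vec Y\otimes X\otimes \vec Z$ with $X\in V_{>0}$. By definition,
$$H^\sharp(x_{\vec a}\otimes 1_V^\sharp\otimes x_{\vec c})(E)=\pm\sum (x_{\vec a}\otimes 1_V^\sharp\otimes x_{\vec c})\big((\vec Y\otimes X\otimes \vec Z_1)\otimes 1_V\otimes \vec Z_2\big).$$
This is nonzero only when the word $\vec Y\otimes X\otimes\vec Z_1$ is dual to one of the two outer words and $\vec Z_2$ is dual to the other. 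With the reversal convention $x_{\vec v}\leftrightarrow X_{\vec v^{op}}$ fixed in the Notations and in \eqref{eqn:dbefpd}, this forces one cut position in the word $x_1\cdots x_k$. At that position the letter $x_i$ is the one paired with $X$, the letters before it are paired with $\vec Y$, and the letters after it (concatenated with the other outer word, here $y_1\cdots y_l$) are paired with $\vec Z_1\otimes\vec Z_2$. Summing over the cut positions gives a sum over $i$ of elements of the form $x_1\cdots x_{i-1}\otimes x_i\otimes x_{i+1}\cdots x_k\,y_1\cdots y_l$, with the middle factor now in $V^\sharp$.

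The second step is to apply $PD^{-1}$ to the middle factor. This uses $x_i=PD(\bar X_i)$ from the coordinate description of the Koszul dual, so $x_i\in V^\sharp$ becomes $\bar X_i\in V[n]$, which is the stated formula. Cyclicity enters only at this point, to guarantee that the pairing is in standard form and that $PD$ commutes with the $\Bbbk$-bimodule structure. One should also confirm that the other basis types, with middle factor $x_b$ for some $b$, are sent to zero. This follows because $H$ always produces $1_V$ in the middle slot, and $x_b(1_V)=0$.

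The main obstacle is the bookkeeping rather than any conceptual issue, and it has two parts. First, the order reversal between words in $V^!$ and tensors in $\bar B V$ must be handled consistently, so that the letters surviving on the right are exactly $x_{i+1}\cdots x_k$ followed by $y_1\cdots y_l$. Second, the Koszul signs from the shifted degrees must be tracked when the degree $-1$ map $H$ is dualized. I would fix conventions by first checking the cases $k=1,l=0$ and $k=2,l=0$ by hand, then argue inductively on $k$. The induction works because the sum over cut positions of $\vec Z=\vec Z_1\otimes\vec Z_2$ matches term by term the sum over $i$. The signs are then absorbed into the convention by which $PD$ identifies $V^\sharp$ with $V[n]$.
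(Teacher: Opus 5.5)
Your overall route is the direct computation the paper has in mind: transpose $H$, evaluate on basis tensors, apply $PD^{-1}$ to the middle slot, and observe that components with middle factor $x_b$ die because $H$ always outputs $1_V$ in the middle. The problem is the matching step. In the pairing you explicitly invoke, \eqref{eqn:dbefpd}, the outer slots are swapped:
\[
(x_{\vec a}\otimes x_b\otimes x_{\vec c})(\vec Y\otimes X\otimes \vec Z)=x_{\vec c}(\vec Y)\,x_b(X)\,x_{\vec a}(\vec Z).
\]
Since $H$ produces $\vec Y\otimes X\otimes \vec Z_1$ as the new \emph{left} bar factor, you get
\[
(x_1\cdots x_k\otimes 1_V^\sharp\otimes y_1\cdots y_l)\bigl(H(\vec Y\otimes X\otimes\vec Z)\bigr)=\sum \pm\,(y_1\cdots y_l)(\vec Y\otimes X\otimes \vec Z_1)\cdot(x_1\cdots x_k)(\vec Z_2).
\]
So $X$ is absorbed by a letter of the \emph{right} word $y_1\cdots y_l$, whatever internal letter order you use for words. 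Your claim that this convention ``forces one cut position in the word $x_1\cdots x_k$'' is therefore false. Done honestly in that convention, the computation yields
\[
\sum_j \pm\, x_1\cdots x_k\,y_1\cdots y_{j-1}\otimes \bar Y_j\otimes y_{j+1}\cdots y_l, \qquad y_j=PD(\bar Y_j),
\]
which is not the lemma's formula. This version is in fact the one compatible with the paper's displayed $\pi^\sharp(f)=f\otimes 1_V^\sharp\otimes 1_\Bbbk$. Indeed, the $1_V^\sharp$-middle terms of $dH^\sharp\phi$ telescope to $\phi-x_1\cdots x_k y_1\cdots y_l\otimes 1_V^\sharp\otimes 1_\Bbbk$.

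The formula as stated is the transpose of $H$ only for the order-preserving pairing $(f\otimes x_b\otimes g)(\vec Y\otimes X\otimes\vec Z)=f(\vec Y)\,x_b(X)\,g(\vec Z)$, with letters of a word read in the same order as the tensor factors. Under that pairing the cut falls in $x_1\cdots x_k$ exactly as you describe. Correspondingly $\pi^\sharp(f)=1_\Bbbk\otimes 1_V^\sharp\otimes f$, so $H^\sharp$ is a homotopy from the identity to $f\otimes 1_V^\sharp\otimes g\mapsto 1_\Bbbk\otimes 1_V^\sharp\otimes fg$. Alternatively, keep the fully reversed pairing and dualize the mirror homotopy $\vec Y\otimes X\otimes\vec Z\mapsto \sum \vec Y_1\otimes 1_V\otimes(\vec Y_2\otimes X\otimes\vec Z)$, which splits $\vec Y$ instead of $\vec Z$. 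Either way, what you defer as ``bookkeeping'' is not a sign issue: the two conventions give genuinely different formulas. To prove the statement as written, you must explicitly adopt the order-preserving convention (adjusting $\pi^\sharp$ accordingly), not the one in \eqref{eqn:dbefpd}. A minor point: cyclicity plays no role here. $H$ involves no $m_k$, and the standard form of the pairing is a choice of basis, not a consequence of cyclicity.
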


It is not surprising to have $[\mathrm{pt}_V]$ in many places, as it is  the dual of $1_V$ and $d (X_i)$ always involves $[\mathrm{pt}_V]$ (due to the cyclic symmetry $m_2 (X_i , \bar{X}_i) = m_2 (\bar{X}_i,X_i) = [\mathrm{pt}_V]$).



Unfortunately, $\widehat{V^! \to V^!  \otimes_\Bbbk V\otimes_\Bbbk V^! }[n] $ itself is not a free $V^!$-bimodule. 
Also, it is not possible to descend its structure to $V^! \otimes_\Bbbk V^\sharp \otimes_\Bbbk V^!$ in general since the image of the differential makes sense only in the completion. For this to be possible, one should have a strong finiteness condition on $A_\infty$-operations on $V$. For instance, one may impose


\begin{defn}\label{def:finmkv}
We call the $A_\infty$-operations $\{m_k\}$ on $V$ finite-type if there exists $N$ such that $m_k \equiv 0$ for $k \geq N$. 
\end{defn}

This is not an intrinsic property of an $A_\infty$-algebra, but rather a property of a particular model $V$, since it is not preserved by $A_\infty$-homotopies. Note that $V$ is always $A_\infty$-isomorphic to some dga $V'$, but the rank of $V'$ does not need to be finite anymore, hence may not satisfy Assumption \ref{assume:onv} in general.

When $(V,\{m_k\})$ is finite-type, it is obvious that the differential on $V^! \to\widehat{ V^!  \otimes_\Bbbk V\otimes_\Bbbk V^! }$ is actually a finite sum, and hence descends to $V^! \otimes_\Bbbk V^\sharp \otimes_\Bbbk V^!$. The previous discussion on homotopy equivalence is still valid since everything involved in the argument  preserves $V^! \otimes_\Bbbk V^\sharp \otimes_\Bbbk V^!$ viewed as a subset of the completion. Moreover, $V^!_c$ (see \ref{subsub:compl}) is well-defined when $V$ is finite type, and by exactly the same argument $V^!_c \otimes_\Bbbk V^\sharp \otimes_\Bbbk V^!_c$ can serve as its resolution.
In summary,

\begin{cor}\label{cor:freeresolfin}
Suppose $V$ is finite-type. Then $V^! \otimes_\Bbbk V^\sharp \otimes_\Bbbk V^!$ is a well-defined dg bimodule over $V^!$, and is homotopy equivalent to $V^!$ itself, or equivalently $s^\sharp: V^! \otimes_\Bbbk V^\sharp \otimes_\Bbbk V^! \to V^!$ gives a free resolution of the diagonal bimodule $V^!$. The analogous statement holds for $V^!_c$ in this case. 
\end{cor}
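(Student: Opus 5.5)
The plan is to show that the uncompleted $V^! \otimes_\Bbbk V^\sharp \otimes_\Bbbk V^!$ is a subcomplex of the completion $\widehat{V^! \otimes_\Bbbk V^\sharp \otimes_\Bbbk V^!}$. Once that is done, I restrict the maps $s^\sharp$, $\pi^\sharp$ and the homotopy $H^\sharp$ to it and verify that they preserve it. The identities $s^\sharp\pi^\sharp = \mathrm{id}$ and $\pi^\sharp s^\sharp - \mathrm{id} = dH^\sharp + H^\sharp d$ hold in the completion by Lemma \ref{lem:conthtpy} and Corollary \ref{cor:resolalg1}. They therefore persist on any subcomplex that is preserved by all three maps.

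\textbf{Step 1 (well-definedness).} I use the coordinate formula \eqref{eqn:dbefpd}, or equivalently $d = d_{V^!}\otimes 1\otimes 1 + 1\otimes 1\otimes d_{V^!} + (\text{middle term})$. The first two terms involve $d_{V^!}(x_i)$, the coefficient of $X_i$ in $\sum_k m_k(\tilde b,\dots,\tilde b)$. If $m_k = 0$ for $k\ge N$, this is a polynomial of length $<N$ in the $x_j$, since $V$ has finite rank. The middle term sends $f\otimes x_b\otimes g$ to a sum over words $\vec i, \vec j$ and $X$ with $m(\vec Y_2, X, \vec Z_1)$ having a nonzero $x_b$-component. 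Finite type bounds $|\vec i|+|\vec j|+1<N$, and there are finitely many basis elements, so this is a finite sum. Finite type therefore makes $d$ carry the algebraic tensor product into itself. This also applies verbatim with $V^!_c$ in place of $V^!$, which in addition gives that $V^!_c$ is a dga.

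\textbf{Step 2 (maps preserve the subspace).} By \eqref{eqn:homotopyequiv}, $s^\sharp$ is multiplication $f\otimes 1_V^\sharp\otimes g \mapsto fg$ (and zero on $x_i$-terms), and $\pi^\sharp(f) = f\otimes 1_V^\sharp\otimes 1$. Both clearly preserve the uncompleted tensor product. For $H^\sharp$, I dualize $H$ directly. A functional $x_{\vec a}\otimes x_b\otimes x_{\vec c}$ evaluated on $H(\vec Y\otimes X\otimes\vec Z)$ is nonzero only when $\vec Y\otimes X\otimes \vec Z_1$ equals $\vec a$ read appropriately and $\vec Z_2$ matches $\vec c$, with the middle letter $1_V$. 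So $H^\sharp$ applied to a monomial is a finite sum over the splittings of the left word, as in the displayed formula for $H^\sharp$ in the cyclic case. It thus preserves the algebraic tensor product, and the same holds for $V^!_c$.

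\textbf{Step 3 (conclusion).} The algebraic tensor product is visibly free as a $V^!$-bimodule, generated by $\Bbbk\otimes V^\sharp\otimes\Bbbk$, which has finite rank. Combined with the homotopy equivalence from Steps 1 and 2, this shows that $s^\sharp$ is a free resolution of the diagonal bimodule. I expect the main obstacle to be Step 2, in particular checking that $H^\sharp$ involves no infinite sums. The subtle point is that the dual of $H$ distributes a single left word over all its prefixes, and this is finite only because the left factor is a genuine polynomial rather than a power series. That finiteness is automatic for $V^!_c$. For $V^!$ in positive-degree directions it holds because each homogeneous component is finite-dimensional modulo completion, and here I would compare against the completion and argue degreewise.
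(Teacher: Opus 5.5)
\paragraph{Verdict.} Your route is the same as the paper's. Finite type makes the differential a finite sum on the uncompleted tensor product, and you then restrict $s^\sharp$, $\pi^\sharp$, $H^\sharp$ from the completion. Step 1 is fine. The gap is in Step 2, at exactly the point you flagged and then waved away. You infer from ``$H^\sharp$ of a monomial tensor is a finite sum'' that $H^\sharp$ preserves $V^!\otimes_\Bbbk V^\sharp\otimes_\Bbbk V^!$. When $V_1\neq 0$, $V^!$ contains genuine power series in degree-zero variables, so this space is not spanned by monomial tensors. Since $H^\sharp$ splits every monomial of the relevant factor at every position, it can output an infinite-rank sum that is not a finite sum of pure tensors. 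No degreewise argument can repair this, because the offending component is the degree-zero one, and it is infinite-dimensional.

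\paragraph{Counterexample.} In fact the conclusion itself fails for $V^!$ in this generality. The paper's sentence that everything involved preserves the uncompleted tensor product has the same problem. Take $V=H^*(S^1)=\C\langle 1_V,X\rangle$ with $\deg X=1$, which is finite-type. Then $V^!=\C[[x]]$, concentrated in degree $0$ with zero differential. The uncompleted complex is
\[
\C[[x]]\otimes\C[[y]]\xrightarrow{\;\pm(x-y)\;}\C[[x]]\otimes\C[[y]]\xrightarrow{\;s^\sharp\;}\C[[x]],
\]
where $\C[[x]]\otimes\C[[y]]$ sits inside $\C[[x,y]]$ as the series with finite-rank coefficient matrix.
\begin{itemize}
\item The element $e^x\otimes 1-1\otimes e^y$ lies in $\ker s^\sharp$.
\item Its unique preimage under $x-y$ in $\C[[x,y]]$ is $(e^x-e^y)/(x-y)$. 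Its coefficient matrix $\bigl(1/(k+l+1)!\bigr)_{k,l}$ is a Hankel matrix of infinite rank, by Kronecker's theorem, since $(e^t-1)/t$ is not rational. So the preimage does not lie in the algebraic tensor product.
\item This preimage is exactly $\pm H^\sharp$ applied to $1_V^\sharp$ tensored with $e^x$ in the factor that $H^\sharp$ splits.
\end{itemize}
Hence $s^\sharp$ is not even a quasi-isomorphism here: its $H^0$ is $\C[[x]]\otimes_{\C[x]}\C[[x]]$, not $\C[[x]]$.

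\paragraph{What survives.} Your argument, and the paper's, does prove the statement in two cases. For $V^!_c$, everything is polynomial, so Step 2 is genuinely finite. For $V^!$ when $V_1=0$, every $(\bar BV)_d$ is finite-dimensional, so $V^!=V^!_c$. For $V^!$ with degree-zero generators you must keep the completed tensor product of Corollary \ref{cor:resolalg1}, giving up algebraic freeness.
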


When $V=CF(\bL,\bL)$, the above can be rephrased in geometric terms as follows.

\begin{prop}\label{prop:fresolofdga}
Suppose $\bL$ is a compact $\mathbb{Z}$-graded unobstructed immersed Lagrangian without nonpositive degree immersed generators. If one of the minimal models of $\CF(\bL ,\bL)$ is finite-type, then $\CF((\bL,\tilde{b}),(\bL,\tilde{b}))$ (defined on the same model) 
 gives a free bimodule resolution $\tilde{\cA}_{\bL}$ viewed as the diagonal bimodule over the dga $\tilde{\cA}_{\bL}$.
\end{prop}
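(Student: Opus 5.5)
The plan is to reduce Proposition \ref{prop:fresolofdga} to Corollary \ref{cor:freeresolfin} by taking $V$ to be the chosen finite-type minimal model of $\CF(\bL,\bL)$. First we check Assumption \ref{assume:onv} for this $V$. It has finite rank over $\Bbbk=\oplus_v \C$, since $\bL$ is compact and immersed with finitely many components and immersed sectors, and a minimal model has rank equal to that of its cohomology. It is $\Z$-graded and supported in nonnegative degrees, because $\bL$ is $\Z$-graded and has no nonpositive-degree immersed generators; the Morse generators contribute degrees $0,\dots,n$. Its degree-zero part is spanned by the units $1_{L_v}$: the only degree-zero generators are the minima, one per component, and these survive as the strict units of the (homologically) unital minimal model.

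Next we identify the objects. Following the Remark after \eqref{eqn:defkosv} and the coordinate description in Section \ref{subsec:coordkos}, $V^!=(\bar BV)^\sharp$ is the completed path algebra on the dual arrows $y_\alpha$, with $\deg y_\alpha=1-\deg Y_\alpha$. Its differential is read off from $\sum_k m_k(\tilde b,\dots,\tilde b)=\sum (dy_\alpha)Y_\alpha$. This is exactly $\tilde{\cA}_\bL$ of Definition \ref{defn:extlocmir} computed on the model $V$. If the extended mirror is defined on a different model, we note that Koszul duality is invariant under $A_\infty$-quasi-isomorphism, so the dgas are quasi-isomorphic.

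The key step is to identify $\CF((\bL,\tilde b),(\bL,\tilde b))=\tilde{\cA}_\bL\otimes_\Bbbk V\otimes_\Bbbk\tilde{\cA}_\bL$ with the bimodule $V^!\otimes_\Bbbk V^\sharp\otimes_\Bbbk V^!$ of Corollary \ref{cor:freeresolfin}. We use the Poincaré duality isomorphism $V^\sharp\cong V[n]$ and the Lemma expressing the differential as $d_{V^!}\otimes1\otimes1+1\otimes1\otimes d_{V^!}+1\otimes m_1^{\tilde b,\tilde b}\otimes1$. This expression is precisely the bimodule differential $m_1^{\tilde b,\tilde b}$ of Section \ref{sec:prelim}, once the internal differential of the coefficient algebra is included. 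Because $V$ is finite-type, all sums are finite, so the differential preserves the uncompleted tensor product. Corollary \ref{cor:freeresolfin} then says that $s^\sharp$, which sends $f\,[\mathrm{pt}_V]\,g\mapsto fg$ and kills every $f X_i g$, is a quasi-isomorphism (indeed a homotopy equivalence) onto the diagonal bimodule. Freeness is automatic, since the bimodule is $V^!\otimes_\Bbbk W\otimes_\Bbbk V^!$ for the finite-rank $\Bbbk$-bimodule $W=V^\sharp$. It is also bounded, because $V$ has finite length, so it is a genuine (semi-)free resolution.

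The main obstacle is the cyclicity hypothesis. The Lemma identifying the differential with $m_1^{\tilde b,\tilde b}$ requires the minimal model to be cyclic with respect to the Poincaré pairing. We would first try to take a cyclic minimal model, which exists by Kontsevich–Soibelman/Fukaya homological perturbation for the cyclic Floer complex. If the finite-type model is not cyclic, we would fall back on stating the resolution as $V^!\otimes V^\sharp\otimes V^!$ with the dual differential \eqref{eqn:dbefpd}, which needs no cyclicity, and observe that this is what $\CF$ means up to the shift $[n]$. Checking the sign conventions under $PD$ is the remaining routine verification.
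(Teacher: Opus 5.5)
Your proposal is correct and follows the paper's route: the proposition is Corollary \ref{cor:freeresolfin} applied to the finite-type minimal model $V$, with $V^!\otimes_\Bbbk V^\sharp\otimes_\Bbbk V^!$ identified with $\CF((\bL,\tilde b),(\bL,\tilde b))$ via $V^\sharp\cong V[n]$ and the lemma expressing the differential through $m_1^{\tilde b,\tilde b}$. You are right that this identification uses cyclicity of the chosen model, which the paper leaves unstated; one minor slip is that in the paper's Morse convention the degree-zero unit generators are the maxima $p_{\max}$, while the minima are the top-degree point classes.
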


Any spherical object obviously satisfies the condition in the statement. Namely, if a unital $\mathbb{Z}$-graded $A_\infty$ algebra $V$ takes the form of $V \cong \mathbb{C} \langle 1_V \rangle \oplus \mathbb{C} \langle [\mathrm{pt}_V] \rangle$, then by unitality and degree considerations, all higher $m_k$ with $k \geq 3$ vanish. More generally, if $V$ is formal, then it is obviously finite-type.


\subsection{Koszul duality and Hochschild invariants of local mirrors}
Recall that the Hochschild cohomology of $V^!$ can be defined as
$$HH^\ast (V^!, V^!) =H^\ast (\mathrm{RHom}_{V^!-V^!} (V^!,V^!)) (= \operatorname{Ext}^\ast (V^!, V^!) ).$$
Therefore, when $V$ is finite-type,
we have 
\begin{equation}\label{eqn:hhloop}
HH^\ast (V^!, V^!) =H^\ast (\mathrm{Hom}_{V^!-V^!} (V^! \otimes_\Bbbk V^\sharp \otimes_\Bbbk V^! ,V^!)) \cong H^\ast (\mathrm{Hom}_\Bbbk ( V^\sharp  ,V^!))  \cong H^\ast ( (V^! \otimes_\Bbbk V)_{cyc}) 
\end{equation}
where $(V^! \otimes_\Bbbk V)_{cyc}$ is the subspace of $V^! \otimes_\Bbbk V$ spanned by cyclic elements with respect to the $\Bbbk$-module structure. These elements only survive since we are considering $V^!$-bimodule homomorphisms, which must be $\Bbbk$-bimodule homomorphisms as well (this is analogous to the calculation in \eqref{eqn::bimodtensor}).  
Likewise, we have
$$HH^\ast (V^!_c, V^!_c) \cong H^\ast ((V^!_c \otimes_\Bbbk V)_{cyc})$$
for finite-type $V$. 

Let us examine the structure of $(V^! \otimes_\Bbbk V)_{cyc}$ more closely using coordinates in \ref{subsec:coordkos}. An element
$\vec{x} \otimes X_k$ in $(V^! \otimes_\Bbbk V)_{cyc}$
 represents a bimodule map
$$ \vec{x} \otimes X_k: V^! \otimes_\Bbbk V^\sharp \otimes_\Bbbk V^!  \to V^! \qquad 
x_j \in V^\sharp \mapsto x_j (X_k) \, \vec{x} 
$$
Note that this is not trivial only when $t(X_k) = h(\vec{x})$ (and $t(\vec{x}) = h(X_k)$ from the definition of $\otimes_\Bbbk$), and hence $\vec{x}X_k$ corresponds to a cycle in the quiver.
Let us compute the differential on \eqref{eqn:hhloop} in these coordinates. We set $a_{v_i,w_i,j} \in \mathbb{C}$ to be the structure coefficients appearing in
$$dx_j = \sum_{ i, v_i, w_i} a_{v_i,w_i,j} (x_{\vec{v}_i} \otimes x_i \otimes x_{\vec{w}_i} ),$$
and hence $a_{v_i,w_i,j} = dx_j ( X_{\vec{w}_i} \otimes X_i \otimes X_{\vec{v}_i})$, or equivalently 
$ m(X_{\vec{w}_i^{op}} \otimes X_i \otimes X_{\vec{v}_i^{op}}) = \sum_j a_{v_i,w_i,j} X_j.$
Then we have
\begin{equation*}
\begin{array}{rcl}
(\vec{x} \otimes X_k) (d x_j) &=& ( \vec{x} \otimes X_k ) \left( \sum_{ i, v_i, w_i} a_{v_i,w_i,j} x_{\vec{v}_i} \otimes x_i \otimes x_{\vec{w}_i} \right) \\
&=&  \sum_{ i, v_i, w_i} a_{v_i,w_i,j} \, (x_{\vec{v}_i} \otimes x_i (X_k) \vec{x} \otimes x_{\vec{w}_i} ) \\
&=&  \sum_{ v_k, w_k} a_{v_i,w_i,j} \, (x_{\vec{v}_k} \otimes  \vec{x} \otimes x_{\vec{w}_k} ) 
\end{array}
\end{equation*}
and
\begin{equation*}
\begin{array}{rcl}
d ((\vec{x} \otimes X_k ) ( x_j)) &=& x_j (X_k)  d( \vec{x}) \\
&=& (d\vec{x} \otimes X_k) (x_j)\end{array}
\end{equation*}
Therefore
\begin{equation}\label{eqn:dcpxhh}
\begin{array}{rcl}
d (\vec{x} \otimes X_k) &=&d\vec{x} \otimes X_k + (-1)^{|\vec{x}|+|X_k|+1}\sum_{j, \vec{v}_k, \vec{w}_k}  \left( x_{\vec{v}_k} \otimes  \vec{x} \otimes x_{\vec{w}_k} \right) \otimes (a_{v_k,w_k,j} X_j) \\
&=&d\vec{x} \otimes X_k + (-1)^{|\vec{x}|+|X_k|+1} \sum_{\vec{v}_k, \vec{w}_k}  \left( x_{\vec{v}_k} \otimes  \vec{x} \otimes x_{\vec{w}_k} \right) \otimes m(X_{\vec{w}_k^{op}} \otimes X_k \otimes X_{\vec{v}_k^{op}}) \\
&=&d\vec{x} \otimes X_k + (-1)^{|\vec{x}|+|X_k|+1} \sum_{\vec{v}_k, \vec{w}_k} m(x_{\vec{v}_k} X_{\vec{v}_k^{op}} \otimes \vec{x} X_k \otimes x_{\vec{w}_k}  X_{\vec{w}_k^{op}}) \\
&=& d\vec{x} \otimes X_k + (-1)^{|\vec{x}|+|X_k|+1} m_1^{\tilde{b},\tilde{b}} (\vec{x} X_k)
\end{array}
\end{equation}
Thus we obtain 
\begin{cor}\label{cor:hhextmir} For a compact $\mathbb{Z}$-graded unobstructed immersed Lagrangian $\bL$ without nonpositive degree immersed generators, suppose $\CF(\bL ,\bL)$ admits a finite-type minimal model (Definition \ref{def:finmkv}). Then we have
$$ HF_{cyc} ((\bL,\tilde{b}), (\bL,\tilde{b})) \cong HH^\ast (\tilde{\mathcal{A}}_\bL,\tilde{\mathcal{A}}_\bL ).$$
\end{cor}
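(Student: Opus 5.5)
The plan is to specialize the algebraic computation above to $V$ a finite-type minimal model of $\CF(\bL,\bL)$ and then translate each algebraic object into its Floer-theoretic counterpart.

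\textbf{Setting up $V$.} Since $\bL$ is compact, $\Z$-graded and unobstructed, with no immersed generators of nonpositive degree, a minimal model $V$ satisfies Assumption~\ref{assume:onv}. It is finite rank over $\Bbbk=\oplus_i\C$, concentrated in nonnegative degrees, and its degree-zero part is spanned by the units of the components (one Morse minimum, or unit, per component). We choose $V$ to be the finite-type model provided by the hypothesis. We also fix a cyclic Poincar\'e pairing in standard form, as in \ref{subsec:coordkos}. By the Remark following \eqref{eqn:defkosv}, $V^!=(\bar BV)^\sharp$ is identified with $\tilde\cA_\bL$.

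\textbf{Computing Hochschild cohomology.} By Corollary~\ref{cor:freeresolfin}, equivalently Proposition~\ref{prop:fresolofdga}, the complex $V^!\otimes_\Bbbk V^\sharp\otimes_\Bbbk V^!$, with $s^\sharp$ as augmentation, is a free bimodule resolution of the diagonal bimodule. We therefore compute $\mathrm{RHom}_{V^!\text{-}V^!}(V^!,V^!)$ by applying $\Hom_{V^!\text{-}V^!}(-,V^!)$ to this resolution. A bimodule map out of a free bimodule is determined by its restriction to the generating $\Bbbk$-bimodule $V^\sharp$. This gives
\[
\Hom_{V^!\text{-}V^!}(V^!\otimes V^\sharp\otimes V^!,V^!)\cong \Hom_{\Bbbk\text{-}\Bbbk}(V^\sharp,V^!).
\]
Because $V$ is finite rank, the right-hand side is $(V^!\otimes_\Bbbk V)_{cyc}$. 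The $\Bbbk$-bilinearity condition forces the head and tail of $\vec x$ to match those of $X_k$, which is exactly the cyclic condition. This is \eqref{eqn:hhloop}.

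\textbf{Matching the differentials.} Next we transport the Hom-complex differential. By \eqref{eqn:dcpxhh} it is
\[
d(\vec x\otimes X_k)=d\vec x\otimes X_k\pm m_1^{\tilde b,\tilde b}(\vec x X_k).
\]
We must identify this with the differential on $\CF((\bL,\tilde b),(\bL,\tilde b))$ restricted to cyclic elements, where the coefficient algebra $\tilde\cA_\bL$ acts from both sides and the two-sided coefficients are merged into a single loop. The point is that $m_1^{\tilde b,\tilde b}$ on $\tilde\cA_\bL\otimes\CF\otimes\tilde\cA_\bL$ produces terms $x_{\vec v}\otimes m(\dots)\otimes x_{\vec w}$. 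On the cyclic quotient these are rewritten as $x_{\vec v}\,\vec x\, x_{\vec w}$ times an output generator, which is precisely the second term above. The first term $d\vec x\otimes X_k$ accounts for the differential of the coefficient algebra. Defining $HF_{cyc}$ as the cohomology of this cyclic complex, the isomorphism of complexes yields the corollary.

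\textbf{Main obstacle.} The main difficulty is the bookkeeping that makes the identification a genuine chain isomorphism. There are two issues. First, the signs $(-1)^{|\vec x|+|X_k|+1}$ must agree with the Koszul signs of \eqref{eq:mk}; I would absorb them by a sign twist $\vec x\otimes X_k\mapsto(-1)^{\epsilon}\vec x X_k$. Second, the ordering/opposite conventions $x_{\vec v}X_{\vec v^{op}}$ must be tracked correctly. Finiteness of $m_k$ is used only to ensure that all sums are finite, so that no completion is needed and the free resolution genuinely applies.
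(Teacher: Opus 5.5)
Your proposal follows the paper's own argument: the finite-type hypothesis makes $V^!\otimes_\Bbbk V^\sharp\otimes_\Bbbk V^!$ a genuine free bimodule resolution (Corollary \ref{cor:freeresolfin}); the Hom-complex then collapses to $(V^!\otimes_\Bbbk V)_{cyc}$ as in \eqref{eqn:hhloop}; and the coordinate computation \eqref{eqn:dcpxhh} identifies its differential with $d\vec{x}\otimes X_k \pm m_1^{\tilde b,\tilde b}(\vec{x}X_k)$. The one superfluous step is fixing a cyclic Poincar\'e pairing. Nothing in the argument uses it, and the paper notes that no cyclic symmetry is needed. It is better dropped, since the given finite-type model is not guaranteed to be cyclic.
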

Note that this does not use any cyclic symmetry properties.

\subsubsection{`Classical mirror symmetry' and Koszul duality}

On the other hand, the usual bar resolution $V  \otimes_\Bbbk \bar{B}V \otimes_\Bbbk V$ of $V$ can be used to compute the Hochschild cohomology of the $A_\infty$-algebra $V$ itself. By the same argument above, we have
$$HH^\ast (V, V) =H^\ast (\mathrm{Hom}_{V-V} (V \otimes_\Bbbk \bar{B}V \otimes_\Bbbk V, V))=H^\ast (\mathrm{Hom}_{\Bbbk} ( \bar{B}V , V)) \cong H^\ast (V^! \otimes V)_{cyc},$$
and hence, $ HH^{p,q} (V,V) = HH^{q,p} (V^!,V^!)$.
However, observe that this does not preserve the bidegree of Hochschild cocycles since the grading on $V^!$ defines the internal grading in the case of $HH^\ast (V^!, V^!)$, but the homological grading in the case of $HH^\ast (V, V)$. Therefore we have
\begin{equation}\label{eqn:hodgedia}
HH^{p,q} (V,V) = HH^{q,p} (V^!,V^!)
\end{equation}
(when $V$ is finite-type).

\subsubsection{Spectral sequence}\label{subsubsec:specseq}
One can equip $(V^! \otimes V)_{cyc}$ with an obvious bigrading, given by
$$ \deg(\vec{x} \otimes Y) := (\deg (\vec{x}), \deg(Y)).$$
With respect to this grading, the differential \eqref{eqn:dcpxhh} decomposes into two operators of degrees $(1,0)$ and $(0,1)$, respectively; that is, $(V^! \otimes V)_{cyc}$ is a double complex. 
Consider the associated spectral sequence $E_{p,q}$. It lies in the third quadrant, and since $V$ is finite-dimensional, the sequence must collapse at a finite stage. 
\begin{lemma}
The above spectral sequence $E_{p,q}$ associated with the double complex $(V^! \otimes V)_{cyc}$ degenerates. If the cohomology of $V^!$ is concentrated at degree $0$, then the $E_2=E_\infty$, which is isomorphic to $HH (V^!,V^!)$. 

For $V=\CF(\mathbb{L},\mathbb{L})$, its $E_2$-page agrees with
$$HF_{cyc} ((\mathbb{L},b),(\mathbb{L},b)) $$
where $b$ is a linear combination of degree 1 generators. This computes $HH^\ast (A_\mathbb{L},A_\mathbb{L})$ in the setting of Corollary \ref{cor:hhextmir}.
\end{lemma}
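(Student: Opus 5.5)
We prove the lemma by analysing the two filtrations of the double complex $(V^!\otimes V)_{cyc}$ from \eqref{eqn:dcpxhh}. Its differential is $d=d_h+d_v$, where $d_h=d_{V^!}\otimes 1$ has bidegree $(1,0)$. The operator $d_v$ has bidegree $(0,1)$ and collects the terms of $m_1^{\tilde b,\tilde b}$ that raise the $V$-degree while preserving the $V^!$-degree.

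\emph{Finiteness.} Since $V$ is finite-dimensional and supported in degrees $0,\dots,n$, the $q$-index takes only finitely many values. Filtering by $q$ (the $V$-degree) therefore gives a bounded filtration in each total degree. This holds even though $V^!$ is infinite in the $p$-direction, provided we work with the completed (graded-dual) version, in which each bidegree piece is a product. Convergence follows, and degeneration holds at a finite page $E_{r}$ with $r\le n+2$, since $d_r$ shifts $q$ by $r-1$ or $r$ and eventually leaves the strip $0\le q\le n$. This is the sense in which the spectral sequence ``degenerates''.

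\emph{Concentration case.} First take $E_1$ to be the $d_h$-cohomology, i.e.\ the filtration in which the first differential is $d_{V^!}\otimes 1$. Since $V$ is a finite free $\Bbbk$-bimodule, $E_1=(H^*(V^!)\otimes V)_{cyc}$. If $H^*(V^!)$ is concentrated in degree $0$, then $E_1$ is supported on the single column $p=0$. All higher differentials $d_r$ with $r\ge 2$ change $p$ by $r\neq 0$ (or by $1-r$, depending on the indexing), so they vanish, and $E_2=E_\infty$. Convergence then identifies $E_\infty$ with $H^*((V^!\otimes V)_{cyc})$, which by \eqref{eqn:hhloop} is $HH^*(V^!,V^!)$ in the finite-type case.

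\emph{Geometric identification.} For $V=\CF(\bL,\bL)$, we have $H^0(V^!)=\cA_\bL$, the Maurer--Cartan algebra, where $\tilde b$ reduces to $b=\sum x_e X_e$ over degree-one generators. We need to show that $d_1$ on $E_1=(\cA_\bL\otimes V)_{cyc}$ is the part of $(-1)^{*}m_1^{\tilde b,\tilde b}$ in which every inserted variable has $V^!$-degree $0$. After projecting to $H^0(V^!)$, only the variables $x_e$ dual to degree-one $X_e$ survive, so this part is exactly $m_1^{b,b}$ restricted to cyclic elements. Hence $E_2=HF_{cyc}((\bL,b),(\bL,b))$. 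Since $H(V^!)\cong \cA_\bL$ is concentrated in degree $0$ in the formal case, Corollary \ref{cor:hhextmir} then gives $HH^*(\cA_\bL,\cA_\bL)$.

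The main obstacle is the identification of $d_1$ with $m_1^{b,b}$. Insertions of positive-degree variables $y_\alpha$ land in negative $V^!$-degree, but they are not zero in $H(V^!)$ unless one passes carefully to cohomology. I would handle this by noting that the terms carrying negative-degree variables change $p$ by a nonzero amount, so they contribute only to the higher $d_r$, which vanish in the concentrated case. Signs must also be matched with \eqref{eqn:dcpxhh}.
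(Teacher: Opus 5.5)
Your proposal is correct and follows the paper's route. You filter so that $E_1$ is the $d_{V^!}$-cohomology $(H^0(V^!)\otimes_\Bbbk V)_{cyc}=(\cA_\bL\otimes_\Bbbk V)_{cyc}$, identify the induced differential with $m_1^{b,b}$, and use concentration in the column $p=0$ to kill all $d_r$ for $r\ge 2$. You also correctly note that the terms of $m_1^{\tilde b,\tilde b}$ with insertions dual to generators of degree $\ge 2$ have bidegree $(-s,1+s)$ with $s>0$, so for the filtration by $V$-degree they only enter $d_r$ for $r\ge 2$; this sharpens the paper's assertion that the differential splits purely into $(1,0)$ and $(0,1)$ parts.
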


\begin{proof}
From \eqref{eqn:dcpxhh}, we see that the $E_1$-page, which is by definition the cohomology with respect to $(1,0)$-differential, is precisely $(H^0(V^!) \otimes_\Bbbk V)_{cyc} = (A_\mathbb{L} \otimes_\Bbbk \CF_{cyc} (\mathbb{L},\mathbb{L}))$, and the induced $(0,1)$-differential on $E_1$ is $m_1^{b,b}$. Thus the $E_2$-page coincides with $HF_{cyc} ((\mathbb{L},b),(\mathbb{L},b))$.
\end{proof}

\subsection{Calabi-Yau structure on $V^!$}
In our setup, especially the Poincar\`{e} pairing on $V$ makes $V^!$ into a Calabi-Yau dga of dimension $n$. Recall
\begin{defn}
A dga $A$ is called Calabi-Yau of dimension $n$ if
$$ \mathrm{RHom}_{A\otimes_\mathbb{C} A^{op}} (A,A \otimes_\mathbb{C} A^{op}) \cong A[n].$$
(The left-hand side can be identified with the space of $A-A$ bimodule maps.)
\end{defn}

Observe that if $V^! \otimes_\Bbbk V^\sharp \otimes_\Bbbk V^!$ is well-defined (hence serves as a free resolution), then
\begin{equation*}
\begin{array}{rcl}
 \mathrm{RHom}_{V^! \otimes_\mathbb{C} (V^!)^{op}} (V^!,V^! \otimes_\Bbbk (V^!)^{op}) &=&\mathrm{Hom}_{V^! - V^!} (V^! \otimes_\Bbbk V^\sharp \otimes_\Bbbk V^!,V^! \otimes_\mathbb{C} V^!) \\
 &=& \mathrm{Hom}_{\Bbbk} ( V^\sharp ,V^! \otimes_\mathbb{C} V^!) 
  \end{array}
 \end{equation*}
where $\mathrm{Hom}_{V^! - V^!}$ means $V^!$-bimodule homomorphisms and $V^! \otimes_\mathbb{C} V^!$ is given the outer bimodule structure:
$$ \vec{a} \cdot (\vec{x} \otimes \vec{y}) \cdot \vec{b} = (\vec{a} \vec{x}) \otimes (\vec{y} \vec{b}).$$
Then $ \mathrm{Hom}_{\Bbbk} ( V^\sharp ,V^! \otimes_\mathbb{C} V^!)$ itself becomes a bimodule via the inner bimodule structure on $V^! \otimes_\mathbb{C} V^!$:
$$ \vec{a} \cdot (\vec{x} \otimes \vec{y}) \cdot \vec{b} = (\vec{x} \vec{b} ) \otimes (\vec{a} \vec{y}).$$

Finally, by adjunction (since $V$ is finite-dimensional),
\begin{equation}\label{eqn::bimodtensor}
\begin{array}{rcl}
\mathrm{Hom}_{\Bbbk} ( V^\sharp ,V^! \otimes_\mathbb{C} V^!) &=&  V \,\, ``\otimes_{\Bbbk-\Bbbk}" \,\, (V^! \otimes_\mathbb{C} V^! ) \vspace{0.2cm} \\
&:=& \vspace{0.2cm}
\left(\begin{array}{ccc} & _{\color{red} \Bbbk} V_{\color{red}\Bbbk} & \vspace{-0.1cm}  \\  &{\color{red}\otimes_\Bbbk}   \hspace{0.8cm}  {\color{red}\otimes_\Bbbk}  & \vspace{0.1cm} \\ & {\color{blue}V^!} \otimes_\mathbb{C} {\color{olive}V^!}  & \end{array}\right)
 \\
&=& {\color{blue}V^!} \otimes_\Bbbk V \otimes_\Bbbk {\color{olive}V^!} \\
 &\cong& V^! \otimes_\Bbbk V^\sharp \otimes_\Bbbk V^! [n] \cong V^! [n],
\end{array}
\end{equation}
we obtain:

\begin{lemma}
When $V$ is finite-type (Definition \ref{def:finmkv}), 
 $V^!$ is Calabi-Yau of dimension $n$. The same is true for $V^!_c$ in this case. 
More generally, if an $A_\infty$-algebra admits a model which is finite-type and satisfies Assumption \ref{assume:onv}, then its Koszul dual dga is Calabi-Yau.
\end{lemma}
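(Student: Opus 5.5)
The plan is to make the computation preceding the statement rigorous, in three steps.

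\emph{Step 1: replace $\mathrm{RHom}$ by an honest $\mathrm{Hom}$.} Since $V$ is finite-type, Corollary \ref{cor:freeresolfin} gives the free bimodule resolution $s^\sharp: V^! \otimes_\Bbbk V^\sharp \otimes_\Bbbk V^! \to V^!$. Hence $\mathrm{RHom}_{V^!\otimes (V^!)^{op}}(V^!, V^!\otimes_\C V^!)$ is computed by $\mathrm{Hom}_{V^!-V^!}(V^! \otimes_\Bbbk V^\sharp \otimes_\Bbbk V^!, V^!\otimes_\C V^!)$, with the outer bimodule structure on the target. The resolution is semi-free: it is filtered by tensor length in $V^\sharp$-generators, and the differential lowers this filtration appropriately because $V^!$ is nonpositively graded. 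So this Hom complex does compute the derived functor, and not merely its naive version.

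\emph{Step 2: reduce to $\Bbbk$-linear maps.} By freeness, a bimodule map is determined by its restriction to the generators $V^\sharp$. This gives an identification with $\mathrm{Hom}_\Bbbk(V^\sharp, V^!\otimes_\C V^!)$, carrying the induced differential. Using finite-dimensionality of $V$, we rewrite this as $V \otimes_{\Bbbk\text{-}\Bbbk}(V^!\otimes_\C V^!)$, which is $V^!\otimes_\Bbbk V\otimes_\Bbbk V^!$ as in \eqref{eqn::bimodtensor}. The remaining bimodule structure comes from the inner structure on $V^!\otimes_\C V^!$.

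\emph{Step 3: transport the differential and apply Poincar\'e duality.} This is the main obstacle: one must check that the differential induced on $V^!\otimes_\Bbbk V\otimes_\Bbbk V^!$ agrees, under the Poincar\'e duality isomorphism $V\cong V^\sharp[n]$, with the differential of the resolution itself.
\begin{itemize}
\item I would compute the induced differential on a generator $\vec x\otimes X_k\otimes \vec y$ exactly as in \eqref{eqn:dcpxhh}. The answer is $d_{V^!}\otimes 1\otimes 1 \pm 1\otimes 1\otimes d_{V^!}$ plus a term whose coefficients are the structure constants $a_{v,w,j}$ of $dx_j$, now inserted with the outer and inner factors swapped. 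This term assembles into an operator of the form $1\otimes m_1^{\tilde b,\tilde b}\otimes 1$, built from the $m_k$ with $X_k$ in the distinguished slot.
\item The resolution's differential was written using the dual basis $x_j\in V^\sharp$. To compare, I would use $\langle m_k(\ldots,X_k,\ldots), X_j\rangle$ together with cyclic symmetry of the pairing. This is the same manipulation as in the lemma identifying the differential on $\widehat{V^!\otimes V^\sharp\otimes V^!}$ with $d_{V^!}\otimes1\otimes1+1\otimes1\otimes d_{V^!}+1\otimes m_1^{\tilde b,\tilde b}\otimes1$.
\item Hence, if $V$ is cyclic, the two differentials coincide up to signs. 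If $V$ is not cyclic, I would first pass to a cyclic minimal model, which is available for Floer complexes; I expect keeping finite-typeness there to be a subtle point.
\item This yields a bimodule isomorphism to $V^!\otimes_\Bbbk V^\sharp\otimes_\Bbbk V^![n]$. Composing with $s^\sharp$ gives $\cong V^![n]$ as bimodules, proving the Calabi-Yau property.
\end{itemize}

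\emph{Step 4: the variants.} For $V^!_c$ the same argument applies verbatim: Corollary \ref{cor:freeresolfin} also furnishes $V^!_c\otimes_\Bbbk V^\sharp\otimes_\Bbbk V^!_c$ as a resolution, and every map above preserves polynomial subspaces. For the general statement, $\mathrm{RHom}$ and the Koszul dual are invariant under $A_\infty$-quasi-isomorphism up to quasi-isomorphism of dgas; I expect this to require a brief justification using completeness, since $V^!$ is complete. Therefore computing with the finite-type model satisfying Assumption \ref{assume:onv} suffices.
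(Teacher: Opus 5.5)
Your proposal is correct and is essentially the paper's argument: the free resolution of Corollary \ref{cor:freeresolfin} turns $\mathrm{RHom}$ into $\mathrm{Hom}_\Bbbk(V^\sharp, V^!\otimes_\C V^!)\cong V^!\otimes_\Bbbk V\otimes_\Bbbk V^!$, and then Poincar\'e duality followed by $s^\sharp$ gives $V^![n]$, exactly as in \eqref{eqn::bimodtensor}. Your Step 3 makes explicit what the paper leaves tacit, namely that matching the differentials uses the cyclic-symmetry lemma, so the pairing is implicitly assumed cyclic; the paper never passes to a cyclic model. In Step 1, the semi-free filtration is most cleanly taken by the degree of the generator in $V^\sharp$, which the differential strictly raises because all coefficients in $V^!$ have nonpositive degree.
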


\section{The localized mirror functor}\label{sec:locfunc}

We have mostly discussed the closed-string mirror symmetry (adapted to the localized mirror setup) in Section \ref{sec:extmirkos}. In fact,  one can construct a natural ring homomorphism from $SH^\ast(X)$ or $QH^\ast(X)$ using Corollary \ref{cor:hhextmir}, which can be understood as a generalization of the Kodaira-Spencer map in \cite{FOOOtoric} (see \cite{HJL25, CHJL25} also for related approaches).

We now investigate aspects of homological mirror symmetry for the localized mirror 
$\tilde{\mathcal{A}}_{\bL}$ associated to a compact Lagrangian $\bL$. 
More specifically, we compare the subcategory of $\Fuk(X)$ generated by $\bL$ with 
the dg-module category over the associated local mirror $\tilde{\mathcal{A}}_{\bL}$ in this section. 
From Section~\ref{sec:prelim}, we have a natural $A_\infty$-functor
\[
\cF^{(\bL,\tilde{b})} : \Fuk(X) \to \mathrm{Mod}_{\dg}(\tilde{\mathcal{A}}_{\bL}).
\]
We show that, after restricting to appropriate subcategories on both sides, 
this functor becomes an equivalence.


\subsection{The localized mirror functor $\cF^{(\bL,\tilde{b})}$ as the Koszul dual functor}
The localized mirror functor can be interpreted as a geometric manifestation of the Koszul dual functor, which is defined as follows. In an abstract algebraic setup, there exists a canonical functor
$$\mathcal{F} :  \mathrm{Mod}_{A_\infty} (V) \to  \mathrm{Mod}_{\dg} (V^!)$$
where $\mathrm{Mod}_{A_\infty} (V)$ is the dg category of (right) $A_\infty$-bimodules over $V$ (with pre-homomorphisms), and $\mathrm{Mod}_{\dg} (V^!)$ is analogously defined.
It sends a right $A_\infty$-module $M$ over $V$ to
\begin{equation}\label{eqn:HAmirrorfunc}
  \mathcal{F} (M) =\Hom_{\mathrm{Mod}_{A_\infty} (V) } (M,\Bbbk) \cong   \Hom_\Bbbk (M \otimes_\Bbbk \bar{B}V, \Bbbk).
\end{equation}
Notice that the right-hand side is canonically a right module over $\mathrm{RHom}_{V}  (\Bbbk,\Bbbk) =V^!$. 
When $M$ is finite-dimensional, we can further simplify
\begin{equation}\label{eqn:functorforfdm}
\mathcal{F} (M)= \Hom_\Bbbk (M \otimes_\Bbbk \bar{B}V, \Bbbk) \cong V^! \otimes M^\sharp.
\end{equation}
We assume this is the case from now on.

On the morphism level, $\mathcal{F}$ is given by the obvious duality map (so it is contravariant)
\begin{equation}\label{eqn:kosfunc}
 \mathrm{RHom}_{V}  (M,M') \to \mathrm{Hom}_{\mathrm{Mod}_{\dg} (V^!)}  ( \mathrm{RHom}_{V}  (M', \Bbbk),\mathrm{RHom}_{V}  (M, \Bbbk)),
\end{equation}
which sends $f \in \mathrm{RHom}_{V} (M,M')$ to the map on the right-hand side of \eqref{eqn:kosfunc} 
\begin{equation}\label{eqn:defmorfun}
 g\in \underbrace{\mathrm{RHom}_{V} (M', \Bbbk)}_{ = \Hom_\Bbbk (M' \otimes_\Bbbk \bar{B}V, \Bbbk)} \mapsto \left(M \otimes_\Bbbk \bar{B}V   \stackrel{\hat{f}}{\to} M' \otimes_\Bbbk \bar{B}V  \stackrel{g}{\to} \Bbbk\right) \in \underbrace{ \mathrm{RHom}_{V}  (M, \Bbbk)}_{ =\Hom_\Bbbk (M \otimes_\Bbbk \bar{B}V, \Bbbk)}.
\end{equation}
Here, $\mathrm{Hom}_{\mathrm{Mod}_{\dg} (V^!)}$ are simply $V^!$-linear maps between dg modules over $V^!$.

In order to compare $\mathcal{F}$ and the localized mirror functor in \ref{subsec:reviewlocmir}, we compute the differential on $ \mathcal{F} (M)$ in terms of coordinates. For this purpose, let us fix a basis $\{P_1, \cdots, P_{N'}\}$ of $M$, and its dual basis  $\{Q_1,\cdots, Q_{N'}\} $ of $M^\sharp$, which itself is a module over $V$ via
$$ (P_i, m_k (X_{i_1},\cdots, X_{i_l}, Q_j)):= (m_k (P_i, X_{i_l},\cdots, X_{i_1}), Q_j).$$
As before, we write $\{1,X_1,\cdots, X_N\}$ for a chosen basis of $V$.
We identify $x_{\vec{v}} \otimes Q_i \in V^! \otimes M^\sharp$ with  the element of $\hom_\Bbbk (\bar{B}V \otimes M, \Bbbk)$ \eqref{eqn:functorforfdm} via
$$ x_{\vec{v}} \otimes Q_i :   P_j \otimes X_{\vec{w}} (\in M \otimes_\Bbbk \bar{B}V ) \mapsto Q_i (P_j) \cdot x_{\vec{v}} (X_{\vec{w}})  (\in \Bbbk).$$
By definition, the differential of $x_{\vec{v}} \otimes Q_i$ is given by
\begin{equation*}
\begin{array}{rcl}
 d ( x_{\vec{v}} \otimes Q_i) ( P_j \otimes X_{\vec{w}} ) &=&   Q_i (P_j) \cdot x_{\vec{v}}  ( \sum (-1)^{|\vec{w}_1|'}X_{\vec{w}_1} \otimes m (X_{\vec{w}_2}) \otimes X_{\vec{w}_3})  \\
 &&+ \sum (-1)^{\eta(P_j,\vec{w}_1)} Q_i (m ( P_j, X_{\vec{w}_1} ) ) \cdot x_{\vec{v}} (X_{\vec{w}_2} )  \\
 &=& (d \vec{x} \otimes Q_i) ( X_{\vec{w}} \otimes P_j) + 
\sum_{\vec{w} =  \vec{w}' \otimes \vec{v}^{op}}  (-1)^{\eta(P_j,\vec{w}')}m ( X_{\vec{w}'^{op}} ,Q_i) (  P_j ).
 \end{array}
 \end{equation*}
 Here $\eta$ denotes the Koszul sign in the right-module bar differential, computed by moving the corresponding module operation past the preceding shifted inputs.
 Therefore
 \begin{equation}\label{eqn:funcdouble}
\begin{array}{rcl}
d ( x_{\vec{v}} \otimes Q_i) &=& d \vec{x} \otimes Q_i + \sum_{\vec{w}'} (-1)^{\eta(Q_i,\vec{w}')} (x_{\vec{w}'} \otimes x_{\vec{v} }  ) \otimes \, m (X_{\vec{w}'^{op}}, Q_i)\\
&=& d \vec{x} \otimes Q_i + \sum_{\vec{w}'} (-1)^{\eta(Q_i,\vec{w}')} x_{\vec{v} }   \otimes \, m ( x_{\vec{w}'} X_{\vec{w}'^{op}}, Q_i) = d(\vec{x}) \otimes Q_i +  \vec{x} m_1^{\tilde{b},0} (Q_i).
  \end{array}
 \end{equation}

We see that $\mathcal{F}$ is precisely the (extended) localized mirror functor $\cF^{(\bL,\tilde{b})}$ when $V= \CF(\mathbb{L},\mathbb{L})$.
In the geometric situation, $M$ corresponds to $CF(L,\mathbb{L})$, and due to the Poincar\'e pairing one has a canonical identification $M^{\sharp} \cong \CF(\mathbb{L},L)$. (Note that $M$ is finite-dimensional since $\mathbb{L}$ is compact.) Thus we have obtained:
\begin{prop} \label{prop:kos}
For $L \in \Fuk (X)$, $\mathcal{F}^{(\bL,\tilde{b})} (L) = \mathcal{F} (M_L)$ where $M_L$ is the module over $\CF(\bL,\bL)$ given as $M_L = \CF(L,\bL)[n]$. 
\end{prop}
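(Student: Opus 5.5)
The plan is to compare the two dg $\tilde{\cA}_\bL$-modules $\mathcal{F}^{(\bL,\tilde b)}(L)$ and $\mathcal{F}(M_L)$ directly. We will match their underlying free modules first and then their differentials.

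First we identify the underlying modules. By definition, $\mathcal{F}^{(\bL,\tilde b)}(L)=\CF((\bL,\tilde b),L)=\tilde{\cA}_\bL\otimes_{\Bbbk}\CF(\bL,L)$. On the other hand, $M_L=\CF(L,\bL)[n]$ is finite-dimensional because $\bL$ is compact, so \eqref{eqn:functorforfdm} gives $\mathcal{F}(M_L)\cong V^!\otimes_\Bbbk M_L^\sharp$ with $V=\CF(\bL,\bL)$. By the Remark identifying $(\bar BV)^\sharp$ with $\tilde{\cA}_\bL$, we have $V^!=\tilde{\cA}_\bL$. It then suffices to identify $M_L^\sharp$ with $\CF(\bL,L)$ as $\Bbbk$-bimodules. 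For this we use the Poincar\'e (Floer) pairing $\CF(L,\bL)\otimes\CF(\bL,L)\to\C$, which is perfect. The shift $[n]$ is chosen exactly so that the pairing, $\deg_{M^\sharp}(Q)=-\deg_M(P)$, and $\deg P+\deg Q=n$ together make the identification degree-preserving. We send $Q_i\in M_L^\sharp$, the dual of the basis element $P_i$, to the element $\bar P_i\in\CF(\bL,L)$ with $\langle P_j,\bar P_i\rangle=\delta_{ij}$. The right $\tilde{\cA}_\bL$-module structures agree on both sides, since each is given by left multiplication on the coefficient factor. Both are written with the same noncommutative coefficient conventions.

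Next we compare the differentials, which is the main step. By \eqref{eqn:funcdouble},
\[
d(\vec x\otimes Q_i)=d\vec x\otimes Q_i+\vec x\, m_1^{\tilde b,0}(Q_i),
\]
where the $V$-module structure on $M^\sharp$ is the transposed one, $(P,m(X_{i_1},\dots,X_{i_l},Q)):=(m(P,X_{i_l},\dots,X_{i_1}),Q)$. On the geometric side, the differential on $\tilde{\cA}_\bL\otimes\CF(\bL,L)$ is $d_{\tilde{\cA}_\bL}\otimes1$ plus $\sum m_{k+1}(\tilde b,\dots,\tilde b,\bar P_i)$. So we must check that the transposed module operation on $M_L^\sharp$ agrees, under the identification above, with the Floer operation $m_{k+1}(X_{i_1},\dots,X_{i_k},\bar P_i)$ on $\CF(\bL,L)$. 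This is the cyclic symmetry of the Fukaya $A_\infty$-structure for the pair $(\bL,L)$:
\[
\langle m_{k+1}(P_j,X_{i_k},\dots,X_{i_1}),\bar P_i\rangle=\pm\langle P_j,m_{k+1}(X_{i_1},\dots,X_{i_k},\bar P_i)\rangle.
\]
Both sides count the same holomorphic polygons with boundary on $L$ and $\bL$, read with opposite orientation and cyclically rotated inputs. The reversal of inputs is absorbed by the notation $x_{\vec w'}X_{\vec w'^{op}}$, which already appears in \eqref{eqn:funcdouble}. For this we will choose a cyclic model, or use the fact that the functor is independent of the model up to quasi-isomorphism.

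The expected obstacle is the signs. The sign $\eta(Q_i,\vec w')$ in \eqref{eqn:funcdouble} must match the Koszul sign of \eqref{eq:mk} combined with the cyclic sign $\chi$. I would handle this exactly as in the proof of the preceding lemma on $d=d_{V^!}\otimes1\otimes1+1\otimes1\otimes d_{V^!}+1\otimes m_1^{\tilde b,\tilde b}\otimes1$. The shifted-degree cyclic convention turns $(-1)^{\epsilon+\chi}$ into the bar sign. Any residual global sign can be absorbed by rescaling $Q_i\mapsto(-1)^{\deg P_i}Q_i$, which does not change the module structure.

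Finally, compatibility on morphisms follows formally. Both functors are given by precomposition with the induced map $\hat f$ on $M\otimes\bar BV$, as in \eqref{eqn:defmorfun}, dualized through the same pairing.
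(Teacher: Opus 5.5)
Your route is the paper's. It computes the differential on $\mathcal{F}(M)\cong V^!\otimes_\Bbbk M^\sharp$ in coordinates, \eqref{eqn:funcdouble}, and then identifies $M_L^\sharp\cong\CF(\bL,L)$ ``due to the Poincar\'e pairing''. You correctly see that this phrase hides a cyclic-symmetry argument. But the identity you write down for it,
\[
\langle m_{k+1}(P_j,X_{i_k},\dots,X_{i_1}),\bar P_i\rangle=\pm\langle P_j,m_{k+1}(X_{i_1},\dots,X_{i_k},\bar P_i)\rangle,
\]
is not cyclic symmetry, and it fails in general for $k\geq 2$.

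Cyclic symmetry rotates the boundary inputs of a polygon; it never reverses them. What it actually gives on the right is $\pm\langle m_{k+1}(X_{i_k},\dots,X_{i_1},\bar P_i),P_j\rangle$. When $\bL$ has several components, the reversed string $X_{i_1},\dots,X_{i_k}$ is typically not even composable when $X_{i_k},\dots,X_{i_1}$ is. Also, ``the same polygons read with opposite orientation'' would be anti-holomorphic polygons, which are not counted. The reversal you are trying to justify comes from the reversed transposed structure $(P,m(X_{i_1},\dots,X_{i_l},Q)):=(m(P,X_{i_l},\dots,X_{i_1}),Q)$ that you adopted, which is really a left $V^{op}$-module, not from the geometry. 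Declaring it ``absorbed by the notation $x_{\vec w'}X_{\vec w'^{op}}$'' does not repair it.

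The fix is to dualize literally. $\Hom_\Bbbk(M_L\otimes_\Bbbk\bar BV,\Bbbk)$ makes $M_L^\sharp$ a left $V$-module via $\langle P,m(X_1,\dots,X_k,Q)\rangle=\pm\langle m(P,X_1,\dots,X_k),Q\rangle$, with no reversal. Then do the bookkeeping directly:

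1. Send $\phi$ to $\sum\phi(P_j\otimes X_{a_1}\otimes\cdots\otimes X_{a_k})\,x_{a_k}\cdots x_{a_1}\otimes\bar P_j$.
2. If $\phi\leftrightarrow f\otimes\bar P_l$, the module part of $d\phi$ is $\pm\sum_j\langle m(P_j,X_{a_1},\dots,X_{a_r}),\bar P_l\rangle\, f\,x_{a_r}\cdots x_{a_1}\otimes\bar P_j$.
3. Ordinary cyclic symmetry rewrites this as $\pm f\,x_{a_r}\cdots x_{a_1}\otimes m(X_{a_1},\dots,X_{a_r},\bar P_l)$. This is exactly what \eqref{eq:mk} assigns to $m(x_{a_1}X_{a_1},\dots,x_{a_r}X_{a_r},f\bar P_l)$.
4. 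The bar-internal terms give $d_{\tilde{\cA}_\bL}\otimes 1$.

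So the only order reversal is the one between bar words and paths, and \eqref{eq:mk} supplies it. Your sign strategy then goes through.

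Separately, the morphism-level compatibility is not part of this statement. It is also not purely formal: the geometric functor on morphisms is defined by Floer operations, and matching them with precomposition by $\hat f$ needs the same cyclic-symmetry argument for polygons with inputs from $\Fuk(X)$. Better to drop that paragraph.
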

In other words, $\mathcal{F}^{(\bL,\tilde{b})}$ can be understood as the composition of the Yoneda functor
$$ \mathcal{Y}_\bL : \Fuk (X) \to \mathrm{Mod} (V)$$ 
with respect to $\bL$, and the (categorical) Koszul duality
$$ \mathcal{K}: \mathrm{Mod}(V) \to \mathrm{Mod}(V^!).$$
More precisely, $\mathcal{F}^{(\bL,\tilde{b})} = \mathcal{K} \circ \mathcal{Y}_\bL [n]$, but we will omit the degree shift, and simply identify $\mathcal{F}^{(\bL,\tilde{b})} = \mathcal{K} \circ \mathcal{Y}_\bL$ from now on.

%

\subsection{Fully-faithfulness of the localized mirror functor $\cF^{(\bL,\tilde{b})}$}
Using the above Koszul formulation, we can easily deduce that all the simple objects in the mirror are the image of the mirror functor.

\begin{lemma}\label{lem:hitbbk}
The image of $\mathbb{L}$ itself under the (extended) localized mirror functor 
 $$\cF^{(\bL,\tilde{b})} : \Fuk (X) \to \mathrm{Mod}_{\dg} (\tilde{\mathcal{A}}_{\mathbb{L}})$$
is quasi-isomorphic to the simple object $\Bbbk$. Moreover, the cohomology of $\cF^{(\bL,\tilde{b})}(\mathbb{L})$ is generated by $[pt_{\mathbb{L}}] \in HF((\mathbb{L},b),\mathbb{L})) \cong \Bbbk$.
\end{lemma}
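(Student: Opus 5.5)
By Proposition \ref{prop:kos}, the image of $\bL$ can be computed purely algebraically. We have $\cF^{(\bL,\tilde{b})}(\bL) = \mathcal{F}(M_\bL)$, where $M_\bL = \CF(\bL,\bL)[n]$ is the diagonal module $V$, up to shift, viewed as a right module over $V=\CF(\bL,\bL)$. Since $\bL$ is compact, $M_\bL$ is finite-dimensional, so \eqref{eqn:functorforfdm} applies and gives
\[
\mathcal{F}(V) \cong V^! \otimes_\Bbbk V^\sharp ,
\]
with the differential computed in \eqref{eqn:funcdouble}, namely $d\vec{x}\otimes Q + \vec{x}\, m_1^{\tilde b,0}(Q)$. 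The first step is to identify this complex, as a $V^!$-module, with the dual of the bar resolution $V\otimes_\Bbbk \bar{B}V \to \Bbbk$ (the left-module version of \eqref{eqn:barresol}). This is essentially the observation recorded just before Lemma \ref{lem:leftrightresol}. One must check that the differential in \eqref{eqn:funcdouble} agrees with the dual of the bar differential on $V\otimes \bar BV$, including the signs $\eta$. Left versus right module conventions should only amount to passing to opposites.

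Granting this identification, Lemma \ref{lem:leftrightresol} yields the quasi-isomorphism $V^!\otimes_\Bbbk V^\sharp \simeq \Bbbk$. Here $\Bbbk$ carries the $V^!$-module structure coming from the canonical augmentation. This proves the first claim.

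It remains to locate the cohomology generator. The quasi-isomorphism is dual to the inclusion $\Bbbk \hookrightarrow V\otimes \bar{B}V$ into $1_V\otimes 1_\Bbbk$, so the cohomology class is represented by the element $1_\Bbbk \otimes 1_V^\sharp$ in $V^!\otimes V^\sharp$. I will check directly that it is a cycle. The component $d(1_\Bbbk)\otimes 1_V^\sharp$ vanishes. The term $m_1^{\tilde b,0}(1_V^\sharp)$ pairs $1_V^\sharp$ against $m(\vec X, P)$; by unitality and degree reasons, no output of such an operation with nonempty input $\vec X$ and $P\neq$ unit can have a $1_V$-component. So this term vanishes as well.

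Under Poincar\'e duality $V^\sharp\cong V[n]$, we have $1_V^\sharp = PD([\mathrm{pt}_V])$, so the class corresponds to $[\mathrm{pt}_\bL]\in \CF((\bL,\tilde b),\bL)$. Its nontriviality follows because it maps to $1\in\Bbbk$ under the quasi-isomorphism $s^\sharp$-type projection. Equivalently, after reducing by the augmentation ideal, $HF((\bL,b),\bL)$ has $\Bbbk$ spanned by $[\mathrm{pt}_\bL]$.

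The main obstacle I expect is the first step. It requires a careful matching of the differential \eqref{eqn:funcdouble} with the dual bar differential, including the Koszul signs and the left/right conventions. It also requires checking that the quasi-isomorphism is $V^!$-linear rather than merely $\Bbbk$-linear. I would handle this by mimicking the contracting homotopy of Lemma \ref{lem:conthtpy}, namely inserting $1_V$, on the one-sided complex and dualizing it.
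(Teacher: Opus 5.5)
Your proposal is correct and follows the paper's proof. Both plug $M=V$ into the Koszul functor to get $V^!\otimes_\Bbbk V^\sharp=\Hom_\Bbbk(V\otimes_\Bbbk\bar{B}V,\Bbbk)$, apply Lemma \ref{lem:leftrightresol} with the quasi-isomorphism given by evaluation at $1_V\otimes 1_\Bbbk$, and identify the generator with $1_V^\sharp=PD([\mathrm{pt}_V])$ via the Poincar\'e pairing. The step you flag as the main obstacle is in fact definitional, since \eqref{eqn:funcdouble} was derived as the dual of the one-sided bar differential; your explicit cycle check and $V^!$-linearity remark only add detail that the paper leaves implicit.
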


\begin{proof}
Algebraically, this is the case when we plug in $M=V$ in \eqref{eqn:HAmirrorfunc}.
Thus
$$ \cF^{(\bL,\tilde{b})}(\mathbb{L}) = \mathrm{Hom}_{\mathrm{Mod}_{A_\infty} (V)} (V,\Bbbk) \cong  \mathrm{Hom}_{\Bbbk} (V \otimes_\Bbbk \bar{B} V ,\Bbbk) \cong V^! \otimes_\Bbbk V^\sharp \stackrel{qis}{\cong} \Bbbk$$
by Lemma \ref{lem:leftrightresol}, where the quasi-isomorphism takes $(f:V\to\Bbbk) \in \mathrm{Hom}_{\mathrm{Mod}_{A_\infty} (V)} (V,\Bbbk)$ to $ f(1_{\mathbb{L}}) \in  \Bbbk$. If we view $f$ as a $\Bbbk$-linear map,  then by nondegeneracy of the Poincar\'{e} pairing, one can find an element $Q$ of $V$ such that $f= (Q, -)$, and hence $f(1_\mathbb{L}) = (Q, 1_\mathbb{L})$. For this to be nontrivial, $Q$ must be a multiple of $[pt_{\mathbb{L}}]$. In other words, the cohomology of $\CF((\mathbb{L},b),\mathbb{L}))$ is generated by $[pt_{\mathbb{L}}]$. 
\end{proof}

Let $\mathcal{D}(\tilde{\mathcal{A}}_{\mathbb{L}})$ be the derived category of dg modules over $\tilde{\mathcal{A}}_{\mathbb{L}}$ which is obtained from $\mathrm{Mod}_{\dg} (\tilde{\mathcal{A}}_{\mathbb{L}})$ by inverting quasi-isomorphisms. Lemma \ref{lem:hitbbk} can be reformulated as the statement that the derived functor
$$\mathcal{D} \cF^{(\bL,\tilde{b})} : \mathcal{D}  \Fuk (X) \to \mathcal{D}(\tilde{\mathcal{A}}_{\mathbb{L}})$$
sends $\mathbb{L}$ to $\Bbbk \langle [pt_{\mathbb{L}}] \rangle \cong \Bbbk$.


Let \(\Bbbk=\bigoplus_i \Bbbk_i\) be the direct sum of the simple
\(\tilde{\mathcal A}_{\mathbb L}\)-modules associated to the vertices. We denote by
\[
\operatorname{thick}_{\mathcal D(\tilde{\mathcal A}_{\mathbb L})}(\Bbbk)
\]
the smallest thick triangulated subcategory of
\(\mathcal D(\tilde{\mathcal A}_{\mathbb L})\) containing \(\Bbbk\).\footnote{Intuitively, this consists of simple modules, or finite-dimensional quiver representations.}

Equivalently, this is the full subcategory of
\(\mathcal D(\tilde{\mathcal A}_{\mathbb L})\) consisting of dg
\(\tilde{\mathcal A}_{\mathbb L}\)-modules \(M\) whose cohomology \(H^*(M)\) is
bounded, finite-dimensional, and nilpotent with respect to the ideal $I \subset H^*(\tilde{\mathcal A}_{\mathbb L})$
generated by the cohomology classes of the arrows. Namely, for every $M$
there exists \(N\gg 0\) such that
\[
I^{N} H^*(M)=0.
\]
We denote this subcategory by
\[\mathcal D_{\mathrm{nil}}(\tilde{\mathcal A}_{\mathbb L})
:=
\operatorname{thick}_{\mathcal D(\tilde{\mathcal A}_{\mathbb L})}(\Bbbk).\]

The localized mirror functor behaves particularly well on the full subcategory $\Fuk_{\mathbb{L}} (X) \subset \Fuk(X)$ generated by $\mathbb{L}$. Equivalently, this is a restriction of the Koszul duality functor to the perfect complexes over $V$.

\begin{thm}\label{thm:extfuctorequiv}
Let $\bL$ be a compact $\mathbb{Z}$-graded unobstructed immersed Lagrangian without nonpositive degree immersed generators.
Then the extended mirror functor induces a quasi-equivalence
$$\mathcal{D} \Fuk_{\mathbb{L}} (X) \longrightarrow \mathcal{D}_{\mathrm{nil}}(\tilde{\cA}_\bL).$$
\end{thm}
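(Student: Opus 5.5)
The plan is to reduce the statement to a standard thick-subcategory argument, using Proposition \ref{prop:kos}: $\cF^{(\bL,\tilde b)}=\mathcal{K}\circ\mathcal{Y}_\bL$. There are two ingredients, fully faithfulness on the generator $\bL$ and essential surjectivity onto $\operatorname{thick}(\Bbbk)$.

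\textbf{Reduction to the generator.} Since $\Fuk_\bL(X)$ is by definition the triangulated (idempotent-complete) envelope of $\bL$, and $\mathcal{D}\cF^{(\bL,\tilde b)}$ is an exact functor of triangulated categories, it suffices to check two things.
\begin{enumerate}
\item The map $\Hom^\ast_{\Fuk}(\bL,\bL)\to \Hom^\ast_{\mathcal D(\tilde\cA_\bL)}(\cF(\bL),\cF(\bL))$ is an isomorphism.
\item The image of $\bL$ is $\Bbbk$.
\end{enumerate}
A five-lemma induction over cones and summands then gives full faithfulness on all of $\Fuk_\bL(X)$. The essential image is then the thick closure of $\cF(\bL)$, and by Lemma \ref{lem:hitbbk} this is $\operatorname{thick}(\Bbbk)=\mathcal D_{\mathrm{nil}}(\tilde\cA_\bL)$. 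Contravariance of $\mathcal K$ is harmless: either replace $\Fuk_\bL(X)$ by its opposite, or use the Poincaré duality $M_L^\sharp\cong \CF(\bL,L)$ to rewrite the functor covariantly, as already done in \eqref{eqn:funcdouble}.

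\textbf{Step 1: Yoneda.} The first step is $\Fuk_\bL(X)\simeq \mathrm{Perf}(V)$ for $V$ a minimal model of $\CF(\bL,\bL)$. This is the $A_\infty$ Yoneda lemma: $\mathcal Y_\bL$ sends $\bL$ to the free module $V$, and is fully faithful on the subcategory generated by $\bL$.

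\textbf{Step 2: endomorphisms of $\Bbbk$.} The main computation is to show that $\mathcal K$ induces a quasi-isomorphism
$$\mathrm{RHom}_V(V,V)=V\longrightarrow \mathrm{RHom}_{V^!}(\mathcal K(V),\mathcal K(V))\simeq \mathrm{RHom}_{V^!}(\Bbbk,\Bbbk)=(V^!)^!.$$
Lemma \ref{lem:vdoubledual} already gives an abstract isomorphism $(V^!)^!\cong V$, via the free resolution $V^!\otimes_\Bbbk V^\sharp\to\Bbbk$ of Lemma \ref{lem:leftrightresol}. What has to be checked is that the specific duality map \eqref{eqn:defmorfun} realizes this isomorphism. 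I would do this concretely. Since $\mathcal K(V)=V^!\otimes_\Bbbk V^\sharp$ is already the free (semi-free) resolution of $\Bbbk$, we have
$$\Hom_{V^!}(V^!\otimes V^\sharp,V^!\otimes V^\sharp)\simeq \Hom_{V^!}(V^!\otimes V^\sharp,\Bbbk)=\Hom_\Bbbk(V^\sharp,\Bbbk)\cong V.$$
Under this identification, $f\in V$ acting by right multiplication on $V\otimes\bar BV$ and then dualized, composed with the augmentation, recovers the functional $X\mapsto \langle f, X\rangle$ up to the identification $V^{\sharp\sharp}=V$. Only the word-length-zero part of $\hat f$ survives after composing with the augmentation, so this map is an isomorphism on the nose.

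\textbf{Step 3: identifying $\operatorname{thick}(\Bbbk)$.} This closes the argument; the remark preceding the theorem, equating the thick closure with modules having finite-dimensional nilpotent cohomology, is not needed for the equivalence itself.

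\textbf{Expected obstacle.} I expect the main obstacle to be Step 2, namely making sure the functor, rather than some abstract isomorphism, gives the quasi-isomorphism. This involves matching the higher components of $\hat f$ and the signs in \eqref{eqn:funcdouble}. A related concern is that $V^!\otimes V^\sharp$ uses the graded dual, and the finiteness needed relies on $V$ being finite rank and concentrated in nonnegative degrees with $V_0=\Bbbk$ (Assumption \ref{assume:onv}). I would handle the higher components of $\hat f$ by filtering by bar word length: the associated graded map is the linear duality, which is an isomorphism, and the filtration is complete and exhaustive degreewise because $\deg x_i\le 0$ with only finitely many generators in each length. A spectral-sequence comparison then finishes the step.
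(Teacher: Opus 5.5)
Your proposal is correct and follows the paper's proof. You reduce to the generator $\bL$, use Lemma~\ref{lem:hitbbk} to get $\cF(\bL)\simeq\Bbbk$, and identify the endomorphism map with the double Koszul duality map $V\to (V^!)^!$ via the semi-free resolution $V^!\otimes_\Bbbk V^\sharp\to\Bbbk$; the paper also leans on Lemma~\ref{lem:vdoubledual} for that last identification. Your extra check is a useful addition: after composing with the augmentation, only the length-zero component of $\hat f$ survives, so the functor's map is exactly the canonical isomorphism $V\cong V^{\sharp\sharp}$. This makes precise what the paper leaves implicit when it calls the map ``the double Koszul duality map,'' and it makes your word-length spectral-sequence argument unnecessary.
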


\begin{proof}
Since we already know $\Bbbk$ lies in the image of the functor, it suffices to compare the endomorphism $V=CF(\mathbb{L},\mathbb{L})$ with its image $\Hom_{\mathrm{Mod}_{\dg} (V^!)} (\Bbbk,\Bbbk)$ on morphism level, passing to the derived category.
Using \eqref{eqn:functorforfdm}, we have
\begin{equation*}
\begin{array}{rcl}
\Hom_{\mathrm{Mod}_{\dg} (V^!)} (\mathcal{F}(V),\mathcal{F}(V)) &=& \Hom_{V^!} (V^! \otimes_\Bbbk  V^\sharp, V^! \otimes_\Bbbk  V^\sharp ) \\
 & =& \mathrm{RHom}_{V^!} (V^! \otimes_\Bbbk  V^\sharp, V^! \otimes_\Bbbk  V^\sharp )  \\
 &\cong& \mathrm{RHom}_{V^!} (\Bbbk, \Bbbk ) 
 \end{array}
 \end{equation*}
by Lemma \ref{lem:leftrightresol}.
The last line uses the fact that $\mathrm{RHom}$ (the derived hom) only depends on the quasi-isomorphism type of objects.
This is simply the Koszul dual of $V^!$. Hence the morphism level functor of $\cF^{(\bL,\tilde{b})}$ on the endomorphism algebra of $\mathbb{L}$ is a natural map (the double Koszul duality map) $ V \to (V^!)^!$, which induces a quasi-equivalence by Lemma \ref{lem:vdoubledual}.
\end{proof}

Moreover, a completed path algebra $\widehat{A}$ has the following nice property.

\begin{lemma}\label{lem:fd=nil}
	Let \(Q\) be a finite quiver, let \(I\subset kQ\) be a two-sided ideal, and set $A:=kQ/I.$
	Let \(\mathfrak m\subset A\) be the image of the arrow ideal of \(kQ\), and let
	\[
	\widehat A:=\varprojlim_{n} A/\mathfrak m^n
	\]
	be the \(\mathfrak m\)-adic completion of \(A\). Denote by \(\widehat{\mathfrak m}\subset \widehat A\) the completed arrow ideal. Then every finite-dimensional left \(\widehat A\)-module is \(\widehat{\mathfrak m}\)-nilpotent, i.e. $\mathrm{Mod}_{\mathrm{fd}}(\widehat{A}) \cong \mathrm{Mod}_{\mathrm{nil}}(\widehat{A}).$
\end{lemma}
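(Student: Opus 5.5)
Let $M$ be a finite-dimensional left $\widehat A$-module. The plan is to show that the descending chain of submodules
\[
M\supseteq \widehat{\mathfrak m}M\supseteq \widehat{\mathfrak m}^2M\supseteq\cdots
\]
reaches $0$. Since $M$ is finite-dimensional, the chain stabilizes at some $N':=\widehat{\mathfrak m}^{N}M$ with $\widehat{\mathfrak m}N'=N'$. The goal is then to prove $N'=0$ by a Nakayama-type argument. Two preliminary facts are needed. First, $\widehat{\mathfrak m}$ is finitely generated as a left ideal, namely by the finitely many arrows: $\widehat{\mathfrak m}=\sum_{a\in Q_1}\widehat A\,a$. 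Second, every element of $1+\widehat{\mathfrak m}$ is invertible in $\widehat A$, or more precisely $e_i+\widehat{\mathfrak m}$ is invertible in each corner $e_i\widehat Ae_i$. This is the standard completeness statement: if $u\in\widehat{\mathfrak m}$, the geometric series $\sum_k u^k$ converges in the $\mathfrak m$-adic topology. Hence $\widehat{\mathfrak m}$ lies in the Jacobson radical of $\widehat A$.

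For the first fact, I would argue via the inverse limit. An element of $\widehat{\mathfrak m}$ is a compatible sequence of elements of $\mathfrak m/\mathfrak m^n$. Any such element can be written as $\sum_a f_a a$, where the $f_a$ are obtained by peeling off the last arrow of each path. Passing to the limit gives elements $f_a\in\widehat A$, and compatibility can be arranged because $A/\mathfrak m^n$ is a quotient of the finite-dimensional truncated path algebra. This is the step I expect to need the most care: one must check that $\widehat{\mathfrak m}$ really equals the closure of $\mathfrak m$ and is generated by arrows even though $I$ is arbitrary. Fortunately $\mathfrak m$ is finitely generated as a left ideal already in $A$, and completion of a finitely generated ideal in this (left-)Noetherian-free setting can be handled directly by the path-peeling construction.

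With these facts, Nakayama's lemma applies, since $N'$ is finitely generated (being finite-dimensional) and $\widehat{\mathfrak m}\subset J(\widehat A)$. To spell it out, choose a minimal generating set $n_1,\dots,n_r$ of $N'$. From $N'=\widehat{\mathfrak m}N'$ we get $n_1=\sum_j u_j n_j$ with $u_j\in\widehat{\mathfrak m}$. Then $(1-u_1)n_1\in\sum_{j>1}\widehat A n_j$, and since $1-u_1$ is invertible, $n_1$ is redundant, which contradicts minimality. Hence $N'=0$, so $\widehat{\mathfrak m}^NM=0$ and $M$ is nilpotent. The reverse inclusion, that nilpotent modules of finite dimension are finite-dimensional, is tautological, so the two categories coincide.

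Alternatively, one can avoid generating sets entirely. The action of each arrow on $M$ gives a representation of $\bar Q$ with relations, and the action of $\widehat A$ is determined by a continuous-type structure. If some element acts non-nilpotently, an eigenvalue argument (invertibility of $1-\lambda^{-1}u$) yields a contradiction. I would keep the Nakayama route as the main line.
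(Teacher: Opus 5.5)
Your argument is correct and is essentially the paper's proof: the chain $\widehat{\mathfrak m}^k M$ stabilizes by finite-dimensionality, $\widehat{\mathfrak m}\subset \operatorname{Jac}(\widehat A)$ because the geometric series converges in the $\widehat{\mathfrak m}$-adic topology, and Nakayama's lemma applied to the stable term finishes. The only difference is your ``first fact'' that $\widehat{\mathfrak m}=\sum_a \widehat A\,a$, which you flagged as the delicate step but never actually use: Nakayama needs the module, not the ideal, to be finitely generated, and your minimal-generating-set argument only uses invertibility of $1-u$ for $u\in\widehat{\mathfrak m}$, so that step can simply be dropped.
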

\begin{proof}

		Since \(\widehat A\) is complete with respect to the two-sided ideal
		\(\widehat{\mathfrak m}\), we have
		$\widehat{\mathfrak m}\subset \operatorname{Jac}(\widehat A).$
		Indeed, for any $x\in \widehat{\mathfrak m}$ and $a\in \widehat A$, the
		geometric series
		$$
		1+ax+(ax)^2+\cdots
		$$
		converges in the \(\widehat{\mathfrak m}\)-adic topology and gives the inverse
		of \(1-ax\). Hence \(1-ax\) is invertible for all \(a\in \widehat A\), so
		\(x\in \operatorname{Jac}(\widehat A)\) by Jacobson radical criterion.
		
	Let \(M\) be a finite-dimensional left \(\widehat A\)-module. The descending
	filtration
	$$
	M \supset \widehat{\mathfrak m}M \supset \widehat{\mathfrak m}^{2}M
	\supset \cdots
	$$
	stabilizes, since \(M\) is finite-dimensional over \(k\). Hence, for some
	\(N\geq 0\), we have
	\(\widehat{\mathfrak m}^{N}M=\widehat{\mathfrak m}^{N+1}M\). Setting \(M_N:=\widehat{\mathfrak m}^{N}M\), this gives \(M_N=\widehat{\mathfrak m}M_N\). Since
	\(\widehat{\mathfrak m}\subset \operatorname{Jac}(\widehat A)\), Nakayama's lemma implies \(M_N=0\). Therefore \(\widehat{\mathfrak m}^{N}M=0\), so \(M\) is \(\widehat{\mathfrak m}\)-nilpotent.
\end{proof}

\begin{remark}\label{rem: fd=nil}
	Similar statements hold for completed dg algebras, provided one works cohomologically. In other words, if the differential preserves the completed arrow ideal, then every dg module with finite-dimensional total cohomology is nilpotent with respect to the induced action of the completed arrow ideal on cohomology.
\end{remark}

\subsection{Comparison to global (homological) mirror symmetry}\label{subsec:globalms}
In general, the mirror functor detects a ``completion'' of the global mirror with respect to the local object $\mathbb{L}$. The following definitions are straightforward adaptations of \cite[Definition 2.3.1]{BGO25} (which is for dgas) to $A_\infty$-algebras.

\begin{defn}\label{def:centcomp} Let $A$ be an $A_\infty$-algebra, and $M$ a module over $A$. 
\begin{itemize}
\item[(i)]
The dga $A^!_M:=\Hom_{\mathrm{Mod}_{A_\infty} (A)} (M, M)$ is called the centralizer of $A$ with respect to $M$. 
\item[(ii)]
$M$ can be viewed as a module over $A^!_M$. We call
$$ A^{!!}_M := (A_M^!)_M^! = \Hom_{\mathrm{Mod}_{\dg} (A_M^!)} (M,M)$$
the derived completion of $A$ along $M$. 
\end{itemize}
\end{defn}

Observe that the centralizer recovers the Koszul dual when $M=\Bbbk$, and in this case, the double dual is known to be equivalent to the derived completion at a point \cite{Booth1}.
We can describe our local (extended) mirror $\tilde{\mathcal{A}}_\mathbb{L}$ in terms of the ``global mirror" using this term.
Suppose there exists an object $G$ that generates $\Fuk(X)$, and
let $\mathcal{A}_G$ denote its endomorphism $\Hom_{\Fuk(X)} (G,G)$. Then $\Fuk(X) \simeq \mathrm{Mod}_{A_{\infty}} (G)$, and hence $\mathcal{A}_G$ can intuitively be thought of as a global noncommutative mirror (determined up to Morita equivalence). 

\begin{prop} If $G$ generates $\Fuk(X)$, then the functor $\cF^{(\bL,\tilde{b})}$ derived-completes the algebra $\Hom_{\Fuk(X)} (G,G)$ in the sense that 
$$\cF^{(\bL,\tilde{b})}:  \Hom_{\Fuk(X)} (G,G) \to \Hom_{V^!} (\cF^{(\bL,\tilde{b})}(G),\cF^{(\bL,\tilde{b})}(G) ) \cong( \Hom_{\Fuk(X)} (G,G) )_M^{!!}$$ 
where $M = CF(G,\mathbb{L})$.
\end{prop}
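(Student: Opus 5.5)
We unwind both sides via the Koszul description of $\cF^{(\bL,\tilde b)}$ from Proposition~\ref{prop:kos}: the functor is the composition $\mathcal K\circ\mathcal Y_\bL$, and we apply it to the generator $G$. Set $A:=\Hom_{\Fuk(X)}(G,G)$ and $V:=\CF(\bL,\bL)$. Let $M:=\CF(G,\bL)$, regarded as a right $A$-module via the Yoneda functor $\mathcal Y_{\bL}$ (up to the shift $[n]$, which we suppress as in the paper). Since $\bL$ is compact, $M$ is finite-dimensional. By \eqref{eqn:functorforfdm},
\[
\cF^{(\bL,\tilde b)}(G)=\Hom_{\mathrm{Mod}_{A_\infty}(V)}(\mathcal Y_\bL(G),\Bbbk)\cong V^!\otimes_\Bbbk M^\sharp .
\]

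\textbf{Step 1: identify the centralizer $A^!_M$ with $V$.} Because $G$ generates $\Fuk(X)$, we have $\Fuk(X)\simeq \mathrm{Mod}_{A_\infty}(A)$ via $L\mapsto \CF(G,L)$, and this equivalence sends $\bL$ to $M$. Hence
\[
A^!_M=\Hom_{\mathrm{Mod}_{A_\infty}(A)}(M,M)\simeq \CF(\bL,\bL)=V
\]
as $A_\infty$-algebras. In the same way, $M$ viewed as a module over $A^!_M$ corresponds to the $V$-module $\mathcal Y_\bL(G)$.

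\textbf{Step 2: compute the target of the functor.} We need $\Hom_{V^!}(V^!\otimes M^\sharp,\,V^!\otimes M^\sharp)$. The module $V^!\otimes_\Bbbk M^\sharp$ is semi-free, with a filtration by the finite-dimensional $M^\sharp$ (using the energy or length filtration). Therefore its Hom computes $\mathrm{RHom}_{V^!}$, and by adjunction
\[
\Hom_{V^!}(V^!\otimes M^\sharp,\,V^!\otimes M^\sharp)\cong \Hom_\Bbbk(M^\sharp,\,V^!\otimes M^\sharp)\cong \Hom_{\mathrm{Mod}_{A_\infty}(V)}(\mathcal Y_\bL(G),\,\mathcal Y_\bL(G)\otimes \ldots).
\]
A cleaner route is to use that $\mathcal K$ restricted to finite-dimensional $V$-modules is fully faithful. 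This is the key point. Every finite-dimensional $V$-module lies in the thick subcategory generated by $\Bbbk$, since $V$ is nonnegatively graded with $V_0=\Bbbk$, so such a module admits a finite filtration by the degree (Postnikov) truncations, whose subquotients are sums of shifts of $\Bbbk$. On $\Bbbk$ itself, $\mathcal K$ is fully faithful by Theorem~\ref{thm:extfuctorequiv} and Lemma~\ref{lem:vdoubledual}. A five-lemma argument then gives
\[
\Hom_{V^!}(\cF(G),\cF(G))\simeq \Hom_{\mathrm{Mod}_{A_\infty}(V)}(\mathcal Y_\bL G,\mathcal Y_\bL G),
\]
up to the contravariance of $\mathcal K$, i.e.\ taking the opposite algebra.

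\textbf{Step 3: identify with the double centralizer.} Under Step 1, the last expression is exactly $\Hom_{A^!_M}(M,M)=A^{!!}_M$. We then check that the map induced by $\cF^{(\bL,\tilde b)}$ on $A$ is the canonical map $A\to A^{!!}_M$, $a\mapsto(\text{action of }a\text{ on }M)$. This follows from the morphism-level formula \eqref{eqn:defmorfun}, in which $f\in A$ acts by precomposition with $\hat f$ on $M\otimes\bar BV$.

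\textbf{Main obstacle.} The delicate step is the full faithfulness of $\mathcal K$ beyond $\Bbbk$. It requires that $\mathcal Y_\bL(G)$ be built from $\Bbbk$ by finitely many cones in $\mathrm{Mod}_{A_\infty}(V)$. This holds when $\CF(G,\bL)$ is finite-dimensional and $V$ is connected. We also need the contravariance and the completion in $V^!$ to match the conventions of Definition~\ref{def:centcomp}. We will handle this by passing to a minimal model of $V$ and inducting on the dimension of $M$.
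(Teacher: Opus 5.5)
Your route is sound, but it differs genuinely from the paper's. The paper never uses full faithfulness of $\mathcal K$. It computes both sides as the same graded space, using only that $M=\CF(G,\bL)$ is finite-dimensional:
\[
(E_G)^{!!}_M=\Hom_{\mathrm{Mod}_{A_\infty}(V)}(M,M)\cong M\otimes_\Bbbk(M\otimes_\Bbbk\bar{B}V)^\sharp\cong M\otimes_\Bbbk V^!\otimes_\Bbbk M^\sharp ,
\]
and $\Hom_{V^!}(V^!\otimes_\Bbbk M^\sharp,V^!\otimes_\Bbbk M^\sharp)\cong\Hom_\Bbbk(M^\sharp,V^!\otimes_\Bbbk M^\sharp)\cong M\otimes_\Bbbk V^!\otimes_\Bbbk M^\sharp$. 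This is exactly the adjunction computation you began in Step 2 and abandoned. It is shorter, but it matches the two sides only as graded spaces. It leaves implicit that the differentials agree and that the identification is the map induced by $\cF^{(\bL,\tilde b)}$.

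Your d\'evissage does more. A finite-dimensional strictly unital module over a minimal $V$ with $V_0=\Bbbk$ has a finite degree filtration with semisimple subquotients, because every operation with inputs in $V_{>0}$ raises degree. So it lies in $\operatorname{thick}(\Bbbk)$, and exactness of $\mathcal K$ plus the five lemma finish the argument. This shows directly that the functor-induced map is a quasi-isomorphism and identifies it with the canonical map $A\to A^{!!}_M$. It also gives the more general fact that $\mathcal K$ is fully faithful on all finite-dimensional $V$-modules. The cost is the filtration argument and the transfer of $M$ to the minimal model.

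One correction: your base case is misattributed. Theorem~\ref{thm:extfuctorequiv} and Lemma~\ref{lem:vdoubledual} give full faithfulness of $\mathcal K$ on the free module $V$, i.e.\ the object $\bL$, whose image is the simple $V^!$-module; this is the map $V\to(V^!)^!$. D\'evissage from there only reaches $\operatorname{thick}(V)$, which in general does not contain $\mathcal Y_\bL(G)$. For instance, for $G$ with $\CF(G,\bL)\cong\Bbbk$ as in Remark~\ref{rmk:Gint1pt}, $\Bbbk$ is not perfect over $V$ whenever $H^*(V^!)$ is infinite-dimensional.

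What you actually need is full faithfulness on the $V$-module $\Bbbk$, whose image $\mathcal K(\Bbbk)=V^!$ is free of rank one. This is the Yoneda lemma: the map $\Hom_{\mathrm{Mod}_{A_\infty}(V)}(\Bbbk,\Bbbk)=V^!\to\Hom_{V^!}(V^!,V^!)$, $f\mapsto(-)\circ f$, is multiplication by $f$ on the free module. Hence it is an isomorphism onto $(V^!)^{\mathrm{op}}$, consistent with the contravariance of $\mathcal K$.
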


\begin{proof}
We denote $  \Hom_{\Fuk(X)} (G,G)$ by $E_G$ for simplicity.
Let $M:=CF(G,\mathbb{L})$ be the left $E_G$-module obtained from $\mathbb{L}$.
Since $G$ generates, its associated Yoneda functor is fully faithful, and hence 
$$ V \cong \Hom_{\mathrm{Mod}_{A_\infty} (E_G) } (M,M)= (E_G)_M^{!}.$$
Therefore
\begin{equation*}
\begin{array}{rcl}
(E_G)_M^{!!} &=& \Hom_{\mathrm{Mod}_{A_\infty} (V)} (M,M) \\
&=& \Hom_V (M \otimes_\Bbbk \bar{B}V \otimes_\Bbbk V, M ) \\
&\cong& M \otimes_\Bbbk (M \otimes_\Bbbk \bar{B}V)^\sharp\\
 &\cong& M \otimes_\Bbbk V^! \otimes_\Bbbk M^\sharp
\end{array}
\end{equation*}
where we used finite-dimensionality of $M$. 
On the other hand, by definition of the functor, $\Hom_{V^!} (\cF^{(\bL,\tilde{b})}(G),\cF^{(\bL,\tilde{b})}(G) )$ is given by
$$\Hom_{V^!} (\cF^{(\bL,\tilde{b})}(G),\cF^{(\bL,\tilde{b})}(G) )= \Hom_{V^!} (V^! \otimes_\Bbbk M^\sharp, V^! \otimes_\Bbbk M^\sharp ) \cong M \otimes_\Bbbk V^! \otimes_\Bbbk M^\sharp.$$
\end{proof}
 
\begin{remark}\label{rmk:Gint1pt}
Suppose further that $G \in \Fuk(X)$ satisfies $Hom_{\Fuk(X)} (\mathbb{L}, G) \cong \Bbbk$. Then it is straightforward to check its image under $\cF^{(\bL,\tilde{b})}$ is $V^!=\tilde{\mathcal{A}}_{\mathbb{L}}$. This is analogous to transferring a Lagrangian section $G$ of a SYZ fibration (with $\bL$ being one of fibers) to the structure sheaf on the mirror side.
\end{remark}

\subsection{Reduction to unextended mirror $A_\mathbb{L}$ under certain formality}
Recall from \cite{CHL21} that one also has an $A_\infty$ functor
$$ \mathcal{F}^{(\mathbb{L},b)} : \Fuk(X)  \to \mathrm{Mod}_{\dg} (\cA_{\mathbb{L}})$$
where $b$ consists of degree-1 elements only. In this case, $\mathrm{Mod}_{\dg} (\cA_{\mathbb{L}})$ is simply the dg category of complexes of $\cA_\mathbb{L}$-modules.\footnote{This is intuitively extracting the underlying topological space (``$\mathrm{Spec}\, \cA_{\mathbb{L}}$") from a larger derived space (``$\mathrm{Spec}\,\tilde{\mathcal{A}}_{\mathbb{L}}$").}
As expected, the same spectral sequence argument as in Section \ref{subsubsec:specseq} shows that this suffices, provided $\tilde{\mathcal{A}}_{\mathbb{L}}$ is quasi-isomorphic to $\cA_{\mathbb{L}} = H^0(\tilde{\mathcal{A}}_{\mathbb{L}})$. 

\begin{lemma}\label{lem:equiv}
Suppose 
$\tilde{\mathcal{A}}_{\mathbb{L}} \cong \cA_\mathbb{L}$. Then the (unextended) mirror functor gives rise to a quasi-equivalence
$$\mathcal{D} \mathcal{F}^{(\mathbb{L},b)}: \mathcal{D} \Fuk_\mathbb{L} (X) \to \mathcal{D}_{\mathrm{nil}}(\cA_\bL),$$ where $\mathcal{D}_{\mathrm{nil}}(\cA_\bL):=\operatorname{thick}_{\mathcal D( \cA_{\mathbb L})}(\Bbbk).$
\end{lemma}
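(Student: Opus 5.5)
The plan is to deduce the statement from Theorem \ref{thm:extfuctorequiv} by comparing the two functors $\mathcal{F}^{(\bL,\tilde b)}$ and $\mathcal{F}^{(\bL,b)}$ along the quasi-isomorphism $\tilde{\cA}_\bL \simeq \cA_\bL$. First I make the comparison map explicit. Since $\tilde{\cA}_\bL$ is nonpositively graded (no nonpositive-degree immersed generators, so every $\deg y = 1-\deg Y \le 0$), there is a canonical dga map
$$\rho:\tilde{\cA}_\bL \to \tau_{\ge 0}\tilde{\cA}_\bL \to H^0(\tilde{\cA}_\bL)=\cA_\bL .$$
It kills all arrows of negative degree and sends degree-zero arrows to their classes modulo the image of $d$, i.e. modulo the Maurer--Cartan relations. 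The hypothesis $\tilde{\cA}_\bL\cong \cA_\bL$ is read as saying that $\rho$ is a quasi-isomorphism. Restriction and extension of scalars along $\rho$ then give inverse equivalences
$$\mathcal{D}(\tilde{\cA}_\bL)\simeq \mathcal{D}(\cA_\bL).$$
These equivalences send the simple $\Bbbk$ to the simple $\Bbbk$ vertex by vertex, so they restrict to an equivalence $\mathcal{D}_{\mathrm{nil}}(\tilde{\cA}_\bL)\simeq \mathcal{D}_{\mathrm{nil}}(\cA_\bL)$ of thick closures.

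Second, I identify $\mathcal{F}^{(\bL,b)}$ with the base change of $\mathcal{F}^{(\bL,\tilde b)}$ along $\rho$:
$$\mathcal{F}^{(\bL,b)}(L)\cong \cA_\bL\otimes_{\tilde{\cA}_\bL}\mathcal{F}^{(\bL,\tilde b)}(L).$$
Objectwise, $\mathcal{F}^{(\bL,\tilde b)}(L)=\tilde{\cA}_\bL\otimes_\Bbbk \CF(\bL,L)$ is free over $\tilde{\cA}_\bL$ with differential $d_{\tilde{\cA}}\otimes 1+m_1^{\tilde b,0}$, as in \eqref{eqn:funcdouble}. Tensoring with $\cA_\bL$ over $\tilde{\cA}_\bL$ sets the negative-degree variables $t_q$ and the higher-degree dual variables to zero. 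This turns $\tilde b$ into $b$ and $m_1^{\tilde b,0}$ into $m_1^{b,0}$, giving exactly $\mathcal{F}^{(\bL,b)}(L)$ over $\cA_\bL$. The higher components of the $A_\infty$-functor are given by the same formulas with $\tilde b$ replaced by $b$, so they match as well. Because $\mathcal{F}^{(\bL,\tilde b)}(L)$ is free, and hence K-projective after a finite filtration by $\CF$-degree, the underived tensor product computes the derived base change. Therefore $\mathcal{D}\mathcal{F}^{(\bL,b)} = (\rho_!)\circ \mathcal{D}\mathcal{F}^{(\bL,\tilde b)}$, where $\rho_!=\cA_\bL\otimes^{\mathbb L}_{\tilde{\cA}_\bL}(-)$.

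Composing the quasi-equivalence of Theorem \ref{thm:extfuctorequiv} with the equivalence $\rho_!$ then gives the claimed quasi-equivalence onto $\mathcal{D}_{\mathrm{nil}}(\cA_\bL)$. As a consistency check at the level of objects, Lemma \ref{lem:hitbbk} says $\bL\mapsto\Bbbk$, and $\rho_!\Bbbk=\Bbbk$.

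I expect the main obstacle to be the rigorous identification in the second step, specifically the completion issues. $\tilde{\cA}_\bL$ is a completed path algebra, and the free module $\tilde{\cA}_\bL\otimes \CF(\bL,L)$ carries a differential given by infinite sums. So I would check that base change commutes with completion, and that $\rho$ being a quasi-isomorphism of completed algebras still yields the derived equivalence. If the direct argument is messy, I would fall back on the spectral sequence of Section \ref{subsubsec:specseq}. That means filtering $\mathrm{Hom}_{\tilde{\cA}_\bL}(\mathcal{F}(L),\mathcal{F}(L'))$ by the $\CF$-degree, so that the $E_1$-page is $H^*(\tilde{\cA}_\bL)\otimes\CF=\cA_\bL\otimes\CF$ with induced differential $m_1^{b,b}$, matching the corresponding filtration on the $\cA_\bL$ side. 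Since $H^*(\tilde{\cA}_\bL)$ is concentrated in degree $0$, the sequence degenerates at $E_2$, and this gives the comparison quasi-isomorphism on morphism complexes directly.
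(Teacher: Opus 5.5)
Your proof is correct, but it is organized differently from the paper's. The paper never introduces $\rho$ or a base-change functor. Instead, it views $\mathcal{F}^{(\bL,\tilde b)}(M)=V^!\otimes_\Bbbk M^\sharp$, with differential $d_{V^!}\otimes\mathrm{id}+m_1^{\tilde b,0}$, as a double complex. Because $H^{\neq 0}(V^!)=0$, its $E_1$-page is exactly $\mathcal{F}^{(\bL,b)}(M)=H^0(V^!)\otimes_\Bbbk M^\sharp$. The paper applies this to $M=V$ to get $H^0(V^!)\otimes_\Bbbk V^\sharp\simeq\Bbbk$, and then repeats the endomorphism computation from the proof of Theorem~\ref{thm:extfuctorequiv} over $H^0(V^!)$, ending at $\mathrm{RHom}_{V^!}(\Bbbk,\Bbbk)\simeq V$. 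You instead prove the factorization $\mathcal{D}\mathcal{F}^{(\bL,b)}=\rho_!\circ\mathcal{D}\mathcal{F}^{(\bL,\tilde b)}$ and inherit the whole statement from Theorem~\ref{thm:extfuctorequiv}.

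The paper's route is shorter and stays inside the Koszul-duality computation. Yours makes explicit two points the paper uses silently. First, replacing $\mathrm{RHom}_{H^0(V^!)}(\Bbbk,\Bbbk)$ by $\mathrm{RHom}_{V^!}(\Bbbk,\Bbbk)$ is precisely the derived equivalence induced by the quasi-isomorphism $\rho$. Second, the morphism-level map of $\mathcal{F}^{(\bL,b)}$ on $\bL$ really is the double-duality map.

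The completion issue you anticipate does not arise. $\CF(\bL,L)$ is finite-dimensional, and a coefficient of $m_1^{\tilde b,0}$ of degree $0$ involves only degree-$0$ arrows, so $\rho$ sends it exactly to the corresponding coefficient of $m_1^{b,0}$. Semi-freeness follows because $m_1^{\tilde b,0}$ raises the $\CF$-degree by at least one (all $\deg Y\geq 1$).

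Two small corrections to your fallback. The $E_1$-differential is the commutator with $m_1^{b,0}$ on $\mathrm{Hom}_\Bbbk(\CF(\bL,L),\cA_\bL\otimes_\Bbbk\CF(\bL,L'))$, not $m_1^{b,b}$. Also, no degeneration is needed: the filtration is finite and $\rho_*$ is already an isomorphism on $E_1$.
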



\begin{proof}
Let $V=CF(\mathbb{L},\mathbb{L})$ and $M=CF(U,\mathbb{L})$ as before.
Observe that the functor restricted to $\Fuk_{\mathbb{L}} (X)$ can be understood abstractly as 
$$ \mathcal{F}^{(\mathbb{L},b)} : \mathrm{Mod}_{A_\infty} (V) \to \mathrm{Mod}_{\dg} (\cA_{\mathbb{L}}), \qquad M \mapsto H^0 (V^!) \otimes_\Bbbk  M^\sharp.$$
Here, $H^0 (V^!) \otimes_\Bbbk  M^\sharp$ is the complex obtained as follows. Recall that the image of $M$ under $\mathcal{F}^{(\mathbb{L},\tilde{b})}$ can be identified as $V^! \otimes_\Bbbk  M^\sharp$, and as in \eqref{eqn:funcdouble}, the differential on it consists of two operators, $d= d_{V^!} \otimes id + m_1^{\tilde{b},0}$. Viewing $\mathcal{F}^{(\mathbb{L},\tilde{b})} = V^! \otimes_\Bbbk  M^\sharp$ as a double complex this way, $\mathcal{F}^{(\mathbb{L},b)} (M)$ is precisely its $E_1$-page (with the differential $d_{E_1}$), since $H^{\neq 0} (V^!)=0$ by hypothesis. The $E_2$-page computes the cohomology of $\mathcal{F}^{(\mathbb{L},\tilde{b})}$.

Now, as in the proof of Theorem \ref{thm:extfuctorequiv},
\begin{equation*}
\begin{array}{rcl}
\Hom_{H^0 (V^!)} (\mathcal{F}^{(\mathbb{L},b)} (V), \mathcal{F}^{(\mathbb{L},b)} (V)) &=& \Hom_{H^0(V^!)} ( H^0(V^!) \otimes_\Bbbk  V^\sharp, H^0 (V^!) \otimes_\Bbbk  V^\sharp ) \\
 & =& \mathrm{RHom}_{H^0(V^!)} (H^0(V^!) \otimes_\Bbbk  V^\sharp, H^0(V^!) \otimes_\Bbbk  V^\sharp )  \\
 &\cong& \mathrm{RHom}_{V^!} (\Bbbk, \Bbbk ).
 \end{array}
 \end{equation*}
The last line uses the fact that $H^0(V^!) \otimes V^\sharp$ is quasi-isomorphic to $\Bbbk$ which follows from $V^! \otimes_\Bbbk  V^\sharp \stackrel{qis}{\simeq} \Bbbk$ by applying the above spectral sequence for $M=V$.

\end{proof}

A family of extended localized mirrors satisfying the formality come from the core of plumbing spaces. More discussions can be found in Section \ref{subsec: lms}.

\begin{remark}
	Theorem \ref{thm:extfuctorequiv} and Lemma \ref{lem:equiv} can be understood as the mirror counterpart of Theorem 6.8 in \cite{Tod18}, which shows that there is an equivalence of categories $$I_*: \mathrm{mod}_{\mathrm{nil}} A_{E_{\bullet}} \to \langle E_1, \ldots, E_k \rangle \subset \mathrm{Coh}(Y), $$ where $E_{\bullet}=\{E_1, \ldots, E_k\}$ is a simple collection of coherent sheaves and $A_{E_{\bullet}}$ is the corresponding deformation quiver algebra, encoding the relations of $\mathrm{Ext}^*(\oplus_i E_i,\oplus_i E_i)$. In particular, $\mathcal{D}^b(\mathrm{mod}_{\mathrm{nil}} A_{E_{\bullet}}) \cong \operatorname{thick}_{\mathcal D( A_{E_{\bullet}})}(\Bbbk)$, and $I_*$ sends the $i$-th simple representation $S_i$ to the $i$-th simple sheaf $E_i$.
	
	From this perspective, the formal Maurer-Cartan deformation space of a Lagrangian immersion plays the mirror role of Toda's deformation quiver algebra. Thus, even when the mirror manifold is not explicitly known, the localized mirror construction provides a symplecto-geometric deformation space mirror to the moduli stack of semistable coherent sheaves.
\end{remark}

\section{Higher rank vector bundles on the Lagrangian $\bL$}\label{sec:higher}
We now consider a compact Lagrangian immersion equipped with a higher rank local system. By this, we mean a Lagrangian immersion $L \looparrowright \mathbb{L}$, and a flat vector bundle $(\mathcal{E},\nabla)$ on $L$ whose rank is allowed to be greater than one. Higher rank flat vector bundles encode information about the fundamental group $\pi_1(L)$ while rank-one local systems are Abelian and only concern about the homology group $H_1(L)$.

If $L$ has several connected components, equivalently if $\mathbb{L}=\bigoplus_i L_i$ has several irreducible components, then we allow the restrictions $\mathcal{E}|_{L_i}$ to have different ranks.

Let us consider its Fukaya algebra $\CF((\bL,\mathcal{E}),(\bL,\mathcal{E}))$. More specifically, one may use the Morse model by choosing a Morse function $f$ on $L$, in which case
$$\CF((\bL,\mathcal{E}),(\bL,\mathcal{E}))=\bigoplus_{p \in \mathrm{crit}(f)} \mathrm{End} (\mathcal{E}_p) \oplus \bigoplus_{(q_+, q_-) \in L \times_{\iota} L} \Hom (E_{q_+}, E_{q_-}).$$
The $A_\infty$-operations additionally involve parallel transports with respect to $\nabla$ along the boundary of contributing pearl trajectories. For example, a (negative) gradient flow $\gamma$ from $p$ to $q \in \mathrm{crit}(f)$ contributes $P_\gamma^\nabla \circ A  \circ (P_\gamma^\nabla)^{-1} \in   \mathrm{End} (\mathcal{E}_q)$ to the Morse differential of $A \in \mathrm{End} (\mathcal{E}_p)$ (which is part of $m_1 (A)$), where $P_{\gamma}^\nabla$ denotes the parallel transport of $(\mathcal{E},\nabla)$ along $\gamma$. (In \ref{subsec:matcoeff}, we give a simpler formulation of parallel transport with respect to $\nabla$ along the boundary paths of pearl trajectories.)

We can extend the construction of \cite{CHL17} to obtain the localized mirror functor $\cF^{(\mathbb{L},\mathcal{E})}$ associated to $(\mathbb{L},\mathcal{E},b)$ which transforms Lagrangians in $X$ to complexes over the corresponding Maurer-Cartan quiver moduli (see \cite[Definition 3.3]{LT26}).
The purpose of this section is to use Koszul duality to deduce a quasi-equivalence statement for this functor.

Recall that, in the rank-one setting, we used the semisimple ring $\Bbbk = \oplus_i \mathbb{C}$ as the coefficient ring for the Floer complex with one copy of $\mathbb{C}$ assigned to each irreducible component $L_i$ of $\mathbb{L}$.
In the framework of Koszul duality, one may generalize \(\Bbbk\) in either of the following ways:
\begin{enumerate}
\item[(i)] by replacing the coefficient ring of the algebra \(V\) with a different coefficient ring;
\item[(ii)] by replacing the trivial \(V = \CF(\bL,\bL)\)-module \(\Bbbk\) with a more general \(V\)-module.
\end{enumerate}
We begin by choosing a convenient representative of the flat connection $\nabla$ within its gauge-equivalence class.

\subsection{Gauge hypersurfaces for $\nabla$}\label{subsec:gaugehyp}
We first describe the holonomy of $\mathcal{E}$ by concentrating the nontrivial parallel transport near a chosen finite collection of codimension-one chains chosen using Morse theory or handle decomposition. We explain the construction only in the case where $\mathbb{L}$ has a single irreducible component, and the general case follows by applying the same construction to each component separately.

Let $\widetilde{f}$ be a Morse function with exactly one maximum point on $\mathbb{L}$ chosen generically with respect to the original Morse function $f$, and let $h_1,\ldots,h_r$ be the compactified unstable chains parameterized by manifolds with corners associated with its cohomological degree-$1$ critical points. 

We specify holonomy matrices across each of $h_i$ as follows. The complement $\mathbb{L}^\circ:=\mathbb{L}\setminus\bigcup_{i=1}^r h_i$ is the unstable submanifold of the unique maximum point and is simply connected. Indeed, the interiors of the $h_i$ are the cocores of the $1$-handles in the handle decomposition associated with $-f$, and cutting along them removes the generators introduced by these handles. Since $(\mathcal{E},\nabla)$ is flat, it is therefore flatly trivial over $\mathbb{L}^\circ$,
$$
\mathcal{E}|_{\mathbb{L}^\circ}\cong\mathbb{L}^\circ\times\mathbb{C}^d.
$$
After a gauge transformation, the nontrivial contributions of parallel transport are made to be concentrated in arbitrarily small neighborhoods of the hypersurfaces $h_i$. Accordingly, the parallel transport is encoded by automorphisms
$$
\rho_i\in\operatorname{Aut}(\mathbb{C}^d),
$$
with a factor of $\rho_i$ or $\rho_i^{-1}$ whenever a path crosses $h_i$ with positive or negative local intersection number, respectively.
$\rho_i$'s are required to satisfy compatibility relations specified by the unstable chains of degree-$2$ critical points: they are of the form $R(\rho_1,\ldots,\rho_p) = I$ for $p$ codimension-one walls joint at a codimension-two unstable chain.

We refer to $h_1,\ldots,h_r$ as the \emph{gauge hypersurfaces} for $\nabla$. This construction generalizes the gauge hypertori used in \cite{CHL-toric} when $\mathbb{L}\cong T^n$.
The $J$-holomorphic discs and pearl trajectories are required to be transverse to these unstable submanifolds by generic perturbations. In the inductive perturbation scheme of \cite{FOOO09, FOOO-can}, these are included in the set of the initial chains.

\begin{remark}
Under an additional assumption on the holonomy, one may instead use smooth embedded hypersurfaces. Suppose that the holonomy representation of $(\mathcal{E},\nabla)$ factors through $H_1(\mathbb{L};\mathbb{Z})/\operatorname{Tor}$ (which holds true when $\mathcal{E}$ is a line bundle).
Choose smooth maps
$
g_1,\ldots,g_r:\mathbb{L}\rightarrow S^1
$
representing a basis of $H^1(\mathbb{L};\mathbb{Z})$; smooth representatives exist by the Whitney approximation theorem. For each $i$, choose a regular value $z_i\in S^1$ and set
$
h_i:=g_i^{-1}(z_i).
$
Then the $h_i$ are smooth, cooriented hypersurfaces whose homology classes form a basis of the free part of $H_{n-1}(\mathbb{L};\mathbb{Z})$.
Any loop contained in $\mathbb{L}\setminus\bigcup_i h_i$ has zero algebraic intersection with every $h_i$ and hence represents a torsion class in $H_1(\mathbb{L};\mathbb{Z})$. 
\end{remark}

\subsection{The matrix coefficient ring}\label{subsec:matcoeff}
In the approach (i), we enlarge the coefficient ring $\Bbbk$ of $\CF((\mathbb L,\mathcal{E}),(\mathbb L,\mathcal{E}))$ to a suitable matrix algebra so that the resulting algebraic structure more faithfully records the effect of $\mathcal{E}$ on Floer theory. 
By genericity, we may assume that all critical points of $f$ and all self-intersection points of $\mathbb{L}$ lie outside $\bigcup_i h_i$. Consequently, the chosen trivialization of $(\mathcal{E},\nabla)$ over $\mathbb{L}\setminus\bigcup_i h_i$ as in \ref{subsec:gaugehyp} induces an identification
\begin{equation*}
\begin{aligned}
\CF\bigl((\bL,\mathcal{E}),(\bL,\mathcal{E})\bigr)
&=
\bigoplus_{p\in\operatorname{crit}(f)}
\operatorname{End}(\mathcal{E}_p)
\oplus
\bigoplus_{(q_+,q_-)\in L\times_{\iota}L}
\operatorname{Hom}(\mathcal{E}_{q_+},\mathcal{E}_{q_-})
\\
&\cong
\bigoplus_{p\in\operatorname{crit}(f)}
\operatorname{End}(\mathbb{C}^d)\langle p\rangle
\oplus
\bigoplus_{(q_+,q_-)\in L\times_{\iota}L}
\operatorname{End}(\mathbb{C}^d)\langle(q_+,q_-)\rangle ,
\end{aligned}
\end{equation*}
where we have assumed that $\mathbb{L}$ has a single irreducible component and that $\mathcal{E}$ has rank $d$. The case of several irreducible components is treated similarly.
The operations $m_k$ on $\CF\bigl((\bL,\mathcal{E}),(\bL,\mathcal{E})\bigr)$ are defined as before. The parallel-transport factors, however, are now much easier to compute as one only needs to record the intersections of the boundary segments of a pearl trajectory with the hypersurfaces $h_i$.

Recall that the unique maximum $p_{\max}$ of the Morse function represents the unit and spans the degree-$0$ part of the underlying Morse complex. We set
$$
\widetilde{\Bbbk}_{\mathcal{E}}
:=
\operatorname{End}(\mathbb{C}^d)\langle p_{\max}\rangle
\cong
\mathfrak{gl}_d (\mathbb{C}).
$$
Obviously, $\widetilde{\Bbbk}_{\mathcal{E}}$ is closed under $m_2$, which agrees with ordinary matrix multiplication. More generally, if $\bL=\bigcup_i L_i$ has several irreducible components and $\mathcal{E}|_{L_i}$ has rank $d_i$, then
$$
\widetilde{\Bbbk}_{\mathcal{E}}
:=
\bigoplus_i
\operatorname{End}(\mathbb{C}^{d_i})\langle p_{\max,i}\rangle
\cong
\bigoplus_i\mathfrak{gl}_{d_i}(\mathbb{C}).
$$

Even when $\bL$ is exact and $f$ is perfect, the resulting Morse model need not be minimal. Indeed, the classical differential counting isolated gradient trajectories is twisted by conjugation with the parallel transport of $\nabla$. In the above trivialization, this parallel transport is given by an ordered product of the matrices $\rho_i^{\pm1}$, determined by the signed intersections of the trajectory with the hypersurfaces $h_i$. In particular, after passing to a minimal model, its degree-$0$ component is identified with the centralizer in $\widetilde{\Bbbk}_{\mathcal{E}}$ of the subgroup
$$
G\subset \bigoplus_i \operatorname{Aut}(\mathbb{C}^{d_i})
$$
generated by the holonomy matrices $\rho_j$. We denote this algebra by
$$
\Bbbk_{\mathcal{E}}
:=Z_{\widetilde{\Bbbk}_{\mathcal{E}}}(G),
$$
and take it as our new coefficient ring.

From now on, we use the same notation $\CF((\bL,\mathcal{E}),(\bL,\mathcal{E}))$ for its minimal model, and denote it by $V_{\mathcal{E}}:=\CF((\bL,\mathcal{E}),(\bL,\mathcal{E}))$ for simplicity.
The $\Bbbk_{\mathcal{E}}$-bimodule structure on $V_{\mathcal{E}}$ is defined by
$$ U \cdot X \cdot V = m_2(m_2 (U, X) ,V) =  m_2( U, m_2(X ,V))$$
for $U,V \in \Bbbk_{\mathcal{E}}$, where the second equality comes from the $A_\infty$-relation and the unital property. Let us write $V_{\mathcal{E}} : =\CF((\bL,\mathcal{E}),(\bL,\mathcal{E}))$ for simplicity. By the same argument, we have

\begin{lemma} The $A_\infty$-operations on $V_{\mathcal{E}}$ are linear on $V_{\mathcal{E}} \otimes_{\Bbbk_{\mathcal{E}}} \cdots \otimes_{\Bbbk_{\mathcal{E}}} V_{\mathcal{E}}$ in the sense that
$$ m_k (X_1,\cdots, (X_{i-1} \cdot U), X_i ,\cdots, X_k) = m_k (X_1,\cdots, X_{i-1}, (U \cdot X_i) ,\cdots, X_k)$$
for $U \in \Bbbk_{\mathcal{E}}$.
\end{lemma}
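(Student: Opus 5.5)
The lemma is a consequence of the $A_\infty$-relations together with strict unitality. The argument for the bimodule structure (associativity of $m_2$ with elements of $\Bbbk_{\mathcal{E}}$) should work here too. I write $\cdot$ for the $\Bbbk_{\mathcal{E}}$-action, so $X\cdot U=m_2(X,U)$ and $U\cdot X=m_2(U,X)$. Elements of $\Bbbk_{\mathcal{E}}$ are degree-$0$ classes of the form $A\langle p_{\max}\rangle$ with $A$ commuting with the holonomy group $G$. On the chain level, I would first arrange that these are strict elements of the minimal model. Transferring the structure along a homotopy retraction compatible with the unit preserves strict unitality, so $1_{\bL}=\mathrm{Id}\langle p_{\max}\rangle$ stays a strict unit. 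The point to verify is that every $U\in\Bbbk_{\mathcal{E}}$ also behaves like a \emph{strict} scalar: $m_1(U)=0$, and $m_k(\ldots,U,\ldots)=0$ for $k\ge3$.

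\textbf{Step 1: $U$ acts like a strict scalar.} Geometrically, $U=A\langle p_{\max}\rangle$ is the unit decorated by the constant matrix $A$. A pearl trajectory with an input at $p_{\max}$ is governed by the same forgetful-map argument that gives strict unitality for $p_{\max}$. The only new ingredient is the matrix $A$, which is inserted into the ordered product of parallel transports. Since $A$ commutes with every $\rho_j^{\pm1}$, it can be slid along the boundary to the insertion points of the neighbouring inputs. Hence higher $m_k$ with a $U$-input vanish, exactly as for the unit, and $m_2(U,X)$, $m_2(X,U)$ reduce to matrix multiplication. Algebraically, I would instead choose the homotopy transfer data $\Bbbk_{\mathcal{E}}$-equivariantly: the inclusion, projection and homotopy should all be $\Bbbk_{\mathcal{E}}$-bilinear, which is possible because $\Bbbk_{\mathcal{E}}$ is semisimple as a centralizer of a reductive image. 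The transferred operations, built from trees of $\Bbbk_{\mathcal{E}}$-bilinear maps, are then automatically balanced.

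\textbf{Step 2: apply the $A_\infty$-relation.} With Step 1 in hand, write the $A_\infty$-relation for the inputs $(X_1,\ldots,X_{i-1},U,X_i,\ldots,X_k)$. All terms with $m_j(\ldots U\ldots)$ for $j\ge3$ vanish, as does $m_1(U)$. What remains is
\begin{itemize}
\item $m_k(X_1,\ldots,m_2(X_{i-1},U),X_i,\ldots)$ and $m_k(\ldots,X_{i-1},m_2(U,X_i),\ldots)$, coming from the inner $m_2$ absorbing $U$ with one neighbour;
\item $m_2\bigl(m_k(\ldots),U\bigr)$ and $m_2\bigl(U,m_k(\ldots)\bigr)$, which occur only when $U$ is at an end, so they do not arise for interior positions.
\end{itemize}
Since $|U|'=-1$, the Koszul signs make the two surviving terms appear with opposite signs. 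Their sum is zero, which gives the claimed identity. This mirrors the unit computation in Lemma \ref{lem:conthtpy}.

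\textbf{Main obstacle.} The hardest step is Step 1, specifically the chain-level vanishing of higher operations with a $U$-input after passing to the minimal model. I would rely on the equivariant homological perturbation approach, because it avoids any direct geometric analysis of pearl moduli spaces. The key check there is that the Morse differential twisted by the $\rho_j$ is $\Bbbk_{\mathcal{E}}$-bilinear. This holds because conjugation by a parallel transport $P_\gamma$ commutes with left and right multiplication by elements of the centralizer of $G$.
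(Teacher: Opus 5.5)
Your Step 2 is the paper's argument. The paper proves the lemma ``by the same argument'' as associativity of the $\Bbbk_{\mathcal{E}}$-action, i.e.\ the $A_\infty$-relation with $U$ inserted plus the unital property. It silently treats elements of $\Bbbk_{\mathcal{E}}$ as strict matrix-valued units in the minimal model $V_{\mathcal{E}}$.

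The gap is in your Step 1, which tries to justify exactly this. Your main line rests on $\Bbbk_{\mathcal{E}}$ being semisimple ``as a centralizer of a reductive image''. But $G$ is generated by arbitrary $\rho_j\in\operatorname{Aut}(\mathbb{C}^d)$, and nothing in the setup makes it reductive. Take a circle component with rank-two unipotent holonomy $\rho=\begin{pmatrix}1&1\\0&1\end{pmatrix}$. Then $\Bbbk_{\mathcal{E}}=\{aI+bN\}\cong\mathbb{C}[N]/(N^2)$ with $N=E_{12}$. Under left and right multiplication, the degree-zero chain group $\operatorname{End}(\mathbb{C}^2)\langle p_{\max}\rangle$ is a free cyclic module, generated by $E_{21}$, over the local ring $\Bbbk_{\mathcal{E}}\otimes\Bbbk_{\mathcal{E}}^{op}$. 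There are no boundaries in degree zero, so any retraction must send $H^0$ onto $\ker m_1=\Bbbk_{\mathcal{E}}$. A bilinear projection $p$ would then make $\ker p$ a bimodule complement of $\Bbbk_{\mathcal{E}}$ inside an indecomposable module, which is impossible. So $\Bbbk_{\mathcal{E}}$-bilinear transfer data need not exist.

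The geometric route does not get around this. Your sliding argument correctly uses that $A$ centralizes the $\rho_j$, but it only gives strictness of $U$-inputs in the chain-level pearl model. After transfer, $m_3^{V_{\mathcal{E}}}(X,U,Y)$ contains the tree term $p\,m_2\bigl(h(i(X)\cdot U),i(Y)\bigr)$. Here $h(i(X)\cdot U)$ vanishes only if $h$ commutes with the right $\Bbbk_{\mathcal{E}}$-action, so that the side condition $h\circ i=0$ applies. Unit-compatibility of the retraction handles $U=1_{\bL}$ but not a non-scalar $U$.

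Your argument therefore needs an extra hypothesis, or must be run on the chain-level model instead. Suitable hypotheses are trivial bundles, as in all of the paper's applications where $\Bbbk_{\mathcal{E}}=\bigoplus_i\mathfrak{gl}_{d_i}(\mathbb{C})$, or semisimple (e.g.\ unitary) holonomy. Note also that when bilinear transfer data do exist, sliding shows every chain-level $m_k$ is balanced, not just $m_1$. The tree formula then gives the lemma directly, so Step 2 becomes superfluous.

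A minor point on signs: the two surviving terms in Step 2 have relative sign $(-1)^{|X_{i-1}|'}$, because their inner $m_2$'s start in adjacent slots. This has nothing to do with $|U|'$. Whether the identity holds exactly with $X\cdot U=m_2(X,U)$ depends on how the sign of $m_2(X,1_{\bL})$ is normalized.
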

Moreover we have the augmentation $V_{\mathcal{E}} \to \Bbbk_{\mathcal{E}}$ by projecting to the unit component. We assume that all the immersed generators have positive degrees, analogously to Assumption \ref{assume:onv}. The bar construction naturally extends to give
$$ B V_{\mathcal{E}} := \bigoplus_k ((V_{\mathcal{E}})_{>0}[1])^{\otimes_{\Bbbk_{\mathcal{E}}} k},$$
where $(V_{\mathcal{E}})_{>0}$ is the augmentation ideal generated by positive degree elements of $V_{\mathcal{E}}$. Notice that $B V_{\mathcal{E}}$ also has a $\Bbbk_{\mathcal{E}}$-bimodule structure and that the bar differential is $\Bbbk_{\mathcal{E}}$-linear. In particular, one can consider the (graded) linear dual as before,
$$ V_{\mathcal{E}}^! := \oplus_i \Hom_{\Bbbk_{\mathcal{E}}} ( (B V_{\mathcal{E}})_i ,\Bbbk_{\mathcal{E}} )  $$
where we view both $(B V_{\mathcal{E}})_i$ and $\Bbbk_{\mathcal{E}}$ as left $\Bbbk_{\mathcal{E}}$-modules. As before, this can also be understood as the set of pre-$A_\infty$ module homomorphisms from the right $V_{\mathcal{E}}$-module $\Bbbk_{\mathcal{E}}$ to itself.  

We next extend the Koszul functor to this higher-rank setup.
For a given (left) $V_{\mathcal{E}}$-module $M$, consider the correspondence
$$ M \mapsto \Hom_{\Bbbk_{\mathcal{E}}}  (M \otimes_{\Bbbk_\mathcal{E}} BV_\mathcal{E}, \Bbbk_{\mathcal{E}})$$
which assigns $M$ the set of pre-$A_\infty$ module homomorphism from $M$ to $\Bbbk_{\mathcal{E}}$.
In particular, if we take $M= V_\mathcal{E}$, then the image of the above functor is cohomologically the set of $A_\infty$-homomorphisms from $V$ to $\Bbbk_\mathcal{E}$ extending the augmentation (which should obviously be the augmentation itself up to scale). Hence we see that $V_{\mathcal{E}}$ maps to $\Bbbk_{\mathcal{E}}$ under this functor.

On the other hand, the usual contracting homotopy (inserting the unit class to the $V_{\mathcal{E}}$-factor) still makes sense in the current setup to give a homotopy equivalence
$$ V_{\mathcal{E}} \otimes_{\Bbbk_\mathcal{E}}  BV_\mathcal{E}  \cong \Bbbk_\mathcal{E}$$
which dualizes to 
$$ V_{\mathcal{E}}^! \otimes_{\Bbbk_\mathcal{E}} V_{\mathcal{E}}^{\sharp} \cong \Bbbk_\mathcal{E}$$ as left $V_{\mathcal{E}}^! $-modules. Hence, we have
 $$\mathrm{RHom}_{V_{\mathcal{E}}^!} (\Bbbk_{\mathcal{E}}, \Bbbk_{\mathcal{E}} ) = \mathrm{Hom}_{V_{\mathcal{E}}^!} (V^! \otimes_{\Bbbk_{\mathcal{E}}}  V^\sharp, \Bbbk_{\mathcal{E}} )   
 \cong   \Hom_{\Bbbk_{\mathcal{E}}} ( V_{\mathcal{E}}^\sharp, \Bbbk_{\mathcal{E}} ) \cong V_{\mathcal{E}} \cong \hom_{V_\mathcal{E}} (V_\mathcal{E},V_\mathcal{E}),$$
 which establishes an equivalence between the sub-Fukaya category generated by $(\mathbb{L},\mathcal{E})$ and the category of finite dimensional $V_\mathcal{E}^!$-modules.
%
%

\subsection{The new ``trivial" module $\Bbbk_{\vec{d}}$}
Approach (ii) requires each local system $\mathcal{E}|_{L_i}$ to be trivial up to gauge equivalence. This condition is automatic when every component $L_i$ of $\bL$ is simply connected, as will be the case in our later study of plumbings of cotangent bundles of spheres. In this situation, the isomorphism class of $\mathcal{E}|_{L_i}$ is determined entirely by its rank, which we denote by $d_i$.


We then assign $\mathbb{C}^{d_i}$ over each idempotent $\pi_i$ of $\Bbbk = \oplus_i \mathbb{C}$, which results in a $\Bbbk$-module $\Bbbk_{\vec{d}}:= \oplus_i \mathbb{C}^{d_i}$. It can also be viewed as a module over $V = \CF(\mathbb{L},\mathbb{L})$, since so is $\Bbbk$ via the augmentation. This has a higher rank, but is still ``trivial" in the sense that higher $A_\infty$-operations (in the $V$-module structure) vanish. As mentioned, we will replace the trivial $V$-module $\Bbbk$ by this new $V$-module $\Bbbk_{\vec{d}}$. Thus we may consider the centralizer of $V$ with respect to  $\Bbbk_{\vec{d}}$ (Definition \ref{def:centcomp}),
$$V_{\Bbbk_{\vec{d}}}^! = \Hom_{V} (\Bbbk_{\vec{d}},\Bbbk_{\vec{d}}) \cong \Hom_\Bbbk ( \Bbbk_{\vec{d}} \otimes_\Bbbk  \bar{B}V, \Bbbk_{\vec{d}}) \cong \Bbbk_{\vec{d}} \otimes_\Bbbk V^! \otimes_\Bbbk \Bbbk_{\vec{d}}^\sharp$$
We denote this by $\tilde{\mathcal{A}}_{\mathbb{L}_\mathcal{E}}$.

For a basis element $X \in \pi_i \cdot V \cdot \pi_j$, define its associated coordinate function $x_{ab}$ (for $1\leq a \leq i$ and $1 \leq b\leq j$) to be an element of $V_{\Bbbk_{\vec{d}}}^!$ such that
$$x_{ab}: e_a \otimes X  ( \in  \mathbb{C}^{d_i} \otimes \langle X \rangle )\mapsto  e_b  (\in  \mathbb{C}^{d_j} )$$
and maps other elements to zero,
where $e_a$ and $e_b$ denote $a$-th and $b$-th standard basis vectors of $\mathbb{C}^{d_i}$ and $\mathbb{C}^{d_j}$, respectively. $\tilde{\mathcal{A}}_{\mathbb{L}_\mathcal{E}}$ is generated by matrices with such a coordinate function in each entry.

As before, there is a natural functor
$$\mathcal{F}^{\mathbb{L}_\mathcal{E}} : \Fuk(X) \to \mathrm{Mod}^{left}_{\dg} (\tilde{\mathcal{A}}_{\mathbb{L}_\mathcal{E}})$$
which can be algebraically described as the composition of the Yoneda functor $\mathcal{Y}_\bL$ and
$$\mathcal{F} : \mathrm{Mod}^{right}_{A_\infty} (V) \to \mathrm{Mod}^{left}_{\dg} (V_{\Bbbk_{\vec{d}}}^!),\quad M \mapsto \Hom_{\mathrm{Mod}_{A_\infty} (V)} (M, \Bbbk_{\vec{d}}) = \Hom_\Bbbk (M \otimes_\Bbbk \bar{B}V,  \Bbbk_{\vec{d}})$$
for $V=\CF(\mathbb{L},\mathbb{L})$.
It is easy to check that the functor sends $\mathbb{L}$ itself to $\Bbbk_{\vec{d}}$, or $\mathcal{F}(V) = \Bbbk_{\vec{d}}$. 


Let us consider $\Fuk_{\mathbb{L}_\mathcal{E}} (X)$, the subcategory of $\Fuk(X)$ split-generated by $\mathbb{L}_\mathcal{E}:=(\mathbb{L},\mathcal{E})$.\footnote{Strictly speaking $\Fuk(X)$ does not contain Lagrangians equipped with higher-rank local systems, but one can easily generalize it to include  those.} 
On the mirror side, we take the subcategory split-generated by $\mathrm{End}(\Bbbk_{\vec{d}}):=\Bbbk_{\vec{d}}  \otimes_\Bbbk  \Bbbk_{\vec{d}}^\sharp$, which is obviously a left module over $V_{\Bbbk_{\vec{d}}}^! = \Hom_{V} (\Bbbk_{\vec{d}},\Bbbk_{\vec{d}}).$
We claim that the functor restricts to a quasi-equivalence between these subcategories.

\begin{lemma}
$\mathcal{F}^{\mathbb{L}_\mathcal{E}}$ induces a quasi-equivalence
\begin{equation}\label{eqn:functhr}
 \mathcal{D} \Fuk_{\mathbb{L}_\mathcal{E}} (X) \stackrel{\simeq}{\longrightarrow} \operatorname{thick}_{\mathcal{D}(\tilde{\mathcal{A}}_{\mathbb{L}_\mathcal{E}})}(\mathrm{End}(\Bbbk_{\vec{d}})).
\end{equation}
\end{lemma}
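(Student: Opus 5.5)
The plan follows the proof of Theorem \ref{thm:extfuctorequiv}, with the trivial module $\Bbbk$ replaced by $\Bbbk_{\vec{d}}$. Set $V=\CF(\bL,\bL)$ and $M_{\vec d}:=\Bbbk_{\vec d}\otimes_\Bbbk V$, the right $V$-module that corresponds under the Yoneda functor to $(\bL,\mathcal{E})$. I first identify the image of $(\bL,\mathcal{E})$. Since each $\mathcal{E}|_{L_i}$ is trivial of rank $d_i$, the object $(\bL,\mathcal{E})$ is isomorphic to $\bigoplus_i L_i^{\oplus d_i}$. Hence $\Fuk_{\bL_\mathcal{E}}(X)$ and $\Fuk_{\bL}(X)$ have the same split-closure whenever all $d_i>0$, and the functor sends $(\bL,\mathcal{E})$ to
\[
\Hom_\Bbbk(M_{\vec d}\otimes_\Bbbk \bar BV,\Bbbk_{\vec d})\cong \Hom_\Bbbk(\Bbbk_{\vec d}^{\sharp}\otimes_\Bbbk V\otimes_\Bbbk\bar BV,\ldots).
\]
Using the contracting homotopy of Lemma \ref{lem:leftrightresol} (inserting the unit into the $V$-factor), $V\otimes_\Bbbk\bar BV\simeq\Bbbk$. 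The image is therefore quasi-isomorphic to $\Bbbk_{\vec d}\otimes_\Bbbk\Bbbk_{\vec d}^\sharp=\mathrm{End}(\Bbbk_{\vec d})$ as a left $V^!_{\Bbbk_{\vec d}}$-module. I will check that the module structure transported through this homotopy is the obvious one, namely $V^!_{\Bbbk_{\vec d}}=\Bbbk_{\vec d}\otimes V^!\otimes\Bbbk_{\vec d}^\sharp$ acting through the augmentation $V^!\to\Bbbk$ on the first factor.

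Next comes full faithfulness. It suffices to show that the morphism map
\[
\Hom_{\Fuk}(\bL_\mathcal{E},\bL_\mathcal{E})\simeq \Bbbk_{\vec d}\otimes_\Bbbk V\otimes_\Bbbk \Bbbk_{\vec d}^\sharp\longrightarrow \mathrm{RHom}_{V^!_{\Bbbk_{\vec d}}}(\mathrm{End}(\Bbbk_{\vec d}),\mathrm{End}(\Bbbk_{\vec d}))
\]
is a quasi-isomorphism. Once this holds, both sides are thick closures of a single object and its image, and a fully faithful exact functor between them that hits the generator is a quasi-equivalence on thick closures. For the computation I observe that $V^!_{\Bbbk_{\vec d}}$ is Morita equivalent to $V^!$. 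It is a ``matrix algebra'' over $V^!$, $\mathrm{End}_{\Bbbk}$-twisted by $\Bbbk_{\vec d}$, and when all $d_i>0$ the bimodules $\Bbbk_{\vec d}\otimes_\Bbbk V^!$ and $V^!\otimes_\Bbbk\Bbbk_{\vec d}^\sharp$ implement the equivalence $\mathrm{Mod}(V^!)\simeq\mathrm{Mod}(V^!_{\Bbbk_{\vec d}})$. Under this equivalence, $\mathrm{End}(\Bbbk_{\vec d})$ corresponds to $\Bbbk\otimes_\Bbbk\Bbbk_{\vec d}^\sharp=\Bbbk_{\vec d}^\sharp$, which is a direct sum of copies of the simples $\Bbbk_i$. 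Then
\[
\mathrm{RHom}_{V^!}(\Bbbk_{\vec d}^\sharp,\Bbbk_{\vec d}^\sharp)\cong \Bbbk_{\vec d}\otimes_\Bbbk\mathrm{RHom}_{V^!}(\Bbbk,\Bbbk)\otimes_\Bbbk\Bbbk_{\vec d}^\sharp\cong \Bbbk_{\vec d}\otimes_\Bbbk (V^!)^!\otimes_\Bbbk\Bbbk_{\vec d}^\sharp,
\]
and Lemma \ref{lem:vdoubledual} identifies $(V^!)^!$ with $V$. Finally, I verify that the composite is the natural duality map \eqref{eqn:defmorfun}, exactly as in Theorem \ref{thm:extfuctorequiv}.

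If some $d_i=0$, I restrict to the full subquiver on the vertices with $d_i>0$. The module $\Bbbk_{\vec d}$ only sees idempotents $\pi_i$ with $d_i>0$, so the same argument applies with $\Bbbk$ replaced by $\bigoplus_{d_i>0}\mathbb{C}$. Here one uses that $\mathrm{RHom}_V(\Bbbk_{\vec d},\Bbbk_{\vec d})$ only involves the corner $\pi V^!\pi$.

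I expect the main obstacle to be the Morita step in the completed and dg setting. One must check that $\Bbbk_{\vec d}\otimes_\Bbbk V^!$ is a compact projective generator compatible with the $I$-adic completion, and that the identification intertwines the functor $\mathcal{F}^{\bL_\mathcal{E}}$ with $\mathcal{F}$ up to this equivalence. The differential contributes no new terms, because $\Bbbk_{\vec d}$ has vanishing higher module operations. My first attempt will therefore be to write everything in explicit matrix coordinates $x_{ab}$ and check the compatibility directly on generators.
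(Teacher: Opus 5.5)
For all $d_i>0$ your argument is correct, but it is organized differently from the paper's.

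The paper stays over $\tilde{\mathcal{A}}_{\bL_\mathcal{E}}=V^!_{\Bbbk_{\vec d}}$ throughout. It takes the explicit model $\Bbbk_{\vec d}\otimes_\Bbbk V^!\otimes_\Bbbk V^\sharp\otimes_\Bbbk \Bbbk_{\vec d}^\sharp$ of the image of $(\bL,\mathcal{E})$ and rewrites it as $V^!_{\Bbbk_{\vec d}}\otimes_{\mathrm{End}(\Bbbk_{\vec d})}(\Bbbk_{\vec d}\otimes_\Bbbk V^\sharp\otimes_\Bbbk\Bbbk_{\vec d}^\sharp)$. Base change, plus the elementary Morita equivalence between the semisimple rings $\mathrm{End}(\Bbbk_{\vec d})$ and $\Bbbk$, reduces the endomorphism complex to $\Hom_\Bbbk(V^\sharp\otimes_\Bbbk\Bbbk_{\vec d}^\sharp,\,V^!\otimes_\Bbbk V^\sharp\otimes_\Bbbk\Bbbk_{\vec d}^\sharp)$. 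Lemma~\ref{lem:leftrightresol} then finishes.

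You instead Morita-reduce the dg algebra $V^!_{\Bbbk_{\vec d}}$ itself to $V^!$ and quote Lemma~\ref{lem:vdoubledual}. The lemma thus becomes Theorem~\ref{thm:extfuctorequiv} transported along an equivalence. This is more conceptual, and it makes explicit that the functor realizes the identification, which the paper records only as an abstract agreement of two endomorphism complexes.

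The obstacle you anticipate is not real. When all $d_i>0$, $V^!_{\Bbbk_{\vec d}}$ is a full corner $e\,\mathrm{Mat}_N(V^!)\,e$, i.e.\ finite matrices over the already completed $V^!$, so no new completion issue arises. Moreover $\Hom_\Bbbk(M\otimes_\Bbbk\bar BV,\Bbbk_{\vec d})\cong\Bbbk_{\vec d}\otimes_\Bbbk\Hom_\Bbbk(M\otimes_\Bbbk\bar BV,\Bbbk)$. So $\mathcal{F}^{\bL_\mathcal{E}}$ is literally the Morita functor $\Bbbk_{\vec d}\otimes_\Bbbk(-)$ composed with $\mathcal{F}$, and the intertwining is immediate.

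Your final paragraph on $d_i=0$, however, is wrong. Let $\pi=\sum_{d_i>0}\pi_i\neq 1$. Then $V^!_{\Bbbk_{\vec d}}$ is Morita equivalent to the corner $\pi V^!\pi$, which is the dual of $\pi\,\bar BV\,\pi$. That space still contains bar words passing through the rank-zero vertices. So $\pi V^!\pi$ is not $(\pi V\pi)^!$, and Lemma~\ref{lem:vdoubledual} does not apply with $\Bbbk$ replaced by $\pi\Bbbk$.

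Full faithfulness in fact fails, as the following example shows.
Take the $A_2$ plumbing of two copies of $T^*S^2$ with $\vec d=(1,0)$, so that $(\bL,\mathcal{E})=L_1$ has endomorphism algebra $H^*(S^2)$. Here $V^!=\widehat G_2(A_2)$, with arrows $a\colon 1\to 2$ and $a^*\colon 2\to 1$ in degree $0$ and loops $t_1,t_2$ in degree $-1$.
\begin{itemize}
\item The corner $\pi_1V^!\pi_1$ is (completed) free on $t_1$ and $u_k:=a^*t_2^k a$ for $k\ge 0$, with $|u_k|=-k$.
\item Its differential is $dt_1=\pm u_0$, with each $du_k$ quadratic in the $u_j$. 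So the linear part only cancels $t_1$ against $u_0$.
\item Each surviving generator $u_k$ with $k\ge 1$ contributes an Ext class in degree $1-|u_k|=k+1$.
\item Hence $\mathrm{Ext}^*_{\pi_1V^!\pi_1}(\Bbbk_1,\Bbbk_1)$ is one-dimensional in each degree $0,2,3,4,\dots$, which is not $H^*(S^2)$.
\end{itemize}

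So the lemma needs every $d_i>0$. The paper's proof uses this tacitly too, since its rewriting requires $\Bbbk_{\vec d}^\sharp\otimes_{\mathrm{End}(\Bbbk_{\vec d})}\Bbbk_{\vec d}\cong\Bbbk$. To cover zero ranks, you would have to discard the rank-zero components before taking the Koszul dual, not after.
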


\begin{proof}
The image of $(\mathbb{L},\mathcal{E})$ under the mirror functor is
\begin{equation*}
\begin{array}{rcl}
 \mathcal{F}^{\mathbb{L}_\mathcal{E}} (\mathbb{L},\mathcal{E})=\Hom_V ( \Bbbk_{\vec{d}} \otimes_\Bbbk V,\Bbbk_{\vec{d}})&=& \Hom_\Bbbk ( \Bbbk_{\vec{d}}  \otimes_\Bbbk V \otimes_\Bbbk \bar{B}V,  \Bbbk_{\vec{d}}) \\
 &=& \Bbbk_{\vec{d}} \otimes_\Bbbk V^! \otimes_\Bbbk V^\sharp \otimes_\Bbbk  \Bbbk_{\vec{d}}^\sharp \\
  &\stackrel{qis}{\simeq}&  \Bbbk_{\vec{d}}  \otimes_\Bbbk  \Bbbk_{\vec{d}}^\sharp =\mathrm{End}(\Bbbk_{\vec{d}}),
\end{array}
\end{equation*}  
and the restricted functor \eqref{eqn:functhr} is well-defined.
Let us next compare their endomorphism spaces.
Observe first that the endomorphism of $\mathbb{L}_\mathcal{E}$ in the Fukaya category is given by
\begin{equation}\label{eqn:endohrobj}
\begin{array}{rcl}
\Hom_{\Fuk(X)} ((\mathbb{L},\mathcal{E}),(\mathbb{L},\mathcal{E}))=\Hom_V (\Bbbk_{\vec{d}} \otimes_\Bbbk V, \Bbbk_{\vec{d}} \otimes_\Bbbk V) &=& \Hom_\Bbbk (\Bbbk_{\vec{d}} \otimes_\Bbbk  V \otimes_\Bbbk \bar{B}V, \Bbbk_{\vec{d}} \otimes_\Bbbk V) \\
&=& \Bbbk_{\vec{d}} \otimes_\Bbbk V \otimes_\Bbbk V^! \otimes_\Bbbk  V^\sharp \otimes_\Bbbk  \Bbbk_{\vec{d}}^\sharp \\
&\stackrel{qis}{\simeq}& \Bbbk_{\vec{d}} \otimes_\Bbbk V \otimes_\Bbbk  \Bbbk_{\vec{d}}^\sharp.
\end{array}
\end{equation}
On the other hand, the endomorphism of $ \mathcal{F}^{\mathbb{L}_\mathcal{E}} (\mathbb{L},\mathcal{E})$ can be computed as
\begin{equation*}
\begin{array}{rl} 
& \Hom_{\mathrm{Mod}_{\dg} (\tilde{\mathcal{A}}_{\mathbb{L}_\mathcal{E}})} ( \mathcal{F}^{\mathbb{L}_\mathcal{E}} (\mathbb{L},\mathcal{E}), \mathcal{F}^{\mathbb{L}_\mathcal{E}} (\mathbb{L},\mathcal{E}))\\
=&\Hom_{V_{\Bbbk_{\vec{d}}}^!} (  \Bbbk_{\vec{d}} \otimes_\Bbbk V^! \otimes_\Bbbk V^\sharp \otimes_\Bbbk  \Bbbk_{\vec{d}}^\sharp \,\,,\,  \Bbbk_{\vec{d}} \otimes_\Bbbk V^! \otimes_\Bbbk V^\sharp \otimes_\Bbbk  \Bbbk_{\vec{d}}^\sharp) \\
=& \Hom_{V_{\Bbbk_{\vec{d}}}^!} ( \underbrace{ \Bbbk_{\vec{d}} \otimes_\Bbbk V^! \otimes_\Bbbk \Bbbk_{\vec{d}}^\sharp }_{=V_{\Bbbk_{\vec{d}}}^!} \otimes_{\mathrm{End} (\Bbbk_{\vec{d}})} \Bbbk_{\vec{d}} \otimes_\Bbbk V^\sharp \otimes_\Bbbk  \Bbbk_{\vec{d}}^\sharp \,\,,\,  \Bbbk_{\vec{d}} \otimes_\Bbbk V^! \otimes_\Bbbk V^\sharp \otimes_\Bbbk  \Bbbk_{\vec{d}}^\sharp) \\
=& \Hom_{\mathrm{End}(\Bbbk_{\vec{d}})} (  \Bbbk_{\vec{d}} \otimes_\Bbbk V^\sharp \otimes_\Bbbk  \Bbbk_{\vec{d}}^\sharp \,\,,\,  \Bbbk_{\vec{d}} \otimes_\Bbbk V^! \otimes_\Bbbk V^\sharp \otimes_\Bbbk  \Bbbk_{\vec{d}}^\sharp) \\
=& \Hom_{\Bbbk} (    V^\sharp \otimes_\Bbbk  \Bbbk_{\vec{d}}^\sharp \,\, , \, V^! \otimes_\Bbbk V^\sharp \otimes_\Bbbk  \Bbbk_{\vec{d}}^\sharp) 
\end{array}
\end{equation*}
where we used
\begin{itemize}
\item[(i)] the base change formula,
\(\text{Hom}_{S}( S \otimes _{R}M,P)\cong \text{Hom}_{R}(M,P)\), and
\item[(ii)] the Morita equivalence (in the last line, since $\Bbbk_{\vec{d}}$ is projective over $\Bbbk$).
\end{itemize}
Moreover, using $V^\sharp   \otimes_\Bbbk V^!  \stackrel{qis}{\simeq} \Bbbk$ from Lemma \ref{lem:leftrightresol}, the above becomes
\begin{equation*}
\begin{array}{rcl}
 \Hom_{\Bbbk} (    V^\sharp \otimes_\Bbbk  \Bbbk_{\vec{d}}^\sharp \,\, , \, V^! \otimes_\Bbbk V^\sharp \otimes_\Bbbk  \Bbbk_{\vec{d}}^\sharp) &\stackrel{qis}\simeq&  \Hom_{\Bbbk} (    V^\sharp \otimes_\Bbbk  \Bbbk_{\vec{d}}^\sharp \,\, , \,    \Bbbk_{\vec{d}}^\sharp)  \\
&=&  \Bbbk_{\vec{d}} \otimes_\Bbbk V \otimes_\Bbbk  \Bbbk_{\vec{d}}^\sharp \,\, .
\end{array}
\end{equation*}
Thus we have 
$$ \Hom_{\mathrm{Mod}(V^!_{\Bbbk_{\vec{d}}})} ( \mathcal{F}^{\mathbb{L}_\mathcal{E}} (\mathbb{L},\mathcal{E}), \mathcal{F}^{\mathbb{L}_\mathcal{E}} (\mathbb{L},\mathcal{E})) \stackrel{qis}{\simeq}  \Bbbk_{\vec{d}} \otimes_\Bbbk V \otimes_\Bbbk  \Bbbk_{\vec{d}}^\sharp$$
which agrees with \eqref{eqn:endohrobj}.
\end{proof}


\section{Application to the Fukaya category of plumbing spaces $T^\ast \mathbb{S}^n$}\label{sec:fukplumb}
In this section, we provide a simple application, namely, a version of homological mirror symmetry for plumbing spaces will be deduced from Theorem \ref{thm:extfuctorequiv}. Moreover, when there is no cycle in the plumbing graph, it implies that the zero section (core) split generates the compact Fukaya category. On the other hand, we also exhibit examples showing that, in the presence of cycles, the core Lagrangian in general fails to split generate the compact Fukaya category.

Let's first briefly recall the construction of plumbing spaces which are Liouville manifolds obtained by performing surgeries on cotangent bundles. 
Let $D$ be a graph with vertex set $V$ and edge set $E$. 
For each vertex $v \in V$, let $M_v$ be a Riemannian manifold, and consider its disk cotangent bundle $D^*M_v$. For each edge $e = (v,w) \in E$, choose open balls $B_v^e \subset M_v$ and $B_w^e \subset M_w$, together with a symplectomorphism between the corresponding disk cotangent bundles $
D^*B_v^e \cong D^*B_w^e$
given by the identification
$ (x_v, y_v) \mapsto (-y_w, x_w)$,
where $x$ and $y$ denote the base and fiber coordinates, respectively.

Gluing all $D^*M_v$ along these identifications for each edge $e \in E$, one obtains a Weinstein domain with corners. After rounding the corners, its completion defines a Liouville manifold, called the \emph{plumbing of cotangent bundles according to the graph $D$}. Moreover, each zero section $M_v \subset D^*M_v$ forms an embedded exact Lagrangian submanifold in the resulting Liouville manifold. In this section, we will mainly consider the plumbing space of $T^*\mathbb{S}^n$ and work with the standard exact plumbing model. We choose the Liouville
form in this model so that it restricts to zero on each core component and on the chosen cocores.

\subsection{Local mirror symmetry for plumbing spaces} \label{subsec: lms}

We describe the local mirror symmetry for plumbing spaces from the perspective of Koszul duality. 
The extended localized mirror of the core Lagrangian is modeled by a version of the Ginzburg differential graded algebra associated to the plumbing graph.

\begin{defn}
	Let $D=(I,E)$ be a graph. We associate a quiver $Q$ by:
	\begin{itemize}
		\item replacing each edge between $i$ and $j$ by a pair of arrows 
		$x_a: i \to j$ and $x_{a^*}: j \to i$,
		\item adding a loop $t_v$ at each vertex $v \in I$.
	\end{itemize}
	
	Fix an integer $n\in \mathbb{Z}$ and assign to each arrow $a$ an integer degree $q_a\in \mathbb{Z}_{\leq 0}$. 
	
	The \emph{Ginzburg differential graded algebra} $G_n(D)$ is the graded path algebra of $Q$ equipped with the following grading and differential.
	
	\begin{itemize}
		\item The grading is given by
		\[
		|x_a|=q_a,\qquad |x_{a^*}|=2-n-q_a,\qquad |t_v|=1-n.
		\]
		
		\item The differential $d$ is the derivation determined by
		\[
		d(x_a)= d(x_{a^*})=0,\qquad 
		d(t_v)=\sum_{h(a)=v} x_ax_{a^*}.
		\]
	\end{itemize}
     
     The \emph{completed Ginzburg differential graded algebra} 
     $\widehat{G}_n(D)$ is defined in the same way, replacing the graded
     path algebra by its completion with respect to the arrow ideal. 
     Equivalently,
     \[
     \widehat{G}_n(D)
     =
     \prod_{k\geq 0} \mathbb{C}Q_k,
     \]
     where $\mathbb{C}Q_k$ is the vector space spanned by paths of length $k$.
     The grading and differential are the extensions of those on
     $G_n(D)$.
\end{defn}


We now explain how the above algebra arises from the extended localized mirror construction for plumbing spaces.

\begin{prop}
	Let $X$ be the plumbing of $T^*\mathbb{S}^n$ according to a graph $D=(I,E)$ and $\bL$ be the core of $X$. Then the extended localized mirror of $\bL$ is the completed Ginzburg differential graded algebra $\widehat{G}_n(D)$ for $n\geq 2.$
	
\end{prop}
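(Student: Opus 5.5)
Since $\tilde{\cA}_\bL = V^!=(\bar{B}V)^\sharp$ for $V$ a minimal model of $\CF(\bL,\bL)$, the plan is to compute $V$ explicitly as a graded $\Bbbk$-bimodule with its $A_\infty$-structure, and then dualize following the coordinate description of \S\ref{subsec:coordkos}. Here $\Bbbk=\oplus_{v\in I}\C$.

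\textbf{Step 1: the minimal model.} I would model $\bL$ as the immersed Lagrangian obtained by taking a perfect Morse function on each sphere $S^n_v$, which has one maximum (the unit $1_v$) and one minimum $[\mathrm{pt}_v]$. Each edge $e=(i,j)$ contributes a transverse intersection point, and this gives two immersed generators $X_a\in \CF(L_i,L_j)$ and $X_{a^*}\in\CF(L_j,L_i)$. Their degrees satisfy $\deg X_a+\deg X_{a^*}=n$, and we set $\deg X_a=1-q_a$; the grading choice on the components makes $q_a\le 0$, so there are no nonpositive-degree immersed generators. Each core component is exact and the plumbing is exact, so there are no nonconstant holomorphic polygons with boundary on a single sphere. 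Every generator lies in a distinct graded piece of a $2$-dimensional space per vertex (for $n\ge2$), so $V$ is already minimal. It satisfies Assumption \ref{assume:onv}: $V_0=\Bbbk$ and $V$ is of finite rank.

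\textbf{Step 2: the $A_\infty$-structure.} The key claim is that $V$ is formal, or at least admits a finite-type model whose only nontrivial products are the unit products and
\[
m_2(X_a,X_{a^*})=\pm[\mathrm{pt}_{h(a)}], \qquad m_2(X_{a^*},X_a)=\pm[\mathrm{pt}_{t(a)}].
\]
These follow from the Poincaré pairing and the cyclic symmetry remark, since the local model at each plumbing point is the standard $T^*\R^n$ with the zero section and a cotangent fiber. For the higher products $m_k$ with $k\ge3$, I would use exactness: any contributing disc has energy equal to a difference of the primitive values at the corners. Choosing the Liouville form to vanish on the cores makes all action values zero, so only constant polygons can contribute, and those are accounted for in the local Morse model. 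Alternatively, a degree count for $n\ge 2$ forces $m_k=0$ for $k\geq 3$: the output degree $\sum\deg X_i+2-k$ must lie in $\{0,n\}\cup\{\deg X_a\}$, and unitality kills the remaining cases. I expect this step to be the main obstacle. For general gradings $q_a$ the degree argument may not close, so I would rely on the action argument and cite the standard computation of the Fukaya $A_\infty$-algebra of plumbing cores (Etgü–Lekili, Abouzaid–Smith) to conclude that it is the formal trivial-extension algebra.

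\textbf{Step 3: dualize.} By the coordinate description, $V^!$ is the completed tensor algebra over $\Bbbk$ on the duals of the positive-degree generators. The dual of $X_a$ gives an arrow $x_a$ of degree $1-\deg X_a=q_a$. The dual of $X_{a^*}$ gives an arrow of degree $1-(n-1+q_a)=2-n-q_a$, pointing in the reversed direction. The dual of $[\mathrm{pt}_v]$ gives a loop $t_v$ of degree $1-n$. These match the grading of $\widehat G_n(D)$.

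Next I would compute the differential from $\sum_k m_k(\tilde b,\dots,\tilde b)=\sum (dx_i)X_i$ with $\tilde b=\sum x_aX_a+\sum x_{a^*}X_{a^*}+\sum t_v[\mathrm{pt}_v]$. The only nonzero terms are the $m_2(x_aX_a,x_{a^*}X_{a^*})=x_{a^*}x_a[\mathrm{pt}]$-type products, and there are no products landing in $X_a$. This gives $dx_a=dx_{a^*}=0$ and $dt_v=\sum_{h(a)=v}x_ax_{a^*}$, possibly after rescaling arrows by signs to absorb orientation signs and to match the composition convention. The completion in $V^!=(\bar BV)^\sharp$ is exactly the arrow-ideal completion, by \S\ref{subsub:compl}. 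This identifies $\tilde{\cA}_\bL\cong\widehat G_n(D)$.
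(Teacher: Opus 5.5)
Your route is the paper's: compute the $A_\infty$-structure on $\CF(\bL,\bL)$ in the Morse model, use exactness to reduce to constant polygons plus flow lines, then dualize. For $n\geq 3$ it goes through. A constant polygon has all its corners at one double point, so its inputs alternate between $X_a$ and $X_{a^*}$. Then $m_{2k}(X_a,X_{a^*},\dots,X_a,X_{a^*})$ has degree $k(n-2)+2>n$ for $k\geq 2$, which is exactly the paper's degree argument. Two small points here. You should use this single-double-point structure rather than degrees alone, since degrees do not rule out, e.g., $m_2(X_{12},X_{23})\in\CF(L_1,L_3)$ when $D$ has a cycle. You also need $2-n\le q_a\le 0$, so that $X_{a^*}$ has positive degree as well.

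The genuine gap is at $n=2$, where every $X_a$ and $X_{a^*}$ has degree $1$.
\begin{itemize}
\item $m_{2k}(X_a,X_{a^*},\dots,X_a,X_{a^*})$ then has degree $2k+2-2k=2=n$ for every $k$, so your degree count does not force $m_k=0$ for $k\geq 3$.
\item The action argument does not help: these are exactly the zero-energy constant polygons at the double point.
\item With the Kuranishi perturbation they do contribute. One gets $dt_v=\sum_{t(a)=v}\epsilon(a)\,x_{a^*}x_a\bigl(1+\sum_{j\geq 1}a_j(x_{a^*}x_a)^j\bigr)$ with perturbation-dependent $a_j$; this is the computation of \cite[Thm.~5.2]{HLT24} that the paper invokes.
\end{itemize}
So your Step~3 formula for $d$ is wrong as stated for the actual model.

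The missing idea is a change of generators in the completed path algebra. By the local anti-symmetric involution at the double point (\cite[Lem.~8.4.3]{FOOO09}, \cite[Lem.~3.4]{HKL23}), the same series appears at the other endpoint as $x_a\bigl(1+\sum_j a_j(x_{a^*}x_a)^j\bigr)x_{a^*}$. Hence the single substitution $\tilde x_a=x_a\bigl(1+\sum_j a_j(x_{a^*}x_a)^j\bigr)$, $\tilde x_{a^*}=x_{a^*}$, applied to one arrow in each pair, makes both relations quadratic at once. It is invertible only in the completion, and it identifies $\tilde{\cA}_\bL\cong\widehat{G}_2(D)$. Equivalently, $V$ is $A_\infty$-isomorphic, but not equal, to the formal model. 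This is why the statement is an isomorphism with the \emph{completed} algebra.

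Your fallback of citing Etg\"u--Lekili or Abouzaid--Smith does not close the gap in the stated generality. Those works treat tree plumbings, whereas $D$ here is an arbitrary graph (e.g.\ affine $A_n$, or a single vertex with a loop).
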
 
\begin{proof}
	One can compute the extended localized mirror of $\bL$ by taking the Koszul dual. Here we provide a geometric proof using Proposition \ref{prop:kos}.
	
	We distinguish the cases $n=2$ and $n \geq 3$. Let $n=2$, and take
	$$\tilde{b} = \sum_a (x_a X_a + x_{\bar{a}}X_{\bar{a}}) + \sum_v t_v T_v  $$
	to be the formal deformation parameter, where $(X_a,X_{\bar{a}})$ is the immersed sector of degree 1 and $T_v$ is the minimum point of $\mathbb{S}^2_v$ for all $v \in I$.  Formally $\tilde{b}$ is made to be degree one by setting $\deg x_a = \deg x_{\bar{a}} = 0$ and $\deg t_v = -1$.  
	Then $m_0^{\tilde{b}}$ is in degree $2$ and takes the form
	$$m_0^{\tilde{b}} = \sum_v p_v T_v.  $$
	Since the coefficients have non-positive degrees, the generators $X_a$ and $X_{\bar{a}}$ do not appear for degree reasons.  Moreover, each $p_v$ must have degree $0$ and hence are series only in $x_a,x_{\bar{a}}$.
	
	The derivation $d$ is defined by $d x_a = d x_{\bar{a}} = 0$ and $d t_v = p_v$, and then extended by Leibniz rule.  The $A_\infty$ equations imply that $(\C \tilde{Q}, d)$ is a dg algebra.
	
	The only non-trivial term $dt_v = p_v$ counts Floer contributions to the minimal point of the Morse function in each sphere component, which was computed in Theorem 5.2 of \cite{HLT24}. Up to a coordinate change, one has $p_v= \sum_{t(a)=v} \epsilon(x_a) x_{\bar{a}}x_a$. Hence, $(\C \tilde{Q}, d)$ is the completed Ginzburg differential graded algebra $\widehat{G}_2(D)$. 
	
	When $n \geq 3$, we take $\tilde{b}$ as before. In this case, each intersection point gives rise to a pair of immersed sectors $(X_a,X_{\bar{a}})$, whose degrees are $d_a$ and $n- d_a$. Moreover, after shifting the gradings of the Lagrangians and choosing the ambient grading structure appropriately, the degree $d_a$ be arranged to satisfy $1\leq d_a \leq n-1$. See, for example, Section 3.1 of \cite{JKL-Ginzburg}. 

	By construction, the plumbing space $X$ deformation retracts to the core Lagrangian $\bL$. Hence any relative disc with boundary on  $\bL$ represents zero class in $H^2(X,\bL)$. The core Lagrangian $\bL$ doesn't bound any non-constant discs. Consequently, $m_0^{\tilde{b}}$ counts pearl trajectories, which consist of constant polygons mapping to the immersed sectors together with Morse flow lines. Therefore, $$m_0^{\tilde{b}}= \sum_k \sum_a m_{2k}(x_a X_a, x_{\bar{a}}X_{\bar{a}}, \ldots, x_a X_a, x_{\bar{a}}X_{\bar{a}}).$$ Moreover, by degree considerations, all terms with $k\geq 2$ vanish. As a result, $$m_0^{\tilde{b}}=\sum_v \left( \sum_{t(a)=v} \epsilon(x_a) x_{\bar{a}}x_a \right) T_v,$$ where $\epsilon(x_a)= \pm 1$ depending on the grading structure. Thus, $\tilde{\cA}_\bL \cong \widehat{G}_n(D).$
\end{proof}

\begin{remark}\label{rem: completed}
	In general, the extended localized mirror or Koszul dual obtained from the bar construction is naturally a completed path algebra, since the bar construction allows tensors of arbitrary length. However, in some cases the completion is unnecessary and one obtains the ordinary, non-completed Ginzburg dg algebra.
	
	For example, suppose that the underlying graph is a tree and $n \geq 3$. In this case there are no degree-zero oriented cycles in the corresponding graded quiver. Consequently, in each fixed degree, only finitely many paths can contribute. Hence the bar construction does not produce genuinely infinite degree-preserving sums, and the resulting dg algebra lies in the ordinary graded path algebra rather than its completion. Thus, in this case, the Koszul dual can be identified with the non-completed Ginzburg dg algebra $G_n(D)$.
\end{remark}

When $X$ is the plumbing of $T^*\mathbb{S}^2$ according to an affine $ADE$ Dynkin graph, the above proposition provides a version of local mirror symmetry. 
\begin{cor}\label{cor:localHMS}
	Let $X$ be the plumbing of $T^*\mathbb{S}^2$ according to an extended ADE Dynkin diagram $D$. There exists a fully-faithful functor $$\Phi: \mathcal{D} \Fuk_\mathbb{L} (X) \to \mathcal{D}^b(\mathrm{Coh}(Y)),$$ which sends the $i$-th component of $\bL$ to $\iota_* \cO_{C_i}(-1)[1]$ for $i \neq 0$. Here \(Y\) is the
	crepant resolution of the corresponding ADE singularity
	\(\mathbb C^2/\Gamma\), and the vertices of \(D\) are labeled by
	\(i=0,1,\dots,r\), with \(i=0\) denoting the affine vertex corresponding to the trivial representation of \(\Gamma\) under the McKay correspondence. For \(i\neq 0\), the vertex \(i\) corresponds to the
	exceptional curve \(C_i\subset Y\), and \(\iota:C_i\hookrightarrow Y\)
	denotes the inclusion.
\end{cor}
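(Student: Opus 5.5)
Compose the equivalence of Lemma \ref{lem:equiv} with the McKay correspondence, i.e.\ the derived equivalence between the crepant resolution and the preprojective algebra. Concretely, I would proceed in three steps.

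\emph{Step 1: formality.} By the preceding Proposition, $\tilde{\cA}_\bL \cong \widehat{G}_2(D)$. For $D$ an extended ADE (hence non-Dynkin, connected) graph, the (uncompleted) Ginzburg dga $G_2(D)$ has cohomology concentrated in degree $0$, equal to the preprojective algebra $\Pi(Q)$. This is the standard fact that preprojective algebras of non-Dynkin quivers are Koszul and $2$-Calabi--Yau, so the Koszul complex $\Pi\otimes \Bbbk\to \Pi\otimes\Bbbk^{E}\otimes\Pi\to\Pi\otimes\Pi$ is exact. One has to pass to the completion. Completion with respect to the path-length grading is exact on each graded piece, since $d$ preserves path-length weight (weighting $t_v$ by $2$). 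Hence $H^\ast(\widehat G_2(D)) = \widehat{\Pi}(Q)$ in degree $0$, and Lemma \ref{lem:equiv} applies with $\cA_\bL=\widehat\Pi(Q)$. This gives a quasi-equivalence $\mathcal{D}\Fuk_\bL(X)\simeq \mathcal{D}_{\mathrm{nil}}(\widehat\Pi)$ sending the $i$-th sphere $L_i$ to the simple $S_i$, via Lemma \ref{lem:hitbbk} applied componentwise.

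\emph{Step 2: from nilpotent modules to $\Pi$.} The category $\operatorname{thick}(\Bbbk)$ over $\widehat\Pi$ agrees with the thick subcategory of $\mathcal{D}(\Pi)$ generated by the simples $S_i$, i.e.\ the derived category of finite-dimensional nilpotent $\Pi$-modules. To see this, note that nilpotent modules over $\Pi$ and $\widehat\Pi$ coincide. The Ext-algebras $\mathrm{Ext}^\ast(S,S)$ also agree, since both are computed by the (finite) Koszul resolution above, which completes compatibly; Lemma \ref{lem:fd=nil} is used here. The result is a fully faithful embedding into $\mathcal{D}^b(\mathrm{mod}\,\Pi)$ sending $L_i\mapsto S_i$.

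\emph{Step 3: McKay.} Use the Kapranov--Vasserot / Bridgeland--King--Reid equivalence $\Psi:\mathcal{D}^b(\mathrm{mod}\,\Pi)\simeq \mathcal{D}^b(\mathrm{mod}\,\C[x,y]\rtimes\Gamma)\simeq\mathcal{D}^b(\mathrm{Coh}(Y))$. Here $\Pi$ is Morita equivalent to $\C[x,y]\rtimes\Gamma$, with the vertices labelled by irreducible representations of $\Gamma$. Under the standard normalization of this equivalence, the simple $S_0$ corresponding to the trivial representation goes to $\cO_{E}$ for the exceptional fiber, and the simples $S_i$ with $i\neq 0$ go to $\cO_{C_i}(-1)[1]$. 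Define $\Phi$ as the composite of Step 2 with $\Psi$; it is fully faithful.

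The main obstacle is matching conventions. First, the identification of $\cA_\bL$ with $\Pi(Q)$ requires the orientation signs $\epsilon(x_a)$ from the Proposition; these can be absorbed by rescaling arrows, since a tree plus possibly one cycle (the $\tilde A_n$ case) admits such a sign change up to the sum relation. Second, the choice of McKay equivalence normalization (or a twist by a line bundle or dual) is needed to land exactly on $\iota_\ast\cO_{C_i}(-1)[1]$ rather than $\cO_{C_i}(-1)$ or a shift. I would fix the BKR functor with kernel the universal $\Gamma$-cluster and check the images of the $S_i$ via the standard computation of Ito--Nakajima.
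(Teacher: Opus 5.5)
Your proposal is correct and follows the paper's route: formality of the Ginzburg dga for the non-Dynkin graph $D$ (cohomology concentrated in degree $0$ and equal to the preprojective algebra), then Lemma \ref{lem:equiv} to identify $\mathcal{D}\Fuk_\bL(X)$ with $\operatorname{thick}(\Bbbk)$ with $L_i\mapsto S_i$, and finally the derived McKay correspondence together with Nakajima's computation $S_i\mapsto \iota_*\cO_{C_i}(-1)[1]$ for $i\neq 0$. The main difference is how you treat the completion: you handle it explicitly, via the weight grading on $\widehat{G}_2(D)$ and a comparison of $\operatorname{thick}(\Bbbk)$ over $\widehat\Pi$ and over $\Pi$ through the graded Koszul resolution, and you get formality from the Koszul/2-Calabi--Yau property of $\Pi$; the paper instead passes directly to the non-completed model $G_2(D)$ and cites Hermes for formality (your sign-rescaling concern is unnecessary, since the preceding Proposition already supplies the Ginzburg relations).
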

\begin{proof}
	By the preceding proposition, the local mirror construction produces a
	completed Ginzburg dg algebra \(\tilde{\cA}_{\mathbb L}\) and an extended mirror functor: \[
	\mathcal D\mathcal F^{(\mathbb L,b)}:
	\mathcal D\Fuk_{\mathbb L}(X)
	\longrightarrow
	\mathcal D(\tilde{\cA}_{\mathbb L}).
	\] In the affine $ADE$ surface case, the relations appearing in this completed algebra are the polynomial preprojective relations. Moreover, for the objects considered here, no genuinely
	infinite series appear in the image of the localized mirror functor. We henceforth work with the non-completed model $G_2(D)$. By abuse of notation, we still denote the corresponding algebra by
	\(\tilde{\mathcal A}_{\mathbb L}\), and the localized mirror
	functor takes values in \(\mathcal D(\tilde{\mathcal A}_{\mathbb L})\).
	
	By \cite{Her16}, over a field of characteristic zero, the Ginzburg dga $\tilde{\cA}_\bL$ is formal, which is quasi-isomorphic to a preprojective algebra, if the graph $D$ is non-Dynkin. Then  Lemma \ref{lem:equiv} implies that the unextended mirror functor gives rise to a quasi-equivalence $$\mathcal{D} \mathcal{F}^{(\mathbb{L},b)}: \mathcal{D} \Fuk_\mathbb{L} (X) \to \operatorname{thick}_{\mathcal{D}(\cA_{\mathbb{L}})}(\Bbbk),$$ which sends the $i$-th component of $\bL$ to the $i$-th simple representation $S_i$. 
	
	On the other hand, when $D$ is an affine $ADE$ Dynkin diagram, the derived McKay correspondence shows that the preprojective algebra $\cA_\bL$ is derived equivalent to $\mathcal{D}^b(\mathrm{Coh}(Y))$. More precisely, $$ \varphi: \mathcal{D}^b(\cA_\bL) \to \mathcal{D}^b(\mathrm{Coh}(Y)), \quad M \mapsto M\otimes^\mathbf{L}_{\cA_{\bL}} \mathcal{T} $$ induces an equivalence, where $\mathcal{T}$ is the tilting bundle of $\widetilde{\C^2/\Gamma}$. In particular, one can check that $E_i:= \varphi(S_i)= S_i \otimes^\mathbf{L}_{\cA_{\bL}} \mathcal{T}$ is quasi-isomorphic to $\iota_* \cO_{C_i}(-1)[1]$ for $i \neq 0$ and $\iota_*\cO_{D}$ for $i=0$ (see e.g. Section 4.3 of \cite{Nak99}). Here $C_i$ is the $i$-th exceptional divisor in $Y$, $D$ is the union of the exceptional divisors and $\iota$ is the inclusion.
	
	As a consequence, $\varphi \circ \mathcal{D} \mathcal{F}^{(\mathbb{L},b)}$ gives the desired fully-faithful functor, and the core of $X$ is mirror to the exceptional divisor in $Y$.
\end{proof}


\subsection{Generation of compact Fukaya categories}

In this subsection, we apply Theorem \ref{thm:extfuctorequiv} to the compact exact Fukaya categories of plumbing spaces and study the relation with some known results on split generation. Moreover, we provide examples where the core Lagrangian fails to split generate the compact exact Fukaya category $\cF(X)$ whose objects are pairs $(L,b)$, where $L$ is a compact exact Lagrangian immersion and $b$ is a bounding cochain supported on the positive-action double point of $L$. Readers are referred to, for example, \cite{JKL-im, AFOOO26} for more details. 

\begin{lemma}\label{lem:gen}
	Let $X$ be the plumbing of $T^*\mathbb{S}^n$ according to a graph $D$ and let $\bL$ be the core of $X$. Then
	\begin{enumerate}
		\item If $n=2$ and $D$ is of Dynkin type $A_k$ or $D_k$, the core Lagrangian $\bL$ split-generates the compact Fukaya category $\cF(X)$.
		\item If $n \geq 3$ and $D$ is a tree, the core Lagrangian $\bL$ split-generates the compact exact Fukaya category $\cF(X)$.
	\end{enumerate}
\end{lemma}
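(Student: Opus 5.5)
We will use Theorem \ref{thm:extfuctorequiv} together with Lemma \ref{lem:fd=nil} and Remark \ref{rem: fd=nil}. Let $L$ be an arbitrary object of $\cF(X)$, that is, a compact exact (possibly immersed, with bounding cochain) Lagrangian. Apply the extended mirror functor $\cF^{(\bL,\tilde b)}$ to $L$. Since $L$ and $\bL$ are compact, $\CF((\bL,\tilde b),L)=\tilde{\cA}_\bL\otimes_\Bbbk \CF(\bL,L)$ is a dg module whose cohomology is computed by a spectral sequence from $\tilde{\cA}_\bL\otimes HF(\bL,L)$. The goal is to show that this cohomology is finite-dimensional. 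Then Lemma \ref{lem:fd=nil} (in its cohomological dg form) places $\cF^{(\bL,\tilde b)}(L)$ in $\mathcal D_{\mathrm{nil}}(\tilde{\cA}_\bL)$, which by Theorem \ref{thm:extfuctorequiv} is the essential image of $\mathcal D\Fuk_\bL(X)$. Full faithfulness on $\cF(X)$ is needed to conclude that $L$ itself lies in the split closure of $\bL$. For this we use that $\cF^{(\bL,\tilde b)}=\mathcal K\circ\mathcal Y_\bL$ (Proposition \ref{prop:kos}) and the Koszul double-dual argument of Lemma \ref{lem:vdoubledual}: for a finite-dimensional $V$-module $M$, $\mathrm{RHom}_{V^!}(\mathcal K M,\mathcal K M')\simeq \mathrm{RHom}_V(M,M')$ whenever $\mathcal K M$ lies in $\operatorname{thick}(\Bbbk)$. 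The functor $\mathcal Y_\bL$ is then compared with the Yoneda embedding via a Seidel-type exact triangle, or via the fact that a compact exact Lagrangian in a plumbing is detected by its Floer modules over the core.

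The key computation is finiteness of $H^*\cF^{(\bL,\tilde b)}(L)$. By Remark \ref{rmk:Gint1pt}, a cotangent fiber $F_v$ intersecting $\bL$ once at $L_v$ is sent to $e_v\tilde{\cA}_\bL$. Dually, for compact $L$ we have $\mathrm{RHom}_{\tilde{\cA}_\bL}(e_v\tilde{\cA}_\bL,\cF(L))\simeq HF(F_v,L)$, which is finite-dimensional because $L$ is compact. Summing over $v$ bounds the total cohomology of $\cF(L)$ by $\bigoplus_v HF(F_v,L)$. This step needs the functor to intertwine the wrapped/fiber pairing with the module Hom, which is an instance of the same Koszul formalism applied to the pair $(F_v,\bL)$.

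Case (1): for $n=2$ and $D$ of type $A_k$ or $D_k$ (finite Dynkin), $\widehat G_2(D)$ has cohomology the finite-dimensional preprojective algebra $\Pi(D)$ in degree $0$, with further cohomology in negative degrees. Here we would instead use that every finite-dimensional module over a finite-dimensional completed path algebra is nilpotent, so the check reduces to the fiber-cohomology bound above. Case (2): for $n\ge3$ and $D$ a tree, Remark \ref{rem: completed} shows there are no degree-zero cycles. Hence $H^0(\tilde{\cA}_\bL)=\Bbbk$, and each graded piece of $G_n(D)$ is finite-dimensional, so boundedness of the module in each degree follows from the degree grading together with the fiber bound.

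The main obstacle is the passage from ``image lies in $\operatorname{thick}(\Bbbk)$'' to ``$L$ is split-generated by $\bL$'', which requires $\cF^{(\bL,\tilde b)}$ to be faithful on $\Hom(L,L)$ for objects outside $\Fuk_\bL(X)$. I would first attempt this by realizing the cotangent fibers $F_v$ as a generator of the wrapped category (Abouzaid–Seidel/Chantraine–Dimitroglou Rizell–Ghiggini–Golovko) with endomorphism algebra $\tilde{\cA}_\bL$ in the tree/Dynkin cases, where no completion issue arises. Then compact $L$ corresponds to a proper module, and Theorem \ref{thm:extfuctorequiv} identifies proper modules with $\operatorname{thick}(\Bbbk)=$ the image of $\bL$. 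The restriction to trees or to $A_k,D_k$ is exactly what makes the wrapped endomorphism algebra agree with the uncompleted Koszul dual, so that properness coincides with nilpotence.
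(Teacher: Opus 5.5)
\textbf{Verdict.} Your main line has a genuine gap. It is not only the faithfulness issue you flag at the end: your ``key computation'' is circular.

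Since $\cF^{(\bL,\tilde b)}=\mathcal{K}\circ\mathcal{Y}_\bL$ with $\mathcal{Y}_\bL(Z)=\CF(Z,\bL)$, the functor sees an object only through its Floer module over $\bL$. Comparison of morphism spaces is guaranteed only when one of the two objects lies in $\operatorname{thick}(\bL)$. Your identity $\mathrm{RHom}_{\tilde{\cA}_\bL}(e_v\tilde{\cA}_\bL,\cF^{(\bL,\tilde b)}(L))\simeq HF(F_v,L)$ is a full-faithfulness statement for the pair $(F_v,L)$, and neither object is known to lie in $\operatorname{thick}(\bL)$. The same objection applies to ``a compact exact Lagrangian is detected by its Floer module over the core'': given Theorem~\ref{thm:extfuctorequiv}, this is essentially equivalent to the split-generation you want to prove.

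Your chain ``finite-dimensional $\Rightarrow$ nilpotent $\Rightarrow$ in the essential image'' also never uses the hypothesis on $D$, because over the completed $\tilde{\cA}_\bL$ it is automatic by Lemma~\ref{lem:fd=nil}. So if your mechanism were valid, it would give split-generation for every plumbing. The paper's affine $A_0$ example refutes this. There the Clifford torus $(\mathbb{T},\nabla^{t_1,t_2})$ is nonzero, so $HF(F,\mathbb{T})\neq 0$ because the cocore generates $\mathcal{W}(X)$. Yet $\mathbb{T}$ is sent to $\C[[x,y]]/(x-t_1,xy-1-t_2)\simeq 0$. Hence your fiber identity fails in general. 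Under the lemma's hypotheses it is a consequence of split-generation, not a route to it, and the hypotheses must enter through global information about $\cF(X)$.

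\textbf{Your fallback.} The route in your last paragraph is the right one, and it is the paper's. The ingredients are:
\begin{itemize}
\item cocores generate $\mathcal{W}(X)\simeq\mathrm{Mod}_{\dg}(G_n(D))$ (Etg\"u--Lekili, Karabas--Lee);
\item compact objects, including immersed ones with bounding cochains, go fully faithfully to proper modules, giving $\cF(X)\simeq\mathcal{D}_{\mathrm{fd}}(G_n(D))$ (Jeong--Karabas--Lee);
\item the core components go to the vertex simples;
\item $\tilde{\cA}_\bL\simeq G_n(D)$ (Remark~\ref{rem: completed} for $n\ge 3$ trees, Booth--Goodbody--Opper for $n=2$).
\end{itemize}
Split-generation then reduces to $\mathcal{D}_{\mathrm{nil}}=\mathcal{D}_{\mathrm{fd}}$ for the uncompleted algebra.

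However, you misplace the decisive step. Theorem~\ref{thm:extfuctorequiv} does not identify proper modules with $\operatorname{thick}(\Bbbk)$; that is exactly the remaining algebraic claim. It is proved by noting that nonzero-degree classes act nilpotently on bounded finite-dimensional cohomology, so it suffices that the arrow ideal of $H^0(G_n(D))$ be nilpotent.

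The hypotheses on $D$ are also not what makes the wrapped algebra agree with the uncompleted Koszul dual. They are what makes this ideal nilpotent:
\begin{itemize}
\item for Dynkin $D$ and $n=2$, $\Pi(D)$ is finite-dimensional;
\item for $n\ge 3$ trees, the degree-zero arrows form an acyclic quiver.
\end{itemize}
For $n=2$ and an affine tree such as $\tilde{D}_4$, $\Pi(D)$ is infinite-dimensional and has non-nilpotent finite-dimensional modules, so ``tree'' alone does not suffice.

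Finally, $H^0(\tilde{\cA}_\bL)=\Bbbk$ is wrong in case (2). When $p_a\in\{1,n-1\}$, which is forced for $n=3$, one arrow of each pair has degree $0$, so $H^0$ is the path algebra of an acyclic quiver. Nilpotence still holds, but for that reason.
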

\begin{proof}
	Recall that the wrapped Fukaya category of a plumbing space \(X\) was studied
	in \cite{EL17,EL19} for \(n=2\), and in \cite{KL25} for general \(n\).
	In the present setting, these results imply that the wrapped Fukaya category \(\mathcal W(X)\) is pretriangulated equivalent to dg category of modules over the Ginzburg algebra \(\mathrm{Mod}_{\dg} (G_n(D))\).
	
	
	Moreover, under the above equivalence, \cite{JKL-Ginzburg} shows that the compact exact Fukaya category $\cF(X)$ is identified with
	the full subcategory of \(\mathrm{Mod}_{\dg} (G_n(D))\)
	consisting of cohomologically finite-dimensional modules. Namely, there is a pretriangulated equivalence
	\[ \mathcal F(X) \simeq \mathcal{D}_{\mathrm{fd}}(G_n(D)),\]
	where \(\mathcal{D}_{\mathrm{fd}}(G_n(D))\) denotes the subcategory of dg \(G_n(D)\)-modules with finite-dimensional total cohomology. In other words, an object in $\mathcal{D}_{\mathrm{fd}}(G_n(D))$ is a complex $M$ of $G_n(D)$-modules such that $\mathrm{dim}_\C H^*(M) < \infty.$ 
	
	Notice that when $n \geq 3$, the extended localized mirror $\tilde{\cA}_\bL$ is isomorphic to $G_n(D)$ as mentioned in Remark \ref{rem: completed}. Besides, when $n=2$, there is a quasi-equivalence between \(\tilde{\mathcal A}_{\mathbb L}\) and the Ginzburg dga $G_2(D)$, see \cite[Rmk 6.1.5]{BGO25}. Hence, in both cases, we have $\cF(X) \simeq \mathcal{D}_{\mathrm{fd}}(\tilde{\cA}_\bL).$
	
	Combining with Theorem \ref{thm:extfuctorequiv}, the core Lagrangian $\bL$ split-generates $\cF(X)$ if and only if $\mathcal{D}_{\mathrm{nil}}(\tilde{\cA}_\bL)\cong \mathcal{D}_{\mathrm{fd}}(\tilde{\cA}_\bL).$ As $H^*(M)$ is finite-dimensional as a graded vector space, any class of nonzero-degree arrows acts nilpotently on $H^*(M)$. Therefore, the equivalence $\mathcal{D}_{\mathrm{nil}}(\tilde{\cA}_\bL)\cong \mathcal{D}_{\mathrm{fd}}(\tilde{\cA}_\bL)$ holds when the ideal of $H^0(\tilde{\mathcal A}_{\mathbb L})$ generated by the nontrivial arrow classes is nilpotent.
	
	When $n=2$, since \(\tilde{\mathcal A}_{\mathbb L}\) is quasi-isomorphic to $G_2(D)$, $H^0(\tilde{\mathcal A}_{\mathbb L})$ is identified with the preprojective algebra. If $D$ is of $ADE$ Dynkin type, this algebra is finite-dimensional. Hence its arrow ideal, or equivalently the kernel of the
	augmentation
	\[
	H^0(\tilde{\mathcal A}_{\mathbb L}) \longrightarrow \Bbbk,
	\]
	is nilpotent. Therefore every finite-dimensional module is nilpotent.
	
	For $n\geq 3$, each transverse intersection point gives rise to a pair of reverse arrows $(x_a,x_{\bar a})$ of degrees $1-p_a$ and $1-n+p_a$. These two arrows cannot both have degree $0$.
	If $D$ is a tree, the degree-zero arrow ideal has no oriented cycle, i.e. there is no
	nontrivial path made entirely of degree-zero arrows whose head and tail coincide.
	Thus the degree-zero arrow ideal is nilpotent.
	Hence $\mathbb L$ split-generates $\mathcal F(X)$.
\end{proof}

In the proof above, if one works with a non-completed model and has
\[
\cF(X) \simeq \mathcal{D}_{\mathrm{fd}}(\tilde{\cA}_\bL),
\qquad
\mathcal{D}_{\mathrm{nil}}(\tilde{\cA}_\bL)
\subsetneq
\mathcal{D}_{\mathrm{fd}}(\tilde{\cA}_\bL),
\]
then the core Lagrangian fails to split-generate the compact Fukaya category $\cF(X)$. A typical algebraic source of this strict inclusion is the presence of a degree-zero oriented cycle in the plumbing graph: such a cycle may act non-nilpotently on a finite-dimensional module over the ordinary quiver algebra.

However, this mechanism should be interpreted with care. The extended localized mirror naturally gives a completed algebra \(\tilde{\cA}_\bL\), and for a completed quiver algebra, finite-dimensional modules are
automatically nilpotent with respect to the completion ideal, see Remark \ref{rem: fd=nil}. Hence,
$\mathcal{D}_{\mathrm{nil}}(\tilde{\cA}_\bL)
=
\mathcal{D}_{\mathrm{fd}}(\tilde{\cA}_\bL)$ in the completed setting. This equality does not imply that the core Lagrangian split-generates the whole compact Fukaya category; it only says \(\tilde{\cA}_\bL\) describes the formal local category seen by $\bL$. This local category need not coincide with \(\cF(X)\).

Therefore, in the completed setting, the possible failure of split-generation should be understood as a local-to-global issue: the core Lagrangian may only see a local completed chart of the global mirror. The following example illustrates this point.

\begin{example}
    Let $X$ be the plumbing of $T^*\mathbb{S}^2$ according to affine $A_0$ graph. Then $\bL$ does not split generate the compact Fukaya category $\cF(X)$ considered in \cite{JKL-Ginzburg}. In this case, the wrapped Fukaya category $\mathcal{W}(X)$ is equivalent to $\mathrm{Mod}_{\dg}(\C[x,y,(xy-1)^{-1}])$, whereas, up to a coordinate change, the extended localized mirror $\tilde{\cA}_\bL\cong \C[[x,y]].$ 
    
    By definition, see for example \cite{JKL-Ginzburg}, the Clifford torus $(\mathbb{T}, \nabla^{t_1,t_2})$, equipped with the rank-one local system whose holonomies are nonzero complex numbers $(t_1,t_2)$, defines an object in $\cF(X)$. Here \(t_1,t_2\) are the holonomy parameters chosen as
    in \cite[Section 3]{HKL23}. In particular, $(\mathbb{T}, \nabla^{t_1,t_2})$ is a nonzero object. On the other hand, one can check that $\cF^{(\bL,b)}(\mathbb{T}, \nabla^{t_1,t_2})$ is quasi-isomorphic to $\C[[x,y]]/(x-t_1,xy-1-t_2) \cong 0$, since $x-t_1$ is invertible in $\C[[x,y]]$. Therefore, the core Lagrangian does not split generate $\cF(X).$ 
    This is also consistent with the construction in \cite[Section 3]{HKL23}, where both the immersed sphere with bounding cochains $(\bL,b)$ and the Clifford torus with non-trivial holonomy are required to construct the mirror.  
\end{example}

\begin{remark}
	For plumbing spaces, generation results for the compact Fukaya category are known in several cases. 
	When \(n=2\), the result follows from Seidel's work on ADE configurations; see \cite[Lem.~4.15]{Sei00}, \cite[Cor.~5.8]{Sei08}, and the discussion in \cite[Rem.~29]{EL17}. 
	For \(n\geq 3\), the case of tree-like plumbings was studied by Abouzaid--Smith \cite{AS12}. 
	More recently, Jeong, Karabas and Lee, building on \cite{KL25}, have been studying compact Fukaya categories of more general plumbing graphs and $n \geq 3$ \cite{JKL-im, JKL-Ginzburg}. 
	Their work suggests that, in general, the core Lagrangians alone do not generate; instead, one should include core Lagrangians equipped with suitable bounding cochains.
	
	In the plumbing of $T^*\bS^n$, one can use the cocore $G$ to deduce the equivalence between $\Fuk_\bL(X)$ and $\mathrm{thick}_{\mathrm{CW}(G,G)}(\Bbbk)$. Our equivalence seems to give an intrinsic version of this computation: it recovers  directly from the deformation algebra $\tilde{\cA}_\bL$, without relying on the existence of cocores, or more generally Lagrangians dual to $\bL$.
\end{remark}

\section{Bulk Deformations of the Plumbing of $T^\ast \mathbb{S}^2$
 and Representations of Deformed Preprojective Algebras}\label{bulk_deform}

Let $X$ be the plumbing of $T^*\mathbb{S}^2$ according to a graph $D$, equipped with the standard Liouville form, and $\bL$ be the core of $X$. In \cite{HLT24}, the authors showed that the noncommutative deformation space of $\bL$ in $X$ is isomorphic, up to a change of coordinates, to the preprojective algebra. Moreover, \cite{HLT24} introduced the notion of framed Lagrangian branes $(L^\fr,\mathcal{E})$, and proved that the Maurer-Cartan space of a framed Lagrangian brane is isomorphic to the Nakajima quiver variety at complex moment-map level $\mu_{\C}=0$.

From the viewpoint of symplectic reduction, it is natural to ask how to obtain the variety in the more general moment map level of a coadjoint orbit
$$ (\mu_{\mathbb{C},i})_{i=1}^n \in \prod_{i=1}^n \mathcal{O}_{g_i}. $$ 

In this section, we will apply bulk deformations of $X$ to construct the deformed preprojective algebras and Nakajima quiver variety at non-zero levels. 

Moreover, this construction gives a symplectic counterpart of the non-commutative deformations considered by Kawamata \cite{Kaw24A}. In \cite{Kaw24A}, Kawamata constructs non-commutative schemes deforming the $A_n$-resolution, compares their non-commutative local charts with the non-commutative crepant resolution, and realizes the corresponding derived equivalence by a tilting bundle.  From the mirror symmetry point of view, the non-commutative deformation parameters in Kawamata's construction correspond to bulk deformation parameters in the Fukaya category.  Moreover, the comparison maps between the non-commutative local charts and the non-commutative crepant resolution arise, on the symplectic side, from Fukaya isomorphisms between the corresponding Lagrangian immersions.

\subsection{Bulk-Deformed Maurer-Cartan algebra}

For each sphere component $L_i$ of $\bL$ in a plumbing $X$, we pick a point $p_i$ in $L_i$ that is away from the nodal points and critical points of the Morse function on $L_i$. Let $C_i$ be the cotangent fiber $T^*_{p_i}L_i$ which gives rise to a codimension-two submanifold in $X$. We consider bulk deformations of $X$ by the cycle $\mathbbm{b}:=\sum_i \delta_i C_i$ for each fixed value of $\delta_i$. 

In this section, we use the standard exact Liouville structure on the plumbing \(X\). Notice that $X$ deformation retracts to the core Lagrangian $\bL$, and any relative disc with boundary on  $\bL$ represents zero class in $H^2(X,\bL)$. Hence any holomorphic polygon with boundary on \(\bL\) has zero symplectic area, and therefore is constant. Thus the \(A_\infty\)-operations considered below are computed by  constant polygons together with Morse flow lines. Equivalently, this calculation gives the energy-zero part of the corresponding filtered bulk-deformed \(A_\infty\)-structure. In a more general filtered setting, possible nonconstant polygons, including those which meet the cycles \(C_i\), would carry positive Novikov energy and would disappear after reducing modulo the positive-energy ideal.

If one works with the full filtered bulk-deformed theory, then one should take the parameters $\delta_i \in \Lambda_+$ to ensure convergence. In this generality, the localized mirror functor may produce formal power series in the parameters $\delta_i$, and the formal deformation spaces are noncommutative power series rings. \footnote{In general, for a general exact Lagrangian, nonconstant polygons may contribute arbitrarily high powers of the bulk parameters. Consequently, to define the localized mirror functor on the full filtered Fukaya category and to formulate a categorical equivalence in this setting, one should work over an appropriate Novikov or formal power series completion.} Later, after a suitable change of coordinates, we will see the relevant deformation spaces are cut out by polynomial relations. Since only finitely many bulk insertions contribute in the cases considered below, the resulting expressions are polynomial in the parameters $\delta_i$. This allows us to restrict the resulting description to the corresponding noncommutative polynomial rings. 

\begin{thm}\label{thm: dpre}
	The bulk-deformed Maurer-Cartan algebra of $\bL$ by the cycle $\mathbbm{b}:=\sum_i \delta_i C_i$ is isomorphic to the deformed preprojective algebra associated to the corresponding quiver of $\bL$.
	
	Similarly, for a fixed stability parameter $\zeta$ and a framed Lagrangian brane $(L^{\fr},\mathcal{E})$ equipped with trivial bundles $\mathcal{E}$ of rank $(\vec{n},\vec{d})$, the bulk-deformed Maurer-Cartan space is isomorphic to the corresponding Nakajima quiver variety at the complex moment-map level $\mu_{\C,i} = \delta_i$.
\end{thm}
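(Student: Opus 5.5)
I would compute the bulk-deformed obstruction $m_0^{\bb,b}$ directly in the Morse--pearl model, using the unbulked computation of \cite{HLT24} as the baseline. The bulk term then adds only a constant correction at each vertex.

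Write $b=\sum_a (x_a X_a + x_{\bar a} X_{\bar a})$, where the degree-one immersed sectors come from the plumbing points. The bulk-deformed operations $m_k^{\bb}$ count pearl trajectories with interior marked points mapped to $\bigcup_i C_i$. As explained above, every polygon with boundary on $\bL$ has zero area and is therefore constant. So each contribution to $m_k^{\bb}$ comes from a constant polygon, decorated by interior insertions, together with Morse flow lines. I would split $m_0^{\bb,b}$ by the number $\ell$ of interior insertions.

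For $\ell=0$, the computation of \cite[Thm 5.2]{HLT24} applies verbatim. After their coordinate change it gives $m_0^{b}=\sum_v \big(\sum_{t(a)=v}\epsilon(x_a)x_{\bar a}x_a\big)T_v$, the preprojective relation at each vertex.

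For $\ell\geq1$, consider a constant map to a point $q\in\bL$ with an interior marked point on $C_i$. This forces $q\in \bL\cap C_i=\{p_i\}$. Since $p_i$ lies away from the nodes, a constant polygon there has no corners, so it carries no $b$-inputs. The only surviving configuration is the constant disc at $p_i$ with one interior insertion, followed by the gradient flow from $p_i$ to the minimum $T_i$ of $L_i$. Because $p_i$ is generic, exactly one such flow line exists. This contributes $\delta_i T_i$ up to a fixed sign, which can be absorbed into $\delta_i$.

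The contributions with $\ell\geq2$ vanish. Each $C_i$ has codimension two, and the disc class $\beta=0$ satisfies $\beta\cdot C_i=0$. By the divisor axiom, which holds in the Morse model once the perturbations are chosen compatibly with forgetting interior points, these moduli spaces have excess dimension or are cancelled. I would check this dimension count explicitly: the constant disc with $\ell$ interior points on $C_i$ has virtual dimension $n-2\ell+2\ell-\ldots$, and after the $\mathrm{Aut}$ quotient it is non-rigid for $\ell\geq 2$.

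Combining the cases gives $m_0^{\bb,b}=\sum_v\big(\sum_{t(a)=v}\epsilon(x_a)x_{\bar a}x_a+\delta_v\big)T_v$. Only the degree-two class $T_v$ appears, so $W=0$. After the same change of variables as in \cite{HLT24}, which rescales arrows by signs so that $\epsilon(x_a)x_{\bar a}x_a$ becomes the commutator $\sum[a,a^*]$, the relations are exactly those of $\Pi(Q)^\lambda$ with $\lambda=-\delta$ (or $\delta$, after a sign convention). One must check that this change of coordinates commutes with adding the constant. It does, because the change only rescales arrows and $\delta_v$ is a scalar multiple of the idempotent $e_v$.

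For the framed brane $(L^{\fr},\mathcal E)$, I would repeat the argument with matrix-valued $b$, including the framing arrows $i_v,j_v$ coming from the cotangent fibre $F_v$. I would choose the points $p_i$ away from $F_v$ as well. The insertion then contributes $\delta_v\cdot\mathrm{Id}_{\mathcal E_{L_v}}$, since parallel transport is trivial, and the framing contributes $i_vj_v$ as in \cite{HLT24}. The Maurer--Cartan equations become $\mu_{\C}(x)=\delta$. Passing to the GIT quotient by $\cG=\prod GL(n_v)$ at stability $\zeta$ gives the Nakajima variety $\mathfrak M_{\zeta,\delta}(\vec n,\vec d)$, by \cite[Prop. 3.10]{LT26}.

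The main obstacle is the rigorous vanishing of the $\ell\geq2$ terms, together with the claim that the one-insertion configuration is transversally cut out and counts $\pm1$. Making this precise requires a bulk-compatible choice of Morse perturbations and the divisor axiom for constant discs. I would first try to handle this by hand in the local model $T^*S^2$ near $p_i$.
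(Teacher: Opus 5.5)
Your proposal follows essentially the same route as the paper. You compute $m_0^{\bb,b}$ in the Morse model from constant polygons and flow lines, get $\delta_v P_v$ from the single constant disc at $p_v$ flowing to the minimum, discard multiple insertions by the divisor axiom, and then apply the coordinate change of \cite{HLT24}. Your separation by the number of insertions, and the observation that $p_v$ lies off the nodes so no mixed terms arise, make explicit what the paper leaves implicit.

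One correction: that change of coordinates is not a sign rescaling of arrows. It is the nonlinear substitution $x_a\mapsto x_a\bigl(1+\sum_j a_j(x_{\bar a}x_a)^j\bigr)$, which, by the local antisymmetric involution at the node, turns each vertex coefficient exactly (not merely up to a unit) into the quadratic expression. It leaves the constant term untouched because it is an automorphism of the completed path algebra fixing every $e_v$, not because it "only rescales arrows".
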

\begin{proof}
	Using the Morse model, the Floer complex $\CF^*(\bL)$ is generated by the maximal points $M_v$ and minimum points $P_v$ of the Morse function $f_v$ on $\mathbb{S}^2_v$, and the degree-one immersed sectors among the adjacent components. Thus, by the construction introduced in Section 2, the quiver $Q$ assigned to $\bL$ is the double quiver of $D$. We denote the set of arrows by $\mathfrak{A}$, and the immersed sectors by $X_a \in \CF^1(\mathbb{S}^2_{t(a)},\mathbb{S}^2_{h(a)})$ for $a \in \mathfrak{A}$.

	Let $b=\sum_{a\in \mathfrak{A}} x_a X_a$.
	We compute the obstruction term $m_0^{b,\mathbbm{b}}$ of $\bL$. The orientation and spin structure of $\bL$ correspond to an orientation $\Omega$ of the double quiver $Q$, which defines the function $\epsilon:\mathfrak{A}  \to \{\pm 1\}$. 
	
	The outputs of the obstruction term $m_0^{b,\mathbbm{b}}$ have degree 2. In this case, the degree-two generators are the minimum points $P_v$. Thus, $m_0^{b}$ counts the pearl trajectories to $P_v$. By the preceding discussion, all holomorphic discs with boundary on $\bL$ are constant. Thus, the relevant trajectories are unions of Morse flow lines to $P_v$ and constant polygons $(x_{\bar{a}}x_a)^{k}$ for all $x_a$ with $t(x_a)=v$. Moreover, there are flow lines from the base point of $C_v$ to $P_v$.
	As a consequence, we obtain  
	\begin{align*}
		m_0^{b,\mathbbm{b}} & =m_0^{\mathbbm{b}}+\sum_k m_{2k}\left(\sum_{a} x_a X_a, \sum_{\bar{a}} x_{\bar{a}} X_{\bar{a}}, \cdots, \sum_{a} x_a X_a, \sum_{\bar{a}} x_{\bar{a}} X_{\bar{a}}\right) \\
		& = \sum_v \left(\delta_v+\sum_{t(a)=v} \epsilon(a)x_{\bar{a}}x_{a}\left(1+ \sum_j a_j(x_{\bar{a}}x_{a})^j \right)\right)P_v,
	\end{align*}
	where $a_j$ depends on the Kuranishi perturbation, and $(x_{\bar{a}}x_{a})^j$ is contributed by the constant polygon with corners being the immersed sectors $x_a$ and $x_{\bar{a}}$, ordered counterclockwise.
	
	Because of the local anti-symmetric involution in a neighborhood of the transverse intersection point, see \cite[Lem. 8.4.3]{FOOO09} or \cite[Lem. 3.4]{HKL23}, a pair of arrows contributes to the coefficients $x_{\bar{a}}x_{a}(1+ \sum_j a_j(x_{\bar{a}}x_{a})^j)$ at $P_{t(a)}$ and $x_{a}x_{\bar{a}}(1+ \sum_j a_j(x_{a}x_{\bar{a}})^j)= x_a (1+ \sum_j a_j(x_{\bar{a}}x_{a})^j)x_{\bar{a}}$ at $P_{h(a)}$. We define the following change of coordinates: 
	\begin{equation}
		\label{eq:coord}
		\tilde{x}_a= x_{a}\left(1+ \sum_j a_j(x_{\bar{a}}x_{a})^j\right), \quad \tilde{x}_{\bar{a}}= x_{\bar{a}},
	\end{equation} for all $a$ such that $\epsilon(a)=1$. In particular, one can check that the factor is invertible.
	
	We will abuse the notation and replace $\tilde{x}$ by $x$. Thus, $m_0^{b,\mathbbm{b}}= \sum_v \left(\delta_v+\sum_{t(a)=v} \epsilon(a)x_{\bar{a}}x_{a}\right)P_v$, and the noncommutative deformation space of $\bL$ is isomorphic to the deformed preprojective algebra.
	
	Similarly, one can compute the bulk-deformed Maurer-Cartan space of a framed Lagrangian brane $(L^\fr,\mathcal{E})$ by observing that each framing component is equipped with a Morse function with a unique maximum point. This produces the framed analogue of the above computation.
\end{proof}

First, we need to work over the Novikov ring $\Lambda_{0}$ with formal parameter $T$ for convergence in Lagrangian Floer theory. However, after a change of coordinates, the relations are only polynomially dependent in $T$, so we can set $T=e^{-1}$ to get back to the complex field $\C$. As mentioned above, $\delta_i$ is initially taken in $\Lambda_+$. In the computations considered below, however, we only apply the functor to $\bL$, to the cocores, and more generally to objects for which the bulk parameters introduce no additional convergence issues. This allows us to take $\delta_i\in \mathbb C$ in these computations.

Moreover, we can also consider the matrix coefficients in the Maurer-Cartan algebra as noncommutative free variables. This gives a noncommutative version of the Nakajima quiver variety. 
We will soon see this in the examples of affine $A_n$ quivers. 

We now apply the above theorem to the smoothing of $A_n$-singularity, which is a conic fibration over $\C^*$. Moreover, we compute the local noncommutative charts of the formal deformation space of the core Lagrangian under bulk deformation and their gluing. This agrees with the recent result of Kawamata \cite{Kaw24A}.

Recall that the smoothing of the $A_n$ singularity is the following surface
$$X=\{(x,y,z)\in \mathbb{C}^3\mid xy = p(z) \text{ and }z\neq 0\}$$ where $p(z)=\prod_{k=0}^{n}(z-c_{k})$ is a monic polynomial and $-\infty < c_{n}<\cdots < c_{0}<0$ are distinct real numbers. This is a smoothing of the $A_n$ singularity. $X$ is equipped with the K\"ahler form inherited from the standard form on $\mathbb{C}^3.$ As is known, the projection map $\pi: (x,y,z)\mapsto z$ can be viewed as a Lefschetz fibration. It admits a natural Hamiltonian $\cS^1$-action given by \begin{align*}
	e^{it}\cdot(x,y,z) =  (e^{it} x, e^{-it} y, z)
\end{align*}
whose associated moment map is $\mu(x,y,z)=\frac{1}{2}(|x|^2-|y|^2)$. 

$X$ admits a special Lagrangian torus fibration 
$$F:X \to \R^2, \, F(x,y,z)=(\mu(x,y,z), \ln \, |z|^2).$$ 
We denote the singular fibers $F^{-1}(0, \ln |c_i|^2)$ by $\cS_i$ for $i=0,\cdots, n$, which are immersed Lagrangian spheres.

Let $C_{q}=\left\{(x,y,z)\in X\mid \mu(x,y,z) = 0 \text{ and }z= q\right\}$.  Consider $\bL := \bigcup_{q\in c}C_{q}$, where $c$ is a loop passing through all the zeros of $p(z)$. Thus, $\bL$ is a union of $n+1$ Lagrangian spheres $\bigcup_{i=0}^{n} \bS^2_i$, see Figure \ref{fig:An}. 

\begin{figure}[htbp]
	\centering
	\includegraphics[width=0.7\textwidth]{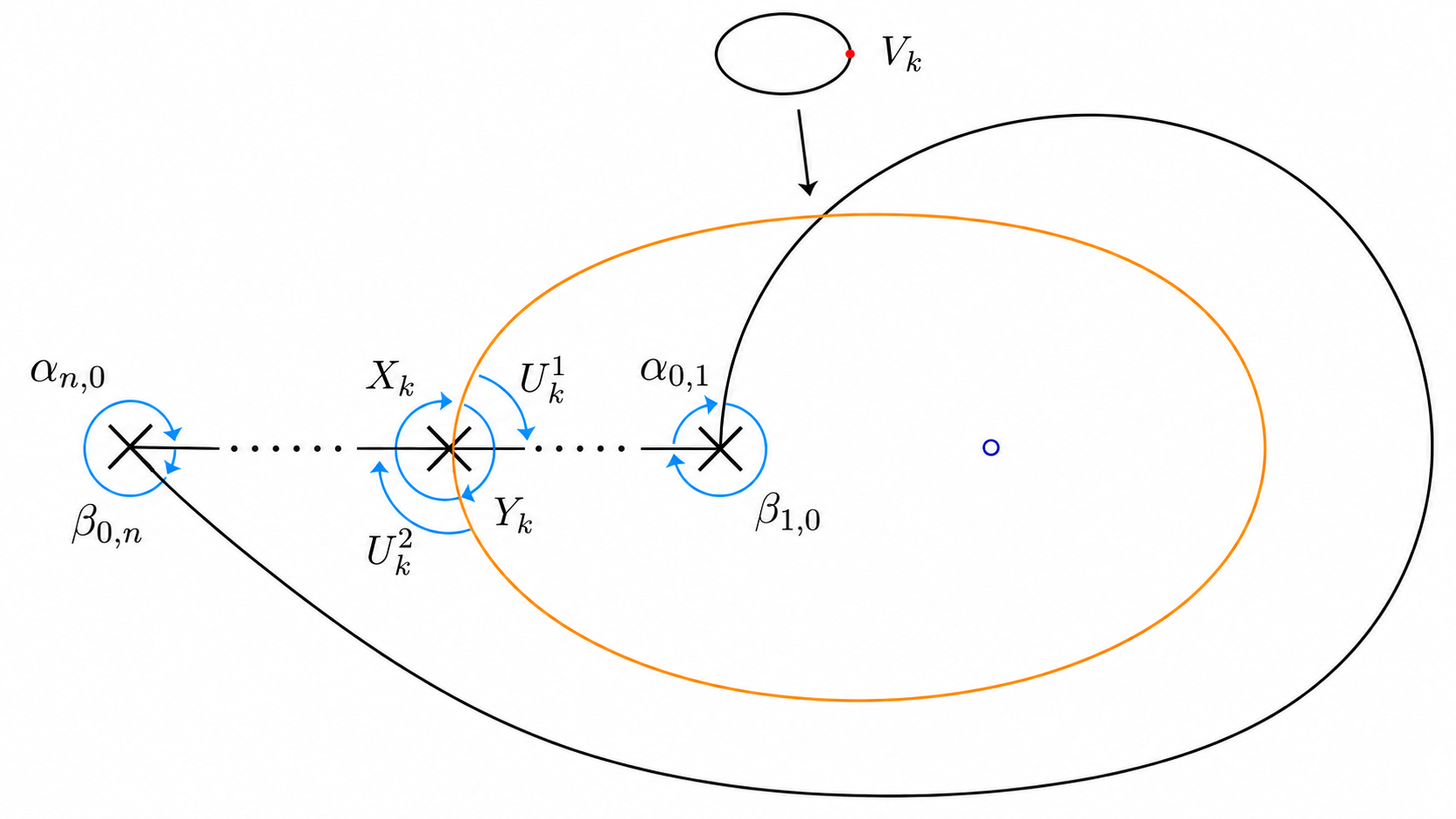}
	\caption{The image of $\bL$ under the conic fibration of $X$. The degree-one immersed sector from $\bS^2_{k+1}$ to $\bS^2_k$ is denoted by $\alpha_{k,k+1},$ while the sector in the opposite direction is denoted by $\beta_{k+1,k}.$ In addition, the degree-one self-immersed sectors of $\cS_k$ are denoted by $X_k$ and $Y_k$.}
	\label{fig:An}
\end{figure}

By the Weinstein neighborhood theorem, the plumbing space constructed above for the affine $A_n$ Dynkin graph can be identified with a neighborhood of $\bL$ in $X$. Applying the bulk deformation therefore gives the following generalization of the affine $A_n$ preprojective algebra constructed from a Lagrangian immersion $\bL$ in \cite{HLT24}.

\begin{cor}\label{cor: An}
	The bulk deformed Maurer-Cartan algebra of $\bL$ is isomorphic to $$\cA_{\bL}:= k[\delta_0,\delta_1,\ldots,\delta_n] Q/(\alpha_{i,i+1} \beta_{i+1,i}- \beta_{i,i-1}\alpha_{i-1,i}- \delta_i)_i, $$ while the bulk deformed Maurer-Cartan algebra of $\cS_i$, for $i=0, \ldots,n$, is isomorphic to $$\cA_i:= k[\delta_0,\delta_1,\ldots,\delta_n]\langle x_i,y_i\rangle/(x_iy_i-y_ix_i-(\delta_0+\cdots + \delta_n)).$$ Here, we write $\mathbbm{b}:=\sum_i \delta_i C_i$, and $\delta_0, \ldots,\delta_n$ are the bulk parameters.
\end{cor}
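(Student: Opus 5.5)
The first claim follows from Theorem \ref{thm: dpre} applied to the affine $A_n$ graph; the second needs a separate local computation for the immersed sphere $\cS_i$, in the spirit of Example \ref{ex:ncC2}.

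\textbf{The core $\bL$.} By the Weinstein neighborhood theorem, a neighborhood of $\bL$ in $X$ is identified with the plumbing of $T^*\bS^2$ along the affine $A_n$ cycle. The bulk cycles $C_i$ can be taken to be cotangent fibers inside this neighborhood. Every polygon with boundary on the exact core is constant, so the bulk-deformed Maurer--Cartan computation is local, and Theorem \ref{thm: dpre} applies directly.

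The remaining work is bookkeeping. The double quiver has arrows $\alpha_{k,k+1}\colon k+1\to k$ and $\beta_{k+1,k}\colon k\to k+1$, following Figure \ref{fig:An}. At the minimum $P_i$ of $\bS^2_i$, the two nodes adjacent to $\bS^2_i$ contribute $\alpha_{i,i+1}\beta_{i+1,i}$ and $\beta_{i,i-1}\alpha_{i-1,i}$ (indices mod $n+1$). I will choose the orientation $\epsilon$ so that these two terms appear with opposite signs. After the coordinate change \eqref{eq:coord}, this gives the relation $\alpha_{i,i+1}\beta_{i+1,i}-\beta_{i,i-1}\alpha_{i-1,i}+\delta_i$. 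Replacing $\delta_i$ by $-\delta_i$ matches the stated sign. Since the relations are polynomial in the $\delta$'s, we may treat the $\delta_i$ as formal central variables, which gives the base $k[\delta_0,\dots,\delta_n]$.

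\textbf{The immersed spheres $\cS_i$.} Here $\cS_i$ is a torus fiber collapsed at one node, with degree-one immersed sectors $X_i,Y_i$. Without bulk deformation, \cite{HKL23} gives $m_0^b=(x_iy_i-y_ix_i)P$ after a coordinate change, as recalled in Example \ref{ex:ncC2}.

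Unlike $\bL$, the sphere $\cS_i$ is not the core, so it may bound nonconstant holomorphic discs. The key is to count how the bulk cycles $C_j$ meet these discs. I would place each $C_j$ in the conic fiber cotangent to $\bS^2_j$ at a point $p_j$. Then $\cS_i$ and $\bL$ are related by Lagrangian isotopy and surgery, through the torus fibers of $F$ in a neighborhood of the loop $c$.

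The expected picture is as follows. Each $C_j$, suitably moved within its homology class, meets $\cS_i$ transversely in exactly one point, or sits in a neighborhood of $\cS_i$ that is symplectomorphic to a cotangent fiber. Then each $C_j$ stabilizes the constant disc at that point and contributes $\delta_j P$. The divisor axiom, together with the fact that discs on $\cS_i$ of the relevant Maslov index have zero intersection with $C_j$, rules out all other contributions. This yields the relation $x_iy_i-y_ix_i-(\delta_0+\cdots+\delta_n)$.

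A cleaner alternative is to check consistency through the localized mirror functor. $\cS_i$ and $\bL$ are related by an isomorphism in the bulk-deformed Fukaya category. Transferring the central element $\sum_v \delta_v e_v$ (the sum of all the deformed preprojective relations) should produce the total $\sum_j\delta_j$. This matches the fact that the trace of the moment map equals $\sum_j\delta_j$ for the dimension vector $(1,\dots,1)$.

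\textbf{Main obstacle.} I expect the hardest step to be justifying that every $C_j$, including those with $j\neq i$, contributes exactly $\delta_j$ to $m_0$ of $\cS_i$, with the correct signs and with no higher-order corrections. I would first try to compute the intersection numbers $C_j\cdot[\text{disc}]$ using the $S^1$-moment map and the conic fibration. Failing that, I would fall back on the isomorphism argument with $\bL$ to pin down the constant term.
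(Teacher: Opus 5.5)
Your treatment of $\bL$ is exactly the paper's: the corollary is presented as an immediate consequence of Theorem~\ref{thm: dpre} via the Weinstein neighbourhood identification. Your sign bookkeeping is fine, provided the same normalization ($\delta_i\mapsto-\delta_i$, or the opposite orientation of the double quiver) is used for the $\cS_i$ as well.

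For the $\cS_i$ the paper writes no separate argument. The intended one is the mechanism of Example~\ref{ex:ncC2}, which is what you outline. As written, however, your proposal does not prove the second claim. Its one non-formal input, that every $C_j$ contributes exactly $\delta_j$ to $m_0$ of $\cS_i$, is stated only as an ``expected picture''. This is not automatic. The cotangent discs of Theorem~\ref{thm: dpre} are local, and $\cS_i$ (lying over $|z|=|c_i|$) is not contained in a Weinstein neighbourhood of $\bL$. So one must use a proper extension of each $C_j$ to $X$. A conic fibre through $p_j$ would not do: it meets $\bS^2_j$ cleanly in a vanishing circle and has zero intersection number with every $\bS^2_k$ and every $\cS_i$.

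Two observations close the gap.

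First, your worry about nonconstant discs is unfounded. The function $z$ is holomorphic and nowhere zero on $X$. For a holomorphic polygon $u$ with boundary and corners on $\cS_i\subset\{|z|=|c_i|\}$, the function $\log|z\circ u|$ is therefore harmonic with constant boundary values, so $z\circ u$ is constant. Then $u$ lies in one conic fibre with boundary on a vanishing circle or at the node. The same argument with $\log|x\circ u|$ (or $x\circ u=y\circ u=0$ at the node) shows $u$ is constant. Hence, for $C_j$ in general position, the only bulk contributions are constant discs with one interior insertion and no boundary inputs, as in Example~\ref{ex:ncC2}. Each point of $C_j\cap\cS_i$ flows to the minimum $P$. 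So $m_0^{b_i,\bb}$ is the term $(x_iy_i-y_ix_i)P$ of \cite{HKL23} plus $\sum_j\delta_j\,(C_j\cdot\cS_i)\,P$, with no higher-order corrections.

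Second, $C_j\cdot\cS_i$ is homological. Both $c$ and $\{|z|=|c_i|\}$ wind once around $0$, so there is a $2$-chain $R\subset\C^*$ with $\partial R=c-\{|z|=|c_i|\}$. The set $\{\mu=0\}\cap\pi^{-1}(R)$ is then a $3$-chain with boundary $\bL-\cS_i$, so $[\cS_i]=\sum_k[\bS^2_k]$ in $H_2(X;\Z)$. Hence any proper $C_j$ meeting $\bL$ transversally only at $p_j$ has $C_j\cdot\cS_i=C_j\cdot\bS^2_j=1$ for every $i$. This gives the constant $\delta_0+\cdots+\delta_n$, and it is the precise form of your ``dimension vector $(1,\ldots,1)$'' heuristic.

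Your fallback via the isomorphism with $\bL$ is only a consistency check in the paper's logic, since Proposition~\ref{prop:A_n} takes $\cA_i$ (with $t_0=\sum\delta_j$) as input. It is nevertheless a useful sign check: using the $\bL$-relations one gets $G_{(n+1)k}(x_ky_k-y_kx_k)=\delta_0+\cdots+\delta_n$, which confirms the stated normalization.
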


To construct the noncommutative mirror of $X$, one can consider the gluing of deformation spaces of $\cS_i$ through that of $\bL$. This results in a quiver algebroid stack, which corresponds to the noncommutative variety considered by Kawamata \cite{Kaw24A}.
\begin{prop} \label{prop:A_n}
	There exist preisomorphism pairs between $(\bL,b)$ and $(\cS_i,b_i),i=0,\cdots, n$:
	$$\alpha_i \in \CF_{\cA_i \otimes \cA_\bL(U_{i})}((\cS_i,b_i),(\bL,b)),\quad \beta_i \in \CF_{\cA_\bL(U_{i}) \otimes \cA_i}((\bL,b),(\cS_i,b_i))$$
	and a quiver stack $\hat{\cY}$ corresponding to the noncommutative deformations of the crepant resolution of $A_n$-singularity, with charts $\cA_\bL$ and $\cA_i,i=0,\cdots,n$, that solves the isomorphism equations for $(\alpha_i,\beta_i)$:
	\begin{align}
		\label{equ:stackrelation1}
		m_{1,\hat{\cY}}^{b_i,b}(\alpha_i) = 0,& \quad  m_{1,\hat{\cY}}^{b,b_i}(\beta_i) = 0;\\
		\label{equ:stackrelation2}
		m_{2,\hat{\cY}}^{b_i,b,b_i}(\alpha_i,\beta_i)  = \mathbf{1}_{\cS_i},&\quad  m_{2,\hat{\cY}}^{b,b_i,b}(\beta_i,\alpha_i) = \mathbf{1}_{\bL}.
	\end{align}
    Moreover, the gluing of the $\cA_i$, for $i = 0, \ldots, n$, yields the noncommutative variety introduced by Kawamata. Here, $\cA_\bL(U_{i})$ is the localization of $\cA_\bL$ at the set of arrows
	$$\{\alpha_{0,1},\cdots, \alpha_{i-1,i}, \beta_{n+1,n}, \cdots, \beta_{i+2,i+1}\}$$ for $i=0,\cdots,n$, respectively.
\end{prop}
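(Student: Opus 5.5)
We construct the preisomorphism pairs explicitly in the Morse model, following the strategy of \cite{HLT24} for $\delta=0$, and then track the extra terms produced by the bulk insertions $\mathbbm{b}=\sum_i\delta_i C_i$. Geometrically, $\cS_i$ (the nodal sphere over $z=c_i$ in the conic fibration) and $\bL$ intersect cleanly. After a small Hamiltonian perturbation they meet transversally near the two sphere components $\bS^2_i,\bS^2_{i+1}$ adjacent to the node. We take $\alpha_i$ and $\beta_i$ to be the degree-zero generators of $\CF(\cS_i,\bL)$ and $\CF(\bL,\cS_i)$ located at these intersection points. Their coefficients are (localized) elements of $\cA_i\otimes\cA_\bL(U_i)$, chosen so that the constant triangle at the intersection already produces the units.

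The steps, in order, are as follows.

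\textbf{Step 1 (cocycle equations).} We compute $m_1^{b_i,b}(\alpha_i)$. The only contributions are the strips between $\cS_i$ and $\bL$ passing through the immersed sectors $X_i,Y_i$ of $\cS_i$ and $\alpha_{\cdot,\cdot},\beta_{\cdot,\cdot}$ of $\bL$. Exactness and the retraction argument used in Theorem~\ref{thm: dpre} make them constant polygons, or very short strips in the local model. The equation $m_1=0$ then yields the gluing relations $x_i\cdot(\text{coefficient}) = (\text{coefficient})\cdot\alpha_{\dots}$, and similarly for $y_i$. This is where the stack $\hat{\cY}$ is defined: we set $x_i,y_i$ equal to the appropriate products of arrows of $\cA_\bL$ and inverses of the localized arrows $\alpha_{0,1},\dots,\alpha_{i-1,i},\beta_{n+1,n},\dots,\beta_{i+2,i+1}$. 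Concretely, $x_i$ is the cycle of $\alpha$'s through vertex $i$, and $y_i$ is the reverse cycle, normalized by the localized arrows.

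\textbf{Step 2 (bulk terms in the cocycle equations).} The cycles $C_j$ are cotangent fibers at generic points away from the intersection locus. Constant discs carry zero intersection with $C_j$ except at the single constant disc through $p_j$. By the divisor-axiom argument of Example~\ref{ex:ncC2}, the bulk contributes only to $m_0$ and never to $m_1$ or $m_2$ between distinct branes. Therefore Step 1 and the product equations \eqref{equ:stackrelation2} are literally the $\delta=0$ computations of \cite{HLT24}. The $\delta$-dependence enters only through the consistency of the relations.

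\textbf{Step 3 (consistency of relations).} We must check that the substitution defined in Step 1 sends the relation $x_iy_i-y_ix_i-(\delta_0+\cdots+\delta_n)$ of $\cA_i$ (Corollary~\ref{cor: An}) into the ideal generated by the deformed preprojective relations $\alpha_{j,j+1}\beta_{j+1,j}-\beta_{j,j-1}\alpha_{j-1,j}-\delta_j$ in $\cA_\bL(U_i)$. Summing these relations around the cycle, telescoping, and using the localized inverses gives exactly the total $\sum_j\delta_j$. This is an algebraic computation, and we also get $m_{1,\hat{\cY}}^2=0$, since the $m_0$ terms $W\cdot 1$ cancel as the two sides have matching central potentials.

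\textbf{Step 4 (units and comparison with Kawamata).} The equations \eqref{equ:stackrelation2} reduce to the constant triangle giving $\mathbf 1$ times the product of coefficients, which equals $1$ by the normalization in Step 1. Finally, we compare the transition maps $\cA_i\dashrightarrow\cA_{i+1}$, obtained by composing through $\cA_\bL(U_i)\cap\cA_\bL(U_{i+1})$, with Kawamata's gluing of the noncommutative charts $k\langle x,y\rangle/([x,y]-\text{const})$ in \cite{Kaw24A}, matching his deformation parameters with the partial sums of the $\delta_j$.

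I expect the main obstacle to be Step 3 together with sign control. The orientation function $\epsilon$ and the coordinate change \eqref{eq:coord} must be propagated through the localization so that the telescoping produces exactly $\delta_0+\cdots+\delta_n$ with the correct sign in each chart. I would first verify this for $n=0,1$ by hand and then induct on $i$.
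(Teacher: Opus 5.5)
Your overall architecture matches the paper's: explicit pairs with coefficients in the localized algebra, identifications of $x_k,y_k$ read off from $m_1=0$, an algebraic check of the relations, and a comparison of chart transitions with Kawamata. Your Step~3 algebra is also correct once $x_k,y_k$ are chosen properly. Set $P_k=\alpha_{01}\cdots\alpha_{k-1,k}$ and $Q_{k+1}=\beta_{0,n}\cdots\beta_{k+2,k+1}$, and take $x_k\mapsto P_k\alpha_{k,k+1}Q_{k+1}^{-1}$, $y_k\mapsto Q_{k+1}\beta_{k+1,k}P_k^{-1}$. This is a partial $\alpha$-path divided by the complementary $\beta$-path, not the full $\alpha$-cycle. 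Telescoping then gives $x_ky_k=\beta_{0,n}\alpha_{n,0}+\delta_0+\cdots+\delta_k$ and $y_kx_k=\beta_{0,n}\alpha_{n,0}-\delta_{k+1}-\cdots-\delta_n$.

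\textbf{The gap is the geometric input in Steps~1, 2 and~4.} The retraction argument only kills discs with boundary on $\bL$ alone. Exactness only expresses polygon areas through action values at the corners; it does not make polygons bounded by $\cS_k\cup\bL$ constant. The relevant polygons have positive areas $A_k'$ and $A_k$. They run from $U_k^1$, resp.\ $U_k^2$, through the chains of corners $\alpha_{01},\dots,\alpha_{k-1,k}$, resp.\ $\beta_{k+2,k+1},\dots,\beta_{0,n}$, to $V_k$, and give
\[
m_1^{b,b_k}(U_k^1+U_k^2)=\bigl(T^{A_k'}\alpha_{01}\cdots\alpha_{k-1,k}-T^{A_k}\beta_{0,n}\cdots\beta_{k+2,k+1}\bigr)V_k'+(x_k-\alpha_{k,k+1})\mathcal{X}_k+(y_k-\beta_{k+1,k})\mathcal{Y}_k .
\]
This formula is what forces localization at exactly the listed arrows. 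It also dictates $\alpha_k=(\alpha_{01}\cdots\alpha_{k-1,k})^{-1}U_k^1+(\beta_{0,n}\cdots\beta_{k+2,k+1})^{-1}U_k^2$ and $\beta_k=V_k$; the inverted coefficients exist precisely to cancel the arrow products these polygons pick up.

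With only constant or very short strips, $m_1=0$ could never tie $x_k$ to products of arrows at distant nodes. Likewise, a constant triangle for the unit equations would need no inverted coefficients at all. So in your Step~1 the gluing is postulated ("we set $x_i,y_i$ equal to \dots") rather than derived, and your Step~4 unit argument is not the actual mechanism. For the same reason Step~2 fails as stated. The divisor-axiom argument for constant discs says nothing about these nonconstant polygons, which could meet the $C_j$. You need either a positional argument (the paper uses the conic fibration to place the bulk cycles so that they miss the relevant discs) or an account of the resulting $\delta$-weights.

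\textbf{Secondary omission.} You only construct the direction $\cA_k\to\cA_\bL(U_k)$. The quiver stack also needs the reverse representation $G_{k(n+1)}\colon\cA_\bL(U_k)\to\cA_k$, together with the gerbe terms $c_{0k0}$. This map sends the localized arrows to $1$, $\alpha_{k,k+1}\mapsto x_k$ and $\beta_{k+1,k}\mapsto y_k$. Every remaining arrow goes to $x_ky_k$ plus a constant built from $t_0=\delta_0+\cdots+\delta_n$ and $t_i=\delta_i-t_0$; these constants are forced by the deformed preprojective relations. Composing this map with the substitution above yields the chart transitions $x_{k+1}\mapsto x_k^2y_k+t_{k+1}x_k$, $y_{k+1}\mapsto x_k^{-1}$. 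Those transitions are what actually get compared with Kawamata, so your Step~4 presupposes this map without supplying it.
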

\begin{proof}
	In the proof, we use the same notations as shown in Figure \ref{fig:An}. We denote the bulk parameters by $\delta_0, \ldots,\delta_n$, $A_{k}$ and $A_{k}'$  denote areas of the polygons with vertices $U_k^2,\beta_{k+1,k},\cdots ,\beta_{0n},V_k$ and $U_k^1,\alpha_{k,k-1},\cdots, \alpha_{01},V_k$. The isomorphism pairs are defined using normalized immersed sectors $U_k^1,U_k^2$ and $V_k$, 
	$$
	\alpha_{k}= T^{-A_{k}'} (\alpha_{01}\cdots \alpha_{k-1,k})^{-1}U_k^1+ T^{-A_{k}}(\beta_{0,n}\beta_{n,n-1}\cdots \beta_{k+2,k+1})^{-1}U_k^2, \quad \beta_k = V_k.$$ In fact, these isomorphism pairs are detected by computing $m_1^{b,b_k}(U_k^1+U_k^2)$ and $m_1^{b_k,b}(V_k)$. More precisely,
	\begin{align*}
		m_1^{b,b_k}(U_k^1+U_k^2)= (T^{A_k'} \alpha_{01} \cdots \alpha_{k-1,k} -T^{A_k} \beta_{0,n}\beta_{n,n-1}\cdots \beta_{k+2,k+1}) V_k' + (x_k-\alpha_{k,k+1}) \mathcal{X}_k  +(y_k- \beta_{k+1,k}) \mathcal{Y}_k ,
	\end{align*} where $V_k'$ is a degree one intersection point complementary to $V_k$ in the base, $\mathcal{X}_k, \mathcal{Y}_k \in \CF^1(\cS_i, \bL)$. This suggests the above choices of \(\alpha_k\) and \(\beta_k\), together with the identifications $x_k=\alpha_{k,k+1}, y_k=\beta_{k+1,k}.$ These identifications constitute part of the local data of the quiver algebroid stack. We now explain how they enter the transition maps between the local charts. One can check that over the quiver algebroid stack, $\alpha_k$ and $\beta_k$ satisfy Equations \eqref{equ:stackrelation1} and \eqref{equ:stackrelation2}.
	
	For simplicity, we assume the area terms vanish, i.e. $A_i=A_i'=0$, which suffices for the purpose of constructing the quiver algebroid stack.

	Let $t_0:=\delta_0 + \delta_1 + \cdots +\delta_n$ and $t_i:= -(\delta_0 + \cdots + \hat{\delta_i} + \cdots + \delta_n)$ for $i=1, \cdots,n.$
	
	The transition representation maps  $G_{k(n+1)}:\cA_\bL|_{U_k}\rightarrow\cA_{k}$ and $G_{(n+1)k}:\cA_k \to \cA_\bL|_{U_k}$ for $k=0, \ldots, n$ are defined as \begin{align*}
		G_{k(n+1)}:=& \begin{cases} 
			\alpha_{i,i+1}\mapsto 1 & \forall i<k  \\
			\alpha_{k,k+1} \mapsto x_k \\
		\alpha_{i,i+1}\mapsto x_ky_k+ (i-k-1)t_0+t_{k+1}+ \cdots t_i& \forall i \geq k+1\\
		\beta_{i+1,i}\mapsto 1 & \forall i \geq k+1  \\
		\beta_{k+1,k}\mapsto y_k &  \\
		\beta_{i+1,i} \mapsto x_ky_k-(k-i)t_0-t_{i+1}-\cdots-t_k& \forall i < k 
	\end{cases},\\ G_{(n+1)k}:=&\begin{cases}
		x_{k}\mapsto (\alpha_{01}\cdots \alpha_{k-1,k})\alpha_{k,k+1}(\beta_{0,n} \cdots \beta_{k+2,k+1})^{-1} &  \\
		y_k\mapsto (\beta_{0,n} \cdots \beta_{k+2,k+1})\beta_{k+1,k}(\alpha_{01}\cdots \alpha_{k-1,k})^{-1}
	\end{cases}.
\end{align*}
	In particular, we obtain the transition maps between $\cA_k$ and $\cA_{k+1}$:
	$$G_{(n+1)k} \circ G_{(n+1)(k+1)}:=\begin{cases}
		x_{k+1} \mapsto (\alpha_{01}\cdots \alpha_{k,k+1})\alpha_{k+1,k+2}(\beta_{0,n} \cdots \beta_{k+3,k+2})^{-1} \mapsto x_k^2y_k +t_{k+1}x_k\\
		y_{k+1} \mapsto (\beta_{0,n} \cdots \beta_{k+3,k+2})\beta_{k+2,k+1}(\alpha_{01}\cdots \alpha_{k,k+1})^{-1} \mapsto x_{k}^{-1}
	\end{cases}.$$ 

The gerbe terms $c_{0k0}$ at the vertices of $Q$  are defined by  $$c_{0k0}(v_i)= \begin{cases}
	\alpha_{01}\cdots \alpha_{i-1,i} & i \leq k\\
	\beta_{0,n}\beta_{n,n-1}\cdots \beta_{i+1,i} & i>k   \\ 	 
\end{cases} $$ where $v_i$ denotes the $i$-th vertex of $Q$, for $i=0,\ldots,n$. The other gerbe terms are trivial.
\end{proof}

Note that for the smoothing of the $A_n$ singularity, one can compute the transition maps directly by carefully deforming the immersed Lagrangian spheres $\cS_i$ \cite{LLL24}.

\begin{remark}
	Notice that in \cite[Lemma 5.3]{Kaw24A}, Kawamata wrote down a map which compared the noncommutative crepant resolution of $\C^2/\Z_n$ with noncommutative deformations of the usual commutative resolutions. This map coincides with the transition representation $G_{k(n+1)}$, which comes from solving Fukaya isomorphism equations over the quiver algebroid stacks. Later, we will produce more examples of these constructions, where the deformations of the noncommutative crepant resolutions coincide with the noncommutative bulk-deformation space of an immersed Lagrangian. 
\end{remark}

\begin{lemma}
	Let $F_i$ be the cocores for $i=0,\ldots,n$. Then $\cF^{(\mathcal{L},b)}(\oplus_k F_k)$ is the tilting bundle on the noncommutative scheme $\mathcal{X}$ obtained by gluing the localized mirrors $\cA_i$.
\end{lemma}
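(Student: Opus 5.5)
We compute the image of each cocore $F_i$ under the bulk-deformed localized mirror functor. The key point is that the bulk cycles $C_v$ can be chosen disjoint from the $F_i$ and from all relevant discs.

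\textbf{Image on the quiver chart.} Since $F_i$ is a cotangent fiber of $\bS^2_i$, it meets $\bL$ in exactly one point, a generator $p_i \in \CF(\bL,F_i)$ sitting over vertex $i$. As in Remark~\ref{rmk:Gint1pt}, we get
\[
\cF^{(\bL,b)}(F_i) = \cA_\bL\, e_i
\]
with zero differential. First we check that $m_1^{b,0}(p_i)=0$. The $C_v$ may be chosen as fibers over points $p_v$ distinct from the cocore base points, so they are disjoint from the $F_i$. Every relevant disc has zero area and is constant, so a constant disc at $p_i$ carries no bulk insertion. Hence $\cF^{(\bL,b)}(\oplus_k F_k)=\oplus_k \cA_\bL e_k = \cA_\bL$, the free module of rank one over the bulk-deformed algebra of Corollary~\ref{cor: An}. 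This is the projective generator corresponding to the noncommutative crepant resolution chart. Morphisms $\CF(F_j,F_k)$ (wrapped, or computed in the compact model) should map isomorphically onto $\Hom_{\cA_\bL}(\cA_\bL e_j,\cA_\bL e_k)=e_j\cA_\bL e_k$, as in the undeformed case of \cite{HLT24}.

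\textbf{Transport to the charts.} Next we transport to each chart $\cA_i$ using the isomorphism pairs $(\alpha_i,\beta_i)$ of Proposition~\ref{prop:A_n}. The object $\cF^{(\cS_i,b_i)}(\oplus_k F_k)$ should be identified with
\[
\cA_i \otimes_{\cA_\bL(U_i)} \cA_\bL(U_i)
\]
via $G_{i(n+1)}$, where the $k$-th summand becomes a rank-one free $\cA_i$-module. In the gluing, the gerbe terms $c_{0k0}(v_j)$ serve as the transition functions of the summands. Thus $\oplus_k F_k$ maps to a vector bundle $\mathcal{T}=\oplus_k \mathcal{T}_k$ on the glued noncommutative scheme $\mathcal{X}$. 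Each $\mathcal{T}_k$ is a noncommutative line bundle whose transition data are the monomials $\alpha_{01}\cdots\alpha_{k-1,k}$ or $\beta_{0,n}\cdots\beta_{k+1,k}$. At $\delta=0$ this recovers the classical tilting bundle $\oplus \mathcal{O}(D_k)$ on the $A_n$ resolution. We then match $\mathcal{T}$ with Kawamata's tilting bundle \cite{Kaw24A}, which is defined by the same gluing data after the identification of charts noted in the remark following Proposition~\ref{prop:A_n}.

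\textbf{Tilting property.} Two things must be checked.
\begin{itemize}
\item \emph{Endomorphisms:} $\mathrm{End}(\mathcal{T})\cong\cA_\bL$. This follows because global sections over the glued charts are computed by a Čech complex whose degree-zero part is cut out by compatibility with the localizations $\cA_\bL(U_i)$. The intersection of these localizations inside the total ring of fractions should be $\cA_\bL$ itself.
\item \emph{Vanishing and generation:} $\mathrm{Ext}^{>0}(\mathcal{T},\mathcal{T})=0$ and $\mathcal{T}$ generates.
\end{itemize}

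\textbf{Main obstacle.} We expect the Ext-vanishing and generation to be the hardest step, since the charts are noncommutative and Čech methods are delicate. The first approach to try is a deformation argument. The bulk parameters $\delta$ enter polynomially, and at $\delta=0$ the statement is the classical McKay/Van den Bergh tilting result. A flat deformation over $k[\delta_0,\ldots,\delta_n]$ should then preserve Ext-vanishing by semicontinuity, and generation by Nakayama's lemma over the base. If that fails, we would instead verify the vanishing via Floer theory: $HF^{>0}$ between cocores in the plumbing vanishes, and the mirror functor intertwines these groups with Ext.
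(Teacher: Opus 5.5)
Your conclusion agrees with the paper's, but you obtain the gluing data by a different route.

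\textbf{The paper's route.} The paper works only on the sphere charts. Since $\cS_i$ meets $F_k$ in one point $P_{ik}$, the module $\cF^{(\cS_i,b_i)}(F_k)=\cA_i\langle P_{ik}\rangle$ is free of rank one. The transitions are read off from $m_2(\widetilde{\alpha}_{i,i+1},P_{i+1,k})$, using the Fukaya isomorphisms between consecutive spheres. These give $P_{i,k}$ when $i+1\neq k$. When $i+1=k$, a disc through the sector $x_iX_i$ gives $x_{k-1}^{-1}P_{k-1,k}$. The bulk cycles miss all of these discs. A sum of line bundles with exactly these transitions is Kawamata's tilting bundle, and the proof stops there.

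\textbf{Your route.} You use $\bL$ as a hub: you pull $\cF^{(\bL,b)}(F_k)=\cA_\bL e_k$ back to each chart along the representations of Proposition~\ref{prop:A_n}, with trivializations given by the gerbe monomials $c_{0j0}(v_k)$. This is legitimate. Since $(\alpha_j,\beta_j)$ is an isomorphism pair and all complexes involved have rank one and zero differential, $m_2(\alpha_j,-)$ identifies the two modules. Whatever units it produces only change the trivialization chart by chart, which does not affect the isomorphism class.

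\textbf{Comparison.} Your route buys two things. It uses only data already fixed in Proposition~\ref{prop:A_n}, whereas the paper's $\widetilde{\alpha}_{i,i+1}$ are never written explicitly. It also makes transparent why the result has endomorphism algebra $\cA_\bL$: it is the localization of the projective generator of the NCCR chart.

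\textbf{What you still need.} To actually match Kawamata's gluing, you must evaluate the monomials, which you only assert. Under $G_{(k-1)(n+1)}$ one gets $\alpha_{01}\cdots\alpha_{k-1,k}\mapsto x_{k-1}$ and $\beta_{0,n}\cdots\beta_{k+1,k}\mapsto 1$. So $\mathcal{T}_k$ has transition $x_{k-1}^{\pm1}$ between charts $k-1$ and $k$. For every other pair of consecutive charts, the two trivializing monomials coincide and map to $1$. This is exactly the paper's transition data.

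\textbf{The tilting section.} It is unnecessary. Once $\mathcal{T}$ is identified with Kawamata's bundle, the tilting property is his theorem, and that is all the paper invokes. The substitutes you sketch also go beyond anything available here:
\begin{itemize}
\item semicontinuity for a non-proper noncommutative family over $\C[\delta_0,\ldots,\delta_n]$;
\item a ``total ring of fractions'' for a quiver algebra with idempotents;
\item full faithfulness of the bulk-deformed functor on the noncompact cocores.
\end{itemize}
Drop them rather than list them as obligations.
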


\begin{proof}
    We compute $\cF^{(\mathcal{L},b)}(\oplus_k F_k)$ explicitly and show that it coincides with the tilting bundle of the noncommutative deformation of the crepant resolution of the $A_n$ singularity constructed by Kawamata \cite{Kaw24A}. 
    
    As $\cS_k$ intersects $F_j$ transversally at a point $P_{kj}$, we have $\cF^{(\cS_k, b_k)}(F_j) = \cA_k \langle P_{kj} \rangle,$ which is the structure sheaf over noncommutative $\C^2$, i.e. a free rank-one $\cA_k$-module. Observe that $\widetilde{\alpha}_{ij}$ forms Fukaya isomorphism between $\cS_i$ and $\cS_j$ over the quiver algebroid stack $\hat{\mathcal{Y}}$, which induces transition maps between $\cF^{(\cS_i, b_i)}(F_k)$ and $\cF^{(\cS_j, b_j)}(F_k)$. More precisely, we have the transition maps \begin{align*}
    	\rho_{i,i+1}:=m_2^{b_i,b_{i+1},b_i}(\widetilde{\alpha}_{i,i+1},-):& \,\cF^{(\cS_{i+1}, b_{i+1})}(F_k) \to  \cF^{(\cS_i, b_i)}(F_k) \\
    	&\, P_{i+1,k} \mapsto m_2^{b_i,b_{i+1},0}(\widetilde{\alpha}_{i,i+1}, P_{i+1,k}).
    \end{align*}

    By a local trivialization of the conic fibration $p: X \to \C^*$, we observe that, generically, the bulk cycles $\mathbbm{b}:=\sum_i \delta_i C_i$ do not intersect the holomorphic disc bounded by the immersed spheres. Hence, the transition maps $m_2^{b_i,b_{i+1},b_i}(\widetilde{\alpha}_{i,i+1},-)$ are not affected by the bulk deformation. In particular, if $i+1 \neq k$, the holomorphic discs do not pass through the immersed sectors. Hence, $m_2^{b_i,b_{i+1},0}(\widetilde{\alpha}_{i,i+1}, P_{i+1,k})=P_{i,k}.$ Moreover, if $i+1=k$, there exists a holomorphic disc with boundary passing through the immersed sector $x_i X_i$, which implies $$m_2^{b_i,b_{i+1},b_i}(\widetilde{\alpha}_{i,i+1}, P_{i+1,i+1})=m_3(x_iX_i,\widetilde{\alpha}_{i,i+1}, P_{i+1,i+1})=x_i^{-1} P_{i,i+1}.$$

    In summary, $\cF^{(\mathcal{L},b)}(F_k)$ is a line bundle over the noncommutative variety such that $\cF^{(\mathcal{L},b)}(F_k) \mid_{U_i} \cong \cA_i$, $\rho_{i,i+1}= id$ if $i+1\neq k$ and $\rho_{k-1,k}=x_{k-1}^{-1}$ for all $i,k$. Hence, $\cF^{(\mathcal{L},b)}(\oplus_k F_k)$ is isomorphic to the tilting bundle over the noncommutative scheme $\mathcal{X}$.
\end{proof}

\begin{example}
	\label{eg: D4}
	Let $Q$ be the doubled quiver of affine $D_4$ as shown in Figure \ref{fig:affD_4} (a). We consider the deformed preprojective algebra $\cA_{0}$ of $Q$, in other words, the elements in $\cA_{0}$ satisfy $\sum_{i=1}^{4} a_i b^i=-\delta_5$ and $b^ia_i=\delta_i$ for $i=1,\cdots, 4$, where $\delta_k$ is the deformation parameter. In the following, we will write down an affine local chart for $\cA_{0}$ explicitly, whose dimension vector $\vec{w}$ is $(1,1,1,1,2)$ where the order of components agrees with the subscripts of the vertices. 
	
	\begin{claim}
		The affine local chart $\cA_2$ of $\cA_{0}$ is the noncommutative deformation of $\C^2$, i.e. $$\cA_2\cong \C[\delta_1,\ldots,\delta_5]\langle X,T \rangle/(XT-TX+\delta_1+\cdots +2\delta_5).$$
	\end{claim}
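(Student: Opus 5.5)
The plan is to realize the chart concretely by \emph{localizing and gauge fixing} in the deformed preprojective algebra $\cA_0$. I then eliminate all but two generators using the relations, and finally identify the single surviving relation as a commutator relation whose constant term comes from a ``trace'' computation.

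\textbf{Step 1: conventions and gauge fixing.} Write $a_i$ for the arrow from the outer vertex $i$ to the central vertex $5$ and $b^i$ for the reverse arrow, $i=1,\dots,4$. With $\vec w=(1,1,1,1,2)$, each $a_i$ is a column $2$-vector of generators and each $b^i$ is a row $2$-vector. The chart $\cA_2$ is defined by localizing at the $2\times 2$ matrix $(a_1\ a_2)$, so that it becomes invertible. We also localize at one entry of $a_3$; this is the analogue of inverting the products of arrows in Proposition \ref{prop:A_n}. Using the gauge group $GL_2\times (GL_1)^4$, acting via the gerbe and transition terms as in Proposition \ref{prop:A_n}, I would normalize
\[
a_1=\begin{pmatrix}1\\0\end{pmatrix},\qquad a_2=\begin{pmatrix}0\\1\end{pmatrix},\qquad a_3=\begin{pmatrix}1\\1\end{pmatrix}.
\]
This uses the full gauge group modulo the diagonal scalars, which act trivially.

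\textbf{Step 2: eliminating generators.} The relations $b^1a_1=\delta_1$ and $b^2a_2=\delta_2$ fix the diagonal entries: $b^1=(\delta_1,u)$ and $b^2=(v,\delta_2)$. Write $a_4=(p,q)^T$, $b^3=(r,\delta_3-r)$ and $b^4=(s,s')$, subject to $s p+s' q=\delta_4$. The matrix relation $\sum_i a_ib^i=-\delta_5 I$ gives four entrywise equations. Two of them solve for $u$ and $v$. The other two express $r$ and one of $s,s'$ in terms of the remaining variables.

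After this elimination, the natural candidates for the generators are
\[
X:=q\,p^{-1}\ \ (\text{the slope of } a_4),\qquad T:=\text{a suitably normalized entry of } b^4.
\]
Here the normalization uses $p$ to absorb the residual scaling at vertex $4$. I would check that every other generator is then a noncommutative polynomial in $X$ and $T$, so that the map $\C[\delta]\langle X,T\rangle\to\cA_2$ is surjective.

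\textbf{Step 3: the remaining relation.} Throughout, all entries are kept noncommutative and the order of products is tracked carefully. Substituting back, one relation remains, namely the constraint coming from $b^4a_4=\delta_4$ combined with the unused combination of the $2\times 2$ relation. This relation should reduce to $XT-TX+c=0$ for a constant $c$.

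To identify $c$ without grinding, I would use the trace argument. Taking the ``trace'' of $\sum_i a_ib^i=-\delta_5 I$ gives $\sum_i \mathrm{tr}(a_ib^i)=-2\delta_5$. On the other hand, the scalar relations give $\sum_i b^ia_i=\delta_1+\cdots+\delta_4$. In a commutative ring we would have $\mathrm{tr}(a_ib^i)=b^ia_i$, which forces the consistency condition $\delta_1+\cdots+\delta_4+2\delta_5=0$. In the noncommutative algebra, the difference $\sum_i\big(\mathrm{tr}(a_ib^i)-b^ia_i\big)$ is a sum of commutators of entries. After the gauge fixing of Step 1, all of these commutators vanish except one, which is a single commutator $\pm[X,T]$. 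This yields $XT-TX+\delta_1+\cdots+\delta_4+2\delta_5=0$.

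\textbf{Step 4: injectivity.} Finally, I would construct the inverse map $\C[\delta]\langle X,T\rangle/(XT-TX+\textstyle\sum\delta_i+2\delta_5)\to\cA_2$ explicitly, in the style of the maps $G_{k(n+1)}$ and $G_{(n+1)k}$. I would then check that both composites are the identity, which gives the isomorphism.

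\textbf{Main obstacle.} I expect the hardest step to be Step 3. One has to verify that, with noncommutative entries, all commutators other than the one between $X$ and $T$ genuinely cancel, using the inverted elements and the right multiplication order. The choice of normalization of $T$ (left versus right multiplication by $p^{-1}$) is exactly what makes this cancellation work. If the first choice fails, I would try the other normalization, or instead invert an entry of $b^4$ rather than of $a_4$.
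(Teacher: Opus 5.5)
Your argument is correct after small repairs. It differs from the paper's in two respects: you work in a different chart, and you use a different mechanism to get the commutator relation.

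\textbf{The chart.} The paper's $\cA_2$ inverts $(a_1\ a_2)$ together with $b^2a_1,b^3a_1,b^4a_1$. In other words, it normalizes the first entries of $b^2,b^3,b^4$ to $1$, and its generators are $X$ (second entry of $a_3$) and $Y,Z$ (second entries of $b^3,b^4$). You instead invert the entries of $a_3$ and the first entry $p$ of $a_4$.

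\textbf{The mechanism.} The paper pushes $XY,YX,XZ,ZX$ through $G_{02}$ into $S^{-1}\cA_0$ and simplifies using the $\alpha^i$ and $b^ia_i=\delta_i$. This gives $[X,Y-Z]=-(\delta_1+\cdots+\delta_4+2\delta_5)$. It then eliminates $Z$ via the $(2,2)$-entry relation $Z=X(Y-Z)+\mathrm{const}$, leaving generators $X$ and $T=Y-Z$.

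Your trace identity $\sum_{i,k}[a_i^{(k)},b^i_{(k)}]=-(\delta_1+\cdots+\delta_4+2\delta_5)$ replaces that long computation with one line. It also explains the constant as $\vec w\cdot\vec\delta$, the obstruction to commutative representations of dimension $\vec w$. In exchange, the paper's computation checks directly that $G_{02}$ respects the relation, i.e.\ that the chart really maps back into $S^{-1}\cA_0$.

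Your method also proves the claim for the paper's $\cA_2$ as stated. In the paper's gauge, $a_3=(-YX+\delta_3,X)^T$, $a_4=(\ast,-1-X)^T$, $b^3=(1,Y)$, $b^4=(1,Z)$. There the trace identity reduces immediately to $[X,Y]+[-1-X,Z]=[X,Y-Z]$.

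\textbf{Repairs.}
\begin{enumerate}
\item Normalizing $a_3=(1,1)^T$ requires inverting both entries of $a_3$, not just one.
\item After that, a $GL_1$ at vertex $4$ is still unused, so Step 1 does not exhaust the gauge group. The cleanest fix is to invert $p$ and gauge it to $1$. Then $\mathrm{tr}(a_4b^4)-b^4a_4=[q,s']$, and you can take $X=q$, $T=s'$.
\item If you prefer to keep $p$, take $X=qp^{-1}$ and $T=ps'$ (left multiplication by $p$, not by $p^{-1}$). Then $\mathrm{tr}(a_4b^4)=ps+XT$ while $p\,(b^4a_4)\,p^{-1}=ps+TX$.
\item Read the trace identity in the path algebra with idempotents. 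There $\mathrm{tr}(a_ib^i)$ lives at the two vertex-$5$ idempotents, while $b^ia_i$ lives at $e_i$. It becomes the scalar identity you use only after the inverted arrows, including $p$, have identified all the vertices.
\end{enumerate}
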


\begin{center}
	\begin{tabular}{cc}
		\begin{tikzcd}[row sep=huge,column sep=large]
			& v_{1} \arrow[d, "a_{1}"', bend right] &                    \\
			v_{4} \arrow[r,  bend right] &  v_{5} \arrow[u, "b_{1}"', bend right] \arrow[r, bend right] \arrow[d, bend right] \arrow[l, bend right] & v_{2} \arrow[l, bend right] \\& v_{3} \arrow[u, bend right]&                                      
		\end{tikzcd} &
		\begin{tikzcd}[row sep=huge]
			\bullet \arrow["X"', loop, distance=4em, in=30, out=330] \arrow["Y"', loop, distance=4em, in=150, out=90] \arrow["Z"', loop, distance=4em, in=270, out=210]
		\end{tikzcd} \\
		a) & b)\\
	\end{tabular}
	\captionof{figure}{Double quiver of affine $D_{4}$ and the underlying quiver of $\cA_{2}$}
	\label{fig:affD_4}
\end{center}

    We localize $\cA_{0}$ at the matrix of arrows $\begin{pmatrix}
    	a_1 & a_2 
    \end{pmatrix}$, that is, we insert the reverse arrows $\alpha^1$ and $\alpha^2$ so that \begin{equation} \label{eq:inv1}
    	\begin{pmatrix}
    		a_1 & a_2
    	\end{pmatrix} \cdot \begin{pmatrix}
    		\alpha^1 \\
    		\alpha^2
    	\end{pmatrix} =e_5,
    \end{equation}
    \begin{equation} \label{eq:inv2}
    	\begin{pmatrix}
    		\alpha^1 \\
    		\alpha^2
    	\end{pmatrix} \cdot \begin{pmatrix}
    		a_1 & a_2
    	\end{pmatrix}= \begin{pmatrix}
    		e_1 & 0\\
    		0 & e_2\\
    	\end{pmatrix}.
    \end{equation}
    We then localize it at $\mathrm{diag}(b^2a_1,b^3a_1,b^4a_1$), and denote the resulting algebra by $S^{-1}\cA_0$. 
    
    First, we can normalize the invertible arrows to be 1, which gives rise to the following map: 
    
    $$ G_{20}:=
    \begin{cases}
    	a_1 \mapsto \begin{psmallmatrix}
    		1 \\
    		0
    	\end{psmallmatrix} \\
    	a_2 \mapsto \begin{psmallmatrix}
    		0 \\
    		1
    	\end{psmallmatrix} \\
    	a_3 \mapsto \begin{psmallmatrix}
    		-YX+\delta_3 \\
    		X
    	\end{psmallmatrix} \\
    	a_4 \mapsto \begin{psmallmatrix}
    		-ZX'+\delta_4 \\
    		X'
    	\end{psmallmatrix} \\
    	b^1 \mapsto \begin{psmallmatrix}
    		\delta_1 & W
    	\end{psmallmatrix} \\
    	b^2 \mapsto \begin{psmallmatrix}
    		1 & \delta_2
    	\end{psmallmatrix} \\
    	b^3 \mapsto \begin{psmallmatrix}
    		1 & Y
    	\end{psmallmatrix} \\
    	b^4 \mapsto \begin{psmallmatrix}
    		1 & Z
    	\end{psmallmatrix}
    \end{cases}.$$

Because $\sum_i a_i b^i= -\delta_5 \cdot \unit,$ we know $X'=-1-X$, $X'Z=-\delta_5-\delta_2-XY$, $-ZX'+\delta_4=-\delta_5-\delta_1-\delta_3+YX$ and $W=YXY-ZXY-\delta_3 Y+(-\delta_5-\delta_2-\delta_4)Z$. 

Let $\cA_2$ be the algebra generated by $X,Y,Z$ satisfying the above relations, and  rewrite the above map:

$$G_{20}: S^{-1}\cA_{0} \to Mat(\cA_2), \quad G_{20}:=
\begin{cases}
	a_1 \mapsto \begin{psmallmatrix}
		1 \\
		0
	\end{psmallmatrix} \\
	a_2 \mapsto \begin{psmallmatrix}
		0 \\
		1
	\end{psmallmatrix} \\
	a_3 \mapsto \begin{psmallmatrix}
		-YX+\delta_3 \\
		X
	\end{psmallmatrix} \\
	a_4 \mapsto \begin{psmallmatrix}
		-\delta_5-\delta_1-\delta_3+YX \\
		-1-X
	\end{psmallmatrix} \\
	b^1 \mapsto \begin{psmallmatrix}
		\delta_1 & YXY-ZXY-\delta_3 Y+(-\delta_5-\delta_2-\delta_4)Z
	\end{psmallmatrix} \\
	b^2 \mapsto \begin{psmallmatrix}
		1 & \delta_2
	\end{psmallmatrix} \\
	b^3 \mapsto \begin{psmallmatrix}
		1 & Y
	\end{psmallmatrix} \\
	b^4 \mapsto \begin{psmallmatrix}
		1 & Z
	\end{psmallmatrix}
\end{cases}.$$

One can construct a right inverse map, denoted by $G_{20}$, of $G_{20}$ as follows:

$$G_{02}: \cA_2 \to S^{-1}\cA_{0}, \quad  G_{02}:=
\begin{cases}
	X \mapsto (b^2a_1)^{-1}(\alpha^2a_3)(b^3a_1) \\
	Y \mapsto (b^3a_1)^{-1}(b^3a_2)(b^2a_1) \\
	Z \mapsto (b^4a_1)^{-1}(b^4a_2)(b^2a_1) 
\end{cases}.$$

We show that if $\cA_2$ is an affine local chart of $\cA_0$—meaning that $G_{02}$ and $G_{20}$ are representations up to gerbe terms—then $\cA_2$ is a noncommutative deformation of $\C^2$. In particular, if $G_{02}$ is a representation map, it satisfies $$G_{02}(XY)=G_{02}(X)G_{02}(Y),$$ and the same holds for the others. We next analyze the restrictions on $X, Y, Z$.

First, we compute $G_{02}(XY)$ and $G_{02}(YX)$:
\begin{align*}
	G_{02}(XY)&= (b^2a_1)^{-1}(\alpha^2a_3b^3a_2)(b^2a_1) \\
	&= (b^2a_1)^{-1}\alpha^2 (-\delta_5-a_1b^1-a_2b^2-a_4b^4)a_2(b^2a_1) \\
	&= -\delta_5-\delta_2 - (b^2a_1)^{-1}\alpha^2a_4b^4a_2(b^2a_1)\\
	&= -2\delta_5-\delta_1-\delta_2-\delta_4+(-2\delta_5-\delta_1-\delta_3-\delta_4)(b^2a_1)^{-1}\alpha_2a_3b^3a_1- (b^2a_1)^{-1}b^2a_3b^3a_1,
\end{align*}
where the second equality comes from $\sum_i a_ib^i= \delta_5$ and the third equality uses $\alpha^2a_1=0$ and $b^ka_k=\delta_k$ for $k=1,\ldots,4$. 

\begin{align*}
	G_{02}(YX)&=(b^3a_1)^{-1}(b^3a_2\alpha_2a_3)(b^3a_1) \\
	&=(b^3a_1)^{-1}(b^3a_3)(b^3a_1)- (b^3a_1)^{-1}(b^3a_1\alpha_1a_3)(b^3a_1) \\
	&=\delta_3 - \alpha_1a_3b^3a_1 \\
	&= \delta_3 - (b^2a_1)^{-1}b^2a_1\alpha_1a_3b^3a_1 \\
	&=\delta_3+\delta_2 (b^2a_1)^{-1}\alpha_2a_3b^3a_1- (b^2a_1)^{-1}b^2a_3b^3a_1,
\end{align*} where the second equality comes from $a_1\alpha_1+a_2\alpha_2=e_5.$ Thus, $$G_{02}(XY)-G_{02}(YX)=(-2\delta_5-\delta_1-\delta_2-\delta_3-\delta_4)X-2\delta_5-\delta_1-\ldots-\delta_4.$$

Similarly, we can compute $G_{02}(XZ)$ and $G_{02}(ZX):$
\begin{align*}
	G_{02}(XZ)=&(b^2a_1)^{-1}(\alpha^2a_3)(b^3a_1)(b^4a_1)^{-1}(b^4a_2)(b^2a_1) \\
	=&(b^2a_1)^{-1}\alpha^2(-a_2b^2-a_4b^4)a_1(b^4a_1)^{-1}(b^4a_2)(b^2a_1)\\
	=&-(b^4a_1)^{-1}(b^4a_2b^2a_1)-(b^2a_1)^{-1}\alpha^2a_4b^4a_2b^2a_1\\
	=&-(b^4a_1)^{-1}(b^4a_2b^2a_1)-(b^2a_1)^{-1}\alpha^2(-\delta_5-a_2b^2-a_3b^3)a_2b^2a_1\\
	=&-(b^4a_1)^{-1}(b^4a_2b^2a_1)+\delta_5+\delta_2+(b^2a_1)^{-1}\alpha^2a_3b^3a_2b^2a_1\\
	=&-(b^4a_1)^{-1}(b^4a_2b^2a_1)+\delta_5+\delta_2+(b^2a_1)^{-1}\alpha^2a_3b^3(-\delta_5- a_1b^1-a_3b^3-a_4b^4 )a_1\\
	=&-(b^4a_1)^{-1}(b^4a_2b^2a_1)+\delta_5+\delta_2+(-\delta_5-\delta_1-\delta_3)(b^2a_1)^{-1}\alpha^2a_3b^3a_1-(b^2a_1)^{-1}\alpha^2a_3b^3a_4b^4a_1\\
	=&-(b^4a_1)^{-1}(b^4a_2b^2a_1)+\delta_5+\delta_2+(-\delta_5-\delta_1-\delta_3)(b^2a_1)^{-1}\alpha^2a_3b^3a_1\\
	& -(b^2a_1)^{-1}\alpha^2(-\delta_5-a_2b^2-a_4b^4)a_4b^4a_1\\
	=&-(b^4a_1)^{-1}(b^4a_2b^2a_1)+(-\delta_5-\delta_1-\delta_3)(b^2a_1)^{-1}\alpha^2a_3b^3a_1\\ &+(\delta_4+\delta_5)(b^2a_1)^{-1}\alpha^2a_4b^4a_1-\delta_1-(b^2a_1)^{-1}b^2a_3b^3a_1\\
	&=-\delta_5-\delta_4-\delta_1-(b^4a_1)^{-1}(b^4a_2b^2a_1)+(-2\delta_5-\delta_1-\delta_3-\delta_4)(b^2a_1)^{-1}\alpha^2a_3b^3a_1\\ &-(b^2a_1)^{-1}b^2a_3b^3a_1.\\
	\end{align*}
\begin{align*}
	G_{02}(ZX)&=(b^4a_1)^{-1}(b^4a_2\alpha^2a_3)(b^3a_1)\\
	&=(b^4a_1)^{-1}b^4(e_5-a_1\alpha^1)a_3b^3a_1\\
	&=(b^4a_1)^{-1}b^4a_3b^3a_1-\alpha^1a_3b^3a_1\\
	&=(b^4a_1)^{-1}b^4(-\delta_5-a_1b^1-a_2b^2-a_4b^4)a_1-\alpha^1a_3b^3a_1.\\
	&=-\delta_5-\delta_1-\delta_4-(b^4a_1)^{-1}b^4a_2b^2a_1-\alpha^1a_3b^3a_1\\
	&=-\delta_5-\delta_1-\delta_4+\delta_2(b^2a_1)^{-1}\alpha^2a_2b^3a_1-(b^4a_1)^{-1}b^4a_2b^2a_1-(b^2a_1)^{-1}b^2a_3b^3a_1.\\
\end{align*}
Thus, $G_{02}(XZ)-G_{02}(ZX)=(-2\delta_5-\delta_1-\delta_2-\delta_3-\delta_4)(b^2a_1)^{-1}\alpha^2a_3b^3a_1=(-2\delta_5-\delta_1-\delta_2-\delta_3-\delta_4)X.$

Furthermore, we obtain $$G_{02}(X(Y-Z))-G_{02}((Y-Z)X)= -2\delta_5-\delta_1-\ldots-\delta_4.$$ 

Recall that $\cA_2$ is generated by $X,Y,Z$ satisfying $Z+XZ-XY+\delta_5+\delta_2= Z-X(Y-Z)+\delta_5+\delta_2=0$. Setting $T:=Y-Z$, we obtain $$\cA_2\cong \C[\delta_1,\ldots,\delta_5]\langle X,T\rangle /(XT-TX+2\delta_5+\delta_1+\ldots+\delta_4). $$ In other words, the Claim holds and $\cA_2$ is the noncommutative deformation of the coordinate ring of $\C^2$. Moreover, $\cA_2$ serves as an affine local chart of $\cA_0$ via the representation maps $G_{02}$ and $G_{20}$.

Similarly, one can compute other affine local charts of $\cA_0$ by localizing other collections of arrows.

\end{example}

\begin{remark}
	By Theorem \ref{thm: dpre}, the deformed preprojective algebra $\cA_{0}$ appearing in the example has a geometric meaning, which is the bulk-deformed Maurer-Cartan algebra of the zero section $\bL$ in the plumbing space, while $\cA_2$ serves as a local chart of the mirror space.
\end{remark}

By the notion of a `local chart' in quiver stack, we can define a quiver algebra with relations to be commutative if all its local charts are commutative.

We know that the quiver algebras of affine type $\tilde{A}_n$ and $\tilde{D}_4$ at complex moment-map level zero are commutative. Moreover, in Corollary \ref{cor: An} for $\tilde{A}_n$, it remains commutative if $\sum_{i=0}^n a_i = 0$.
\begin{conjecture}
	All quiver algebras of affine types at complex moment-map level zero are commutative.
	Moreover, if we stay in the hyperplane $\sum_i \mu_i^{\C} = 0$, then the quiver algebra remains commutative.
	Thus, the total moment-map value $\sum_i \mu_i^{\C}$ measures the degree of noncommutativity.
\end{conjecture}

\subsection{Noncommutative Framed Torsion-Free Sheaves}

Motivated by the famous ADHM construction \cite{ADHM78}, Donaldson established a one-to-one correspondence between the isomorphism classes of quiver representations and the isomorphism classes of framed holomorphic vector bundles over $\C\bP^2$. Here, a framed bundle means that the trivialization of the bundle is fixed along the line at infinity $l_\infty \subset \C\bP^2$. Subsequently, Nakajima \cite{Nak99,Nak07} generalized the correspondence over the compactification of the asymptotic locally Euclidean (ALE) spaces introduced in \cite{KN90}. We will denote such a compactification by $\widetilde{\bP^2/\Gamma},$ where $\Gamma$ is a finite subgroup in $\mathrm{SL}_2(\C).$

An analogue of this correspondence also exists in the noncommutative setting. In \cite{KKO01}, Kapustin-Kuznetsov-Orlov introduced the noncommutative projective plane $\bP^2_{\tau}$ and showed that there is a one-to-one correspondence between isomorphism classes of framed sheaves over $\bP^2_{\tau}$ and deformed ADHM data. In \cite{BGK02}, Baranovsky-Ginzburg-Kuznetsov further generalized this result to the noncommutative surface $\bP^2_{\tau,\Gamma}$ associated to the finite subgroup $\Gamma \subset \mathrm{SL}_2(\C)$. 

In this subsection, we prove that the bulk-deformed framed Lagrangian branes are mirror to framed torsion-free sheaves over the noncommutative spaces $\bP^2_{\tau,\Gamma}$. Moreover, the bulk-deformed localized mirror functor produces monadic complexes that are quasi-isomorphic to noncommutative framed torsion-free sheaves. This is a noncommutative generalization of \cite{HLT24}.  

\begin{thm}\label{thm: monad}
	Let $M$ be the plumbing of $T^*\mathbb{S}^2$ according to the affine ADE Dynkin diagram, and $\bL$ be the core Lagrangian. Consider a framed Lagrangian brane $(L^\fr,\mathcal{E})$ of rank $(\vec{m},\vec{n})$, and let $b_0$ be a point in the bulk-deformed Maurer-Cartan space of $(L^\fr,\mathcal{E})$. Then the image of $(L^\fr,\mathcal{E},b_0)$ under the localized mirror functor $\cF^{(\bL,b)}$ can be identified with the monadic complex of torsion-free sheaves over the noncommutative surface $\widetilde{\C^2/\Gamma}$, which extends uniquely (up to isomorphism) to a framed torsion-free sheaf on the noncommutative projective surface $\bP^2_{\tau,\Gamma}$.
	
	In other words, the framed Lagrangian brane $(L^\fr,\mathcal{E},b_0)$ is mirror to the framed torsion-free sheaf over $\bP^2_{\tau,\Gamma}$.
\end{thm}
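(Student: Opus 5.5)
We compute the image of $(L^\fr,\mathcal{E},b_0)$ under $\cF^{(\bL,b)}$ directly in the Morse model, in parallel with the undeformed argument of \cite{HLT24}, and then compare with \cite{KKO01,BGK02}.

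First, by Theorem~\ref{thm: dpre}, after the coordinate change \eqref{eq:coord} the bulk-deformed Maurer--Cartan algebra $\cA_\bL$ of the core is the deformed preprojective algebra $\Pi(Q)^{\delta}$ of the affine $ADE$ quiver. The point $b_0$ of the framed Maurer--Cartan space is then the following data: matrices $(B_a)$ on $\bigoplus_i \C^{m_i}$, together with framing maps $I_i,J_i$ to and from $\C^{n_i}$, satisfying $\mu_\C=\sum \epsilon(a)B_{\bar a}B_a+IJ=\delta$. These are exactly the deformed ADHM/Nakajima data of \cite{KKO01,BGK02} at level $\tau\leftrightarrow\delta$.

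Next we write down $\CF((\bL,b),(L^\fr,b_0))$ as a free $\cA_\bL$-module with differential $m_1^{b,b_0}$. The generators have three types: immersed sectors between $\bL$ and the sphere components of $L^\fr$ in degree $1$; the Morse critical points on common components, the maximum in degree $0$ and the minimum in degree $2$; and the transverse intersections of $\bL$ with the framing fibers. Grouping them by degree gives a three-term complex
\[
\textstyle\bigoplus_i \cA_\bL e_i\otimes \C^{m_i}\xrightarrow{\ \alpha\ }\Big(\bigoplus_a \cA_\bL e_{h(a)}\otimes\C^{m_{t(a)}}\Big)\oplus\Big(\bigoplus_i \cA_\bL e_i\otimes\C^{n_i}\Big)\xrightarrow{\ \beta\ }\bigoplus_i \cA_\bL e_i\otimes\C^{m_i}.
\]
Since all discs are constant, the entries of $\alpha$ and $\beta$ are linear in the arrows $x_a$ and in $B_a,I,J$, and have the form $x_a\otimes 1-1\otimes B_a$ (with signs $\epsilon$) together with $I$ and $J$. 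The only new contributions are bulk insertions, which are constant discs through $p_i\in C_i$ followed by a flow line to a minimum. By the divisor axiom, exactly as in Example~\ref{ex:ncC2}, these insertions contribute only scalar multiples of $\delta_i$ to the degree-$0\to 2$ components, and those components vanish for degree reasons. The bulk therefore enters only through the relations of $\cA_\bL$ and of $b_0$. We then check $\beta\alpha=0$. This is the relation $\sum\epsilon[x_a,x_{\bar a}]-\delta=\mu_\C(b_0)-\delta$, and both sides vanish by hypothesis. This gives the monad over $\Pi(Q)^\delta$, which is the noncommutative $\widetilde{\C^2/\Gamma}$ chart.

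Finally, we identify this complex with the restriction to the affine part of the BGK monad on $\bP^2_{\tau,\Gamma}$. Homogenizing by a central degree-one variable $z$ turns $\alpha,\beta$ into maps between sums of $\cO(-1)$, $\cO$ and $\cO(1)$ on $\bP^2_{\tau,\Gamma}$, and the BGK theorem then supplies the unique framed torsion-free extension. Three properties remain to be checked: injectivity of $\alpha$, surjectivity of $\beta$, and the requirement that the cohomology be torsion-free. We deduce these from the stability of $b_0$ (a $\zeta$-stable point with $\zeta$ generic), using the BGK criterion that stable deformed ADHM data give monads; the key input is that the framing generates. We expect the main obstacle to be the sign and orientation bookkeeping that makes $\beta\alpha$ reproduce precisely the deformed moment map, including the invertible coordinate change \eqref{eq:coord}, which must be applied compatibly on the $b_0$ side. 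A closely related difficulty is matching our parameters $\delta_i$ with BGK's $\tau$, which lies in the center of $\C\Gamma$. We would first attempt this matching via the McKay correspondence, pairing each vertex with an irreducible representation of $\Gamma$, as in Corollary~\ref{cor:localHMS}.
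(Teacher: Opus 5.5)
\textbf{There is a genuine gap in the identification step.} Your computation of the Floer complex agrees with the paper's. The three-term complex on $M_v$, $X_a\oplus J_v$, $P_v$ is the one from \cite[Thm.~5.13]{HLT24}, and the bulk cycles enter only through the relations of $\cA_\bL$ and of $b_0$.

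The problem is the identification with BGK. You call the result a monad ``over $\Pi(Q)^\delta$, which is the noncommutative $\widetilde{\C^2/\Gamma}$ chart'' and then homogenize it to reach $\bP^2_{\tau,\Gamma}$. In the statement, however, $\widetilde{\C^2/\Gamma}$ means $B^\tau=(\C\langle x,y\rangle\#\Gamma)/([x,y]=\tau)$, and $\bP^2_{\tau,\Gamma}$ is built from the graded skew group algebra $A^\tau$. The BGK monad and BGK's extension results concern modules such as $B^\tau\otimes_{\C[\Gamma]}V$.

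For $\Gamma$ of type $D$ or $E$, $\Pi(Q)^\delta$ is not $B^\tau$. It is the corner ring $fB^\tau f$ with $f\neq 1$, so the two are only Morita equivalent. (In type $A$ one has $f=1$, and your identification holds literally.) A complex of $\Pi(Q)^\delta$-modules, homogenized or not, is therefore not yet an object BGK applies to. The ``matching of $\delta$ with $\tau$'' you flag is a ring-level problem, not bookkeeping. The McKay correspondence of Corollary~\ref{cor:localHMS} does not resolve it, since that is the derived equivalence with the commutative crepant resolution at $\tau=0$.

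\textbf{The missing ingredient is the Morita equivalence of \cite{CBH98}, which is how the paper argues.}
\begin{enumerate}
\item Choose idempotents $f_i\in\C[\Gamma]$ with $f_i\C[\Gamma]\cong N_i$ and set $f=\sum_i f_i$. Then $f$ is a full idempotent of $B^\tau$, and $fB^\tau f\cong\cA_\bL$ with $f_i\mapsto e_i$ and $\tau$ corresponding to $\delta$. Hence $M\mapsto fM$ is a Morita equivalence.
\item Writing $V\cong\bigoplus_i(\C[\Gamma]f_i)^{\oplus m_i}$ gives $f(B^\tau\otimes_{\C[\Gamma]}V)\cong\bigoplus_i\cA_\bL e_i\otimes\C^{m_i}$.
\item The middle term is handled the same way, using the McKay decomposition of $\Hom_\Gamma(N_i,L\otimes N_j)$.
\item On differentials, Schur's lemma splits $(B,I,J)$ into framed McKay-quiver data, which is exactly $b_0$. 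Multiplication by $f(h_x x+h_y y)$ becomes multiplication by the sum of arrows, by \cite[Lemma 3.3]{CBH98}.
\end{enumerate}
With this in place, the extension to $\bP^2_{\tau,\Gamma}$ comes straight from \cite[Props.~5.3.9--5.3.10]{BGK02}, with no separate homogenization. Your stability argument for injectivity, surjectivity and torsion-freeness goes beyond what the paper checks. However, BGK's criterion is stated for $\Gamma$-equivariant ADHM data, so it too must pass through this translation.
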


Before giving the proof, we first recall the definition of noncommutative surfaces $\widetilde{\C^2/\Gamma}$ and $\bP^2_{\tau,\Gamma}$. 

Let $L$ be a two-dimensional symplectic vector space with basis $\{x,y\}$ such that $\omega(x,y)=1$, and $\Gamma$ be a finite subgroup of $\mathrm{Sp}(L)$. 

\begin{defn}
	The noncommutative surface $\bP^2_{\tau,\Gamma}$ is defined to be the following graded algebra $$A^{\tau}:=(\C\langle x,y,z \rangle\#\Gamma)/ ( [x,z]=[y,z]=0,[x,y]=\tau z^2 ) ,$$ where $\tau$ is an element in the center $Z(\C \Gamma)$ of the group algebra, $\Gamma$ acts naturally on $x,y$, and $\Gamma$ acts trivially on $z$. The algebra $A^\tau$ has a grading defined by  $\mathrm{deg}\,x=\mathrm{deg}\,y=\mathrm{deg}\,z=1$ and $\mathrm{deg}\, a=0$ for all $a \in \C\Gamma$.
\end{defn}

\begin{remark}
	Recall that given a variety $X= \mathrm{Proj}(A)$, where $A$ is a graded Noetherian ring, one can associate to a finitely generated graded $A$-module a coherent sheaf over $X$. Moreover, if  $A$ is generated by finitely many elements in $A_1$ over $A_0$, this assignment induces an equivalence  $$\mathrm{Mod}^{fg}_A/ \mathrm{Mod}^{fg}_{A,\mathrm{torsion}} \to \mathrm{Coh}(X).$$ Hence, one can regard a graded algebra as a (noncommutative) projective variety. 
	
	In particular, when $\tau=0$ and $\Gamma$ is trivial, $A^\tau \cong \C[x,y,z]$ is the homogeneous coordinate ring of $\bP^2$. For more details on the noncommutative surface $\bP^2_{\tau,\Gamma}$, the reader is referred to Section 1 of \cite{BGK02}.
\end{remark}

Next, we introduce the noncommutative $\widetilde{\C^2/\Gamma}$, which can be regarded as the `coordinate ring' of $\bP^2_{\tau,\Gamma} \setminus \bP^1_{\tau,\Gamma}.$

\begin{defn}
	The noncommutative surface $\widetilde{\C^2/\Gamma}$ is defined to be $$B^\tau:= A^{\tau}/(z-1) \cong (\C \langle x, y\rangle \#\Gamma)/([x,y]=\tau).$$
\end{defn}

\begin{remark}
	Let us clarify the name noncommutative $\widetilde{\C^2/\Gamma}$. In \cite{CBH98}, the authors proved that $B^\tau$ is Morita equivalent to the deformed preprojective algebra. In our context, this is $\cA_{\bL}^\tau$ where $\bL$ is the zero section in the plumbing space corresponding to $\Gamma$ and $\tau$ corresponds to the bulk deformation parameter. 
	
	On the other hand, when $\tau=0$, the preprojective algebra $\cA_\bL$ is derived equivalent to the category of coherent sheaves over $\widetilde{\C^2/\Gamma}$ by the derived McKay correspondence \cite{KV00,BKR01}. Hence, we refer to $B^\tau$ as the noncommutative $\widetilde{\C^2/\Gamma}$.
\end{remark}

\begin{proof}[Proof of Theorem \ref{thm: monad}]
	Let $B^\tau$ be the noncommutative surface $\widetilde{\C^2/\Gamma}$. By Proposition 5.3.9 and Proposition 5.3.10 in \cite{BGK02}, every torsion-free sheaf over $B^\tau$ admits a unique (up to isomorphism) extension to a framed torsion-free sheaf over $\bP^2_{\tau,\Gamma}$. Thus, it suffices to prove that $\cF^{(\bL,b)}(L^\fr,\mathcal{E},b_0)$ is isomorphic to the following monadic complex of torsion-free sheaves over $B^\tau$, introduced in Equation 4.1.2 of \cite{BGK02}, under the Morita equivalence:
	\begin{equation}\label{eq: monad1}
		C^\bullet : 0 \to B^\tau \otimes_{\C[\Gamma]} V \xrightarrow[]{\alpha} B^\tau \otimes_{\C[\Gamma]} ((L\otimes_\C V)\oplus W) \xrightarrow[]{\beta}B^\tau \otimes_{\C[\Gamma]} V \to 0,
	\end{equation} where $L$ is the natural representation of the finite subgroup $\Gamma \subset \mathrm{SL}_2(\C)$ with basis $\{x,y\}$. Here $\alpha=\begin{pmatrix}
		B \cdot - (h_x \cdot x+ h_y \cdot y) \otimes \mathrm{Id}_V \\
		J \cdot
	\end{pmatrix}$ and $\beta= \begin{pmatrix}
	B\cdot- (h_x \cdot x+ h_y \cdot y) \otimes \mathrm{Id}_V, & I \cdot
\end{pmatrix}$, where $\{h_x,h_y\}$ is the dual basis of $L^*$ and $(B,I,J) \in \Hom_\Gamma (V, L \otimes V) \oplus \Hom_\Gamma (W,V) \oplus \Hom_\Gamma (V,W).$

On the other hand, the generic bulk cycles do not intersect the holomorphic discs counted by $m_1^{b,b_0}$. Hence, the image of $(L^\fr,\mathcal{E},b_0)$ under the localized mirror functor $\cF^{(\bL,b)}$ can be computed as in \cite[Thm. 5.13]{HLT24}:
\begin{equation}\label{eq: monad2}
	\oplus_v \cA_\bL e_v \otimes \C^{m_v} \langle M_v \rangle \xrightarrow{d_1} \oplus_{a} \cA_{\bL} e_{t(a)} \otimes \C^{m_{h(a)}} \langle X_a \rangle  \bigoplus \oplus_v \cA_\bL e_v \otimes \C^{n_v}\langle J_v \rangle \xrightarrow{d_2} \oplus_v \cA_\bL e_v \otimes \C^{m_v} \langle P_v \rangle,
\end{equation} where $M_v$, $P_v$ are the maximal points and minimum points of the Morse function $f_v$ on $\mathbb{S}^2_v$ respectively, while $X_a \in \CF^1(\mathbb{S}^2_{t(a)},\mathbb{S}^2_{h(a)})$ and $J_v \in \CF^1(\bL_v, F_v)$ are degree $1$ immersed sectors. 
Furthermore, the first differential has the form $$d_1(\eta M_v):=m_{1}^{\bb,b_0}(\eta M_v)= \sum_{t(\bar{a})=v} (B_{\bar{a}} \eta) X_{\bar{a}} + \sum_{h(a)=v} ( \eta x_a )X_a + j_v\eta J_v,$$ and the second one is $$d_2(\eta' X_a):= m_{1}^{\bb,b_0}(\eta' X_a)=  \eta' x_{\bar{a}} P_{h(a)}+ B_{\bar{a}} \eta' P_{t(a)}; $$  $$d_2(\eta'' J_v):=m_{1}^{\bb,b_0}(\eta'' J_v) = i_v \eta'' P_v.$$ Notice that after a suitable change of coordinates, the relations of the deformation space $\cA_\bL$ and the differentials become polynomials. Thus we can extend the coefficients to the complex numbers and obtain Equation \ref{eq: monad2} over $\C$.
	
	To identify the complex, we need to be more precise about the equivalence. Let $N_1,\ldots,N_k$ be the irreducible representations of $\Gamma$. Then by Wedderburn theorem, there exists idempotent $f_i \in \C[\Gamma]$ such that $f_i \C[\Gamma] \cong N_i$ as a right $\C[\Gamma]$-module. One can check that $f:= f_1 + \cdots +f_k$ is a full idempotent of $B^{\tau}$, i.e. $B^{\tau} f B^\tau= B^\tau$. Hence, it induces a Morita equivalence $F:B^\tau-\mathrm{mod} \to fB^\tau f-\mathrm{mod}$, which sends a left $B^\tau$-module $M$ to $fM$.

	Moreover, \cite{CBH98} proved that $fB^\tau f$ is isomorphic to the deformed preprojective algebra $\cA_\bL$, which is the bulk-deformed Maurer-Cartan algebra of the Lagrangian. In particular, it sends $\tau$ to the bulk deformation parameter $\delta$, and idempotent $f_i$ to the trivial path $e_i$ at the $i$-th vertex. Therefore, $F$ induces a Morita equivalence between $B^\tau$ and $\cA_\bL,$ and it remains to prove that the monadic complex $C^\bullet$, Equation \ref{eq: monad1}, is equivalent to $\cF^{(\bL,b)}(L^\fr,\mathcal{E})$ under the functor $F$.
	
	By construction, one can choose $\vec{m}$ so that $V\cong \oplus_i (N_i)^{\oplus m_i}$ as a left $\Gamma$-module. Then \begin{align*}
		F(B^\tau \otimes_{\C[\Gamma]}V)&= f B^\tau \otimes_{\C[\Gamma]}V\cong f B^\tau \otimes_{\C[\Gamma]} \oplus_i (N_i)^{\oplus m_i}\\
		&\cong f B^\tau \otimes_{\C[\Gamma]} \oplus_i (\C[\Gamma]f_i)^{\oplus m_i} \cong \oplus_i \left( f B^\tau \otimes_{\C[\Gamma]} \C[\Gamma]f_i \right)^{\oplus m_i} \\
		&\cong \oplus_i (fB^\tau f)f_i \otimes_\C \C^{m_i} \cong \oplus_i A_\bL e_i \otimes_\C \C^{m_i}= \CF^0((\bL,b),(L^\fr,E,b_0)).
	\end{align*} A similar isomorphism holds in the other degrees. Hence, $F(C^\bullet)$ is isomorphic to $\cF^{(\bL,b)}(L^\fr,\mathcal{E})$ as a graded vector space.

    Next, we prove that the differentials also coincide. For the linear part of the differential $f\otimes (B,I,J)$, it suffices to show $(B,I,J)$ decomposes. Recall that $(B,I,J)$ are morphisms of $\Gamma$-representations, i.e. $$(B,I,J) \in \Hom_\Gamma (V, L \otimes V) \oplus \Hom_\Gamma (W,V) \oplus \Hom_\Gamma (V,W).$$ Thus, each component has a decomposition according to the McKay correspondence: \begin{align*}
    	\Hom_\Gamma (V, L \otimes V)&\cong  \Hom_\Gamma( \oplus_i N_i^{\oplus m_i}, \oplus_j (L \otimes N_j)^{\oplus m_j} ) \\
    	& \cong \oplus_{i,j} \Hom_\C(\C^{\oplus m_i}, \C^{\oplus m_j}) \otimes \Hom_\Gamma(N_i, L\otimes N_j).
    \end{align*}
By the McKay correspondence, this is precisely the representation space of the McKay quiver.

Similarly, if $W \cong \oplus_j N_i^{\oplus n_j},$ $\Hom_\Gamma(W,V)\cong \oplus_{i} \Hom_\C(\C^{\oplus n_i}, \C^{\oplus m_i}) \otimes \Hom_\Gamma(N_i,N_i) \cong \Hom_\C(\C^{\oplus n_i}, \C^{\oplus m_i})$ by Schur's Lemma. Moreover, $\Hom_\Gamma(V,W)\cong \oplus_i \Hom_\C(\C^{\oplus m_i},\C^{\oplus n_i} )$. Hence, $(B,I,J)$ decomposes into the framed quiver representation of the McKay quiver, which can be identified with the Maurer-Cartan element $b_0$ of $(L^\fr,\mathcal{E})$, where $\mathcal{E}$ is a trivial bundle of rank $(\vec{m},\vec{n})$. Therefore, the linear part of the differential can be identified under the functor $F$.

In addition, the morphism $F(h_x\cdot x+ h_y \cdot y)$ also respects the decomposition under the isomorphism of the graded vector space. Indeed, by Lemma 3.3 in \cite{CBH98}, the isomorphism between $fB^\tau f$ and $\cA_\bL$ is defined by sending the bimodule generators of $fB^\tau_1 f$ over $fB_0^\tau f$ to the arrows in the deformed preprojective algebras, where $B^\tau_k$ is the degree $k$ component of $B^\tau$. Hence, multiplication by $f(h_x\cdot x+ h_y \cdot y)$ corresponds to multiplication by the sum of arrows. One can check that under the decomposition $\Hom_{\cA_\bL}(\cA_\bL,\cA_\bL) \cong \oplus_{i,j} \Hom_{\cA_\bL}(\cA_\bL e_i,\cA_\bL e_j)$, the differential also coincides. Hence, the result holds.
\end{proof}

\begin{remark}
	 When the underlying graph $D$ is affine $A_0$ diagram, i.e. the graph with a single vertex and one self-loop, $\cF^{(\bL,b)}(L^\fr,\mathcal{E},b_0)$ reduces to the monadic complex of torsion-free sheaves over the noncommutative plane $\C^2$, where the correspondence was first developed in \cite{KKO01}. In particular, the bulk-deformed Maurer-Cartan equation of $\bL$ is the deformed ADHM equation.
\end{remark}

Notice that the deformed preprojective algebra $\cA_{\bL}$ is isomorphic to the spherical algebra of $B^\tau$; that is $\cA_\bL \cong fB^\tau f$, where $f$ is the multiplicity-free idempotent. See \cite[Theorem 3.4]{CBH98}. 
Moreover, the brane $(\bL,\mathcal{E})$ contains the information of multiplicities. 
We should be able to recover the deformed skew group ring $B^\tau$ from $(\bL,\mathcal{E})$.
Below, we provide an affirmative answer by constructing an isomorphism between the higher-rank Maurer-Cartan algebra $\cA_{(\bL,\mathcal{E})}$ and the deformed skew group ring $B^\tau$ when $\tau=0$.

More precisely, given an affine $ADE$ Dynkin diagram, let $\vec{n}$ be the primitive vector such that $C \vec{n}=\vec{0},$ where $C$ is the Cartan matrix of the graph $D$. We would like to compare $\cA_{(\bL,\mathcal{E})}$, where the trivial vector bundle $\mathcal{E}$ is of rank $\vec{n}$, with the algebra $B^\tau$.  Recall that $\vec{n}$ coincides with the dimension vector of the irreducible representations of the corresponding finite subgroup $\Gamma $, i.e. $\vec{n}:=(n_1, \ldots, n_r)$, where $n_i= \mathrm{dim}\, N_i$ and $N_i$ ranges over the irreducible representations of $\Gamma$.

\begin{prop}
	Let $\tau$ be zero, $B$ be the skew group ring $\C[x,y] \# \Gamma$, and $\cA_{(\bL,\mathcal{E})}$ be the higher-rank Maurer-Cartan algebra, where $\mathcal{E}$ is the trivial vector bundle of rank $\vec{n}$ over $\bL$. Then $\cA_{(\bL,\mathcal{E})}$ is isomorphic to $B$.
\end{prop}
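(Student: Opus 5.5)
The plan is to realize both algebras as the same matrix algebra over a path algebra: the Morita context between $B$ and its spherical subalgebra, with multiplicities recorded by $\vec{n}$.

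First, describe $\cA_{(\bL,\mathcal{E})}$ concretely. By the construction in Section \ref{subsec:reviewlocmir}, $Rep(Q,\mathcal{E})$ is the tensor algebra over $Rep^0=\bigoplus_i \mathrm{End}(\C^{n_i})$ generated by $Rep^1=\bigoplus_a \Hom(\C^{n_{t(a)}},\C^{n_{h(a)}})\otimes x_a$. Let $N=\sum_i n_i^2=|\Gamma|$. The zero-level computation of Theorem \ref{thm: dpre}, with $\delta=0$ and the coordinate change \eqref{eq:coord} applied entrywise, gives the relations $\sum_{t(a)=v}\epsilon(a)x_{\bar a}x_a=0$, now matrix-valued. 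Hence
$$\cA_{(\bL,\mathcal{E})}\cong \mathrm{End}_{\Pi(Q)}\Big(\bigoplus_i (\Pi(Q)e_i)^{\oplus n_i}\Big)^{op},$$
or equivalently $\cA_{(\bL,\mathcal{E})}\cong \mathcal{P}\,\mathrm{Mat}_{N}(\Pi(Q))\,\mathcal{P}$, where $\mathcal{P}$ is the diagonal idempotent placing $n_i$ copies of $e_i$. The point to verify is that the relation ideal generated by coefficients of $m_0^b$ is exactly the matrix-entry ideal generated by the preprojective relation, which holds since $m_0^b$ is computed by the formula \eqref{eq:mk} with matrix coefficients.

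Second, describe $B$. By Artin--Wedderburn, $\C\Gamma\cong\bigoplus_i \mathrm{End}(N_i)$. Choose primitive idempotents $f_i$ as in the proof of Theorem \ref{thm: monad}. Then $\C\Gamma\cong\bigoplus_i \mathrm{End}((\C\Gamma f_i))$, and $\C\Gamma f_i\cong N_i$ has multiplicity $n_i$ in the regular representation. Hence $B\cong \mathrm{End}_{fBf}(fB)^{op}$ with $fB\cong\bigoplus_i (fBf_i)^{\oplus n_i}$ as a right $fBf$-module. This uses that $f$ is full, as in the proof of Theorem \ref{thm: monad}, so that $B\cong\mathrm{End}_{fBf}(Bf)$ by the Morita context. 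By \cite[Theorem 3.4]{CBH98} at $\tau=0$, $fBf\cong \Pi(Q)$ with $f_i\mapsto e_i$, and $Bf\cong\bigoplus_i (\Pi(Q)e_i)^{\oplus n_i}$ as a left module.

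Combining the two descriptions yields $B\cong \mathrm{End}_{\Pi(Q)}\big(\bigoplus_i(\Pi(Q)e_i)^{\oplus n_i}\big)\cong\cA_{(\bL,\mathcal{E})}$, up to a harmless op coming from the right-to-left composition convention. I would make this isomorphism explicit by sending $x,y$ to the matrices of arrows given by the McKay decomposition $L\otimes N_j\cong\bigoplus N_i$, exactly as in the decomposition of $\Hom_\Gamma(V,L\otimes V)$ in the proof of Theorem \ref{thm: monad}.

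The main obstacle is the identification of the first step, namely that $\cA_{(\bL,\mathcal{E})}$ is the full matrix corner and not something smaller. In $Rep(Q,\mathcal{E})$ the degree-zero part is generated only by $\mathrm{End}(\C^{n_i})$ and matrix-valued arrows, so I must check that these generate all matrix units of $\mathcal{P}\mathrm{Mat}_N(\Pi(Q))\mathcal{P}$ and that no extra relations appear. I would do this by comparing the generators and relations presentation of a matrix corner of a quiver algebra, whose generators are matrix units plus arrows tensored with matrix units, with the higher-rank tensor algebra. One also needs to note that the coordinate change \eqref{eq:coord} acts entrywise and remains invertible. The sign function $\epsilon$ is absorbed by rescaling $x_{\bar a}\mapsto -x_{\bar a}$ on arrows with $\epsilon(a)=-1$, which matches the convention of \cite{CBH98}.
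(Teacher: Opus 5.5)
Your argument is correct, but it takes a different route from the paper.

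\textbf{The paper's route.} The paper argues at the $A_\infty$ level. After the coordinate change of Theorem \ref{thm: dpre}, only $m_2$ survives, and it identifies $\CF(\bL,\bL)$ with the orbifold category $\mathcal{C}/\Gamma$, where $\mathcal{C}=\wedge^\bullet\C^2$. The matching of $m_2$ uses Schur's lemma and the fact that $\wedge^2\C^2$ is the trivial representation. This gives $\CF((\bL,\mathcal{E}),(\bL,\mathcal{E}))\cong\Hom_\Gamma(\C\Gamma,\mathcal{C}\otimes\C\Gamma)\cong(\mathcal{C}^{op}\#\Gamma)^{op}$. The paper then invokes \cite{Lam15}, that Koszul duality commutes with smash products, to get $H^0$ of the Koszul dual $\cong\C[x,y]\#\Gamma$.

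\textbf{Your route.} You never leave degree zero. You identify $T_{Rep^0}Rep^1$ with the multiplicity-$\vec n$ corner of $\C Q$, using $M_{m\times n}\otimes_{M_n}M_{n\times p}\cong M_{m\times p}$. You transport the relation ideal through the Morita correspondence of two-sided ideals. You then recognize $B$ as the same corner of $fBf\cong\Pi(Q)$, by splitting $1\in\C\Gamma$ into $n_i$ primitive idempotents conjugate to each $f_i$.

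\textbf{What each buys.} The paper's route identifies the higher-rank Floer algebra itself, hence the whole Koszul dual dga and not only its $H^0$. It also explains the multiplicities conceptually through the regular representation of $\Gamma$. The cost is that it relies on \cite{Lam15} and on the $A_\infty$ identification $\mathcal{C}/\Gamma\cong\CF(\bL,\bL)$.

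Your route is elementary and needs only Theorem \ref{thm: dpre} and \cite{CBH98}. It also extends directly to $\tau\neq 0$. The bulk term contributes $\delta_v\,\mathrm{Id}_{\mathcal{E}_v}P_v$ (a constant disc has trivial holonomy). Moreover $fB^\tau f\cong\Pi(Q)^\lambda$ with $f$ still full. So the same corner argument gives $\cA_{(\bL,\mathcal{E})}\cong B^\tau$, which is the deformed case the paper only states at $\tau=0$.

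\textbf{Small slips to fix.}
\begin{itemize}
\item The idempotent $\mathcal{P}$ lives in $\mathrm{Mat}_{\sum_i n_i}$; it is $\dim Rep^0$, not the matrix size, that equals $\sum_i n_i^2=|\Gamma|$.
\item $fB$ is a \emph{left} $fBf$-module and $Bf$ a \emph{right} one. The decompositions should read $fB\cong\bigoplus_i(\Pi(Q)e_i)^{\oplus n_i}$ as left modules, or $Bf\cong\bigoplus_i(e_i\Pi(Q))^{\oplus n_i}$ as right modules.
\item $\C\Gamma\cong\bigoplus_i\mathrm{End}_{\C}(N_i)$, not $\bigoplus_i\mathrm{End}(\C\Gamma f_i)$ taken over $\C\Gamma$.
\item The ``harmless op'' is indeed harmless, because $B\cong B^{op}$ via $pg\mapsto g^{-1}p$; this uses that $\C[x,y]$ is commutative.
\end{itemize}
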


\begin{proof}
	Let $\mathcal{C}$ be the dg algebra $\wedge^\bullet \C^2$ with trivial differential, whose multiplication is given by the wedge product. Since $\Gamma$ is a finite subgroup of $\mathrm{SL}_2(\C)$, $\mathcal{C}$ admits a natural $\Gamma$ action. Moreover, $\mathcal{C}$ can be viewed as an $A_\infty$-category with a unique object $u$, whose endomorphism is  $\wedge^\bullet \C^2$.
	
	The constructions introduced in Section 3.5 of \cite{Lam15} induce $A_\infty$-structures (differential graded structures) on $\mathcal{C}/\Gamma$ and $\Hom_\Gamma(\C\Gamma, \mathcal{C} \otimes \C\Gamma)$. In particular, $\mathcal{C}/\Gamma$ contains $u \otimes N_i$ as objects for all $i$ and $\Hom_{\mathcal{C}/\Gamma}^\bullet(u\otimes N_i, u \otimes N_j):= \Hom_{\Gamma}(N_i, \mathcal{C}^\bullet(u,u) \otimes N_j)$ as morphism spaces. Moreover, we have the following decomposition of $\Hom_\Gamma(\C\Gamma, \mathcal{C}^1 \otimes \C\Gamma)$: $$ \oplus_{i,j}\Hom_\C(\C^{\oplus n_i}, \C^{\oplus n_j}) \otimes \Hom_\Gamma(N_i, \C^2\otimes N_j) \cong \oplus_{i,j}\Hom_\C(\C^{\oplus n_i}, \C^{\oplus n_j}) \otimes \Hom_{\mathcal{C}/\Gamma}^1(u\otimes N_i, u \otimes N_j).$$ Hence, the $A_\infty$-structure of $\Hom_\Gamma(\C\Gamma, \mathcal{C} \otimes \C\Gamma)$ can be induced by that of $\mathcal{C}/\Gamma$ and matrix multiplication.
	
	Next, we show that $\mathcal{C}/\Gamma$ is isomorphic to $\CF(\bL,\bL)$ as an $A_\infty$-algebra. This implies that $\CF((\bL,\mathcal{E}),(\bL,\mathcal{E}))$ is isomorphic to $\Hom_\Gamma(\C\Gamma, \mathcal{C} \otimes \C\Gamma)$, and likewise for their Koszul duals. By McKay correspondence, each irreducible representation corresponds to a vertex in the affine $ADE$ Dynkin diagram. This suggests that $u \otimes N_i$ corresponds to a sphere component $L_i$ of $\bL$. By the Schur Lemma and the above decomposition, we see that $\mathcal{C}/\Gamma$ and $\CF(\bL,\bL)$ are isomorphic as graded vector spaces.
	
	Notice that, after the change of coordinates on the Koszul dual side described in Theorem~\ref{thm: dpre}, the Koszul dual of $\CF(\bL,\bL)$ is isomorphic to the Koszul dual of an $A_\infty$-algebra with the same underlying graded vector space as $\CF(\bL,\bL)$, but whose only nontrivial $A_\infty$-operation is $m_2$. By abuse of notation, we will still denote this $A_\infty$-algebra by $\CF(\bL,\bL)$. Thus, it remains to show that $m_2$ coincides. Here, we will check the operator $m_2$ on the degree-one components; the remaining cases are similar. 
	
	Observe that $\oplus_{i,j}\Hom_{\mathcal{C}/\Gamma}^1(u\otimes N_i, u \otimes N_j)$ is one-dimensional along each arrow of the McKay quiver. By definition, $m_2: \Hom_{\mathcal{C}/\Gamma}^1(u\otimes N_i, u \otimes N_j) \otimes \Hom_{\mathcal{C}/\Gamma}^1(u\otimes N_j, u \otimes N_k) \to \Hom_{\mathcal{C}/\Gamma}^2(u\otimes N_i, u \otimes N_k) \cong \Hom_\Gamma (N_i, N_k),$ which is zero unless $i=k$. This follows from the Schur Lemma and the identification $\wedge^2 \C^2 \cong \C$ as a trivial $\Gamma$-representation, as $\Gamma$ is a finite subgroup of $\mathrm{SL}_2(\C)$. Furthermore, there exists a basis $\{a_{ij}\}$ of $\oplus_{i,j}\Hom_{\mathcal{C}/\Gamma}^1(u\otimes N_i, u \otimes N_j)$ such that $m_2(a_{ij},a_{ji})= m_2(a_{ik},a_{ki})$ for every vertex $i$ and adjacent vertices $j,k$.
	
	This shows $\mathcal{C}/\Gamma$ is isomorphic to $\CF(\bL,\bL)$ as an $A_\infty$-algebra, by mapping $a_{ij}$ to the basis of $\CF^1(L_i,L_j).$ By Proposition 3.5.5 and Proposition 3.5.6 in \cite{Lam15}, we know $\Hom_\Gamma(\C\Gamma, \mathcal{C} \otimes \C\Gamma) \cong (\mathcal{C}^{op} \# \Gamma)^{op}$ and taking Koszul dual commutes with smash product. As a consequence, $$H^0(\Hom_\Gamma(\C\Gamma, \mathcal{C} \otimes \C\Gamma)^!) \cong \C[x,y]\#\Gamma \cong H^0(\CF((\bL,\mathcal{E}),(\bL,\mathcal{E}))^!) \cong \cA_{(\bL,\mathcal{E})}.$$
\end{proof}

\subsection{Noncommutative Deformation of the Resolved Conifold}
In this subsection, we show that a bulk deformation of the conifold smoothing gives rise to a noncommutative deformation of the noncommutative crepant resolution of the conifold. We then construct local charts for this deformation using the framework of quiver algebroid stacks \cite{LNT23}. This provides a three-dimensional analogue of Kawamata's noncommutative surfaces.

Let $X:=\{(u_1,v_1,u_2,v_2,z) \in \C^4 \times \C^* \mid u_1v_1=z-1, u_2v_2=z+1\}$ be the smoothing of the conifold. The space $X$ admits a double conic fibration over $\C^*$ given by projection onto the last coordinate, see Figure \ref{fig:conifold}. Moreover, the space $X$ carries a Lagrangian torus fibration $\pi: X \to \R^3$ given by $\pi(u_1,v_1,u_2,v_2,z):= \left(
\frac{1}{2}\bigl(|u_1|^2-|v_1|^2\bigr),
\frac{1}{2}\bigl(|u_2|^2-|v_2|^2\bigr),
ln |z|
\right),$ see for example \cite{CPU16,FHLY17}. We denote the preimage of $0$ by $\bL,$ which consists of two 3-spheres $\bS^3$ intersecting cleanly along two circles. 

\begin{figure}[htbp]
	\centering
	\includegraphics[width=0.6\textwidth]{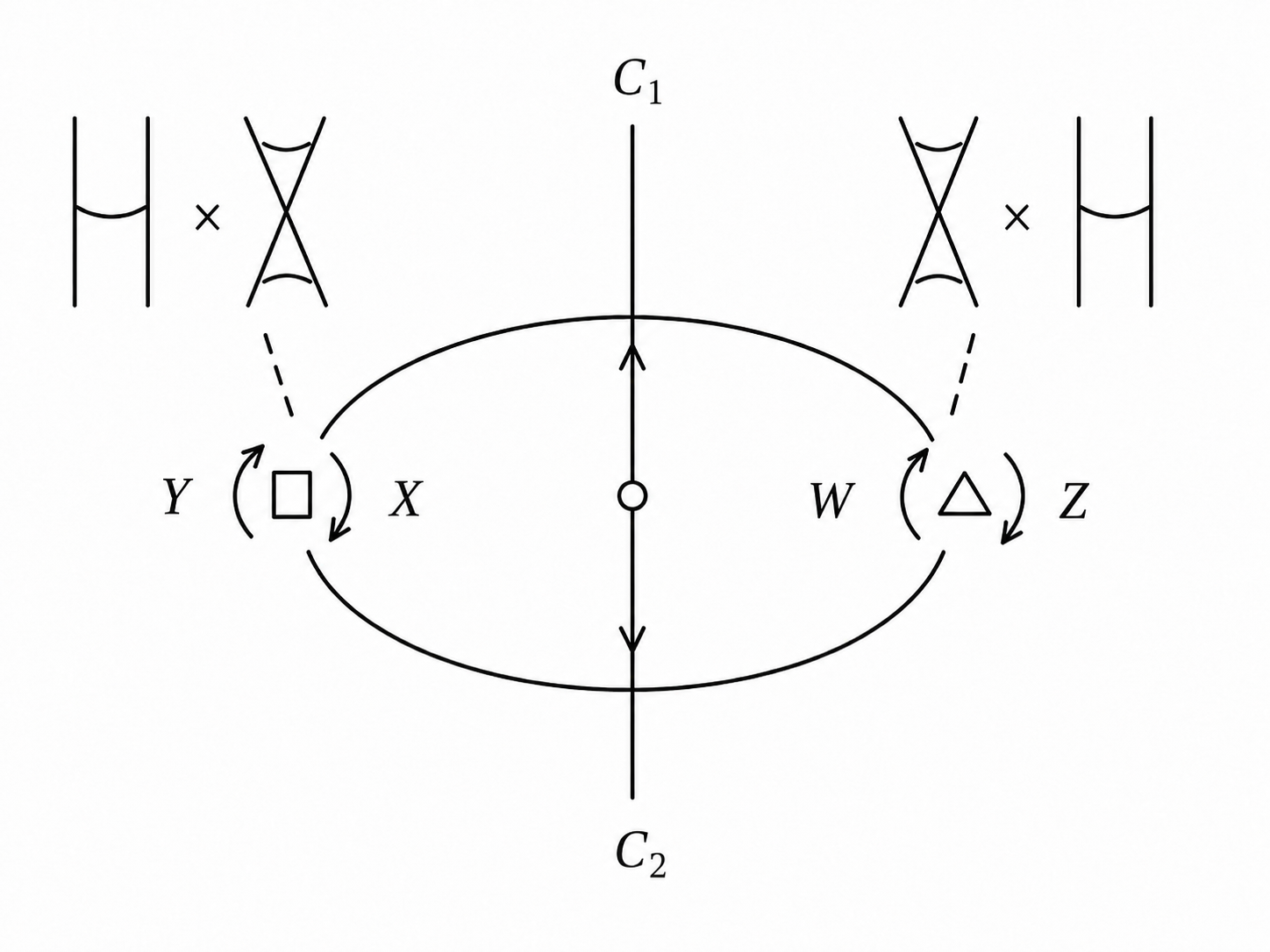}
	\caption{The double conic fibration of $X$, with singular fibers over $\pm 1$.}
	\label{fig:conifold}
\end{figure}

We will use two basic bulk cycles $C_1$ and $C_2$. For each $z\in \C^*$, let
\[
C_1(z)=\{u_1v_1=z-1\},\qquad
C_2(z)=\{u_2v_2=z+1\},
\] so that the fiber $X_z=C_1(z)\times C_2(z)$.  For a regular value \(z\), let \(S_i(z)\subset C_i(z)\) be the standard
vanishing circle. Then \(S_1(z)\times S_2(z)\) is a two-torus inside
\(X_z\). We choose the diagonal circle
$\Delta(z)\subset S_1(z)\times S_2(z),
$ representing the class \((1,1)\in H_1(S_1(z)\times S_2(z))\).

We now choose two rays in the base, which start from the origin in the base and go to infinity, and they are chosen so that they meet the two components of \(\bL\) respectively. The first bulk cycle \(C_1\) is obtained by choosing, in a regular fiber over one of the rays, a real three-dimensional submanifold whose compact circle direction is \(\Delta(z)\), and then parallel transporting it along the ray. Thus, locally over the ray, \(C_1\) is given by the parallel transport of $\Delta(z)\times \R^2$. Topologically,
$C_1\simeq S^1\times \R^3.$
Its intersection with the corresponding component of \(\bL\) has
fiberwise direction given by the diagonal circle.

The second bulk cycle \(C_2\) is defined analogously, using the ray
associated with the other \(3\)-sphere component of \(\bL\). Namely, over the ray, we choose the same type of fiberwise real three-dimensional submanifold. 
\begin{prop}\label{prop:coni}
	The bulk-deformed Maurer-Cartan algebra $\cA_\bL$ of $\bL$ by the cycle $\mathbbm{b}:= \delta_1 C_1 + \delta_2 C_2$ equals the deformed noncommutative crepant resolution of the conifold. More precisely, $$\cA_\bL \cong \C[\delta_1,\delta_2] Q/(\partial_{x_e} \Phi: e \in E), $$ where $\Phi= (1+\delta_1)xyzw-(1+\delta_2)wzyx+(\delta_1-\delta_2) zw+(\delta_1-\delta_2) xy$ and $\partial_{x_e}$ denotes the cyclic derivative.
\end{prop}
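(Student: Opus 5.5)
We follow the strategy of Theorem \ref{thm: dpre}, adapted to the clean intersection model. We first fix a Morse–Bott model for $\CF(\bL,\bL)$ with $\bL=\bS^3_0\cup\bS^3_1$ meeting cleanly along two circles $\gamma_1,\gamma_2$. Each circle is given a perfect Morse function, so each contributes two generators in each direction. The degree-one part then consists of four immersed generators $X,Y,Z,W$, with $X,Z$ going from one sphere to the other and $Y,W$ going back. The degree-two part consists of their Poincar\'e duals $\bar X,\bar Y,\bar Z,\bar W$. The quiver $Q$ is therefore the conifold quiver: two vertices and four arrows $x,y,z,w$.

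Without bulk, the computation of \cite{CHL21} (see also \cite{FHLY17}) gives a spacetime superpotential $\Phi_0=xyzw-wzyx$, up to coordinate change. Here $m_0^b=\sum_e \partial_{x_e}\Phi_0\,\bar X_e$, and this follows from cyclic symmetry of the $A_\infty$ structure. The only discs contributing are the two holomorphic quadrilaterals with corners $x,y,z,w$ in the two cyclic orders. These are the discs lying over the two halves of the base region bounded by the images of $\bL$ in the $z$-plane: one passes through the critical value $z=1$, the other through $z=-1$.

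Next we turn on $\mathbbm{b}=\delta_1C_1+\delta_2C_2$ and compute $m_0^{b,\mathbbm{b}}$ by the divisor-axiom argument of Example \ref{ex:ncC2}. Each $C_i$ is real codimension $3$ in $X$, so the relevant insertions are single interior insertions. Two kinds of contributions arise.

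The first kind comes from the two quadrilaterals meeting $C_1$ or $C_2$. We choose the rays so that each $C_i$ meets exactly one of the two quadrilaterals, with intersection number one. Expanding $\exp(\delta_i\,\cdot)$ and truncating by dimension then rescales the two quartic terms to $(1+\delta_1)xyzw$ and $(1+\delta_2)wzyx$.

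The second kind comes from constant discs and bigons through $C_i\cap\bL$. This intersection is the diagonal circle, of dimension one inside the $3$-sphere. Through the Morse flow these contribute to the outputs dual to degree-one generators, with quadratic coefficients. Concretely, a strip with two corners sweeps through $C_i$ and yields the terms $\pm\delta_i\, zw$ and $\pm\delta_i\, xy$. The relative sign between $\delta_1$ and $\delta_2$ is forced by the opposite orientations of the two rays relative to the vanishing circles.

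Finally, we use cyclic symmetry of the bulk-deformed operations (a cyclic Kuranishi or Morse model) to package $m_0^{b,\mathbbm{b}}$ as $\sum_e \partial_{x_e}\Phi\,\bar X_e$. After a coordinate change of the form \eqref{eq:coord}, which absorbs the higher-order perturbation terms, this gives the stated $\Phi$. The quotient $\C[\delta_1,\delta_2]Q/(\partial_{x_e}\Phi)$ is then the bulk-deformed Maurer–Cartan algebra. As a consistency check, setting $\delta_1=\delta_2=0$ recovers the Klebanov–Witten noncommutative crepant resolution.

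The main obstacle is the quadratic terms. We must show that the bigons through $\Delta(z)\times\R^2$ contribute exactly $(\delta_1-\delta_2)(zw+xy)$, with no further linear or constant terms. This requires determining the intersection numbers of the diagonal circle with the relevant bigons and flow lines, including the signs coming from spin structures. As a first attempt, we would compute these in the local product model $C_1(z)\times C_2(z)$ over the ray. There the bigons split as products of the two conic-fiber discs, and the diagonal class $(1,1)$ meets each factor once.
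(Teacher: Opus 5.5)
There is a genuine gap: the holomorphic polygons your computation rests on do not exist. On $\bL=\pi^{-1}(0)$ one has $|z|=1$, and $z$ is nowhere zero on $X$. So for any holomorphic polygon $u$ with boundary on $\bL$, the function $z\circ u$ has modulus one on the boundary and no zeros, and the maximum and minimum modulus principles force it to be constant. Hence $u$ lies in a single fiber $C_1(z)\times C_2(z)$, with boundary on a product of vanishing circles or on a clean circle $S^1_{\pm1}$. The maximum principle applied to the fiber coordinates $u_j,v_j$ then shows $u$ is constant.

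Consequently there are no quadrilaterals ``over the two halves of the base region'': the image of $\bL$ in the $z$-plane is just the unit circle. There are also no strips sweeping through $C_i$, and your product-model computation would return zero. As in \cite{FHLY17}, the undeformed quartic terms come from pearl trees instead: constant polygons at $S^1_1$ and $S^1_{-1}$, joined by the unique gradient trajectory between the two circles, together with flow along the circles. So your ``first kind'' of contribution has nothing to act on. Even if such discs existed, the divisor axiom would give $e^{\delta_i(\beta\cdot C_i)}$, not $1+\delta_i$.

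Your dimension count is also wrong. By construction $C_i\simeq S^1\times\R^3$ is real four-dimensional, so it has codimension two in $X$. That is why $K_i=C_i\cap\bL_i$ is a circle and why $m_0^{\mathbbm b}$ lands in degree two. A codimension-three insertion would contradict both your own statement that the intersection is a circle and the grading.

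The missing idea is that every $\delta$-dependent term, both $\delta_1xyzw-\delta_2wzyx$ and $(\delta_1-\delta_2)(zw+xy)$, has the same origin. Take a constant disc carrying one interior marked point on $C_i$ at a point of $K_i$. Its boundary output is the degree-two chain $K_i\subset\bL_i$. The gradient flow of $f_i$ carries this chain to the clean circles, using that $K_i$ meets the stable manifold of each $S^1_{\pm1}$ transversely in one point. There it feeds into the same constant polygons with $b$-insertions, or into the flow along the clean circle, that produce the undeformed potential. The coefficients and the relative sign between $\delta_1$ and $\delta_2$ must be read off from these flow lines and their orientations relative to $K_i$, not from intersection numbers of discs with $C_i$.
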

\begin{proof}
	Let $\bL_i$ denote the Lagrangian sphere component of $\bL$ which intersects the bulk cycle $C_i$. The two components $\bL_1$ and $\bL_2$ intersect cleanly along two disjoint circles, which we denote by $S^1_1$ and $S^1_{-1}$, lying over $z=1$ and $z=-1$, respectively. 
	
	The formal deformation space of $\bL$ before introducing bulk deformations was studied in Section 6.1 of \cite{FHLY17}. More precisely, using the Morse model, the authors constructed an $A_\infty$-structure on $\CF^*(\bL,\bL)$. They observed that, for a suitable choice of Morse functions $f_i$ on the components $\bL_i$, there is a unique Morse trajectory connecting $S^1_1$ and $S^1_{-1}$. Moreover, they showed that the obstruction equation is given by $
	m_0^b=\sum_e \partial_{x_e}\bigl(\Phi|_{\delta_1=\delta_2=0}\bigr),$
	where the relevant contributions come from these Morse trajectories together with the flow lines along the clean intersection circles.
	
	After bulk-deformation, 	$m_0^{b,\mathbbm{b}}$ receives additional contributions from pearl configurations with interior marked points constrained to the bulk cycles. Observe that $C_i$ intersects $\bL_i$ along a clean diagonal circle, denoted by $K_i$. Thus, as before, for the chosen Morse functions, there are Morse trajectories from the clean intersection $K_i$ to $S^1_{1}$ and $S^1_{-1}$. This is because $K_i$ intersects the stable manifold of $S^1_{\pm 1}$ transversely in one point.
	
	Thus, in addition to the original contributions to $m_0^b$, the
	bulk-deformed term $m_0^{b,\mathbbm b}$ also counts configurations in which a constant disk component carries an interior marked point constrained to $C_i$ and is attached to the above Morse trajectory. Besides, the configuration obtained by attaching the Morse trajectory from \(K_i\) to the Morse flow along the clean circle \(S^1_{\pm 1}\) becomes a stable pearl configuration. These contribute to the remaining terms of \(m_0^{b,\mathbbm b}\), i.e. $\sum_e\partial_{x_e} \left( \delta_1xyzw-\delta_2wzyx+(\delta_1-\delta_2) zw+(\delta_1-\delta_2) xy \right)$.
	
\end{proof}

Moreover, using the techniques of quiver algebroid stacks (See Section 1.1 of \cite{LNT23} for an introduction), we can produce the local noncommutative charts of deformation of this NCCR $\cA_\bL,$ which serves as a three-dimensional analogue of Kawamata's construction \cite{Kaw24A}. 

\begin{lemma}\label{lem:conichart}
	There exist local noncommutative charts $\cA_1$ and $\cA_2$ of $\cA_\bL$, which glue into a noncommutative deformation of the resolved conifold. Moreover, the local noncommutative charts are noncommutative deformations of $\C^3$.
\end{lemma}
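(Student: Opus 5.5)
Following the template of Proposition~\ref{prop:A_n} and Example~\ref{eg: D4}, the charts should come from localizing $\cA_\bL$ at suitable arrows and normalizing the inverted arrows to $1$. The quiver $Q$ of Proposition~\ref{prop:coni} has two vertices $v_1,v_2$, one for each $\bS^3$ component. It has two arrows $x,z\colon v_1\to v_2$ and two arrows $y,w\colon v_2\to v_1$. I would take the two charts to be $U_1$, the localization of $\cA_\bL$ at $x$, and $U_2$, the localization at $z$. These mirror the two standard affine charts of $\cO(-1)\oplus\cO(-1)\to\bP^1$, where one of the two sections of $\cO(1)$ is inverted.

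On $U_1$ I would normalize $x\mapsto 1$ using a gerbe term $c(v_2)=x$, in the way the paper uses $c_{0k0}$. This kills vertex $v_2$ and leaves a one-vertex quiver algebra with loops
\[
Z:=x^{-1}z,\qquad Y:=yx,\qquad W:=wx .
\]
These are the candidate coordinates of $\cA_1$. I would then push the four cyclic derivatives $\partial_{x_e}\Phi$ through the representation $G_{1\bL}\colon \cA_\bL[x^{-1}]\to \cA_1$. The relation $\partial_x\Phi=(1+\delta_1)yzw-(1+\delta_2)wzy+(\delta_1-\delta_2)y$ becomes a commutation-type relation between $Y$ and $ZW$. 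The relations $\partial_y,\partial_z,\partial_w$ give the remaining ones. I expect these to collapse to three relations of the form
\[
(1+\delta_1)\,AB-(1+\delta_2)\,BA+(\delta_1-\delta_2)(\text{lower order})=0
\]
for the three pairs drawn from $\{Y,Z,W\}$. This would make $\cA_1$ a three-generator algebra that is a noncommutative (quantum or Weyl-type) deformation of $\C[Y,Z,W]=\C[\C^3]$, reducing to the commutative ring at $\delta_1=\delta_2=0$. The chart $U_2$ is handled symmetrically, with $z$ inverted and coordinates $X':=z^{-1}x$, $yz$, $wz$.

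Next I would write the inverse maps $G_{\bL 1}\colon \cA_1\to \cA_\bL[x^{-1}]$ explicitly, for instance $Z\mapsto x^{-1}z$ and $Y\mapsto yx$. I would then check that $G_{1\bL}\circ G_{\bL 1}$ and $G_{\bL 1}\circ G_{1\bL}$ are the identity up to the gerbe conjugation. This is the same verification performed in Example~\ref{eg: D4}.

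The gluing comes from composing the two representations over the common localization $\cA_\bL[x^{-1},z^{-1}]$. This gives transition maps $G_{12}$ of the form $Z\mapsto X'^{-1}$, $Y\mapsto (\text{unit})\cdot(yz)X'$, with $\delta$-dependent corrections. These should specialize at $\delta=0$ to the transition functions of the resolved conifold. The quiver algebroid stack axioms of \cite{LNT23} (cocycle conditions with the gerbe terms) then say that the two charts glue. Since there are only two charts, the only nontrivial cocycle conditions are $G_{12}G_{21}=\mathrm{id}$ up to conjugation by the gerbe terms.

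The main obstacle is the relation computation. The bulk terms make $\Phi$ non-homogeneous and asymmetric in $(1+\delta_1)$ versus $(1+\delta_2)$. The localized relations may therefore only close after a further change of coordinates, such as rescaling $Z$ by a unit or shifting $W$ by a constant multiple of $\delta_1-\delta_2$, analogous to the substitution $T=Y-Z$ in Example~\ref{eg: D4}. My first attempt would be to use $\partial_y\Phi$ and $\partial_w\Phi$ to solve for $zw$ and $wz$ in the chart. I would then check that no relation becomes redundant or inconsistent, so that the resulting algebra is flat over $\C[\delta_1,\delta_2]$ with PBW basis $Y^aZ^bW^c$. I would confirm flatness by checking the overlap ambiguities in the sense of Bergman's diamond lemma.
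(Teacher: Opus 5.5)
Your proposal follows the paper's proof essentially verbatim up to relabeling: the paper's $\cA_1$ and $\cA_2$ are the localizations at $z$ and at $x$ (your $U_2$ and $U_1$), with the same normalized generators $z^{-1}x,\,yz,\,wz$ and $x^{-1}z,\,yx,\,wx$, the same pair of representations that are inverse up to gerbe terms, and transition maps obtained by composing them. The only difference is that the complications you anticipate do not arise: once the inverted arrow is set to $1$, the cyclic derivatives with respect to the other three arrows are already quadratic relations of the form $(1+\delta_1)AB-(1+\delta_2)BA+(\delta_1-\delta_2)(\cdots)$, the cubic relation coming from the inverted arrow follows from them, and the transition maps are exactly $Z\mapsto (X')^{-1}$, $Y\mapsto (yz)X'$, $W\mapsto (wz)X'$ with no $\delta$-corrections; your diamond-lemma/PBW flatness check is an extra verification that the paper does not carry out.
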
  
\begin{proof}
	
	The local noncommutative charts of $\cA_\bL$ can be constructed via arrow localization. More precisely, $\cA_1$ and $\cA_2$ are obtained by localizing at the arrows $z,x$ respectively.
	
	Motivated by the commutative cases and the above example, we introduce normalized generators in the
	localized algebra $\mathcal A_{0}[z^{-1}]$. Namely, after inverting the arrow $z$, we use the invertibility of $z$ to perform
	a gauge normalization, and introduce
	$z^{-1}x, yz, wz $ as the noncommutative analogues of affine coordinates on the corresponding
	local chart. This suggests the following representations between $\cA_1$ and $\cA_0[z^{-1}]$, which are inverse to each other up to gerbe terms:
	$$G_{01}:=\begin{cases}
		x_1 \mapsto z^{-1}x \\
		y_1 \mapsto yz \\
		z_1 \mapsto wz 
	\end{cases}, \quad G_{10}:=\begin{cases}
		z \mapsto 1 \\
		x \mapsto x_1\\
		y \mapsto y_1\\
		w \mapsto z_1,
	\end{cases}$$  where $G_{01}$ is representation from $\cA_1$ to $\cA_0$.
	
	Thus, once this gauge normalization is chosen, the presentation of
	$\mathcal A_1$ is determined by the localized relations of
	$\mathcal A_{0}[z^{-1}]$. Explicitly,
	one obtains
	$\mathcal A_1 \cong
	\mathbb C\langle x_1,y_1,z_1\rangle/I_1,$
	where $I_1$ is the two-sided ideal generated by
	\[
	\begin{aligned}
		&(1+\delta_1)x_1y_1-(1+\delta_2)y_1x_1+(\delta_1-\delta_2),\\
		&(1+\delta_2)x_1z_1-(1+\delta_1)z_1x_1-(\delta_1-\delta_2)x_1,\\
		&(1+\delta_1)y_1z_1-(1+\delta_2)z_1y_1+(\delta_1-\delta_2)y_1.
	\end{aligned}
	\]
	Similarly, we can localize at the arrow $x$. In this chart, the normalized
	generators are chosen as
	$x^{-1}z, yx, wx,$
	which play the role of affine coordinates on the second local chart. This gives the following representations:
	$$G_{02}:=\begin{cases}
		x_2 \mapsto x^{-1}z \\
		y_2 \mapsto yx \\
		z_2 \mapsto wx 
	\end{cases}, \quad G_{20}:=\begin{cases}
		z \mapsto x_2 \\
		x \mapsto 1\\
		y \mapsto y_2\\
		w \mapsto z_2.
	\end{cases}$$ As before, these two representations are inverse to each other up to the prescribed gerbe terms.
	
	Once this gauge normalization is chosen, the presentation of the second local
	chart algebra $\mathcal A_2$ is determined. More explicitly,
    one obtains $\mathcal A_2 \cong
	\mathbb C\langle x_2,y_2,z_2\rangle/I_2,$
	where $I_2$ is the two-sided ideal generated by
	\[
	\begin{aligned}
		&(1+\delta_2)x_2y_2-(1+\delta_1)y_2x_2-(\delta_1-\delta_2)x_2,\\
		&(1+\delta_1)x_2z_2-(1+\delta_2)z_2x_2+(\delta_1-\delta_2),\\
		&(1+\delta_2)z_2y_2-(1+\delta_1)y_2z_2-(\delta_1-\delta_2)z_2.
	\end{aligned}
	\]
	
	Moreover, we have the following transition maps between the local charts:$$G_{21}:= G_{20} \circ G_{01}=\begin{cases}
		x_1 \mapsto x_2^{-1} \\
		y_1 \mapsto y_2x_2 \\
		z_1 \mapsto z_2x_2 
	\end{cases}, \quad G_{12}:= G_{10} \circ G_{02}=\begin{cases}
		x_2 \mapsto x_1^{-1} \\
		y_2 \mapsto y_1x_1 \\
		z_2 \mapsto z_1x_1 
	\end{cases}.$$ The local charts $\mathcal A_1$ and
	$\mathcal A_2$, together with the above transition maps, form a noncommutative
	deformation of the standard two-chart affine cover of the resolved conifold.
\end{proof}

\begin{remark}
	Note that, as in the two-dimensional case, the quiver algebra remains commutative along a distinguished one-parameter direction in the bulk-parameter space. In the present case, this direction is the diagonal direction $\delta_1=\delta_2$.
\end{remark}

\bibliography{mybib}{}

@article{AFOOO26,
      title={Quantum cohomology and split generation in Lagrangian Floer theory}, 
      author={M. Abouzaid and K. Fukaya and Y. -G. Oh and H. Ohta and K. Ono},
      year={2026},
      Journal = {arXiv preprint},
      eprint={2606.12257},
      archivePrefix={arXiv},
      primaryClass={math.SG},
      url={https://arxiv.org/abs/2606.12257}, 
      note   = {\href{https://arxiv.org/abs/2606.12257}{arXiv:2606.12257}}
}

@article{Ho,
author = {Hansol Hong},
title = {{Maurer-Cartan deformation of Lagrangians}},
volume = {21},
journal = {Journal of Symplectic Geometry},
number = {1},
publisher = {International Press},
pages = {1 -- 71},
year = {2023},
doi = {10.4310/JSG.2023.v21.n1.a1},
URL = {https://dx.doi.org/10.4310/JSG.2023.v21.n1.a1
}
}

@article{BGO25,
      title={Reflexive dg categories in algebra and topology}, 
      author={Matt Booth and Isambard Goodbody and Sebastian Opper},
      year={2025},
      eprint={2506.11213},
      archivePrefix={arXiv},
      primaryClass={math.RT},
      url={https://arxiv.org/abs/2506.11213}, 
      note          = {\href{https://arxiv.org/abs/2506.11213}{arXiv:2506.11213}}
}

@article{Booth1,
	author = {Booth, Matt},
	doi = {10.1007/s00209-021-02892-7},
	fjournal = {Mathematische Zeitschrift},
	issn = {0025-5874},
	journal = {Math. Z.},
	mrclass = {14B10 (14A30 14B20 14F08 18N40)},
	mrnumber = {4381228},
	number = {3},
	pages = {3023--3082},
	title = {The derived deformation theory of a point},
	url = {https://mathscinet.ams.org/mathscinet-getitem?mr=4381228},
	volume = {300},
	year = {2022}}

@article{CHL-toric,
	title = {Localized mirror functor constructed from a {L}agrangian torus},
	journal = {Journal of Geometry and Physics},
	volume = {136},
	pages = {284-320},
	year = {2019},
	issn = {0393-0440},
	author = {Cheol-Hyun Cho and Hansol Hong and Siu-Cheong Lau},
}

@article{ADHM78,
title = {Construction of instantons},
journal = {Physics Letters A},
volume = {65},
number = {3},
pages = {185-187},
year = {1978},
issn = {0375-9601},
doi = {https://doi.org/10.1016/0375-9601(78)90141-X},
url = {https://www.sciencedirect.com/science/article/pii/037596017890141X},
author = {Michael F. Atiyah and Vladimir G. Drinfeld and Nigel J. Hitchin and Yuri I. Manin}
}

@article {Aur07,
    AUTHOR = {Auroux, Denis},
     TITLE = {Mirror symmetry and {$T$}-duality in the complement of an
              anticanonical divisor},
   JOURNAL = {J. G\"{o}kova Geom. Topol. GGT},
  FJOURNAL = {Journal of G\"{o}kova Geometry Topology. GGT},
    VOLUME = {1},
      YEAR = {2007},
     PAGES = {51--91},
      ISSN = {1935-2565},
   MRCLASS = {53D40 (14J32 14J45 53C38)},
  MRNUMBER = {2386535},
MRREVIEWER = {Richard\ P.\ Thomas},
}

@article {AS12,
    AUTHOR = {Abouzaid, Mohammed and Smith, Ivan},
     TITLE = {Exact {L}agrangians in plumbings},
   JOURNAL = {Geom. Funct. Anal.},
  FJOURNAL = {Geometric and Functional Analysis},
    VOLUME = {22},
      YEAR = {2012},
    NUMBER = {4},
     PAGES = {785--831},
      ISSN = {1016-443X,1420-8970},
   MRCLASS = {53D37 (53D12)},
  MRNUMBER = {2984118},
MRREVIEWER = {Rafael\ Santamar\'ia},
       DOI = {10.1007/s00039-012-0162-y},
       URL = {https://doi.org/10.1007/s00039-012-0162-y},
}

@article {BGK02,
    AUTHOR = {Baranovsky, Vladimir and Ginzburg, Victor and Kuznetsov, Alexander},
     TITLE = {Quiver varieties and a noncommutative {${\Bbb P}^2$}},
   JOURNAL = {Compositio Math.},
  FJOURNAL = {Compositio Mathematica},
    VOLUME = {134},
      YEAR = {2002},
    NUMBER = {3},
     PAGES = {283--318},
      ISSN = {0010-437X,1570-5846},
   MRCLASS = {14D20 (14A22 16S38)},
  MRNUMBER = {1943905},
MRREVIEWER = {Olivier\ G.\ Schiffmann},
       DOI = {10.1023/A:1020930501291},
       URL = {https://doi.org/10.1023/A:1020930501291},
}

@article{CHJL25,
      title={Closed-string mirror symmetry for dimer models}, 
      author={Dahye Cho and Hansol Hong and Hyeongjun Jin and Sangwook Lee},
      year={2025},
      Journal = {arXiv preprint},
      note = {\href{https://arxiv.org/abs/2511.06699}{arXiv:2511.06699}},
}

@article {CHL21,
    AUTHOR = {Cho, Cheol-Hyun and Hong, Hansol and Lau, Siu-Cheong},
     TITLE = {Noncommutative homological mirror functor},
   JOURNAL = {Mem. Amer. Math. Soc.},
  FJOURNAL = {Memoirs of the American Mathematical Society},
    VOLUME = {271},
      YEAR = {2021},
    NUMBER = {1326},
     PAGES = {v+116},
      ISSN = {0065-9266,1947-6221},
      ISBN = {978-1-4704-4761-8; 978-1-4704-6630-5},
   MRCLASS = {14J33 (14A22 14F08 53D37)},
  MRNUMBER = {4277886},
MRREVIEWER = {Roman\ Golovko},
       DOI = {10.1090/memo/1326},
       URL = {https://doi.org/10.1090/memo/1326},
}

@article {CPU16,
    AUTHOR = {Chan, Kwokwai and Pomerleano, Daniel and Ueda, Kazushi},
     TITLE = {Lagrangian torus fibrations and homological mirror symmetry
              for the conifold},
   JOURNAL = {Comm. Math. Phys.},
  FJOURNAL = {Communications in Mathematical Physics},
    VOLUME = {341},
      YEAR = {2016},
    NUMBER = {1},
     PAGES = {135--178},
      ISSN = {0010-3616,1432-0916},
   MRCLASS = {53D37 (14F05 14J33 53D12 81T60)},
  MRNUMBER = {3439224},
MRREVIEWER = {Hai-Long\ Her},
       DOI = {10.1007/s00220-015-2477-7},
       URL = {https://doi.org/10.1007/s00220-015-2477-7},
}

@article {EL19,
    AUTHOR = {Tolga Etg{\"u} and Yankı Lekili},
     TITLE = {Fukaya categories of plumbings and multiplicative
              preprojective algebras},
   JOURNAL = {Quantum Topol.},
  FJOURNAL = {Quantum Topology},
    VOLUME = {10},
      YEAR = {2019},
    NUMBER = {4},
     PAGES = {777--813},
      ISSN = {1663-487X,1664-073X},
   MRCLASS = {57R58 (16E45 53D37)},
  MRNUMBER = {4033516},
       DOI = {10.4171/qt/131},
       URL = {https://doi.org/10.4171/qt/131},
}

@article{EL17,
author = {Tolga Etg{\"u} and Yankı Lekili},
title = {{Koszul duality patterns in Floer theory}},
volume = {21},
journal = {Geometry \& Topology},
number = {6},
publisher = {MSP},
pages = {3313 -- 3389},
year = {2017},
}

@article{FHLY17,
  title={Mirror of {A}tiyah flop in symplectic geometry and stability conditions},
  author={Yu-Wei Fan and Hansol Hong and Siu-Cheong Lau and Shing-Tung Yau},
  journal={Adv. Theor. Math. Phys.},
  volume = {22},
  year={2018},
  number = {5},
}

@incollection {FOOO-can,
    AUTHOR = {Fukaya, Kenji and Oh, Yong-Geun and Ohta, Hiroshi and Ono,
              Kaoru},
     TITLE = {Canonical models of filtered {$A_\infty$}-algebras and {M}orse
              complexes},
 BOOKTITLE = {New perspectives and challenges in symplectic field theory},
    SERIES = {CRM Proc. Lecture Notes},
    VOLUME = {49},
     PAGES = {201--227},
 PUBLISHER = {Amer. Math. Soc., Providence, RI},
      YEAR = {2009},
      ISBN = {978-0-8218-4356-7},
}

@article{FOOO09,
author = {Fukaya, Kenji and Oh, Yong-Geun and Ohta, Hiroshi and Ono, Kaoru},
year = {2009},
month = {01},
pages = {},
title = {Lagrangian intersection Floer theory: anomaly and obstruction. Part I, II},
journal = {AMS/IP Studies in Advanced Mathematics, v.46 (2009)}
}

@article {FOOOtoric,
    AUTHOR = {Fukaya, Kenji and Oh, Yong-Geun and Ohta, Hiroshi and Ono,
              Kaoru},
     TITLE = {Lagrangian {F}loer theory and mirror symmetry on compact toric
              manifolds},
   JOURNAL = {Ast\'erisque},
  FJOURNAL = {Ast\'erisque},
    NUMBER = {376},
      YEAR = {2016},
     PAGES = {vi+340},
      ISSN = {0303-1179,2492-5926},
      ISBN = {978-2-85629-825-1},
   MRCLASS = {53D40 (14M25 53D37 53D45)},
  MRNUMBER = {3460884},
MRREVIEWER = {Christopher\ T.\ Woodward},
}

@article {HKL23,
    AUTHOR = {Hong, Hansol and Kim, Yoosik and Lau, Siu-Cheong},
     TITLE = {Immersed two-spheres and {SYZ} with application to
              {G}rassmannians},
   JOURNAL = {J. Differential Geom.},
  FJOURNAL = {Journal of Differential Geometry},
    VOLUME = {125},
      YEAR = {2023},
    NUMBER = {3},
     PAGES = {427--507},
}

@article{HLT24,
      title={Mirror Construction for {N}akajima Quiver Varieties}, 
      author={Jiawei Hu and Siu-Cheong Lau and Ju Tan},
      year={2024},
      Journal = {arXiv preprint},
      note = {\href{https://arxiv.org/abs/2404.16172}{arXiv:2404.16172}}, 
}

@article{JKL-Ginzburg,
      title={Proper modules over {G}inzburg dg algebras and compact {F}ukaya categories of plumbings}, 
      author={Wonbo Jeong and Dogancan Karabas and Sangjin Lee},
      Journal = {arXiv preprint},
      year={2026},
      eprint={2605.08969},
      archivePrefix={arXiv},
      primaryClass={math.SG},
      url={https://arxiv.org/abs/2605.08969}, 
      note          = {\href{https://arxiv.org/abs/2605.08969}{arXiv:2605.08969}}
}

@article{JKL-im,
      title={Generation of immersed {L}agrangians by cocores}, 
      author={Wonbo Jeong and Dogancan Karabas and Sangjin Lee},
      year={2026},
      Journal = {arXiv preprint},
      note = {\href{https://arxiv.org/abs/2605.09082}{arXiv:2605.09082}},
}

@article{KL25,
   title={The wrapped {F}ukaya category of plumbings},
   volume={23},
   ISSN={1540-2347},
   url={http://dx.doi.org/10.4310/JSG.250820172316},
   DOI={10.4310/jsg.250820172316},
   number={4},
   journal={Journal of Symplectic Geometry},
   publisher={International Press of Boston},
   author={Karabas, Dogancan and Lee, Sangjin},
   year={2025},
   pages={853–949} }

@article{kon17,
      title={Higher rank local systems in {L}agrangian {F}loer theory}, 
      author={Momchil Konstantinov},
      year={2017},
      Journal = {arXiv preprint},
      note = {\href{https://arxiv.org/abs/1701.03624}{arXiv:1701.03624}},
}

@article {BKR01,
    AUTHOR = {Bridgeland, Tom and King, Alastair and Reid, Miles},
     TITLE = {The {M}c{K}ay correspondence as an equivalence of derived
              categories},
   JOURNAL = {J. Amer. Math. Soc.},
  FJOURNAL = {Journal of the American Mathematical Society},
    VOLUME = {14},
      YEAR = {2001},
    NUMBER = {3},
     PAGES = {535--554},
      ISSN = {0894-0347,1088-6834},
   MRCLASS = {14E15 (14F05 14J30 18E30 19L47)},
  MRNUMBER = {1824990},
MRREVIEWER = {Ana\ Jerem\'{\i}as L\'{o}pez},
       DOI = {10.1090/S0894-0347-01-00368-X},
       URL = {https://doi.org/10.1090/S0894-0347-01-00368-X},
}

@article {CBH98,
    AUTHOR = {Crawley-Boevey, William and Holland, Martin P.},
     TITLE = {Noncommutative deformations of {K}leinian singularities},
   JOURNAL = {Duke Math. J.},
  FJOURNAL = {Duke Mathematical Journal},
    VOLUME = {92},
      YEAR = {1998},
    NUMBER = {3},
     PAGES = {605--635},
      ISSN = {0012-7094,1547-7398},
   MRCLASS = {14B07 (16G10)},
  MRNUMBER = {1620538},
MRREVIEWER = {Michel\ Van den Bergh},
       DOI = {10.1215/S0012-7094-98-09218-3},
       URL = {https://doi.org/10.1215/S0012-7094-98-09218-3},
}

@article {Her16,
    AUTHOR = {Hermes, Stephen},
     TITLE = {Minimal model of {G}inzburg algebras},
   JOURNAL = {J. Algebra},
  FJOURNAL = {Journal of Algebra},
    VOLUME = {459},
      YEAR = {2016},
     PAGES = {389--436},
      ISSN = {0021-8693,1090-266X},
   MRCLASS = {16G20 (16E35 16E45)},
  MRNUMBER = {3503979},
MRREVIEWER = {Jian\ Min\ Chen},
       DOI = {10.1016/j.jalgebra.2016.03.041},
       URL = {https://doi.org/10.1016/j.jalgebra.2016.03.041},
}

@article {HJL25,
    AUTHOR = {Hong, Hansol and Jin, Hyeongjun and Lee, Sangwook},
     TITLE = {Orbifold {K}odaira-{S}pencer maps and closed-string mirror
              symmetry for punctured {R}iemann surfaces},
   JOURNAL = {J. Lond. Math. Soc. (2)},
  FJOURNAL = {Journal of the London Mathematical Society. Second Series},
    VOLUME = {111},
      YEAR = {2025},
    NUMBER = {5},
     PAGES = {Paper No. e70179, 54},
      ISSN = {0024-6107,1469-7750},
   MRCLASS = {53D37 (53D40)},
  MRNUMBER = {4911865},
MRREVIEWER = {Yuhan\ Sun},
       DOI = {10.1112/jlms.70179},
       URL = {https://doi.org/10.1112/jlms.70179},
}

@article{KN90,
journal = {Mathematische Annalen},
number = {2},
pages = {263-308},
title = {Yang-{M}ills instantons on {ALE} gravitational instantons.},
author = {Kronheimer, Peter B. and Nakajima, Hiraku},
url = {http://eudml.org/doc/164736},
volume = {288},
year = {1990},
}

@phdthesis{Lam15,
  author  = {Yat-Tin Lam},
  title   = {Calabi--Yau Categories and Quivers with Superpotential},
  school  = {University of Oxford},
  year    = {2015},
  note    = {\url{https://people.maths.ox.ac.uk/joyce/theses/LamDPhil.pdf}}
}

@article {LLL24,
    AUTHOR = {Lau, Siu-Cheong and Lee, Tsung-Ju and Lin, Yu-Shen},
     TITLE = {S{YZ} mirror symmetry for del {P}ezzo surfaces and affine
              structures},
   JOURNAL = {Adv. Math.},
  FJOURNAL = {Advances in Mathematics},
    VOLUME = {439},
      YEAR = {2024},
     PAGES = {Paper No. 109488, 57},
      ISSN = {0001-8708,1090-2082},
   MRCLASS = {14J33 (14J27 32Q25 53D37)},
  MRNUMBER = {4690503},
MRREVIEWER = {Yu-Wei\ Fan},
       DOI = {10.1016/j.aim.2024.109488},
       URL = {https://doi.org/10.1016/j.aim.2024.109488},
}

@article{LNT23,
      title={Mirror {S}ymmetry for Quiver Algebroid Stacks}, 
      author={Siu-Cheong Lau and Junzheng Nan and Ju Tan},
      year={2023},
      eprint={2206.03028},
      journal={to appear in J. Symplectic Geom.}
}

@article{LT26,
      title={Mirror construction of {H}ecke correspondence between {N}akajima quiver varieties}, 
      author={Siu-Cheong Lau and Ju Tan},
      year={2026},
      Journal = {arXiv preprint},
      Note = {\href{http://arxiv.org/abs/2601.13555}{arXiv:2601.13555}},
}

@article{Nak94,
author = {Hiraku Nakajima},
title = {{Instantons on ALE spaces, quiver varieties, and Kac-Moody algebras}},
volume = {76},
journal = {Duke Mathematical Journal},
number = {2},
publisher = {Duke University Press},
pages = {365 -- 416},
year = {1994},
}

@article{Nak98,
author = {Hiraku Nakajima},
title = {{Quiver varieties and Kac-Moody algebras}},
volume = {91},
journal = {Duke Mathematical Journal},
number = {3},
publisher = {Duke University Press},
pages = {515 -- 560},
year = {1998},
}

@book {Nak99,
	AUTHOR = {Nakajima, Hiraku},
	TITLE = {Lectures on {H}ilbert schemes of points on surfaces},
	SERIES = {University Lecture Series},
	VOLUME = {18},
	PUBLISHER = {American Mathematical Society, Providence, RI},
	YEAR = {1999},
	PAGES = {xii+132},
}

@article {Nak01,
    AUTHOR = {Nakajima, Hiraku},
     TITLE = {Quiver varieties and finite-dimensional representations of
              quantum affine algebras},
   JOURNAL = {J. Amer. Math. Soc.},
  FJOURNAL = {Journal of the American Mathematical Society},
    VOLUME = {14},
      YEAR = {2001},
    NUMBER = {1},
     PAGES = {145--238},
      ISSN = {0894-0347,1088-6834},
   MRCLASS = {17B37 (14D21)},
  MRNUMBER = {1808477},
MRREVIEWER = {Olivier\ G.\ Schiffmann},
       DOI = {10.1090/S0894-0347-00-00353-2},
       URL = {https://doi.org/10.1090/S0894-0347-00-00353-2},
}

@article{Nak07,
author = {Nakajima, Hiraku},
year = {2007},
month = {01},
pages = {},
title = {Sheaves on {ALE} Spaces and Quiver Varieties},
volume = {7},
journal = {Moscow Mathematical Journal},
}

@article {SYZ96,
    AUTHOR = {Strominger, Andrew and Yau, Shing-Tung and Zaslow, Eric},
     TITLE = {Mirror symmetry is {$T$}-duality},
   JOURNAL = {Nuclear Phys. B},
  FJOURNAL = {Nuclear Physics. B. Theoretical, Phenomenological, and
              Experimental High Energy Physics. Quantum Field Theory and
              Statistical Systems},
    VOLUME = {479},
      YEAR = {1996},
    NUMBER = {1-2},
     PAGES = {243--259},
      ISSN = {0550-3213,1873-1562},
   MRCLASS = {32J17 (14J32 32J81 81T30)},
  MRNUMBER = {1429831},
MRREVIEWER = {Mark\ Gross},
       DOI = {10.1016/0550-3213(96)00434-8},
       URL = {https://doi.org/10.1016/0550-3213(96)00434-8},
}

@book {Sei08,
    AUTHOR = {Seidel, Paul},
     TITLE = {Fukaya categories and {P}icard-{L}efschetz theory},
    SERIES = {Zurich Lectures in Advanced Mathematics},
 PUBLISHER = {European Mathematical Society (EMS), Z\"{u}rich},
      YEAR = {2008},
     PAGES = {viii+326},
      ISBN = {978-3-03719-063-0},
   MRCLASS = {53D40 (16E45 32Q65 53D12)},
  MRNUMBER = {2441780},
MRREVIEWER = {Timothy\ Perutz},
       DOI = {10.4171/063},
       URL = {https://doi.org/10.4171/063},
}

@article {Sei00,
    AUTHOR = {Seidel, Paul},
     TITLE = {Graded {L}agrangian submanifolds},
   JOURNAL = {Bull. Soc. Math. France},
  FJOURNAL = {Bulletin de la Soci\'et\'e{} Math\'ematique de France},
    VOLUME = {128},
      YEAR = {2000},
    NUMBER = {1},
     PAGES = {103--149},
      ISSN = {0037-9484,2102-622X},
   MRCLASS = {53D40 (53D12 57R58)},
  MRNUMBER = {1765826},
MRREVIEWER = {David\ E.\ Hurtubise},
       URL = {http://www.numdam.org/item?id=BSMF_2000__128_1_103_0},
}

@article{Kaw24A,
      title={Non-Commutative Deformations of Derived {McKay} Correspondence for {A(n)} singularities}, 
      author={Yujiro Kawamata},
      Journal = {arXiv preprint},
      year={2024},
      Note = {\href{http://arxiv.org/abs/2410.15340}{arXiv:2410.15340}},
}

@article {KV00,
    AUTHOR = {Kapranov, M. and Vasserot, E.},
     TITLE = {Kleinian singularities, derived categories and {H}all
              algebras},
   JOURNAL = {Math. Ann.},
  FJOURNAL = {Mathematische Annalen},
    VOLUME = {316},
      YEAR = {2000},
    NUMBER = {3},
     PAGES = {565--576},
      ISSN = {0025-5831,1432-1807},
   MRCLASS = {14F05 (14J17 14L30 18E30 19A49 32S25)},
  MRNUMBER = {1752785},
MRREVIEWER = {I.\ Dolgachev},
       DOI = {10.1007/s002080050344},
       URL = {https://doi.org/10.1007/s002080050344},
}

@article {KKO01,
    AUTHOR = {Kapustin, Anton and Kuznetsov, Alexander and Orlov, Dmitri},
     TITLE = {Noncommutative instantons and twistor transform},
   JOURNAL = {Comm. Math. Phys.},
  FJOURNAL = {Communications in Mathematical Physics},
    VOLUME = {221},
      YEAR = {2001},
    NUMBER = {2},
     PAGES = {385--432},
      ISSN = {0010-3616,1432-0916},
   MRCLASS = {58B34 (14A22 32L25 81R60)},
  MRNUMBER = {1845330},
MRREVIEWER = {Alexander\ E.\ Polishchuk},
       DOI = {10.1007/PL00005576},
       URL = {https://doi.org/10.1007/PL00005576},
}

@article {CHL17,
    AUTHOR = {Cho, Cheol-Hyun and Hong, Hansol and Lau, Siu-Cheong},
     TITLE = {Localized mirror functor for {L}agrangian immersions, and
              homological mirror symmetry for {$\Bbb{P}^1_{a,b,c}$}},
   JOURNAL = {J. Differential Geom.},
  FJOURNAL = {Journal of Differential Geometry},
    VOLUME = {106},
      YEAR = {2017},
    NUMBER = {1},
     PAGES = {45--126},
      ISSN = {0022-040X,1945-743X},
   MRCLASS = {53D37 (14F05 14J33 53D40)},
  MRNUMBER = {3640007},
MRREVIEWER = {Guangbo\ Xu},
       DOI = {10.4310/jdg/1493172094},
       URL = {https://doi.org/10.4310/jdg/1493172094},
}

@article {Tod18,
    AUTHOR = {Toda, Yukinobu},
     TITLE = {Moduli stacks of semistable sheaves and representations of
              {E}xt-quivers},
   JOURNAL = {Geom. Topol.},
  FJOURNAL = {Geometry \& Topology},
    VOLUME = {22},
      YEAR = {2018},
    NUMBER = {5},
     PAGES = {3083--3144},
      ISSN = {1465-3060,1364-0380},
   MRCLASS = {14D22 (14D15 14D23)},
  MRNUMBER = {3811778},
MRREVIEWER = {Abdelmoubine\ Amar\ Henni},
       DOI = {10.2140/gt.2018.22.3083},
       URL = {https://doi.org/10.2140/gt.2018.22.3083},
}
\bibliographystyle{alpha}
\end{document}